\theoremstyle{plain}
\newtheorem{thm}{Theorem}
\numberwithin{thm}{subsection}
\newtheorem{lem}[thm]{Lemma}
\newtheorem{prop}[thm]{Proposition}
\newtheorem{cor}[thm]{Corollary}
\newtheorem{rmk}[thm]{Remark}
\theoremstyle{definition}
\newtheorem{defn}[thm]{Definition}
\newtheorem{conj}[thm]{Conjecture}
\newtheorem{ex}[thm]{Example}
\tikzstyle{V}=[draw, fill =black, circle, inner sep=0pt, minimum size=3pt]
\begin{document}

\title{Defining an Affine Partition Algebra}

\author{Samuel Creedon and Maud De Visscher}

\date{}

\maketitle

\begin{abstract}
We define an affine partition algebra by generators and relations and prove a variety of basic results regarding this new algebra analogous to those of other affine diagram algebras. In particular we show that it extends the Schur-Weyl duality between the symmetric group and the partition algebra. We also relate it to the affine partition category recently defined by J. Brundan and M. Vargas. Moreover, we show that this affine partition category is a \emph{full} monoidal subcategory of the Heisenberg category.
\end{abstract}


\section{Introduction}

Classical Schur-Weyl duality relates the representations of the symmetric group and the general linear group via their commuting actions on tensor space. The Brauer algebra was introduced in \cite{B37} to play the role of the symmetric group in a corresponding duality for the symplectic and orthogonal groups. The partition algebra was originally defined by P. Martin in \cite{M91} in the context of Statistical Mechanics. V. Jones showed in \cite{J94} that it appears in another version of Schur-Weyl duality. More precisely, if one replaces the general linear group by the finite subgroup of all permutation matrices then the centraliser algebra of its action on tensor space is precisely the partition algebra. The aim of this paper is to define an affine version of the partition algebra. 

There are different \lq affinization' processes for such algebras. One such process amounts to making the Jucys-Murphy elements of the ordinary algebra into variables, retaining some of the relations between these variables and the standard generators of the ordinary algebra. Starting with the symmetric group algebra, this \lq affinization' process gives rise to the much-studied degenerate affine Hecke algebra (see for example \cite{K05}). In the case of the Brauer algebra, M. Nazarov used this process in \cite{Naz96} to define the affine Wenzl algebra (also called the Nazarov-Wenzl algebra or the degenerate affine BMW algebra in the literature). This process was also employed independently in both \cite{RS15} and \cite{Sar13} to  define a  degenerate affine walled Brauer algebra. 

R. Orellana and A. Ram introduced a different \lq affinization' process in \cite{OR07} focussed on extending the Schur-Weyl dualities, via the affine braid group. They have applied it to the symmetric group, Brauer algebra and their quantum analogues. This process naturally leads to cyclotomic quotients of the affine algebras.

In this paper, we follow the first approach to define an affine partition algebra by turning the Jucys-Murphy elements for the partition algebra into variables and ask them to retain certain relations with the generators. But we also show that this affine partition algebra naturally extends the commuting action on tensor space with the symmetric group. We started this work by trying to use the presentation for the partition algebra given  by T. Halverson and A. Ram in \cite{HR05} but were unable to define an algebra with the expected properties in this way. So we instead used the more recent presentation given by J. Enyang in \cite{Eny12} (which uses a new set of generators)  to define the affine partition algebra $\mathcal{A}_{2k}^{\text{aff}}$. We prove that it satisfies many properties analogous to those for other affine diagram algebras.

While writing this paper, J. Brundan and M. Vargas produced a preprint \cite{BV21} defining an affine partition category $\mathsf{APar}$ as a monoidal subcategory of  the Heisenberg category generated by some objects and morphisms.  Taking an endomorphism algebra in their category gives an alternative definition of an affine partition algebra, which they denote by $AP_k$. They prove many properties for this category and use it to give a new approach to the representation theory of the partition category. However, as they note in \cite[Remark 4.12]{BV21} they have not attempted to give a basis for the morphism spaces in their category, or to give a presentation for it.
Inspired by their work, we have explored the connection between our affine partition algebra and the Heisenberg category.
We have added a section at the end of our paper where we construct a surjective homomorphism from $\mathcal{A}_{2k}^{\text{aff}}$ to an endomorphism algebra in the Heisenberg category.  Our argument generalises to show that the affine partition category $\mathsf{APar}$ of  Brundan and Vargas is in fact the full monoidal subcategory of the Heisenberg category generated by one object. Using work of Khovanov \cite{Kho14}, this gives a basis for all morphism spaces in $\mathsf{APar}$ and hence also for $AP_k$. 
We also obtain as a corollary that $AP_k$ is a quotient of $\mathcal{A}_{2k}^{\text{aff}}$. We do not know whether these two algebras are in fact isomorphic. If they were, then our definition of $\mathcal{A}_{2k}^{\text{aff}}$ would also give a presentation for $AP_k$.
 
\medskip

The paper is structured as follows. Section 2 deals with the ordinary partition algebra. In Section 2.1 we recall the diagram basis and the original presentation of the partition algebra given by T. Halverson and A. Ram. In Section 2.2, we recall the definition of the Jucys-Murphy elements and the more recent presentation of the algebra given by J. Enyang. We also state some relations which will be needed in defining an affine partition algebra. Section 2.3 introduces a new normalisation of the Jucys-Murphy elements and of Enyang's generators which has the advantage of simplifying many of the relations in our definition. Finally, in Section 2.4, we recall explicitly the Schur-Weyl duality between the partition algebra and the symmetric group. 

Section 3 gives the definition of the affine partition algebra $\mathcal{A}_{2k}^{\text{aff}}$ in terms of generators and relations and proves some properties. In particular, we show in Section 3.1 that the ordinary partition algebra appears both as a subalgebra and as a quotient of $\mathcal{A}_{2k}^{\text{aff}}$. In Section 3.2, we describe a family of central elements in $\mathcal{A}_{2k}^{\text{aff}}$ and formulate a conjecture about its centre. Finally, in Section 3.3 we show that $\mathcal{A}_{2k}^{\text{aff}}$ extends the action of the partition algebra on tensor space as desired. 

Section 4 deals with the connections with the Heisenberg category and the work of J. Brundan and M. Vargas on their affine partition category. 
In Section 4.1, we recall the definition of the Heisenberg category including the basis of the morphism spaces given by M. Khovanov in \cite{Kho14}. In Section 4.2 we define a  homomorphism from $\mathcal{A}_{2k}^{\text{aff}}$ to the endomorphism space of a particular object in the Heisenberg category and prove that it is surjective. In Section 4.3, we generalise the arguments from Section 4.2 to show that $\mathsf{APar}$ is the full monoidal subcategory of the Heisenberg category generated by one object and deduce that $AP_k$ is a quotient of $\mathcal{A}_{2k}^{\text{aff}}$.


\section{Partition algebra}


\subsection{Diagrammatics and presentation}


For this section we give the definition of the partition algebra $\mathcal{A}_{2k}(z)$ and its presentation established in \cite{HR05} (and independently in \cite{East11}). For $k \in \mathbb{N}$, we let $[k] := \{1,2,\dots,k\}$, and $[k'] := \{1',2',\dots,k'\}$. We view $[k]\cup [k']$ as a formal set on $2k$ elements, and let $\Pi_{2k}$ denote the set of all set partitions of $[k]\cup[k']$. Given any $\alpha \in \Pi_{2k}$, we say a partition diagram of $\alpha$ is  any graph with vertex set $[k]\cup [k']$ whose connected components partition the vertices according to the blocks of $\alpha$. We do not distinguish between $\alpha$ and any partition diagram of $\alpha$, in particular we will only care about the connected components of such graphs, not the edges forming the components. When drawing such a diagram, we will arrange the vertices in two rows with the top row going from 1 to $k$, and the bottom row from $1'$ to $k'$. For example, in $\Pi_{10}$ we have the identification
\[ \{\{1,2,2',3\},\{3'\},\{1',4,4'\},\{5,5'\}\} = \begin{matrix}\begin{tikzpicture}[scale=0.7]
	\node[V, label=above:{$1$}] (1) at (0,1){};
	\node[V, label=above:{$2$}] (2) at (1,1){};
	\node[V, label=above:{$3$}] (3) at (2,1){};
	\node[V, label=above:{$4$}] (4) at (3,1){};
	\node[V, label=above:{$5$}] (5) at (4,1){};
	\node[V, label=below:{$1'$}] (1') at (0,0){};
	\node[V, label=below:{$2'$}] (2') at (1,0){};
	\node[V, label=below:{$3'$}] (3') at (2,0){};
	\node[V, label=below:{$4'$}] (4') at (3,0){};
	\node[V, label=below:{$5'$}] (5') at (4,0){};
	\draw (1) to [bend right] (2) to [bend right] (3) to (2') (4) to (4') to [bend right] (1') (5) to (5');
	\end{tikzpicture}\end{matrix}. \]
We define a product $\circ$ on $\Pi_{2k}$ as follows: Given $\alpha, \beta \in \Pi_{2k}$, we let $\alpha \circ \beta \in \Pi_{2k}$ be the set partition obtained by first stacking the diagram of $\alpha$ on top of that of $\beta$, identifying the bottom row of $\alpha$ with the top row of $\beta$, removing any connected components lying entirely within the middle row, and then reading off the connected components formed between the top row of $\alpha$ and the bottom row of $\beta$. For example consider
\[ \alpha = \begin{matrix}\begin{tikzpicture}[scale=0.7]
	\node[V, label=above:{$1$}] (1) at (0,1){};
	\node[V, label=above:{$2$}] (2) at (1,1){};
	\node[V, label=above:{$3$}] (3) at (2,1){};
	\node[V, label=above:{$4$}] (4) at (3,1){};
	\node[V, label=above:{$5$}] (5) at (4,1){};
	\node[V, label=below:{$1'$}] (1') at (0,0){};
	\node[V, label=below:{$2'$}] (2') at (1,0){};
	\node[V, label=below:{$3'$}] (3') at (2,0){};
	\node[V, label=below:{$4'$}] (4') at (3,0){};
	\node[V, label=below:{$5'$}] (5') at (4,0){};
	\draw (1) to [bend right] (2) to [bend right] (3) to (2') (4) to (4') to [bend right] (1') (5) to (5');
	\end{tikzpicture}\end{matrix} \hspace{2mm} \text{ and } \hspace{2mm}  
	\beta = \begin{matrix}\begin{tikzpicture}[scale=0.7]
	\node[V, label=above:{$1$}] (1) at (0,1){};
	\node[V, label=above:{$2$}] (2) at (1,1){};
	\node[V, label=above:{$3$}] (3) at (2,1){};
	\node[V, label=above:{$4$}] (4) at (3,1){};
	\node[V, label=above:{$5$}] (5) at (4,1){};
	\node[V, label=below:{$1'$}] (1') at (0,0){};
	\node[V, label=below:{$2'$}] (2') at (1,0){};
	\node[V, label=below:{$3'$}] (3') at (2,0){};
	\node[V, label=below:{$4'$}] (4') at (3,0){};
	\node[V, label=below:{$5'$}] (5') at (4,0){};
	\draw (1) to (1') to [bend left] (2') (2) to (4') (5) to (5') to [bend right] (3');
	\end{tikzpicture}\end{matrix} \]
in $\Pi_{10}$. Then we have 
\[ \alpha \circ \beta = \begin{matrix}\begin{tikzpicture}[scale=0.7]
	\node[V, label=above:{$1$}] (1) at (0,1){};
	\node[V, label=above:{$2$}] (2) at (1,1){};
	\node[V, label=above:{$3$}] (3) at (2,1){};
	\node[V, label=above:{$4$}] (4) at (3,1){};
	\node[V, label=above:{$5$}] (5) at (4,1){};
	\node[V] (1') at (0,0){};
	\node[V] (2') at (1,0){};
	\node[V] (3') at (2,0){};
	\node[V] (4') at (3,0){};
	\node[V] (5') at (4,0){};
	\node[V, label=below:{$1'$}] (1'') at (0,-1){};
	\node[V, label=below:{$2'$}] (2'') at (1,-1){};
	\node[V, label=below:{$3'$}] (3'') at (2,-1){};
	\node[V, label=below:{$4'$}] (4'') at (3,-1){};
	\node[V, label=below:{$5'$}] (5'') at (4,-1){};
	\draw (1) to [bend right] (2) to [bend right] (3) to (2') (4) to (4') to [bend right] (1') (5) to (5') (1') to (1'') to [bend left] (2'') (2') to (4'') (5') to (5'') to [bend right] (3''); \end{tikzpicture}\end{matrix} = \begin{matrix}\begin{tikzpicture}[scale=0.7]
	\node[V, label=above:{$1$}] (1) at (0,1){};
	\node[V, label=above:{$2$}] (2) at (1,1){};
	\node[V, label=above:{$3$}] (3) at (2,1){};
	\node[V, label=above:{$4$}] (4) at (3,1){};
	\node[V, label=above:{$5$}] (5) at (4,1){};
	\node[V, label=below:{$1'$}] (1') at (0,0){};
	\node[V, label=below:{$2'$}] (2') at (1,0){};
	\node[V, label=below:{$3'$}] (3') at (2,0){};
	\node[V, label=below:{$4'$}] (4') at (3,0){};
	\node[V, label=below:{$5'$}] (5') at (4,0){};
	\draw (1) to [bend right] (2) to [bend right] (3) to (4') (1') to [bend left] (2') to (4) (5) to (5') to [bend right] (3');
	\end{tikzpicture}\end{matrix}. \]
Clearly this product is associative and independent of the choice of graphs used to represent the set partitions. The element $1=\{\{i,i'\} \ | \ i \in [k]\} \in \Pi_{2k}$ is an identity element, and thus $(\Pi_{2k},\circ)$ is in fact a monoid. Given $\alpha,\beta \in \Pi_{2k}$, we let $m(\alpha,\beta)$ denote the number of middle components removed in evaluating $\alpha \circ \beta$. In the above example, we have $m(\alpha,\beta)=1$. Now let $z$ be a formal variable and $\mathbb{C}[z]$ the polynomial ring. The partition algebra $\mathcal{A}_{2k}(z)$ is the $\mathbb{C}[z]$-algebra whose basis as a free $\mathbb{C}[z]$-module is given by the set $\Pi_{2k}$, and whose product is given by
\[ \alpha\beta := z^{m(\alpha,\beta)}\alpha \circ \beta \]
for all $\alpha, \beta \in \Pi_{2k}$, extended linearly over $\mathbb{C}[z]$.

For $0\leq l \leq k$, we identify $\Pi_{2l}$ as a submonoid of $\Pi_{2k}$ given diagrammatically by
\[ \begin{matrix}\begin{tikzpicture}[scale=0.7]
       \node[V] (1) at (0,1){};
       \node[label=below:{$\alpha$}] (a) at (2,1){};
       \node[V] (2) at (4,1){};
       \node[V] (1') at (0,0){};
       \node[V] (2') at (4,0){};

       \draw (1) to (2) to (2') to (1') to (1) [dashed];

       \node (arrow) at (5,0.5){$\rightarrow$};

       \node[V] (3) at (6,1){};
       \node[label=below:{$\alpha$}] (a) at (8,1){};
       \node[V] (4) at (10,1){};
       \node[V] (3') at (6,0){};
       \node[V] (4') at (10,0){};

       \node[V, label=above:{$\scriptstyle l+1$}] (l+1) at (11,1){};
       \node (dots1) at (12,0.5){$\dots$};
       \node[V, label=below:{$\scriptstyle (l+1)'$}] (l+1') at (11,0){};

       \node[V, label=above:{$\scriptstyle k$}] (k) at (13,1){};
       \node[V, label=below:{$\scriptstyle k'$}] (k') at (13,0){};

       \draw (3) to (4) to (4') to (3') to (3) [dashed];
       \draw (l+1) to (l+1');
       \draw (k) to (k');
    \end{tikzpicture}\end{matrix} \in \Pi_{2k} \] 
for any $\alpha \in \Pi_{2l}$. Define $\Pi_{2k-1}$ to be the submonoid of $\Pi_{2k}$ consisting of all set partitions of $[k]\cup [k']$ where $k$ and $k'$ belong to the same block. We have a chain of monoids $\emptyset = \Pi_{0} \subset \Pi_{1} \subset \Pi_{2} \subset \dots$. For any $0\leq r \leq 2k$, we let $\mathcal{A}_{r}(z)$ denote the subalgebra of $\mathcal{A}_{2k}(z)$ generated by $\Pi_{r}$. We obtain an analogous chain
\[ \mathbb{C}[z] = \mathcal{A}_{0}(z) \subseteq \mathcal{A}_{1}(z) \subset \mathcal{A}_{2}(z) \subset \dots. \]
of $\mathbb{C}[z]$-algebras. The rank of $\mathcal{A}_{r}(z)$ over $\mathbb{C}[z]$ is $|\Pi_{r}| = B_{r}$, where $B_{r}$ is the $r^{\text{th}}$ Bell number. We can view $\mathcal{A}_{r}(z)$ as an infinte dimensional algebra over $\mathbb{C}$ with basis $\{z^{n}\alpha \ | \ n\in\mathbb{Z}_{\geq 0}, \ \alpha\in\Pi_{r}\}$. When we do so, we use the notation $\mathcal{A}_{r}$ instead. For any $\delta \in \mathbb{C}$, let $(z-\delta)$ denote the ideal of $\mathcal{A}_{r}$ generated by $z-\delta$. Then we let $\mathcal{A}_{r}(\delta):=\mathcal{A}_{r}/(z-\delta)$, which is a finite dimensional $\mathbb{C}$-algebra with dim$(\mathcal{A}_{r}(\delta))=B_{r}$.

For $i \in [k-1]$ and $j \in [k]$, we define the following elements of $\Pi_{2k}$:
\[ s_{i} = \begin{matrix}\begin{tikzpicture}[scale=0.7]
	
	\node[V, label=above:{$\scriptstyle 1$}] (1) at (0,1){};
	
	\node[draw=none] (0.5) at (0.5,0.5){$\ldots$};
	
	\node[V, label=below:{$\scriptstyle 1'$}] (1') at (0,0){};
	\node[V] (2) at (1,1){};
	\node[V, label=above:{$\scriptstyle i$}] (i) at (2,1){};
	\node[V, label=above:{$\scriptstyle i+1$}] (i+1) at (3,1){};
	\node[V] (3) at (4,1){};
	\node[V] (2') at (1,0){};
	\node[V, label=below:{$\scriptstyle i'$}] (i') at (2,0){};
	\node[V, label=below:{$\scriptstyle (i+1)'$}] (i+1') at (3,0){};
	\node[V] (3') at (4,0){};
	
	\node[draw=none] (4.5) at (4.5,0.5){$\ldots$};
	
	\node[V, label=above:{$\scriptstyle k$}] (k) at (5,1){};
	\node[V, label=below:{$\scriptstyle k'$}] (k') at (5,0){};
	
	\draw (1) to (1') (2) to (2') (i) to (i+1') (i+1) to (i') (3) to (3') (k) to (k');
	
	\end{tikzpicture}\end{matrix}, \hspace{7.5mm}	
	e_{2j-1} = \begin{matrix}\begin{tikzpicture}[scale=0.7]
	
	\node[V, label=above:{$\scriptstyle 1$}] (1) at (0,1){};
	
	\node[draw=none] (0.5) at (0.5,0.5){$\ldots$};
	
	\node[V, label=below:{$\scriptstyle 1'$}] (1') at (0,0){};
	\node[V] (2) at (1,1){};
	\node[V, label=above:{$\scriptstyle j$}] (j) at (2,1){};
	\node[V] (3) at (3,1){};
	\node[V] (2') at (1,0){};
	\node[V, label=below:{$\scriptstyle j'$}] (j') at (2,0){};
	\node[V] (3') at (3,0){};
	
	\node[draw=none] (3.5) at (3.5,0.5){$\ldots$};
	
	\node[V, label=above:{$\scriptstyle k$}] (k) at (4,1){};
	\node[V, label=below:{$\scriptstyle k'$}] (k') at (4,0){};
	
	\draw (1) to (1') (2) to (2') (3) to (3') (k) to (k');
	
	\end{tikzpicture}\end{matrix}, \]

\[ e_{2i} = \begin{matrix}\begin{tikzpicture}[scale=0.7]
	
	\node[V, label=above:{$\scriptstyle 1$}] (1) at (0,1){};
	
	\node[draw=none] (0.5) at (0.5,0.5){$\ldots$};
	
	\node[V, label=below:{$\scriptstyle 1'$}] (1') at (0,0){};
	\node[V] (2) at (1,1){};
	\node[V, label=above:{$\scriptstyle i$}] (i) at (2,1){};
	\node[V, label=above:{$\scriptstyle i+1$}] (i+1) at (3,1){};
	\node[V] (3) at (4,1){};
	\node[V] (2') at (1,0){};
	\node[V, label=below:{$\scriptstyle i'$}] (i') at (2,0){};
	\node[V, label=below:{$\scriptstyle (i+1)'$}] (i+1') at (3,0){};
	\node[V] (3') at (4,0){};
	
	\node[draw=none] (4.5) at (4.5,0.5){$\ldots$};
	
	\node[V, label=above:{$\scriptstyle k$}] (k) at (5,1){};
	\node[V, label=below:{$\scriptstyle k'$}] (k') at (5,0){};
	
	\draw (1) to (1') (2) to (2') (i) to [bend right] (i+1) (i) to (i') (i') to [bend left] (i+1') (i+1) to (i+1') (3) to (3') (k) to (k');
	
	\end{tikzpicture}\end{matrix}. \]
These elements generate the monoid $(\Pi_{2k}, \circ)$, and in turn the algebra $\mathcal{A}_{2k}(z)$. Moreover, a presentation in terms of these generators, which we display below, was given in \cite[Theorem 1.11]{HR05}, see also \emph{Theorem 36} and \emph{Section 6.3} of \cite{East11}.

\begin{thm} \label{HRPres}
The partition algebra $\mathcal{A}_{2k}(z)$ has a presentation with generating set
\[ \{s_{i}, e_{j} \ | \ i \in [k-1], j \in [2k-1]\} \]
and relations
\begin{itemize}
\item[(HR1)] (Coxeter relations)
\begin{itemize}
\item[(i)] $s_{i}^{2} = 1$, for $i\in [k-1]$.
\item[(ii)] $s_{i}s_{j} = s_{j}s_{i}$, for $j\neq i+1$.
\item[(iii)] $s_{i}s_{i+1}s_{i} = s_{i+1}s_{i}s_{i+1}$, for $i\in [k-2]$.
\end{itemize}
\item[(HR2)] (Idempotent relations)
\begin{itemize}
\item[(i)] $e_{2i-1}^{2} = ze_{2i-1}$, for $i \in [k]$.
\item[(ii)] $e_{2i}^{2} = e_{2i}$, for $i \in [k-1]$.
\item[(iii)] $s_{i}e_{2i} = e_{2i}s_{i} = e_{2i}$, for $i \in [k-1]$.
\item[(iv)] $s_{i}e_{2i-1}e_{2i+1} = e_{2i-1}e_{2i+1}s_{i} = e_{2i-1}e_{2i+1}$, for $i \in [k-1]$.
\end{itemize}
\item[(HR3)] (Commutation relations)
\begin{itemize}
\item[(i)] $e_{2i-1}e_{2j-1} = e_{2j-1}e_{2i-1}$, for $i,j \in [k]$.
\item[(ii)] $e_{2i}e_{2j} = e_{2j}e_{2i}$, for $i,j \in [k-1]$.
\item[(iii)] $e_{2i-1}e_{2j} = e_{2j}e_{2i-1}$, for $j\neq i-1, i$.
\item[(iv)] $s_{i}e_{2j-1} = e_{2j-1}s_{i}$, for $j\neq i,i+1$.
\item[(v)] $s_{i}e_{2j} = e_{2j}s_{i}$, for $j\neq i-1,i+1$.
\item[(vi)] $s_{i}e_{2i-1}s_{i} = e_{2i+1}$, for $i\in [k-1]$.
\item[(vii)] $s_{i}e_{2i-2}s_{i} = s_{i-1}e_{2i}s_{i-1}$, for $i\in [k-1]$.
\end{itemize}
\item[(HR4)] (Contraction relations)
\begin{itemize}
\item[(i)] $e_{i}e_{i+1}e_{i} = e_{i}$ for $i \in [2k-2]$. 
\item[(ii)] $e_{i+1}e_{i}e_{i+1} = e_{i+1}$, for $i \in [2k-2]$.
\end{itemize}
\end{itemize}
\begin{flushright} $\square$ \end{flushright}
\end{thm}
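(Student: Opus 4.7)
The plan is to introduce the abstract $\mathbb{C}[z]$-algebra $\tilde{\mathcal{A}}_{2k}$ defined by the listed generators and relations, construct the natural $\mathbb{C}[z]$-algebra homomorphism $\phi : \tilde{\mathcal{A}}_{2k} \to \mathcal{A}_{2k}(z)$ sending each abstract generator to the diagram of the same name, and prove that $\phi$ is an isomorphism. This splits into three standard tasks: (a) verify that $\phi$ is well-defined by checking each relation holds among the diagrams; (b) show $\phi$ is surjective by writing every diagram as a product of the $s_i, e_j$; (c) exhibit a spanning set of $\tilde{\mathcal{A}}_{2k}$ of $\mathbb{C}[z]$-rank at most $B_{2k}$, so that the equality $\operatorname{rank}_{\mathbb{C}[z]} \mathcal{A}_{2k}(z) = B_{2k}$ forces $\phi$ to be an isomorphism.

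Task (a) is a case-by-case diagrammatic check: for each relation one draws both sides, applies the composition rule $\circ$, and confirms that the resulting set partitions (and powers of $z$) agree. Only relation (HR2)(i) carries a factor of $z$, coming from the central loop of $e_{2i-1}\circ e_{2i-1}$. Task (b) is achieved by decomposing any $\alpha \in \Pi_{2k}$ into three layers $\alpha = u_\alpha \cdot \pi_\alpha \cdot v_\alpha$, where $v_\alpha$ merges and splits in the bottom half to realise the block structure there, $u_\alpha$ does the mirror in the top half, and $\pi_\alpha$ is a permutation matching propagating top blocks to propagating bottom blocks. Each merging of adjacent columns is built from $e_{2i}$'s, each singleton from $e_{2i-1}$'s, and $\pi_\alpha$ is expressed as a reduced word in the $s_i$'s. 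An induction on the number of blocks makes this explicit.

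The substance of the theorem is task (c). The strategy is to fix an ordering of the generators and use the relations as rewriting rules to reduce every word to a canonical expression of the form $U \cdot S \cdot V$, where $U$ and $V$ are ordered products of $e_j$'s encoding the block structure of the top and bottom rows, and $S$ is a reduced word in the $s_i$'s modelling a permutation of the propagating strands. The Coxeter relations (HR1) put $S$ in reduced form; the commutation and conjugation relations (HR3) allow $e_j$'s to be moved past one another and past the $s_i$'s; and the idempotent and contraction relations (HR2),(HR4) eliminate redundancies among adjacent $e_j$'s (including the $z$-contribution in (HR2)(i)). After the reduction, each canonical expression $USV$ is read off as a unique set partition of $[k]\cup[k']$, which yields exactly $B_{2k}$ spanning elements.

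The main obstacle is setting up the rewriting in (c) so that termination and confluence are established, and so that the resulting normal forms really do biject with $\Pi_{2k}$. The conjugation relations (HR3)(vi),(vii) together with (HR4) are exactly what is needed to move $e_j$'s through the $s_i$'s and to contract adjacent $e_j$'s, but verifying that no additional relation is required calls for a careful induction, typically on the $s_i$-length of the word combined with a measure of how far each $e_j$ sits from its canonical slot. Checking this combinatorial sufficiency is the heart of the proof carried out in \cite{HR05} and \cite{East11}.
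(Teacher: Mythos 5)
The paper does not prove this theorem: it is quoted from \cite[Theorem 1.11]{HR05} and \cite{East11}, and your outline (well-definedness of the map to diagrams, surjectivity via a top--permutation--bottom factorisation, and a spanning set of size $B_{2k}$ obtained by rewriting to a normal form $U\cdot S\cdot V$) is precisely the strategy carried out in those references. Your sketch is correct as far as it goes, with the acknowledged caveat that the confluence/sufficiency argument in step (c) is the substantial part and is only deferred to, not supplied.
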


The presentation above extends to one for the $\mathbb{C}$-algebra $\mathcal{A}_{2k}$ by simply adding $z$ as a central generator. The $\mathbb{C}$-algebra $\mathcal{A}_{2k}(\delta)$ has a presentation identical to above with the exception of replacing $z$ with $\delta$ in relation \emph{(HR2)(i)}. From the symmetry of the above presentation, one can deduce that we have an anti-involution $*:\mathcal{A}_{2k}(z) \rightarrow \mathcal{A}_{2k}(z)$ given by flipping a partition diagram up-side-down, and extending linearly over $\mathbb{C}[z]$. We denote the image of an element $a \in \mathcal{A}_{2k}(z)$ under this anti-involution by $a^{*}$.


\subsection{Jucys-Murphy elements and Enyang's presentation}


In this section we give the definition of the Jucys-Murphy elements of the partition algebra. These elements were originally defined diagrammatically by Halverson and Ram in \cite{HR05}. They were later given a recursive definition by Enyang in \cite{Eny12}. For this recursive definition, Enyang introduced new elements $\sigma_{i}$ which resemble the Coxeter generators $s_{i}$. We recall this recursive definition, and a new presentation of the partition algebra given in \cite{Eny12} in terms of the generators $e_{i}$ and $\sigma_{i}$. The following definition is the one given in \emph{Section 2.3} of \cite{Eny13}.

\begin{defn} \label{EnyJMDefn}
Let $L_{1}=0, L_{2}=e_{1}, \sigma_{2}=1$, and $\sigma_{3}=s_{1}$. Then for $i=1,2,\dots,$ define
\[ L_{2i+2} = s_{i}L_{2i}s_{i} - s_{i}L_{2i}e_{2i} - e_{2i}L_{2i}s_{i} + e_{2i}L_{2i}e_{2i+1}e_{2i} + \sigma_{2i+1}, \]
where, for $i=2,3,\dots,$ we have
\begin{align*}
\sigma_{2i+1} = s_{i-1}s_{i}\sigma_{2i-1}s_{i}&s_{i-1}+s_{i}e_{2i-2}L_{2i-2}s_{i}e_{2i-2}s_{i} + e_{2i-2}L_{2i-2}s_{i}e_{2i-2} \\
&- s_{i}e_{2i-2}L_{2i-2}s_{i-1}e_{2i}e_{2i-1}e_{2i-2} - e_{2i-2}e_{2i-1}e_{2i}s_{i-1}L_{2i-2}e_{2i-2}s_{i}.
\end{align*}
Also for $i=1,2,\dots,$ define
\[ L_{2i+1} = s_{i}L_{2i-1}s_{i} - L_{2i}e_{2i} - e_{2i}L_{2i} + (z-L_{2i-1})e_{2i} + \sigma_{2i}, \]
where, for $i=2,3,\dots,$ we have
\begin{align*}
\sigma_{2i} = s_{i-1}s_{i}\sigma_{2i-2}s_{i}&s_{i-1}+e_{2i-2}L_{2i-2}s_{i}e_{2i-2}s_{i} + s_{i}e_{2i-2}L_{2i-2}s_{i}e_{2i-2} \\
&-e_{2i-2}L_{2i-2}s_{i-1}e_{2i}e_{2i-1}e_{2i-2} - s_{i}e_{2i-2}e_{2i-1}e_{2i}s_{i-1}L_{2i-2}e_{2i-2}s_{i}.
\end{align*}
\end{defn}  

\begin{ex}
The first few non-trivial examples are
\[
L_{3} = -\begin{matrix}\begin{tikzpicture}[scale=0.7]
	\node[V] (1) at (0,1){};
	\node[V] (2) at (1,1){};
	\node[V] (1') at (0,0){};
	\node[V] (2') at (1,0){};
	
          \draw (1') to[bend left] (2') (2') to (2);
	\end{tikzpicture}\end{matrix} - 
\begin{matrix}\begin{tikzpicture}[scale=0.7]
	\node[V] (1) at (0,1){};
	\node[V] (2) at (1,1){};
	\node[V] (1') at (0,0){};
	\node[V] (2') at (1,0){};
	
          \draw (1) to[bend right] (2) (2) to (2');
	\end{tikzpicture}\end{matrix} +
z\begin{matrix}\begin{tikzpicture}[scale=0.7]
	\node[V] (1) at (0,1){};
	\node[V] (2) at (1,1){};
	\node[V] (1') at (0,0){};
	\node[V] (2') at (1,0){};
	
          \draw (1) to [bend right] (2) (2) to (2') (1) to (1') (1') to [bend left] (2');
	\end{tikzpicture}\end{matrix} + 
\begin{matrix}\begin{tikzpicture}[scale=0.7]
	\node[V] (1) at (0,1){};
	\node[V] (2) at (1,1){};
	\node[V] (1') at (0,0){};
	\node[V] (2') at (1,0){};
	
          \draw (1) to (1') (2) to (2');
	\end{tikzpicture}\end{matrix},
\]

\[ L_{4} = \begin{matrix}\begin{tikzpicture}[scale=0.7]
	\node[V] (1) at (0,1){};
	\node[V] (2) at (1,1){};
	\node[V] (1') at (0,0){};
	\node[V] (2') at (1,0){};
	
          \draw (1) to (1');
	\end{tikzpicture}\end{matrix} - 
\begin{matrix}\begin{tikzpicture}[scale=0.7]
	\node[V] (1) at (0,1){};
	\node[V] (2) at (1,1){};
	\node[V] (1') at (0,0){};
	\node[V] (2') at (1,0){};
	
          \draw (1) to (1') (1') to [bend left] (2');
	\end{tikzpicture}\end{matrix} -
\begin{matrix}\begin{tikzpicture}[scale=0.7]
	\node[V] (1) at (0,1){};
	\node[V] (2) at (1,1){};
	\node[V] (1') at (0,0){};
	\node[V] (2') at (1,0){};
	
          \draw (1) to [bend right] (2) (1) to (1');
	\end{tikzpicture}\end{matrix} + 
\begin{matrix}\begin{tikzpicture}[scale=0.7]
	\node[V] (1) at (0,1){};
	\node[V] (2) at (1,1){};
	\node[V] (1') at (0,0){};
	\node[V] (2') at (1,0){};
	
          \draw (1) to [bend right] (2) (1') to [bend left] (2');
	\end{tikzpicture}\end{matrix} + 
\begin{matrix}\begin{tikzpicture}[scale=0.7]
	\node[V] (1) at (0,1){};
	\node[V] (2) at (1,1){};
	\node[V] (1') at (0,0){};
	\node[V] (2') at (1,0){};
	
          \draw (1) to (2') (2) to (1');
	\end{tikzpicture}\end{matrix},
\]

\[ \sigma_{4} = \begin{matrix}\begin{tikzpicture}[scale=0.7]
	\node[V] (1) at (0,1){};
	\node[V] (2) at (1,1){};
	\node[V] (3) at (2,1){};
	\node[V] (1') at (0,0){};
	\node[V] (2') at (1,0){};
	\node[V] (3') at (2,0){};
	
          \draw (1) to (1') (2) to (2') (3) to (3');
	\end{tikzpicture}\end{matrix} + 
\begin{matrix}\begin{tikzpicture}[scale=0.7]
	\node[V] (1) at (0,1){};
	\node[V] (2) at (1,1){};
	\node[V] (3) at (2,1){};
	\node[V] (1') at (0,0){};
	\node[V] (2') at (1,0){};
	\node[V] (3') at (2,0){};
	
          \draw (1) to [bend right] (2) (2) to (2') (1') to [bend left] (3') (3') to (3);
	\end{tikzpicture}\end{matrix} + 
\begin{matrix}\begin{tikzpicture}[scale=0.7]
	\node[V] (1) at (0,1){};
	\node[V] (2) at (1,1){};
	\node[V] (3) at (2,1){};
	\node[V] (1') at (0,0){};
	\node[V] (2') at (1,0){};
	\node[V] (3') at (2,0){};
	
          \draw (1) to [bend right] (3) (3) to (3') (1') to [bend left] (2') (2') to (2);
	\end{tikzpicture}\end{matrix} - 
\begin{matrix}\begin{tikzpicture}[scale=0.7]
	\node[V] (1) at (0,1){};
	\node[V] (2) at (1,1){};
	\node[V] (3) at (2,1){};
	\node[V] (1') at (0,0){};
	\node[V] (2') at (1,0){};
	\node[V] (3') at (2,0){};
	
          \draw (1) to [bend right] (2) (2) to (2') (1) to (1') (1') to [bend left] (2') (3) to (3');
	\end{tikzpicture}\end{matrix} - 
\begin{matrix}\begin{tikzpicture}[scale=0.7]
	\node[V] (1) at (0,1){};
	\node[V] (2) at (1,1){};
	\node[V] (3) at (2,1){};
	\node[V] (1') at (0,0){};
	\node[V] (2') at (1,0){};
	\node[V] (3') at (2,0){};
	
          \draw (1) to [bend right] (3) (3) to (3') (1) to (1') (1') to [bend left] (3') (2) to (2');
	\end{tikzpicture}\end{matrix},
\]

\[ \sigma_{5} = \begin{matrix}\begin{tikzpicture}[scale=0.7]
	\node[V] (1) at (0,1){};
	\node[V] (2) at (1,1){};
	\node[V] (3) at (2,1){};
	\node[V] (1') at (0,0){};
	\node[V] (2') at (1,0){};
	\node[V] (3') at (2,0){};
	
          \draw (1) to (1') (2) to (3') (3) to (2');
	\end{tikzpicture}\end{matrix} + 
\begin{matrix}\begin{tikzpicture}[scale=0.7]
	\node[V] (1) at (0,1){};
	\node[V] (2) at (1,1){};
	\node[V] (3) at (2,1){};
	\node[V] (1') at (0,0){};
	\node[V] (2') at (1,0){};
	\node[V] (3') at (2,0){};
	
          \draw (1) to (2') (2') to (3) (1') to (2) (2) to (3');
	\end{tikzpicture}\end{matrix} + 
\begin{matrix}\begin{tikzpicture}[scale=0.7]
	\node[V] (1) at (0,1){};
	\node[V] (2) at (1,1){};
	\node[V] (3) at (2,1){};
	\node[V] (1') at (0,0){};
	\node[V] (2') at (1,0){};
	\node[V] (3') at (2,0){};
	
          \draw (1) to [bend right] (2) (2) to (3') (1') to [bend left] (2') (2') to (3);
	\end{tikzpicture}\end{matrix} - 
\begin{matrix}\begin{tikzpicture}[scale=0.7]
	\node[V] (1) at (0,1){};
	\node[V] (2) at (1,1){};
	\node[V] (3) at (2,1){};
	\node[V] (1') at (0,0){};
	\node[V] (2') at (1,0){};
	\node[V] (3') at (2,0){};
	
          \draw (1) to (1') (1') to [bend left] (2') (2') to (3) (2) to (3');
	\end{tikzpicture}\end{matrix} - 
\begin{matrix}\begin{tikzpicture}[scale=0.7]
	\node[V] (1) at (0,1){};
	\node[V] (2) at (1,1){};
	\node[V] (3) at (2,1){};
	\node[V] (1') at (0,0){};
	\node[V] (2') at (1,0){};
	\node[V] (3') at (2,0){};
	
          \draw (1) to (1') (1) to [bend right] (2) to (3') (2') to (3);
	\end{tikzpicture}\end{matrix}.
\]
\end{ex}

We will refer to the elements $L_{i}$ as the JM-elements, and the elements $\sigma_{i}$ as Enyang's generators. A simple proof by induction tells us the following:

\begin{lem} \label{MinAlgBelong}
For each $i \in \mathbb{N}$ we have that $L_{i} \in \mathcal{A}_{i}(z)$ and $\sigma_{i} \in \mathcal{A}_{i+1}(z)$.
\begin{flushright} $\square$ \end{flushright}
\end{lem}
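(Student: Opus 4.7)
The plan is a simultaneous induction on $i$, establishing $L_i \in \mathcal{A}_i(z)$ and $\sigma_i \in \mathcal{A}_{i+1}(z)$ together. The base cases $L_1 = 0$, $L_2 = e_1$, $\sigma_2 = 1$, and $\sigma_3 = s_1$ are immediate once one reads off from the diagrammatic descriptions that $s_j \in \mathcal{A}_{2j+2}(z)$, $e_{2j-1} \in \mathcal{A}_{2j}(z)$, and $e_{2j} \in \mathcal{A}_{2j+1}(z)$.

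For the inductive step, the two recursions defining $L_{2i+2}$ and $\sigma_{2i+1}$ are entirely routine: every generator appearing in them (namely $s_{i-1}, s_i, e_{2i-2}, e_{2i-1}, e_{2i}, e_{2i+1}$) lies in $\mathcal{A}_{2i+2}(z)$, and by the induction hypothesis so do $L_{2i-2}$, $L_{2i}$, $\sigma_{2i-1}$, and $\sigma_{2i+1}$; hence every summand belongs to $\mathcal{A}_{2i+2}(z)$.

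The delicate cases are $L_{2i+1}$ and $\sigma_{2i}$, which must land in the smaller algebra $\mathcal{A}_{2i+1}(z)$ even though $s_i \in \mathcal{A}_{2i+2}(z) \setminus \mathcal{A}_{2i+1}(z)$ (its diagram places $i+1$ and $(i+1)'$ in distinct blocks). The key observation is that in these formulas every occurrence of $s_i$ sits in a controlled pattern. In most terms $s_i$ conjugates an element $\alpha \in \mathcal{A}_{2i-1}(z)$ --- namely $\alpha = L_{2i-1}$ in the formula for $L_{2i+1}$, and $\alpha \in \{\sigma_{2i-2}, e_{2i-2}L_{2i-2}, e_{2i-2}\}$ in the formula for $\sigma_{2i}$ --- and since any such $\alpha$ has a trivial strand at position $i+1$ in the embedding $\mathcal{A}_{2i-1}(z) \hookrightarrow \mathcal{A}_{2k}(z)$, a direct diagrammatic calculation shows that $s_i\alpha s_i$ is just the relabelling $i \leftrightarrow i+1$ of $\alpha$, so it still has $i+1$ and $(i+1)'$ in one block and thus lies in $\mathcal{A}_{2i+1}(z)$. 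The one remaining summand of $\sigma_{2i}$, in which the two $s_i$'s flank a product containing $e_{2i}$, is handled by a short direct diagrammatic computation, using if needed the relations from \Cref{HRPres} such as $e_{2i}s_i = s_ie_{2i} = e_{2i}$ to simplify before concluding. This bookkeeping for the $s_i$-containing terms is the main (minor) obstacle; everything else is immediate from the induction hypothesis.
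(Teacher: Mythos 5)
Your argument is correct and is exactly the induction the paper has in mind (the paper omits the proof entirely, asserting only that it follows from ``a simple proof by induction''). You have correctly isolated the only non-routine point --- that $L_{2i+1}$ and $\sigma_{2i}$ must land in the odd-indexed algebra $\mathcal{A}_{2i+1}(z)$ even though $s_{i}\notin\mathcal{A}_{2i+1}(z)$ --- and both your conjugation observation and the one remaining summand of $\sigma_{2i}$ do check out diagrammatically: in that last term the top vertex $i+1$ is joined to the bottom vertex $(i+1)'$ by a path passing through the first $s_{i}$, then $e_{2i-2}$, down the identity strands of $e_{2i-1}$ and $e_{2i}$ at position $i-1$, across $s_{i-1}$ to position $i$, and through $L_{2i-2}$, $e_{2i-2}$ and the final $s_{i}$ (so a direct trace suffices; the relation $s_{i}e_{2i}=e_{2i}$ is not actually needed there since that $e_{2i}$ is not adjacent to either $s_{i}$).
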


It was shown in \cite{Eny12} that these elements are invariant under the automorphism $*$. They also commute with smaller partition algebras with respect to the chain described in the previous section: For any $i\leq r$, let $Z_{i}(\mathcal{A}_{r}(z)):=\langle a\in\mathcal{A}_{r}(z) \ | \ ab=ba, \ \forall \ b \in \mathcal{A}_{i}(z)\rangle$, then it was shown in \cite[Theorem 3.8]{Eny12} that
\begin{equation} \label{ComRel}
L_{i}, \sigma_{i+1} \in Z_{i-1}(\mathcal{A}_{i}(z)).
\end{equation}
In particular this shows that the JM-elements pairwise commute. We now give the new presentation of $\mathcal{A}_{2k}(z)$ established in \cite{Eny12}. This presentation is given in terms of the generators $e_{i}$ and Enyang's generator's $\sigma_{i}$. Remarkably, although the definition and diagrammatic description of the $\sigma_{i}$ is rather complicated, the defining relations in the following presentation are very simple. This is less surprising when one considers how these elements act on tensor space. This will be discussed in \emph{Section 2.4}.

\begin{thm}\label{EnyPres} \emph{{\cite[Theorem 4.1]{Eny12}}}
The partition algebra $\mathcal{A}_{2k}(z)$ has a presentation with generating set
\[ \{\sigma_{i}, e_{j} \ | \ 3\leq i \leq 2k-1, \ j \in [2k-1]\} \]
and relations:
\begin{itemize}
\item[(E1)] (Involution)
\begin{itemize}
\item[(i)] $\sigma_{2i+2}^{2}=1$ for $i\in[k-2]$.
\item[(ii)] $\sigma_{2i+1}^{2}=1$ for $i\in[k-1]$.
\end{itemize}
\item[(E2)] (Braid relations)
\begin{itemize}
\item[(i)] $\sigma_{2i+1}\sigma_{2j} = \sigma_{2j}\sigma_{2i+1}$ for $j \neq i+1$.
\item[(ii)] $\sigma_{2i+1}\sigma_{2j+1} = \sigma_{2j+1}\sigma_{2i+1}$ for $j \neq i\pm1$.
\item[(iii)] $\sigma_{2i}\sigma_{2j} = \sigma_{2j}\sigma_{2i}$ for $j \neq i\pm1$.
\item[(iv)] $s_{i}s_{i+1}s_{i} = s_{i+1}s_{i}s_{i+1}$, for $i\in[k-2]$, where 
$s_{j} =
\begin{cases}
   \sigma_{3}, & j=1 \\
   \sigma_{2j}\sigma_{2j+1}, & j>1
\end{cases}$
\end{itemize}
\item[(E3)] (Idempotent relations)
\begin{itemize}
\item[(i)] $e_{2i-1}^{2} = ze_{2i-1}$ for $i \in [k]$. 
\item[(ii)] $e_{2i}^{2} = e_{2i}$ for $2\geq i \leq k-1$.
\item[(iii)] $\sigma_{2i+1}e_{2i} = e_{2i}\sigma_{2i+1} = e_{2i}$ for $i \in [k-1]$.
\item[(iv)] $\sigma_{2i}e_{2i} = e_{2i}\sigma_{2i} = e_{2i}$ for $2\leq i \leq k-1$.
\item[(v)] $\sigma_{2i}e_{2i-1}e_{2i+1} = \sigma_{2i+1}e_{2i-1}e_{2i+1}$ for $2\leq i \leq k-1$.
\item[(vi)] $e_{2i+1}e_{2i-1}\sigma_{2i} = e_{2i+1}e_{2i-1}\sigma_{2i+1}$ for $2\leq i \leq k-1$.
\end{itemize}
\item[(E4)] (Commutation relations)
\begin{itemize}
\item[(i)] $e_{i}e_{j} = e_{j}e_{i}$, if $|i-j| \geq 2$. 
\item[(ii)] $\sigma_{2i-1}e_{2j-1} = e_{2j-1}\sigma_{2i-1}$, if $j \neq i-1,i$.
\item[(iii)] $\sigma_{2i-1}e_{2j} = e_{2j}\sigma_{2i-1}$, if $j \neq i$.
\item[(iv)] $\sigma_{2i}e_{2j-1} = e_{2j-1}\sigma_{2i}$, if $j \neq i,i+1$.
\item[(v)] $\sigma_{2i}e_{2j} = e_{2j}\sigma_{2i}$, if $j \neq i-1$.
\end{itemize}
\item[(E5)] (Contractions) 
\begin{itemize}
\item[(i)] $e_{i}e_{i+1}e_{i} = e_{i}$ and $e_{i+1}e_{i}e_{i+1} = e_{i+1}$, for $i \in [2k-2]$.
\item[(ii)] $\sigma_{2i}e_{2i-1}\sigma_{2i} = \sigma_{2i+1}e_{2i+1}\sigma_{2i+1}$, for $2\leq i \leq k-1$.
\item[(iii)] $\sigma_{2i}e_{2i-2}\sigma_{2i} = \sigma_{2i-1}e_{2i}\sigma_{2i-1}$, for $2\leq i \leq k-1$.
\end{itemize} 
\end{itemize}
\begin{flushright} $\square$ \end{flushright}
\end{thm}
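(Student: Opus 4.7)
The plan is to establish the presentation by constructing mutually inverse surjective homomorphisms between $\mathcal{A}_{2k}(z)$ and the abstract $\mathbb{C}[z]$-algebra $\tilde{\mathcal{A}}$ presented by generators $\tilde\sigma_i, \tilde e_j$ modulo (E1)--(E5).

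First I would verify that the elements $\sigma_i, e_j \in \mathcal{A}_{2k}(z)$ from \cref{EnyJMDefn} satisfy (E1)--(E5), producing a homomorphism $\psi\colon \tilde{\mathcal{A}} \twoheadrightarrow \mathcal{A}_{2k}(z)$ that is surjective because, by \cref{HRPres}, $\mathcal{A}_{2k}(z)$ is generated by the $e_j$'s together with the $s_i$'s, and $s_1 = \sigma_3$, $s_i = \sigma_{2i}\sigma_{2i+1}$ for $i > 1$. Many of the target relations come essentially for free: the commutation relations (E4) all follow from \eqref{ComRel}, which places $\sigma_i$ in the centraliser $Z_{i-1}(\mathcal{A}_i(z))$; the invariance of the $\sigma_i$ under the anti-involution $*$ (shown in \cite{Eny12}) reduces the two-sided identities (E3)(iii)--(vi) to one-sided checks; and the idempotent relations (E3)(i)--(ii) together with (E5)(i) are part of the HR presentation. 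The genuine new content lies in the involutions (E1), the braid relations (E2), and the contractions (E5)(ii)--(iii). I would verify these either diagrammatically by unpacking \cref{EnyJMDefn}, or more efficiently by checking the identities inside the action on tensor space (which is faithful for generic $z$) and then extending to arbitrary $z$ by a polynomiality argument.

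Next, in $\tilde{\mathcal{A}}$ I would set $\tilde s_1 := \tilde\sigma_3$ and $\tilde s_i := \tilde\sigma_{2i}\tilde\sigma_{2i+1}$ for $i > 1$, matching (E2)(iv), and verify that $\{\tilde s_i, \tilde e_j\}$ satisfies the HR relations of \cref{HRPres}. The Coxeter relations (HR1) come from (E1) and (E2); the idempotent and commutation relations (HR2)--(HR3) follow by combining (E3)--(E4) with the definition of $\tilde s_i$; and (HR4) is exactly (E5)(i). By \cref{HRPres} this produces a homomorphism $\phi\colon \mathcal{A}_{2k}(z) \to \tilde{\mathcal{A}}$.

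Finally I would check that $\phi$ and $\psi$ are mutually inverse. The composition $\psi\circ\phi$ is the identity on the HR generators $s_i, e_j$, hence everywhere. For $\phi\circ\psi = \mathrm{id}$, the non-trivial content is $\phi(\sigma_i) = \tilde\sigma_i$, which I would prove by induction on $i$, starting from $\sigma_2 = 1$ and $\sigma_3 = s_1$: one interprets the recursion of \cref{EnyJMDefn} inside $\tilde{\mathcal{A}}$, substituting $\tilde s_j$ for $s_j$ and the inductively defined $\tilde L_i$ for $L_i$, and checks that the resulting combination collapses to $\tilde\sigma_i$ using (E1)--(E5). I expect this to be the main obstacle: the recursive formulas for $\sigma_{2i+1}$ and $\sigma_{2i}$ in \cref{EnyJMDefn} involve long products such as $e_{2i-2}L_{2i-2}s_{i-1}e_{2i}e_{2i-1}e_{2i-2}$, and showing that these collapse to a single generator in $\tilde{\mathcal{A}}$ using only (E1)--(E5) is a substantial calculational task -- indeed this is the calculational core of Enyang's argument in \cite{Eny12}.
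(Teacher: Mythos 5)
The paper does not actually prove this theorem: it is quoted from \cite[Theorem 4.1]{Eny12} and the tombstone in the statement signals that the proof is delegated to that reference. So there is no in-paper argument to compare yours against; I can only assess your proposal on its own terms. Its logical skeleton is sound and is the standard one for a presentation theorem: a surjection $\psi\colon\tilde{\mathcal{A}}\twoheadrightarrow\mathcal{A}_{2k}(z)$ once (E1)--(E5) are verified for the concrete $\sigma_i,e_j$ (surjective because $s_1=\sigma_3$ and $s_i=\sigma_{2i}\sigma_{2i+1}$ recover the Halverson--Ram generators), a map $\phi$ in the other direction well defined via \Cref{HRPres}, and $\psi\circ\phi=\mathrm{id}$ checked on $s_i,e_j$. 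The tensor-space-plus-polynomiality trick you propose for the harder concrete verifications is legitimate and is exactly the technique this paper uses elsewhere (\Cref{TrivKer}, \Cref{BraidLikeRels}).

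Two caveats. First, a minor imprecision: the commutation relations (E4) do not \emph{all} follow from \eqref{ComRel}. The centraliser property $\sigma_{i}\in Z_{i-1}(\mathcal{A}_{i}(z))$ only covers the cases where the idempotent lies in the smaller subalgebra of the chain (e.g.\ $e_{2j-1}$ with $j\leq i-2$ against $\sigma_{2i-1}$); the cases where the index of $e$ exceeds that of $\sigma$ require the separate (easy, but distinct) observation that the two elements have disjoint diagrammatic support. Second, and more seriously, essentially all of the content of the theorem is concentrated in the one step you defer: showing that the recursion of \Cref{EnyJMDefn}, transplanted into the presented algebra $\tilde{\mathcal{A}}$, collapses to the generator $\tilde\sigma_i$ using only (E1)--(E5). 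You correctly identify this as the calculational core, but without carrying it out (or replacing it by a spanning/dimension-count argument bounding $\dim\tilde{\mathcal{A}}$ by $B_{2k}$) the proposal remains an outline of Enyang's proof rather than a proof.
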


Note we only worked with the elements $\sigma_{i}$ for $i\geq3$, since $\sigma_{2}=1$. The elements $s_{j}$ in the above presentation are precisely the Coxeter generators. From the involution relations we have that $s_{i}\sigma_{2i}=\sigma_{2i}s_{i}=\sigma_{2i+1}$. From \emph{\Cref{ComRel}} one can deduce that $L_{i}$ and $\sigma_{j}$ commute whenever $j\neq i-1,i,i+1$. We end this section by giving relations which tell us how the JM-elements interact with Enyang's generators when they do not commute. We use results established in \cite{Eny12}, although we have adopted the notation of \cite{Eny13}.

\begin{rmk}
The change of notation between \cite{Eny12} and \cite{Eny13} is given respectively by $p_{i} \sim e_{2i-1}$, $p_{i+\frac{1}{2}} \sim e_{2i}$, $\sigma_{i} \sim \sigma_{2i-1}$, $\sigma_{i+\frac{1}{2}} \sim \sigma_{2i}$, $L_{i} \sim L_{2i}$, and $L_{i+\frac{1}{2}} \sim L_{2i+1}$.
\end{rmk}

\begin{prop} \label{SkeinRels}
The following relations hold:
\begin{itemize}
\item[(i)] $L_{2i+1}=\sigma_{2i}L_{2i-1}\sigma_{2i}-e_{2i}e_{2i-1}\sigma_{2i}-\sigma_{2i}e_{2i-1}e_{2i}+e_{2i}e_{2i+1}\sigma_{2i}e_{2i+1}e_{2i}+\sigma_{2i}$.
\item[(ii)] $L_{2i+2}=\sigma_{2i+1}L_{2i}\sigma_{2i+1}-e_{2i}e_{2i+1}-e_{2i+1}e_{2i}+e_{2i}e_{2i+1}\sigma_{2i+1}e_{2i+1}e_{2i}+\sigma_{2i+1}$.
\item[(iii)] $L_{2i}=\sigma_{2i}L_{2i}\sigma_{2i}+e_{2i}e_{2i-1}\sigma_{2i}+\sigma_{2i}e_{2i-1}e_{2i}-e_{2i}e_{2i+1}-e_{2i+1}e_{2i}$.
\item[(iv)] $L_{2i+1}=\sigma_{2i+1}L_{2i+1}\sigma_{2i+1}-e_{2i}e_{2i+1}\sigma_{2i+1}-\sigma_{2i+1}e_{2i+1}e_{2i}+e_{2i}e_{2i+1}+e_{2i+1}e_{2i}$.
\end{itemize}
\end{prop}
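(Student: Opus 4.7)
The plan is to derive each of the four identities by combining Enyang's recursive formulas from \Cref{EnyJMDefn} with the structural facts already established: namely that $s_i = \sigma_{2i}\sigma_{2i+1}$ (from \emph{(E1)(i)} and \emph{(E2)(iv)}, using the convention $\sigma_2 = 1$); the absorbing relations $\sigma_{2i}e_{2i} = e_{2i}\sigma_{2i} = e_{2i}$ and $\sigma_{2i+1}e_{2i} = e_{2i}\sigma_{2i+1} = e_{2i}$ from \emph{(E3)(iii)--(iv)}; and the commutation rule $L_i \sigma_j = \sigma_j L_i$ whenever $j \notin \{i-1, i, i+1\}$ coming from \eqref{ComRel}.

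For part (i), I would substitute $s_i = \sigma_{2i}\sigma_{2i+1}$ into Enyang's formula
\[ L_{2i+1} = s_i L_{2i-1} s_i - L_{2i} e_{2i} - e_{2i} L_{2i} + (z - L_{2i-1}) e_{2i} + \sigma_{2i}. \]
Since $\sigma_{2i+1}$ commutes with $L_{2i-1} \in \mathcal{A}_{2i-1}(z)$ and $\sigma_{2i+1}^2 = 1$, the conjugation collapses to $s_i L_{2i-1} s_i = \sigma_{2i} L_{2i-1} \sigma_{2i}$, matching the leading term on the right-hand side of (i). It then remains to verify
\[ -L_{2i} e_{2i} - e_{2i} L_{2i} + (z - L_{2i-1}) e_{2i} = -e_{2i}e_{2i-1}\sigma_{2i} - \sigma_{2i}e_{2i-1}e_{2i} + e_{2i}e_{2i+1}\sigma_{2i}e_{2i+1}e_{2i}, \]
which I would attack by expanding $L_{2i}$ using its own recursive definition and repeatedly applying the relations in \Cref{EnyPres} to contract products such as $e_{2i}e_{2i-1}e_{2i} = e_{2i}$. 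The analogous argument for (ii) starts from Enyang's formula for $L_{2i+2}$; the subtlety here is that, after substituting $s_i = \sigma_{2i}\sigma_{2i+1}$, the outer $\sigma_{2i}$'s in $\sigma_{2i}\sigma_{2i+1} L_{2i} \sigma_{2i+1}\sigma_{2i}$ must be pulled inward using part (iii). I would therefore prove (ii) and (iii) simultaneously, or establish (iii) first.

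For parts (iii) and (iv), my plan is to proceed by induction on $i$: expand $L_{2i}$ (resp.\ $L_{2i+1}$) using Enyang's recursion and examine directly how $\sigma_{2i}$ (resp.\ $\sigma_{2i+1}$) interacts with each summand. Because $\sigma_{2i}$ commutes with everything in $\mathcal{A}_{2i-2}(z)$ and in particular with $L_{2i-2}$, most summands pass through unchanged; only those involving $e_{2i-1}, e_{2i}, e_{2i+1}$ produce corrections, which must then be shown to collapse to the claimed expressions using \emph{(E5)(ii)--(iii)}. An alternative route is to derive (iii), (iv) from (i), (ii) by multiplying both sides by $\sigma_{2i}$ or $\sigma_{2i+1}$ and applying $\sigma^2 = 1$; this reduces (iii), say, to a comparison between the two expressions for $L_{2i+1}$ coming from (i) and (iv), producing yet another identity among $\sigma$--$e$ products that must be verified.

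The main obstacle will be the bookkeeping in the contraction steps: the correction terms involve nested products such as $e_{2i}e_{2i+1}\sigma_{2i}e_{2i+1}e_{2i}$, and matching them against what falls out of Enyang's recursion requires several applications of \emph{(E5)} (in particular $e_{2i}e_{2i+1}e_{2i} = e_{2i}$ together with the $\sigma$-conjugation identities $\sigma_{2i}e_{2i-1}\sigma_{2i} = \sigma_{2i+1}e_{2i+1}\sigma_{2i+1}$) and careful tracking of cancellations. A useful sanity check is that all four relations are invariant under the anti-involution $*$ from the end of Section 2.1: since $L_i^* = L_i$ and (with some care) $\sigma_j^* = \sigma_j$, verifying one half of each identity automatically gives the other half, roughly halving the algebraic labor.
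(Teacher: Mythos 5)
Your starting point for (i) matches the paper's: substitute $s_{i}=\sigma_{2i}\sigma_{2i+1}$ into Enyang's recursion and collapse $s_{i}L_{2i-1}s_{i}$ to $\sigma_{2i}L_{2i-1}\sigma_{2i}$. But from there the proposal has genuine gaps. The remaining three terms are \emph{not} handled in the paper by re-expanding $L_{2i}$ through its own recursion; they are handled by invoking two specific identities from Enyang's paper, namely $L_{2i}e_{2i}=\sigma_{2i}e_{2i-1}e_{2i}$ (\cite[Proposition 3.2 (3)]{Eny12}, together with its image under $*$) and $e_{2i+1}\sigma_{2i}e_{2i+1}=(z-L_{2i-1})e_{2i+1}$ (\cite[Proposition 4.3 (2)]{Eny12}), the latter combined with $e_{2i}=e_{2i}e_{2i+1}e_{2i}$ and the fact that $L_{2i-1}$ commutes with $e_{2i}$. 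Your plan to instead unwind $L_{2i}$ recursively reintroduces $L_{2i-2}$, $e_{2i-2}$ and $\sigma_{2i-1}$, and nothing in the proposal indicates how those terms cancel; in effect you would be re-proving Enyang's identities from scratch, which is the entire content of the hard part of (i) and is not carried out. For (ii) your route creates a circularity you flag but do not resolve: you want to pull the outer $\sigma_{2i}$'s inward ``using part (iii)'', while your only concrete derivation of (iii) is ``from (i), (ii) by multiplying by $\sigma$'s''. The paper avoids this entirely by conjugating the whole recursion for $L_{2i+2}$ by $\sigma_{2i}$ (legitimate since $\sigma_{2i}$ commutes with $L_{2i+2}$ and $\sigma_{2i}^{2}=1$), which turns $s_{i}(\cdot)s_{i}$ into $\sigma_{2i+1}(\cdot)\sigma_{2i+1}$ with no appeal to (iii).

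The most important missing idea is for (iii) and (iv). Since $\sigma_{2i}$ does not commute with $L_{2i}$, no amount of multiplying (i) and (ii) by $\sigma$'s produces $\sigma_{2i}L_{2i}\sigma_{2i}$, and your proposed induction on $i$ is not developed far enough to judge. The paper's key input is the centrality of $L_{1}+\dots+L_{r}$ in $\mathcal{A}_{r}(z)$ (\cite[Proposition 3.10]{Eny12}), which together with \eqref{ComRel} gives $\sigma_{2i}(L_{2i-1}+L_{2i}+L_{2i+1})\sigma_{2i}=L_{2i-1}+L_{2i}+L_{2i+1}$; rearranging isolates $\sigma_{2i}L_{2i}\sigma_{2i}$ as a sum of two bracketed terms, each of which is controlled by (i) (and, for (iv), by (ii)). Without this central-element trick, or some substitute for it, parts (iii) and (iv) of your proposal do not go through.
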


\begin{proof}
\emph{(i)}: By definition,
\begin{equation}\label{Eq1}
L_{2i+1} = s_{i}L_{2i-1}s_{i}-L_{2i}e_{2i}-e_{2i}L_{2i}+(z-L_{2i-1})e_{2i}+\sigma_{2i}.
\end{equation}
We examining the right hand side term by term. For the first term we have
\[
s_{i}L_{2i-1}s_{i} = \sigma_{2i}\sigma_{2i+1}L_{2i-1}\sigma_{2i+1}\sigma_{2i}
= \sigma_{2i}\sigma_{2i+1}^{2}L_{2i-1}\sigma_{2i} 
= \sigma_{2i}L_{2i-1}\sigma_{2i}.
\]
For the second term multiplying \cite[Proposition 3.2 (3)]{Eny12} on the left by $s_{i}$ to get $\sigma_{2i}e_{2i-1}e_{2i}=L_{2i}e_{2i}$. Acting by the automorphism $*$ yields $e_{2i}e_{2i-1}\sigma_{2i}=e_{2i}L_{2i}$ for the third term. Lastly for the forth term
\begin{align*}
(z-L_{2i-1})e_{2i} &= (z-L_{2i-1})e_{2i}e_{2i+1}e_{2i} \\
&= e_{2i}(z-L_{2i-1})e_{2i+1}e_{2i} \\
&= e_{2i}e_{2i+1}\sigma_{2i}e_{2i+1}e_{2i}
\end{align*}
where the last equality follows by \cite[Proposition 4.3 (2)]{Eny12}. Substituting these terms back into \emph{\Cref{Eq1}} yields \emph{(i)}.

\vspace{2mm}
\noindent
\emph{(ii)}: By definition,
\begin{equation}\label{Eq2}
L_{2i+2} = s_{i}L_{2i}s_{i} - s_{i}L_{2i}e_{2i} - e_{2i}L_{2i}s_{i} + e_{2i}L_{2i}e_{2i+1}e_{2i} + \sigma_{2i+1}.
\end{equation}
Multiplying this equation on the left and right by $\sigma_{2i}$ gives
\begin{align*}
L_{2i+2} &= \sigma_{2i+1}L_{2i}\sigma_{2i+1}-\sigma_{2i+1}L_{2i}e_{2i}-e_{2i}L_{2i}\sigma_{2i+1}+e_{2i}L_{2i}e_{2i+1}e_{2i}+\sigma_{2i+1} \\
&= \sigma_{2i+1}L_{2i}\sigma_{2i+1}-\sigma_{2i+1}^{2}e_{2i+1}e_{2i}-e_{2i}e_{2i+1}\sigma_{2i+1}^{2}+e_{2i}e_{2i+1}\sigma_{2i+1}e_{2i+1}e_{2i}+\sigma_{2i+1} \\
&=  \sigma_{2i+1}L_{2i}\sigma_{2i+1}-e_{2i+1}e_{2i}-e_{2i}e_{2i+1}+e_{2i}e_{2i+1}\sigma_{2i+1}e_{2i+1}e_{2i}+\sigma_{2i+1}
\end{align*}
which gives \emph{(ii)}, where the second equality follows by relation $\sigma_{2i+1}e_{2i+1}e_{2i}=L_{2i}e_{2i}$ and its dual $e_{2i}e_{2i+1}\sigma_{2i+1}=e_{2i}L_{2i}$, which follow from \cite[Proposition 3.2 (3)]{Eny12}.

\vspace{2mm}
\noindent
\emph{(iii)}: It was shown in \cite[Proposition 3.10]{Eny12} that the element $L_{1}+L_{2}+\dots+L_{r}$ belongs to the center of $\mathcal{A}_{r}(z)$. From this, and the fact that $L_{i}$ and $\sigma_{j}$ commute whenever $j\neq i-1,i,i+1$, one may deduce that
\[ \sigma_{2i}(L_{2i-1}+L_{2i}+L_{2i+1})\sigma_{2i} = L_{2i-1}+L_{2i}+L_{2i+1}. \]
Rearranging gives
\begin{equation}\label{Eq3}
L_{2i} = \sigma_{2i}L_{2i}\sigma_{2i} + (\sigma_{2i}L_{2i-1}\sigma_{2i} - L_{2i+1}) + (\sigma_{2i}L_{2i+1}\sigma_{2i} - L_{2i-1}).
\end{equation}
We examine the bracketed terms in \emph{\Cref{Eq3}}. Rearranging \emph{(i)} gives the first bracketed term as
\[ \sigma_{2i}L_{2i-1}\sigma_{2i} - L_{2i+1} = e_{2i}e_{2i-1}\sigma_{2i}+\sigma_{2i}e_{2i-1}e_{2i}-e_{2i}e_{2i+1}\sigma_{2i}e_{2i+1}e_{2i}-\sigma_{2i}. \]
Multiplying this on the left and right by $\sigma_{2i}$, and then rearranging gives the second bracketed term
\[ \sigma_{2i}L_{2i+1}\sigma_{2i} - L_{2i-1} = -e_{2i}e_{2i-1}-e_{2i-1}e_{2i}+e_{2i}e_{2i+1}\sigma_{2i}e_{2i+1}e_{2i}+\sigma_{2i}. \]
Substituting these back into equation \eqref{Eq3} yields \emph{(iii)}.

\vspace{2mm}
\noindent
\emph{(iv)}: Analogously to the previous case, one can deduce that
\[ \sigma_{2i+1}(L_{2i}+L_{2i+1}+L_{2i+2})\sigma_{2i+1} = L_{2i}+L_{2i+1}+L_{2i+2}. \]
Rearranging gives
\begin{equation}\label{Eq4}
L_{2i+1} = \sigma_{2i+1}L_{2i+1}\sigma_{2i+1} + (\sigma_{2i+1}L_{2i}\sigma_{2i+1} - L_{2i+2}) + (\sigma_{2i+1}L_{2i+2}\sigma_{2i+1} - L_{2i}).
\end{equation}
We examine the bracketed terms in \emph{\Cref{Eq4}}. Rearranging \emph{(2)(ii)} gives the first bracketed term as
\[ \sigma_{2i+1}L_{2i}\sigma_{2i+1} - L_{2i+2} = e_{2i}e_{2i+1}+e_{2i+1}e_{2i}-e_{2i}e_{2i+1}\sigma_{2i+1}e_{2i+1}e_{2i}-\sigma_{2i+1}. \]
Multiplying this on the left and right by $\sigma_{2i+1}$, and then rearranging gives the second bracketed term
\[ \sigma_{2i+1}L_{2i+2}\sigma_{2i+1} - L_{2i} = -e_{2i}e_{2i+1}\sigma_{2i+1}-\sigma_{2i+1}e_{2i+1}e_{2i}+e_{2i}e_{2i+1}\sigma_{2i+1}e_{2i+1}e_{2i}+\sigma_{2i+1}. \]
Substituting these back into equation \eqref{Eq4} yields \emph{(iv)}.

\end{proof}


\subsection{Normalisation}


As mentioned in the introduction, we seek to \lq affinize' the partition algebra by replacing the Jucys-Murphy elements with commuting variables, and asking them to retain various relations with the generators. In preparation for this construction, this section collects all the relations we seek to retain in one place. However, instead of working with the JM-elements and Enyang's generators, it turns out to be easier to work with the following elements: For each $i \in \mathbb{N}$ we set
\[ t_{2i} := \sigma_{2i} - e_{2i}, \hspace{4mm} t_{2i+1} := \sigma_{2i+1} - e_{2i}. \]
For each $i \in \mathbb{N}$ we set
\[ X_{i} := \begin{cases}
                 z-1-L_{i}, & \text{ if $i$ odd} \\
                 L_{i}-1, & \text{ if $i$ even} 
               \end{cases} \]
We also call the elements $X_{i}$ the JM-elements and the elements $t_{i}$ Enyang's generators. By definitions we have that $t_{i}\in\mathcal{A}_{i+1}(z)$, $X_{i}\in\mathcal{A}_{i}(z)$, and that $t_{i}^{*}=t_{i}$ and $X_{i}^{*}=X_{i}$. One can also deduce that $s_{i}t_{2i}=t_{2i}s_{i}=t_{2i+1}$. We briefly collect some simple relations to ease the proof of the proceeding proposition.  

\begin{lem} \label{NewEnyMixRels}
The following relations hold:
\begin{itemize}
\item[(i)] $e_{2i+1}t_{2i}e_{2i+1} = X_{2i-1}e_{2i+1}$
\item[(ii)] $t_{2i}e_{2i-1}e_{2i}=X_{2i}e_{2i}$, and $e_{2i}e_{2i-1}t_{2i}=e_{2i}X_{2i}$
\item[(iii)] $t_{2i+1}e_{2i+1}e_{2i}=X_{2i}e_{2i}$, and $e_{2i}e_{2i+1}t_{2i+1}=e_{2i}X_{2i}$
\end{itemize}
\end{lem}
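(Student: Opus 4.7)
The plan is to unfold the definitions $t_{2i}=\sigma_{2i}-e_{2i}$, $t_{2i+1}=\sigma_{2i+1}-e_{2i}$, $X_{2i}=L_{2i}-1$ and $X_{2i-1}=z-1-L_{2i-1}$, and then recycle identities already produced during the proof of \Cref{SkeinRels}, together with the anti-involution $*$ (which fixes each of $t_j$, $e_j$, $X_j$) to deduce the dual statements in (ii) and (iii). The two ingredients that will do most of the work are the relations $\sigma_{2i}e_{2i-1}e_{2i}=L_{2i}e_{2i}$ and $\sigma_{2i+1}e_{2i+1}e_{2i}=L_{2i}e_{2i}$, explicitly noted during the proof of \Cref{SkeinRels}(i)(ii), together with the contractions $e_{2i}e_{2i-1}e_{2i}=e_{2i}$ and $e_{2i+1}e_{2i}e_{2i+1}=e_{2i+1}$ from (E5)(i).

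Parts (ii) and (iii) are immediate substitutions. For (ii),
\[ t_{2i}e_{2i-1}e_{2i}=\sigma_{2i}e_{2i-1}e_{2i}-e_{2i}e_{2i-1}e_{2i}=L_{2i}e_{2i}-e_{2i}=(L_{2i}-1)e_{2i}=X_{2i}e_{2i}, \]
and the companion $e_{2i}e_{2i-1}t_{2i}=e_{2i}X_{2i}$ is its image under $*$. Part (iii) is obtained identically from $\sigma_{2i+1}e_{2i+1}e_{2i}=L_{2i}e_{2i}$ and $e_{2i}e_{2i+1}e_{2i}=e_{2i}$, with the dual again following by $*$.

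For (i), expanding $t_{2i}=\sigma_{2i}-e_{2i}$ and applying (E5)(i) reduces the problem to showing that $e_{2i+1}\sigma_{2i}e_{2i+1}=(z-L_{2i-1})e_{2i+1}$. Inserting $e_{2i+1}=e_{2i+1}e_{2i}e_{2i+1}$ on both sides produces a central factor $e_{2i}e_{2i+1}\sigma_{2i}e_{2i+1}e_{2i}$, which is precisely the expression computed as $(z-L_{2i-1})e_{2i}$ during the treatment of the fourth term in the proof of \Cref{SkeinRels}(i). Commuting $L_{2i-1}$ past $e_{2i+1}$ and applying (E5)(i) once more collapses the expression to $(z-L_{2i-1})e_{2i+1}$, whence $e_{2i+1}t_{2i}e_{2i+1}=(z-1-L_{2i-1})e_{2i+1}=X_{2i-1}e_{2i+1}$. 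The one step that is not a pure substitution is the commutation $[L_{2i-1},e_{2i+1}]=0$, and this is the main obstacle: the centrality \eqref{ComRel} only gives $L_{2i-1}\in Z_{2i-2}(\mathcal{A}_{2i-1}(z))$, which a priori does not include $e_{2i+1}$. I would justify it diagrammatically, using that $L_{2i-1}\in\mathcal{A}_{2i-1}(z)$ is a $\mathbb{C}[z]$-linear combination of partition diagrams supported on $\{1,\dots,i,1',\dots,i'\}$ with identity strands elsewhere, while $e_{2i+1}$ acts only on $\{i+1,(i+1)'\}$ with identity strands elsewhere, so stacking in either order yields the same set partition in $\Pi_{2k}$.
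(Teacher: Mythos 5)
Your proposal is correct and follows the same skeleton as the paper's proof (unfold $t_j=\sigma_j-e_{j'}$ and $X_j$, apply the contractions (E5), invoke the two identities $\sigma_{2i}e_{2i-1}e_{2i}=L_{2i}e_{2i}$ and $e_{2i+1}\sigma_{2i}e_{2i+1}=(z-L_{2i-1})e_{2i+1}$ from Enyang, and use $*$ for the dual statements). Part (ii) is identical to the paper's argument. For part (iii) you substitute directly using $\sigma_{2i+1}e_{2i+1}e_{2i}=L_{2i}e_{2i}$, whereas the paper reduces to (ii) via $t_{2i+1}=t_{2i}s_i$; both are fine and rest on the same cited fact. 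The only real divergence is part (i): the paper simply quotes Enyang's Proposition 4.3(2) for $e_{2i+1}\sigma_{2i}e_{2i+1}=(z-L_{2i-1})e_{2i+1}$, while you reconstruct this from the sandwiched identity $e_{2i}e_{2i+1}\sigma_{2i}e_{2i+1}e_{2i}=(z-L_{2i-1})e_{2i}$ appearing in the proof of \Cref{SkeinRels}(i). Your manipulation is valid, but note that in the paper that sandwiched identity is itself obtained \emph{from} the unsandwiched one (its last equality is exactly the citation of Enyang's Proposition 4.3(2)), so your detour does not avoid the external input --- it just repackages it, at the cost of the extra commutation $[L_{2i-1},e_{2i+1}]=0$. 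You correctly flag that this commutation is not covered by \Cref{ComRel}, and your disjoint-support justification (that $L_{2i-1}\in\mathcal{A}_{2i-1}(z)$ is spanned by diagrams living on strands $1,\dots,i$ while $e_{2i+1}$ only alters strand $i+1$) is the right one and is consistent with relation (6)(iii) of \Cref{AffPrep}, which the paper establishes by the same reasoning. In short: correct, with a slightly more roundabout but self-aware treatment of (i).
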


\begin{proof}
\emph{(i)}: We have that
\begin{align*}
e_{2i+1}t_{2i}e_{2i+1} &= e_{2i+1}(\sigma_{2i}-e_{2i})e_{2i+1} \\
&= e_{2i+1}\sigma_{2i}e_{2i+1}-e_{2i+1} \hspace{13mm} \text{by \emph{(E5)}} \\
&= (z-L_{2i-1})e_{2i+1}-e_{2i+1} \hspace{8mm} \text{by \cite[Proposition 4.3 (2)]{Eny12}} \\
&= (X_{2i-1}+1)e_{2i+1}-e_{2i+1} \\
&= X_{2i-1}e_{2i+1}
\end{align*}
\noindent
\emph{(ii)}: We have that
\begin{align*}
t_{2i}e_{2i-1}e_{2i} &= (\sigma_{2i}-e_{2i})e_{2i-1}e_{2i} \\
&= \sigma_{2i}e_{2i-1}e_{2i} - e_{2i} \hspace{13mm} \text{by \emph{(E5)}} \\
&= L_{2i}e_{2i} - e_{2i} \hspace{20.5mm} \text{by \cite[Proposition 3.2 (3)]{Eny12}} \\
&= (X_{2i}+1)e_{2i}-e_{2i} \\
&= X_{2i}e_{2i}
\end{align*}
The relation $e_{2i}e_{2i-1}t_{2i}=e_{2i}X_{2i}$ is obtained by acting by $*$.

\vspace{2mm}
\noindent
\emph{(iii)}: We have $t_{2i+1}e_{2i+1}e_{2i}=t_{2i}s_{i}e_{2i+1}e_{2i} = t_{2i}e_{2i-1}e_{2i} = X_{2i}e_{2i}$. Again the relation $e_{2i}e_{2i+1}t_{2i+1} = e_{2i}X_{2i}$ is obtained by acting by $*$.

\end{proof}

The following proposition contains all the relations we seek to retain for our construction of the affine partition algebra, as such some are identical to relations we have already stated. It provides a presentation of the partition algebra $\mathcal{A}_{2k}(z)$ which is simply Enyang's presentation \emph{\Cref{EnyPres}} except working with the generators $t_{i}$ instead of $\sigma_{i}$. For those relations we have adopted the same naming conventions given in \emph{\Cref{EnyPres}}.

\begin{prop} \label{AffPrep}
The partition algebra $\mathcal{A}_{2k}(z)$ has a presentation with generating set
\[ \{t_{i}, e_{j} \ | \ 3\leq i \leq 2k-1, \ j \in [2k-1]\} \]
and relations:
\begin{itemize}
\item[(1)] (Involutions)
\begin{itemize}
\item[(i)] $t_{2i+2}^{2} = 1-e_{2i}$, for $i\in [k-2]$.
\item[(ii)] $t_{2i+1}^{2} = 1-e_{2i}$, for $i\in [k-1]$.
\end{itemize}
\item[(2)] (Braid relations)
\begin{itemize}
\item[(i)] $t_{2i+1}t_{2j} = t_{2j}t_{2i+1}$ for $j \neq i+1$.
\item[(ii)] $t_{2i+1}t_{2j+1} = t_{2j+1}t_{2i+1}$ for $j \neq i\pm1$.
\item[(iii)] $t_{2i}t_{2j} = t_{2j}t_{2i}$ for $j \neq i\pm1$.
\item[(iv)] $s_{i}s_{i+1}s_{i} = s_{i+1}s_{i}s_{i+1}$, for $i\in[k-2]$, where 
$s_{j} = 
\begin{cases}
    t_{3}+e_{2}, & j=1 \\
    t_{2j}t_{2j+1}+e_{2j}, & j>1
\end{cases}$.
\end{itemize}
\item[(3)] (Idempotent relations)
\begin{itemize}
\item[(i)] $e_{2i-1}^{2} = ze_{2i-1}$ for $i \in [k]$. 
\item[(ii)] $e_{2i}^{2} = e_{2i}$ for $i \in [k-1]$.
\item[(iii)] $t_{2i+1}e_{2i} = e_{2i}t_{2i+1} = 0$ for $i\in [k-1]$.
\item[(iv)] $t_{2i}e_{2i} = e_{2i}t_{2i} = 0$ for $2\leq i \leq k-1$.
\item[(v)] $t_{2i}e_{2i-1}e_{2i+1} = t_{2i+1}e_{2i-1}e_{2i+1}$ for $2\leq i \leq k-1$.
\item[(vi)] $e_{2i+1}e_{2i-1}t_{2i} = e_{2i+1}e_{2i-1}t_{2i+1}$ for $2\leq i \leq k-1$.
\end{itemize}
\item[(4)] (Commutation relations)
\begin{itemize}
\item[(i)] $e_{i}e_{j} = e_{j}e_{i}$, if $|i-j| \geq 2$. 
\item[(ii)] $t_{2i-1}e_{2j-1} = e_{2j-1}t_{2i-1}$, if $j \neq i-1,i$.
\item[(iii)] $t_{2i-1}e_{2j} = e_{2j}t_{2i-1}$, if $j \neq i$.
\item[(iv)] $t_{2i}e_{2j-1} = e_{2j-1}t_{2i}$, if $j \neq i,i+1$.
\item[(v)] $t_{2i}e_{2j} = e_{2j}t_{2i}$, if $j \neq i-1$.
\end{itemize}
\item[(5)] (Contractions)
\begin{itemize}
\item[(i)] $e_{i}e_{i+1}e_{i} = e_{i}$ and $e_{i+1}e_{i}e_{i+1} = e_{i+1}$, for $i \in [2k-2]$.
\item[(ii)] $t_{2i}e_{2i-1}t_{2i} = t_{2i+1}e_{2i+1}t_{2i+1}$, for $i \in [k-1]$.
\item[(iii)] $t_{2i}e_{2i-2}t_{2i} = t_{2i-1}e_{2i}t_{2i-1}$, for $2\leq i \leq k-1$.
\end{itemize}
\end{itemize}
Furthermore, the following relations are satisfied in $\mathcal{A}_{2k}(z)$:
\begin{itemize}
\item[(6)] (JM Commutation Relations)
\begin{itemize}
\item[(i)] $X_{i}X_{j}=X_{j}X_{i}$ for all $i,j \in [2k]$
\item[(ii)] $t_{i}X_{j} = X_{j}t_{i}$ for $j \neq i-1, i, i+1$
\item[(iii)] $e_{i}X_{j}=X_{j}e_{i}$ for $j\neq i, i+1$
\end{itemize}
\item[(7)] (Braid-like Relations)
\begin{itemize}
\item[(i)] $t_{2i-2}t_{2i}t_{2i-2} = t_{2i}t_{2i-2}t_{2i}(1-e_{2i-2})$
\item[(ii)] $t_{2i+1}t_{2i-1}t_{2i+1} = t_{2i-1}t_{2i+1}t_{2i-1}(1-e_{2i})$
\item[(iii)] $t_{2i-1}t_{2i}t_{2i-1} = t_{2i}-e_{2i-2}t_{2i}-t_{2i}e_{2i-2}$
\item[(iv)] $t_{2i}t_{2i-1}t_{2i} = t_{2i-1}-e_{2i}t_{2i-1}-t_{2i-1}e_{2i}$
\end{itemize}
\item[(8)] (Skein-like Relations)
\begin{itemize}
\item[(i)] $X_{2i+1}=t_{2i}X_{2i-1}t_{2i}+e_{2i}e_{2i-1}t_{2i}+t_{2i}e_{2i-1}e_{2i}-t_{2i}$.
\item[(ii)] $X_{2i+2}=t_{2i+1}X_{2i}t_{2i+1}+e_{2i}e_{2i+1}t_{2i+1}e_{2i+1}e_{2i}+t_{2i+1}$.
\item[(iii)] $X_{2i}=t_{2i}X_{2i}t_{2i}+e_{2i}e_{2i-1}t_{2i}+t_{2i}e_{2i-1}e_{2i}$.
\item[(iv)] $X_{2i+1}=t_{2i+1}X_{2i+1}t_{2i+1}+e_{2i}e_{2i+1}t_{2i+1}+t_{2i+1}e_{2i+1}e_{2i}$.
\end{itemize}
\item[(9)] (Anti-symmetry Relations)
\begin{itemize}
\item[(i)] $e_{i}(X_{i}-X_{i+1}) = 0$ for $i\in [2k-1]$.
\item[(ii)] $(X_{i}-X_{i+1})e_{i} = 0$ for $i\in [2k-1]$.
\end{itemize}
\item[(10)] (Bubble Relations)
\begin{itemize}
\item[(i)] $e_{1}X_{1}^{l}e_{1} = z(z-1)^{l}e_{1}$, for all $l \in \mathbb{Z}_{\geq 0}$.
\end{itemize}
\end{itemize}

\end{prop}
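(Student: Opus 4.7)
The plan is to establish the proposition in two stages: first, that relations (1)-(5) give a presentation equivalent to Enyang's \Cref{EnyPres}; second, that the additional identities (6)-(10) hold in $\mathcal{A}_{2k}(z)$. For the presentation, the defining substitution $\sigma_{2i} = t_{2i}+e_{2i}$, $\sigma_{2i+1} = t_{2i+1}+e_{2i}$ is an invertible polynomial change of generators, so the two generating sets yield the same subalgebra. It therefore suffices to translate each of (E1)-(E5) under this substitution into a consequence of (1)-(5) and vice versa. The key enablers are $e_{2i}^2 = e_{2i}$ and the absorption identities $\sigma_{2i}e_{2i} = \sigma_{2i+1}e_{2i} = e_{2i}$ (together with their right-handed analogues), which on the new side are equivalent to (3)(iii)-(iv). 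For instance $\sigma_{2i+1}^2 = 1$ expands as $(t_{2i+1}+e_{2i})^2 = t_{2i+1}^2 + t_{2i+1}e_{2i} + e_{2i}t_{2i+1} + e_{2i}^2 = 1$, which with (3)(iii) yields (1)(ii); the converse direction is obtained by adding $e_{2i}$ and squaring. The braid, commutation and contraction relations translate by the same mechanical pattern after careful book-keeping of the indices.

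For the additional identities, relations (6) follow from the centralisation property \Cref{ComRel} stating $L_i, \sigma_{i+1} \in Z_{i-1}(\mathcal{A}_i(z))$, combined with \Cref{MinAlgBelong}, after transferring through the affine shift $X_{2i} = L_{2i}-1$ and $X_{2i-1} = z-1-L_{2i-1}$. The skein-like relations (8) are direct translations of \Cref{SkeinRels} under the same shift together with $\sigma_j = t_j + e_{\cdot}$, after which the scalar terms cancel. The bubble relation (10) is immediate from $L_1 = 0$: $X_1 = z-1$ is a scalar, so $e_1 X_1^l e_1 = (z-1)^l e_1^2 = z(z-1)^l e_1$ by (3)(i).

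The substantive work lies in (7) and (9). For (7), begin with the Coxeter braid relation $s_{i-1}s_is_{i-1} = s_is_{i-1}s_i$ of (E2)(iv) and expand using $s_i = t_{2i}t_{2i+1} + e_{2i}$ (with $s_1 = t_3 + e_2$); the identities $s_i t_{2i} = t_{2i+1}$ and $s_i e_{2i} = e_{2i}$, both inherited from $s_i \sigma_{2i} = \sigma_{2i+1}$, together with the commutations (4) and the contractions (5)(i), collapse the eight-term expansions on each side to the compact forms (7)(i)-(iv). For (9), the anti-symmetry identities encode the diagrammatic fact that $e_i$ merges positions $i$ and $i+1$, so the corresponding JM weights coincide upon attachment; concretely they follow from $\sigma_{2i}e_{2i-1}e_{2i} = L_{2i}e_{2i}$ and $e_{2i+1}\sigma_{2i}e_{2i+1} = (z-L_{2i-1})e_{2i+1}$ (cited from \cite{Eny12} in the proof of \Cref{SkeinRels}), combined with the absorption identities of \Cref{NewEnyMixRels}. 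The principal obstacle is the pattern-matching in (7), where several lines of expansion must be reduced to the stated closed forms; everything else reduces to mechanical substitution using identities already established in the excerpt.
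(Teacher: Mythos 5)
Most of your outline coincides with the paper's proof: items (1)--(5) by the invertible substitution $\sigma_{2i}=t_{2i}+e_{2i}$, $\sigma_{2i+1}=t_{2i+1}+e_{2i}$; item (6) from \Cref{ComRel} and \Cref{MinAlgBelong}; item (8) by translating \Cref{SkeinRels} through the shift and the absorption identities of \Cref{NewEnyMixRels}; and item (10) from $X_{1}=z-1$. For (9) the paper simply quotes \cite[Proposition 3.9]{Eny12}, whereas you propose deriving it from $\sigma_{2i}e_{2i-1}e_{2i}=L_{2i}e_{2i}$ and $e_{2i+1}\sigma_{2i}e_{2i+1}=(z-L_{2i-1})e_{2i+1}$; note that $e_{2j}(X_{2j}-X_{2j+1})=0$ unwinds to $e_{2j}(L_{2j}+L_{2j+1})=ze_{2j}$, and to control the term $e_{2j}L_{2j+1}$ you would also need the recursive formula of \Cref{EnyJMDefn}, so your sketch for (9) is incomplete as stated, though likely repairable.

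The genuine gap is item (7). You claim the four braid-like relations emerge by expanding the single Coxeter identity $s_{i-1}s_{i}s_{i-1}=s_{i+1}s_{i}s_{i+1}$ (with $s_{j}=t_{2j}t_{2j+1}+e_{2j}$) and simplifying with absorption, commutation and $e_ie_{i+1}e_i=e_i$. This cannot work as described: the braid relation is one equation mixing all four generators $t_{2i-2},t_{2i-1},t_{2i},t_{2i+1}$, while each of (7)(i)--(iv) involves only two of them, so some separation argument is required and none is supplied; moreover, nothing in (1)--(5) evaluates a product of the form $t_{2i-2}t_{2i}t_{2i-2}$ or $t_{2i}t_{2i-1}t_{2i}$ --- the only sandwiching relations available, (5)(ii) and (5)(iii), conjugate idempotents rather than other $t$'s. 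The paper does not attempt an algebraic derivation at all: its proof of \Cref{AffPrep} explicitly defers (7) to \Cref{BraidLikeRels}, where each identity is verified by computing the action of both sides on $V^{\otimes k}$ under $\psi_{n,k}$ via \Cref{CrossingAction}, case-splitting on the coincidences among $a_{i-1},a_{i},a_{i+1}$, and then invoking \Cref{TrivKer} (the intersection of the kernels $\ker\psi_{n,k}$ over all $n$ is zero). The fact that the authors impose (7)(i)--(iv) as independent defining relations in \Cref{APADefn}, alongside the braid relation (2)(iv), is further evidence that they are not a mechanical consequence of it. To close the gap you should either carry out a complete formal derivation of (7) from (1)--(5) (which would actually strengthen the paper) or adopt the representation-theoretic argument of \Cref{BraidLikeRels}.
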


\begin{proof}
Although lengthy, it is simple to check that relations (1) to (5) give an alternative presentation for $\mathcal{A}_{2k}(z)$ since we merely exchanged the elements $\sigma_{i}$ with $t_{i}$ from Enyang's presentation given in \emph{\Cref{EnyPres}}. 

\noindent
\emph{(6)}: Follows from \emph{\Cref{ComRel}} and \emph{\Cref{MinAlgBelong}}.

\noindent
\emph{(7)}: These relations will be proven in the next section in \emph{\Cref{BraidLikeRels}}.

\noindent
\emph{(9)}: Follows from \cite[Proposition 3.9]{Eny12} (1) and (2).

\noindent
\emph{(10)}: We have that $X_{1} = z-1-L_{1} = z-1$. Thus for any $l\in \mathbb{N}$,
\[ e_{1}X_{1}^{l}e_{1} = (z-1)^{l}e_{1}^{2} = z(z-1)^{l}e_{1}. \]

\noindent
\emph{(8)(i)}: From \emph{\Cref{SkeinRels} (i)} we have
\begin{equation} \label{Eq5}
L_{2i+1}=\sigma_{2i}L_{2i-1}\sigma_{2i}-e_{2i}e_{2i-1}\sigma_{2i}-\sigma_{2i}e_{2i-1}e_{2i}+e_{2i}e_{2i+1}\sigma_{2i}e_{2i+1}e_{2i}+\sigma_{2i}.
\end{equation}
Examining the right hand side term by term: For the first term,
\begin{align*}
\sigma_{2i}L_{2i-1}\sigma_{2i} &= (t_{2i}+e_{2i})(-X_{2i-1})(t_{2i}+e_{2i}) + (z-1) \\
&= -t_{2i}X_{2i-1}t_{2i}-t_{2i}X_{2i-1}e_{2i}-e_{2i}X_{2i-1}t_{2i}-e_{2i}X_{2i-1}e_{2i} + (z-1) \\
&= -t_{2i}X_{2i-1}t_{2i} -X_{2i-1}e_{2i} + (z-1)
\end{align*}
where the last equality follows since $X_{2i-1}$ commutes with $e_{2i}$ and $t_{2i}e_{2i}=e_{2i}t_{2i}=0$. For the second and third term of \emph{\Cref{Eq5}}, we have
\[
-e_{2i}e_{2i-1}\sigma_{2i} = -e_{2i}-e_{2i}e_{2i-1}t_{2i}, \hspace{3mm} \text{and} \hspace{3mm}
-\sigma_{2i}e_{2i-1}e_{2i} = -t_{2i}e_{2i-1}e_{2i}-e_{2i}.
\]
For the forth term of \emph{\Cref{Eq5}},
\begin{align*}
e_{2i}e_{2i+1}\sigma_{2i}e_{2i+1}e_{2i} &= e_{2i}e_{2i+1}t_{2i}e_{2i+1}e_{2i} + e_{2i}e_{2i+1}e_{2i}e_{2i+1}e_{2i} \\
&= e_{2i}e_{2i+1}t_{2i}e_{2i+1}e_{2i} + e_{2i} \\
&= e_{2i}e_{2i-1}t_{2i+1}e_{2i+1}e_{2i} +e_{2i} \hspace{13mm} \text{by $t_{2i}=s_{i}t_{2i+1}$} \\
&= e_{2i}e_{2i-1}X_{2i+1}e_{2i} + e_{2i} \hspace{19mm} \text{by \emph{\Cref{NewEnyMixRels} (iii)}} \\
&=  e_{2i}e_{2i-1}e_{2i}X_{2i-1}+e_{2i} \hspace{19mm} \text{by \emph{(9)(i), (ii)}} \\
&= e_{2i}X_{2i-1}+e_{2i}.
\end{align*}
Substituting all these back into \emph{\Cref{Eq5}} yields
\begin{align*}
z-1-X_{2i+1} &= -t_{2i}X_{2i-1}t_{2i} -X_{2i-1}e_{2i}+(z-1)-e_{2i}-e_{2i}e_{2i-1}t_{2i}-t_{2i}e_{2i-1}e_{2i}-e_{2i} \\
& \hspace{12mm} +e_{2i}X_{2i-1} + e_{2i}+t_{2i}+e_{2i} \\
\iff X_{2i+1} &= t_{2i}X_{2i-1}t_{2i}+e_{2i}e_{2i-1}t_{2i}+t_{2i}e_{2i-1}e_{2i}-t_{2i}
\end{align*}
giving \emph{(8)(i)}.

\noindent
\emph{(8)(ii)}: From \emph{\Cref{SkeinRels} (ii)} we have
\begin{equation} \label{Eq6}
L_{2i+2}=\sigma_{2i+1}L_{2i}\sigma_{2i+1}-e_{2i}e_{2i+1}-e_{2i+1}e_{2i}+e_{2i}e_{2i+1}\sigma_{2i+1}e_{2i+1}e_{2i}+\sigma_{2i+1}.
\end{equation}
We examine two terms on the right hand side: The first term gives
\begin{align*}
\sigma_{2i+1}L_{2i}\sigma_{2i+1} &= (t_{2i+1}+e_{2i})(X_{2i}+1)(t_{2i+1}+e_{2i}) \\
&= t_{2i+1}X_{2i}t_{2i+1}+t_{2i+1}X_{2i}e_{2i}+e_{2i}X_{2i}t_{2i+1}+e_{2i}X_{2i}e_{2i}+1 \\
&= t_{2i+1}X_{2i}t_{2i+1}+t_{2i+1}^{2}e_{2i+1}e_{2i}+e_{2i}e_{2i+1}t_{2i+1}^{2}+1 \\
&= t_{2i+1}X_{2i}t_{2i+1}+e_{2i+1}e_{2i}+e_{2i}e_{2i+1}-2e_{2i}+1
\end{align*}
where the second equality follows since $(t_{2i+1}+e_{2i})^{2}=1$, and the third from \emph{\Cref{NewEnyMixRels} (iii)} and since $e_{2i}X_{2i}e_{2i}=e_{2i}e_{2i-1}t_{2i}e_{2i}=0$. The forth term in \emph{\Cref{Eq6}} gives
\begin{align*}
e_{2i}e_{2i+1}\sigma_{2i+1}e_{2i+1}e_{2i} &= e_{2i}e_{2i+1}t_{2i+1}e_{2i+1}e_{2i} + e_{2i}e_{2i+1}e_{2i}e_{2i+1}e_{2i} \\
&= e_{2i}e_{2i+1}t_{2i+1}e_{2i+1}e_{2i} + e_{2i}.
\end{align*}
 Substituting these back into equation \emph{\Cref{Eq6}} yields
\begin{align*}
X_{2i+2}+1 &= t_{2i+1}X_{2i}t_{2i+1}+e_{2i+1}e_{2i}+e_{2i}e_{2i+1}-2e_{2i}+1 \\
& \hspace{8mm} -e_{2i}e_{2i+1}-e_{2i+1}e_{2i}+e_{2i}e_{2i+1}t_{2i+1}e_{2i+1}e_{2i} + e_{2i} + t_{2i+1}+e_{2i} \\
\iff X_{2i+2} &=  t_{2i+1}X_{2i}t_{2i+1}+e_{2i}e_{2i+1}t_{2i+1}e_{2i+1}e_{2i}+t_{2i+1}
\end{align*}
giving \emph{(8)(ii)}.

\noindent
\emph{(8)(iii)}: From \emph{\Cref{SkeinRels} (iii)} we have
\begin{equation} \label{Eq7}
L_{2i}=\sigma_{2i}L_{2i}\sigma_{2i}+e_{2i}e_{2i-1}\sigma_{2i}+\sigma_{2i}e_{2i-1}e_{2i}-e_{2i}e_{2i+1}-e_{2i+1}e_{2i}.
\end{equation}
We have that
\begin{align*}
\sigma_{2i}L_{2i}\sigma_{2i} &= (t_{2i}+e_{2i})(X_{2i}+1)(t_{2i}+e_{2i}) \\
&= t_{2i}X_{2i}t_{2i}+t_{2i}X_{2i}e_{2i}+e_{2i}X_{2i}t_{2i}+e_{2i}X_{2i}e_{2i} +1 \\
&= t_{2i}X_{2i}t_{2i}+t_{2i}^{2}e_{2i-1}e_{2i}+e_{2i}e_{2i-1}t_{2i}^{2}+1 \\
&= t_{2i}X_{2i}t_{2i}+e_{2i-1}e_{2i}+e_{2i}e_{2i-1}-2e_{2i}+1
\end{align*}
where the second equality follows since $(t_{2i}+e_{2i})^{2}=1$, and the third equality from \emph{\Cref{NewEnyMixRels} (ii)} and the since $t_{2i}e_{2i}=e_{2i}t_{2i}=0$. Substituting this, and relations
\[
e_{2i}e_{2i-1}\sigma_{2i} = e_{2i}e_{2i-1}t_{2i}+e_{2i} \hspace{3mm} \text{and} \hspace{3mm}
\sigma_{2i}e_{2i-1}e_{2i} = t_{2i}e_{2i-1}e_{2i}+e_{2i},
\]
back into equation \emph{\Cref{Eq7}} yields
\begin{align*}
X_{2i}+1 &= t_{2i}X_{2i}t_{2i}+e_{2i-1}e_{2i}+e_{2i}e_{2i-1}-2e_{2i}+1+e_{2i}e_{2i-1}t_{2i}+e_{2i} \\
& \hspace{12mm} +t_{2i}e_{2i-1}e_{2i}+e_{2i}-e_{2i}e_{2i+1}-e_{2i+1}e_{2i} \\
\iff X_{2i} &= t_{2i}X_{2i}t_{2i}+e_{2i}e_{2i-1}t_{2i}+t_{2i}e_{2i-1}e_{2i}+1
\end{align*}
giving \emph{(8)(iii)}.

\noindent
\emph{(8)(iv)}: From \emph{\Cref{SkeinRels} (iv)} we have
\begin{equation} \label{Eq8}
L_{2i+1}=\sigma_{2i+1}L_{2i+1}\sigma_{2i+1}-e_{2i}e_{2i+1}\sigma_{2i+1}-\sigma_{2i+1}e_{2i+1}e_{2i}+e_{2i}e_{2i+1}+e_{2i+1}e_{2i}.
\end{equation}
We have that
\begin{align*}
\sigma_{2i+1}L_{2i+1}\sigma_{2i+1} &= (t_{2i+1}+e_{2i})(-X_{2i+1})(t_{2i+1}+e_{2i}) + (z-1) \\
&= -t_{2i+1}X_{2i+1}t_{2i+1}-t_{2i+1}X_{2i+1}e_{2i}-e_{2i}X_{2i+1}t_{2i+1}-e_{2i}X_{2i+1}e_{2i}+(z-1) \\
&= -t_{2i+1}X_{2i+1}t_{2i+1}-t_{2i+1}^{2}e_{2i+1}e_{2i}-e_{2i}e_{2i+1}t_{2i+1}^{2}+(z-1) \\
&= -t_{2i+1}X_{2i+1}t_{2i+1}-e_{2i+1}e_{2i}-e_{2i}e_{2i+1}+2e_{2i}+(z-1)
\end{align*}
where the third equality follows from \emph{\Cref{NewEnyMixRels} (iii)}, and noting that $e_{2i}X_{2i+1}e_{2i}=e_{2i}X_{2i}e_{2i}=e_{2i}e_{2i-1}t_{2i}e_{2i}=0$. Substituting this, and the equations
\[
-e_{2i}e_{2i+1}\sigma_{2i+1} = -e_{2i}e_{2i+1}t_{2i+1}-e_{2i} \hspace{3mm} \text{and} \hspace{3mm}
\sigma_{2i}e_{2i-1}e_{2i} = -t_{2i+1}e_{2i+1}e_{2i}-e_{2i},
\]
back into equation \emph{\Cref{Eq8}} yields
\begin{align*}
(z-1)-X_{2i+1} &= -t_{2i+1}X_{2i+1}t_{2i+1}-e_{2i+1}e_{2i}-e_{2i}e_{2i+1}+2e_{2i}+(z-1)-e_{2i}e_{2i+1}t_{2i+1} \\
& \hspace{15mm} -e_{2i}-t_{2i+1}e_{2i+1}e_{2i}-e_{2i}+e_{2i}e_{2i+1}+e_{2i+1}e_{2i} \\
\iff X_{2i+1} &= t_{2i+1}X_{2i+1}t_{2i+1}+t_{2i+1}e_{2i+1}e_{2i}+e_{2i}e_{2i+1}t_{2i+1}
\end{align*}
giving \emph{(8)(iv)}.

\end{proof}


\subsection{Schur-Weyl Duality}


In this section we recall the Schur-Weyl duality between the partition algebra $\mathcal{A}_{2k}(n)$ and the group algebra of the symmetric group $\mathbb{C}S(n)$ via their actions on tensor space. We will also highlight how $X_{i}$ and $t_{i}$ act on this tensor space, and complete the proof of \emph{\Cref{AffPrep}}. Consider the permutation module $V=\text{Span}_{\mathbb{C}}\{v_{1},\dots,v_{n}\}$ of $\mathbb{C}S(n)$ with action given by $\pi v_{a} = v_{\pi(a)}$ for all $\pi \in S(n)$ and $a\in[n]$, extended $\mathbb{C}$-linearly to $\mathbb{C}S(n)$. For $k\geq 0$, the tensor space
\[ V^{\otimes k} := V\otimes \dots \otimes V \hspace{5mm} \text{($k$ tensor components)} \]
is an $\mathbb{C}S(n)$-module via the diagonal action. For any $k$-tuple $\bm{a}=(a_{1},\dots,a_{k})\in[n]^{k}$ we let $v_{\bm{a}}:=v_{a_{1}}\otimes \dots\otimes v_{a_{k}} \in V^{\otimes k}$, and for any $\pi \in S(n)$ let $\pi(\bm{a}):=(\pi(a_{1}),\dots,\pi(a_{k}))$. Then $V^{\otimes k} = \text{Span}_{\mathbb{C}}\{v_{\bm{a}}\ | \ \bm{a} \in [n]^{k}\}$, and the diagonal action is given by the $\mathbb{C}$-linear extension of $\pi v_{\bm{a}} = v_{\pi(\bm{a})}$ for all $\pi\in S(n)$ and $\bm{a}\in[n]^{k}$. We let End$(V^{\otimes k})$ be the algebra of all vector space endomorphims $V^{\otimes k}\rightarrow V^{\otimes k}$. We identify any $g\in \mathbb{C}S(n)$ with the corresponding endomorphism in End$(V^{\otimes k})$ given by the diagonal action. Then consider the subalgebra
\[ \text{End}_{S(n)}(V^{\otimes k}) := \{ f \in \mathsf{End}(V^{\otimes k}) \ | \ f\pi = \pi f, \text{ for all } \pi \in S(n)\} \] 
of all $S(n)$ commuting endomorphisms. For the following result see \cite[Section 3]{HR05}.

\begin{thm}\label{SWD}
For any $n,k\geq0$, we have a surjective $\mathbb{C}$-algebra homomorphism
\[ \psi_{n,k}:\mathcal{A}_{2k} \rightarrow \mathsf{End}_{S(n)}(V^{\otimes k}) \]
defined on the generators $z$, $s_{i}$, and $e_{j}$, by letting $z\mapsto n$ and
\begin{align*}
\psi_{n,k}(s_{i})(v_{\bm{a}}) &= v_{a_{1}}\otimes \dots \otimes v_{a_{i-1}}\otimes v_{a_{i+1}}\otimes v_{a_{i}}\otimes v_{a_{i+2}}\otimes \dots \otimes v_{a_{k}} \\
\psi_{n,k}(e_{2j-1})(v_{\bm{a}}) &= \sum_{b=1}^{n} v_{a_{1}}\otimes \dots \otimes v_{a_{j-1}}\otimes v_{b}\otimes v_{a_{j+1}}\otimes \dots \otimes v_{a_{k}} \\
\psi_{n,k}(e_{2j})(v_{\bm{a}}) &= \delta_{a_{j},a_{j+1}}v_{\bm{a}}
\end{align*}
for all $\bm{a}=(a_{1},\dots,a_{k})\in [n]^{k}$, $i\in[k-1]$, $j\in[2k-1]$, where $\delta_{a,b}$ is the Kronecker delta. Moreover, we have that $\mathsf{Ker}(\psi_{n,k}) = (z-n)$ if and only if $n\geq 2k$, in which case $\mathcal{A}_{2k}(n)\cong\mathsf{End}_{S(n)}(V^{\otimes k})$.
\end{thm}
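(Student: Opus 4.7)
The plan is to construct $\psi_{n,k}$ diagrammatically rather than through the long list of Halverson--Ram relations, which lets the well-definedness, surjectivity, and kernel analysis all rest on the same combinatorial bookkeeping. For each $\alpha \in \Pi_{2k}$ I would define
$$\psi_{n,k}(\alpha)(v_{\bm{b}}) = \sum_{\bm{a}\in[n]^k} [\alpha \text{ is compatible with } (\bm{a},\bm{b})]\, v_{\bm{a}},$$
where compatibility means the map $[k]\cup[k']\to[n]$ given by $i\mapsto a_i$, $j'\mapsto b_j$ is constant on every block of $\alpha$. Specialising to $s_i$, $e_{2j-1}$, $e_{2j}$ recovers the formulas stated in the theorem, so it is enough to show that $\psi_{n,k}(\alpha)\psi_{n,k}(\beta) = n^{m(\alpha,\beta)}\psi_{n,k}(\alpha\circ\beta)$ and then extend $\mathbb{C}$-linearly with $z\mapsto n$. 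Invariance under $S(n)$ is immediate: relabelling $\bm{a}$ and $\bm{b}$ simultaneously by any permutation preserves constancy of the labelling on each block.

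For surjectivity I would use a change of basis. Any $f\in\mathsf{End}_{S(n)}(V^{\otimes k})$ has matrix entries invariant under the diagonal action of $S(n)$ on $[n]^k\times[n]^k$, hence depending only on the equality pattern of $(\bm{a},\bm{b})$, which is a partition $\beta \in \Pi_{2k}$. Letting $g_\beta\in\mathsf{End}(V^{\otimes k})$ denote the endomorphism whose matrix entries are $1$ precisely on the pairs realising \emph{exactly} the pattern $\beta$, the set $\{g_\beta : \beta\in\Pi_{2k}\}$ spans $\mathsf{End}_{S(n)}(V^{\otimes k})$. A direct inspection of the compatibility condition gives $\psi_{n,k}(\alpha)=\sum_{\beta\geq\alpha} g_\beta$, with the sum over all $\beta$ coarsening $\alpha$; this is unitriangular for the refinement partial order, so Möbius inversion expresses each $g_\beta$ in terms of the $\psi_{n,k}(\alpha)$ and yields surjectivity.

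For the kernel, since $\psi_{n,k}$ sends $z$ to $n$ the ideal $(z-n)$ is automatically contained in $\mathsf{Ker}(\psi_{n,k})$, and the induced map on $\mathcal{A}_{2k}(n)$ is injective precisely when $\{\psi_{n,k}(\alpha):\alpha\in\Pi_{2k}\}$ is linearly independent in $\mathsf{End}(V^{\otimes k})$. When $n\geq 2k$, every $\beta\in\Pi_{2k}$ has at most $2k$ blocks, so by assigning distinct values in $[n]$ to each block we can realise $\beta$ as the exact equality pattern of some pair $(\bm{a},\bm{b})$; this separates the $g_\beta$ and, via the unitriangular change of basis, also the $\psi_{n,k}(\alpha)$, giving $\mathsf{Ker}(\psi_{n,k})=(z-n)$ and the isomorphism $\mathcal{A}_{2k}(n)\cong\mathsf{End}_{S(n)}(V^{\otimes k})$. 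Conversely, for $n<2k$ one can produce an explicit null combination (a Young antisymmetriser in $n+1$ strands vanishes on $V^{\otimes k}$), so $(z-n)$ is strictly smaller than the kernel in that range.

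The main obstacle is the multiplication identity $\psi_{n,k}(\alpha)\psi_{n,k}(\beta)=n^{m(\alpha,\beta)}\psi_{n,k}(\alpha\circ\beta)$. The coefficient of $v_{\bm{a}}$ in $\psi_{n,k}(\alpha)\psi_{n,k}(\beta)(v_{\bm{b}})$ counts intermediate tuples $\bm{c}\in[n]^k$ such that $\alpha$ is compatible with $(\bm{a},\bm{c})$ and $\beta$ is compatible with $(\bm{c},\bm{b})$; sorting the connected components of the stacked diagram into those that reach the top or bottom row (whose $\bm{c}$-labels are forced) and those confined to the middle row (contributing one free factor of $n$ each) is what produces the exponent $m(\alpha,\beta)$ and matches the remaining compatibility with $\alpha\circ\beta$. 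Once this careful combinatorial argument is pinned down, everything else reduces to routine linear algebra on the diagram basis.
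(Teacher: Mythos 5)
The paper does not prove this theorem itself --- it is quoted from Section 3 of \cite{HR05} --- and your argument is essentially the standard proof from that source: define $\psi_{n,k}$ on the whole diagram basis by the block-compatibility rule, check $\psi_{n,k}(\alpha)\psi_{n,k}(\beta)=n^{m(\alpha,\beta)}\psi_{n,k}(\alpha\circ\beta)$ by counting the free middle-row labels, and pass to the orbit basis $\{g_{\beta}\}$ via the unitriangular relation $\psi_{n,k}(\alpha)=\sum_{\beta\geq\alpha}g_{\beta}$ to get both surjectivity and, for $n\geq 2k$, linear independence of the $\psi_{n,k}(\alpha)$ (hence $\mathsf{Ker}(\psi_{n,k})=(z-n)$, since any element of the kernel $\sum_{\alpha}h_{\alpha}(z)\alpha$ then has $h_{\alpha}(n)=0$ for all $\alpha$). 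All of this is correct, including the compatibility of your composition convention with the paper's rule that $\alpha$ is stacked on top of $\beta$.

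The one genuine slip is the parenthetical justification of the converse direction. A Young antisymmetriser on $n+1$ strands lives inside $\mathbb{C}S(k)\subset\mathcal{A}_{2k}$ only when $n+1\leq k$, so it cannot witness the failure of injectivity in the range $k\leq n<2k$, which is precisely where the statement still needs an argument. Your own machinery already supplies the correct element: if $n<2k$, the all-singletons partition $\alpha_{0}$ has $2k>n$ blocks, so its exact equality pattern is never realised and $g_{\alpha_{0}}=0$; M\"obius inversion then exhibits the explicit nonzero element $\sum_{\beta\geq\alpha_{0}}\mu(\alpha_{0},\beta)\,\beta$ of $\mathsf{Ker}(\psi_{n,k})$, which is not in $(z-n)$ since it is a $\mathbb{Z}$-linear combination of basis diagrams with the coefficient of $\alpha_{0}$ equal to $1$. (Equivalently, a dimension count $\dim\mathsf{End}_{S(n)}(V^{\otimes k})=\#\{\beta\in\Pi_{2k}\ :\ \beta \text{ has at most } n \text{ blocks}\}<B_{2k}$ settles the converse.) With that replacement the proof is complete.
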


For any $a,b \in [n]$, we let $(a,b) \in S(n)$ denote the transposition exchanging $a$ and $b$, and let $\varepsilon_{a,b} := 1-\delta_{a,b}$. We now recall how the elements $X_{i}$ and $t_{i}$ act under $\psi_{n,k}$, which was proven in \cite{Eny12}.

\begin{prop} \label{CrossingAction} \emph{{\cite[Proposition 5.2]{Eny12}}}
For any $\bm{a}\in[n]^{k}$, we have
\begin{align*}
\psi_{n,k}(t_{2i})(v_{\bm{a}}) &= \varepsilon_{a_{i},a_{i+1}}(a_{i},a_{i+1})(v_{a_{1}}\otimes \dots \otimes v_{a_{i-1}})\otimes v_{a_{i}}\otimes \dots \otimes v_{a_{k}} \\
 \psi_{n,k}(t_{2i+1})(v_{\bm{a}}) &= \varepsilon_{a_{i},a_{i+1}}(a_{i},a_{i+1})(v_{a_{1}}\otimes \dots \otimes v_{a_{i+1}})\otimes v_{a_{i+2}}\otimes \dots \otimes v_{a_{k}}
\end{align*}
for all $\bm{a}=(a_{1},\dots,a_{k})\in [n]^{k}$, and $i\in [k-1]$.
\end{prop}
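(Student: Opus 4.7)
My plan is to reduce the odd-index formula to the even-index formula using the identity $s_i t_{2i} = t_{2i+1}$, which follows from $s_i \sigma_{2i} = \sigma_{2i+1}$ (noted after \Cref{EnyPres}) together with $s_i e_{2i} = e_{2i}$ from \Cref{HRPres}(HR2)(iii). Granted the formula for $\psi_{n,k}(t_{2i})(v_{\bm{a}})$, post-composition with $\psi_{n,k}(s_i)$ transposes tensor positions $i$ and $i+1$, converting $v_{a_i}\otimes v_{a_{i+1}}$ into $v_{a_{i+1}}\otimes v_{a_i}$. This is precisely the effect of extending the diagonal action of $(a_i,a_{i+1})$ from the first $i-1$ tensor factors to the first $i+1$, yielding the stated formula for $\psi_{n,k}(t_{2i+1})(v_{\bm{a}})$.

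I would then establish the even-index formula by induction on $i$. The base case $i=1$ is immediate: $t_2 = 1 - e_2$, so $\psi_{n,k}(t_2)(v_{\bm{a}}) = (1-\delta_{a_1,a_2})v_{\bm{a}} = \varepsilon_{a_1,a_2}v_{\bm{a}}$, which matches the stated formula (in which the transposition acts on the empty tensor product). For the inductive step, I would expand the recursive definition of $\sigma_{2i+2}$ from \Cref{EnyJMDefn}, which expresses $\sigma_{2i+2}$ as a sum of five terms in $\sigma_{2i}$, $L_{2i}$, the Coxeter generators $s_i, s_{i+1}$, and the idempotents $e_j$. Applying $\psi_{n,k}$ and invoking the inductive hypothesis, the actions of $s_j, e_j$ from \Cref{SWD}, and the action of $\psi_{n,k}(L_{2i})$ (which can be quoted from \cite{HR05} or obtained in parallel by induction using the recursive definition of $L_{2i}$) yields a formula for $\psi_{n,k}(\sigma_{2i+2})(v_{\bm{a}})$. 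The formula for $\psi_{n,k}(t_{2i+2})(v_{\bm{a}})$ then follows by subtracting $\psi_{n,k}(e_{2i+2})(v_{\bm{a}}) = \delta_{a_{i+1},a_{i+2}}v_{\bm{a}}$.

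The main obstacle will be the case analysis in the inductive step: the five summands in the recursion contribute to $\psi_{n,k}(\sigma_{2i+2})(v_{\bm{a}})$ in ways that depend sensitively on which of the labels $a_1,\ldots,a_{i+2}$ coincide, and one must verify that they combine cleanly to give $(a_{i+1},a_{i+2})(v_{a_1}\otimes\cdots\otimes v_{a_i})\otimes v_{a_{i+1}}\otimes\cdots\otimes v_{a_k}$. When $a_{i+1}=a_{i+2}$ this expression reduces to $v_{\bm{a}}$, which is precisely cancelled by $\psi_{n,k}(e_{2i+2})(v_{\bm{a}})$; when $a_{i+1}\neq a_{i+2}$ the $e_{2i+2}$ contribution vanishes. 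Thus the subtraction of $e_{2i+2}$ in the definition $t_{2i+2} = \sigma_{2i+2} - e_{2i+2}$ realises the role of the $\varepsilon_{a_{i+1},a_{i+2}}$ factor in the uniform closed-form expression, and the simplicity of the final answer is a nontrivial consequence of the design of Enyang's generators.
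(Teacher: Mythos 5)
The paper does not actually prove this statement: \emph{\Cref{CrossingAction}} is quoted verbatim (in the normalised generators $t_i$) from \cite[Proposition 5.2]{Eny12}, so there is no in-paper argument to compare yours against. Judged on its own terms, the parts of your proposal that are actually carried out are correct. The reduction of the odd-index formula to the even-index one via $s_it_{2i}=t_{2i+1}$ (which indeed follows from $s_i\sigma_{2i}=\sigma_{2i+1}$ and $s_ie_{2i}=e_{2i}$) is clean and checks out: post-composing with $\psi_{n,k}(s_i)$ swaps tensor slots $i$ and $i+1$, and since $(a_i,a_{i+1})$ sends $v_{a_i}\otimes v_{a_{i+1}}$ to $v_{a_{i+1}}\otimes v_{a_i}$, this is exactly the difference between letting the transposition act diagonally on the first $i-1$ versus the first $i+1$ factors. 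The base case $t_2=1-e_2$ and the observation that subtracting $e_{2i+2}$ from $\sigma_{2i+2}$ is what produces the $\varepsilon_{a_{i+1},a_{i+2}}$ factor are also both correct.

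The gap is that the inductive step --- which is where essentially all of the content of the proposition lives --- is only described, not executed. You correctly identify that one must expand the five-term recursion for $\sigma_{2i+2}$ from \emph{\Cref{EnyJMDefn}}, apply $\psi_{n,k}$, and verify through a case analysis on coincidences among $a_1,\dots,a_{i+2}$ that the terms collapse to the single diagonal-action expression; but calling this ``the main obstacle'' and asserting that it ``must combine cleanly'' is not a verification. Two further points deserve care if you carry this out. First, if you quote \emph{\Cref{JMAction}} for the action of $L_{2i}=X_{2i}+1$, note that within \cite{Eny12} Propositions 5.2 and 5.3 are established together, so to avoid any appearance of circularity you should either treat \emph{\Cref{JMAction}} as an independent input (legitimate within this paper, where both are citations) or, as you suggest, run a joint induction on $\psi_{n,k}(L_m)$ and $\psi_{n,k}(\sigma_m)$; that joint induction is well-founded, since each quantity at index $m$ depends only on quantities at strictly smaller indices. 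Second, the recursion for $\sigma_{2i}$ is only stated for $i\geq 2$, so the first genuine instance of the inductive step is $\sigma_4$ computed from $\sigma_2=1$ and $L_2=e_1$; it would be worth checking that instance explicitly against the diagram for $\sigma_4$ displayed in the paper before trusting the general pattern.
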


\begin{prop} \label{JMAction} \emph{{\cite[Proposition 5.3]{Eny12}}}
For any $\bm{a}\in[n]^{k}$, we have
\begin{align*}
\psi_{n,k}(X_{2i-1})(v_{\bm{a}}) &= \sum\limits_{\substack{b=1 \\ b\neq a_{i}}}^{n}(a_{i},b)(v_{a_{1}}\otimes \dots \otimes v_{a_{i-1}})\otimes v_{a_{i}}\otimes \dots \otimes v_{a_{k}} \\
 \psi_{n,k}(X_{2i})(v_{\bm{a}}) &= \sum\limits_{\substack{b=1 \\ b\neq a_{i}}}^{n}(a_{i},b)(v_{a_{1}}\otimes \dots \otimes v_{a_{i}})\otimes v_{a_{i+1}}\otimes \dots \otimes v_{a_{k}}
\end{align*}
for all $\bm{a}=(a_{1},\dots,a_{k})\in [n]^{k}$, and $i\in [k]$.
\end{prop}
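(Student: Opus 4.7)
The plan is to prove both formulas simultaneously by induction on $i$, using the skein-like relations (8)(i) and (8)(ii) from Proposition 2.3.2 together with the actions of the generators $e_j$ and $t_m$ established in Theorem 2.4.1 and Proposition 2.4.2. These skein relations express $X_{2i+1}$ in terms of $X_{2i-1}$ and similarly $X_{2i+2}$ in terms of $X_{2i}$, so they directly power the induction. For the base cases: $X_1 = z - 1 - L_1 = z - 1$, so $\psi_{n,k}(X_1)$ acts as multiplication by $n-1$, matching the claimed formula at $i = 1$ since the transposition $(a_1, b)$ acts trivially on the empty tensor and there are exactly $n - 1$ choices of $b \neq a_1$; and for $X_2 = L_2 - 1 = e_1 - 1$, a direct computation yields $\psi_{n,k}(X_2)(v_{\bm{a}}) = \sum_{b \neq a_1}(a_1, b)(v_{a_1}) \otimes v_{a_2} \otimes \dots \otimes v_{a_k}$.

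For the inductive step, assume the formulas hold for $X_{2i-1}$ and $X_{2i}$, and apply $\psi_{n,k}$ to both sides of (8)(i) (respectively (8)(ii)), computing the action of each summand on $v_{\bm{a}}$. The key observation driving the bookkeeping is the dichotomy that $\psi_{n,k}(t_{2i})(v_{\bm{a}})$ vanishes precisely when $a_i = a_{i+1}$, whereas $\psi_{n,k}(e_{2i})(v_{\bm{a}})$ is nonzero precisely in that case, so the analysis splits naturally on whether $a_i = a_{i+1}$. In the equal case, only summands beginning with $e_{2i}$ survive, and the $e_{2i-1}$ factor introduces a sum over a free index at position $i$ which, after the final $t_{2i}$ application, yields exactly the sum of transpositions on the first $i - 1$ tensor factors required by the formula. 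In the unequal case, the term $t_{2i} X_{2i-1} t_{2i}$ supplies the bulk of the contribution via the inductive hypothesis (conjugation by $(a_i, a_{i+1})$ reindexes the inductive sum), while the remaining summands combine to supply the single additional transposition $(a_i, a_{i+1})$ needed to complete the formula.

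The principal obstacle is the combinatorial bookkeeping: within each case, further subcases arise depending on equalities among $a_{i-1}, a_i, a_{i+1}$, and one must track the interaction between the free indices introduced by $e_{2i-1}$ and the transpositions delivered by $t_{2i}$. Once one verifies that all contributions reorganize to
\[ \sum_{\substack{b=1 \\ b \neq a_i}}^{n} (a_i, b)(v_{a_1} \otimes \dots \otimes v_{a_{i-1}}) \otimes v_{a_i} \otimes \dots \otimes v_{a_k}, \]
and carries out the parallel analysis for (8)(ii) to obtain $X_{2i+2}$, the induction closes and yields both formulas.
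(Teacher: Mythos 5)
The paper offers no proof of this proposition: it is quoted verbatim from Enyang \cite[Proposition 5.3]{Eny12}, so your argument supplies a proof where the paper gives none, and the strategy is sound. Your induction is anchored correctly ($X_{1}=z-1$ acts as multiplication by $n-1$, and $X_{2}=e_{1}-1$ gives the claimed sum directly), and it is propagated by the skein-like relations \emph{(8)(i)--(ii)} of \Cref{AffPrep}, which the paper establishes purely algebraically from \Cref{SkeinRels} — in particular independently of the tensor-space action — so there is no circularity in using them here. I checked the bookkeeping in both branches of your dichotomy: when $a_{i}\neq a_{i+1}$ the term $t_{2i}X_{2i-1}t_{2i}$ contributes $\sum_{c\neq a_{i+1}}(a_{i+1},c)$ acting on the first $i-1$ factors after the reindexing $c=(a_{i},a_{i+1})(b)$, the $-t_{2i}$ term cancels the stray $c=a_{i}$ summand, and $e_{2i}e_{2i-1}t_{2i}$ restores it acting on the first $i$ factors; when $a_{i}=a_{i+1}$ only $t_{2i}e_{2i-1}e_{2i}$ survives and the free index from $e_{2i-1}$ combined with the final $\varepsilon_{b,a_{i+1}}(b,a_{i+1})$ reproduces the target sum. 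The even-index step via \emph{(8)(ii)} closes the same way, with the $+\tau$-type term absorbing the excluded $b=a_{i}$ summand. The one caveat worth recording is that your proof still relies on \Cref{CrossingAction} (the action of the $t_{m}$), which the paper also only cites from \cite{Eny12}; so this is a valid derivation of \Cref{JMAction} from facts already available in the paper rather than an independent re-derivation of Enyang's results. Compared with what is presumably Enyang's original route — induction directly on the recursive \Cref{EnyJMDefn} in terms of $L_{i}$ and $\sigma_{i}$ — your version buys a cleaner case analysis, since the normalised generators make the vanishing conditions ($t$-terms die exactly when $e_{2i}$-terms survive) explicit.
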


The following result tells us that a relation holds in $\mathcal{A}_{2k}$ if and only if it holds under $\psi_{n,k}$ for all $n$. We will use this result to complete \emph{\Cref{AffPrep}}.

\begin{lem}\label{TrivKer}
Let $R_{1},R_{2}\in\mathcal{A}_{2k}$. If $\psi_{n,k}(R_{1})=\psi_{n,k}(R_{2})$ for all $n\geq 1$, then $R_{1}=R_{2}$.
\end{lem}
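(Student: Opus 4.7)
The plan is to reduce to polynomial identities. Setting $R := R_1 - R_2$, I want to show that if $\psi_{n,k}(R) = 0$ for every $n \geq 1$, then $R = 0$ in $\mathcal{A}_{2k}$.

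First I would expand $R$ in the canonical $\mathbb{C}$-basis described just before the statement of \Cref{SWD}: write
\[ R = \sum_{\alpha \in \Pi_{2k}} p_\alpha(z)\, \alpha, \]
where each $p_\alpha \in \mathbb{C}[z]$. Since $\Pi_{2k}$ is $\mathbb{C}[z]$-free as a basis of $\mathcal{A}_{2k}$ (viewed as a $\mathbb{C}[z]$-module), membership in the principal ideal $(z-n)$ of $\mathcal{A}_{2k}$ is determined coefficientwise: indeed, if $R = (z-n)\sum_\alpha q_\alpha(z)\,\alpha$, then $p_\alpha(z) = (z-n)q_\alpha(z)$ for every $\alpha$, so $p_\alpha(n) = 0$; conversely, if every $p_\alpha(n) = 0$, then $p_\alpha(z) = (z-n)q_\alpha(z)$ for some $q_\alpha \in \mathbb{C}[z]$ and thus $R \in (z-n)$.

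Next I would invoke the second assertion of \Cref{SWD}: whenever $n \geq 2k$, the kernel of $\psi_{n,k}$ equals the ideal $(z-n)$. Therefore, assuming $\psi_{n,k}(R) = 0$ for all $n \geq 1$, in particular for all $n \geq 2k$, the preceding paragraph gives $p_\alpha(n) = 0$ for every $\alpha \in \Pi_{2k}$ and every $n \geq 2k$.

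Since each $p_\alpha$ has infinitely many roots in $\mathbb{C}$, we conclude $p_\alpha \equiv 0$ for all $\alpha$, hence $R = 0$, i.e., $R_1 = R_2$. There is no real obstacle here; the only substantive input is the identification $\ker \psi_{n,k} = (z-n)$ for large $n$ from \Cref{SWD}, after which the argument is a standard polynomial-vanishing step.
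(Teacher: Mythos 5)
Your proposal is correct and follows essentially the same route as the paper: both arguments reduce to the fact from \Cref{SWD} that $\mathsf{Ker}(\psi_{n,k})=(z-n)$ for $n\geq 2k$, and then observe that $\bigcap_{n\geq 2k}(z-n)=0$. You simply make explicit the coefficientwise polynomial-vanishing argument that the paper leaves implicit in that last intersection.
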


\begin{proof}
Follows since
\[ R_{1}-R_{2} \in \bigcap_{n=1}^{\infty}\mathsf{Ker}(\psi_{n,k})\subset \bigcap_{n\geq 2k}(z-n)=0. \]

\end{proof}

\begin{lem}\label{BraidLikeRels}
The relations
\begin{itemize}
\item[(7)] (Braid-like relations)
\begin{itemize}
\item[(i)] $t_{2i-2}t_{2i}t_{2i-2} = t_{2i}t_{2i-2}t_{2i}(1-e_{2i-2})$
\item[(ii)] $t_{2i+1}t_{2i-1}t_{2i+1} = t_{2i-1}t_{2i+1}t_{2i-1}(1-e_{2i})$
\item[(iii)] $t_{2i-1}t_{2i}t_{2i-1} = t_{2i}-e_{2i-2}t_{2i}-t_{2i}e_{2i-2}$
\item[(iv)] $t_{2i}t_{2i-1}t_{2i} = t_{2i-1}-e_{2i}t_{2i-1}-t_{2i-1}e_{2i}$
\end{itemize}
\end{itemize}
hold in $\mathcal{A}_{2k}$, thus completing the proof of \emph{\Cref{AffPrep}}.
\end{lem}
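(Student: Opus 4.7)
The plan is to invoke \Cref{TrivKer}: since both sides of each identity in (7) lie in $\mathcal{A}_{2k}$, it is enough to check that the two sides have the same image under $\psi_{n,k}$ for every $n\geq 1$. Fix $\bm{a}=(a_{1},\dots,a_{k})\in[n]^{k}$ and evaluate both sides of each identity on $v_{\bm{a}}$, using the explicit formulas of \Cref{CrossingAction} for $\psi_{n,k}(t_{i})$ together with the action of $\psi_{n,k}(e_{j})$ from \Cref{SWD}.

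The computations rest on a locality principle read directly off those formulas: $\psi_{n,k}(t_{2j})$ leaves the tensor factors at positions $j,j+1,\dots,k$ unchanged and acts on positions $1,\dots,j-1$ by swapping the values $a_{j}$ and $a_{j+1}$, with overall scalar factor $\varepsilon_{a_{j},a_{j+1}}$; likewise $\psi_{n,k}(t_{2j+1})$ leaves positions $j+2,\dots,k$ unchanged and performs the same swap on positions $1,\dots,j+1$ with the same scalar. Because $t_{2j}$ and $t_{2j\pm 1}$ themselves never modify the entries at positions $j,j+1$, one can read off the transposition contributed by each factor in a composition by tracking which earlier factors have already altered those entries. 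Finally, $\psi_{n,k}(e_{2j})$ multiplies by $\delta_{a_{j},a_{j+1}}$, so $1-e_{2j}$ contributes the scalar $\varepsilon_{a_{j},a_{j+1}}$.

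Under this locality principle each of (i)--(iv) reduces to an identity of scalar-times-permutation operators on an initial segment of the tensor. For (i), the $\varepsilon$-factors on both sides coincide and force $a_{i-1},a_{i},a_{i+1}$ to be pairwise distinct in the common support; the extra factor $(1-e_{2i-2})$ on the right provides exactly the $\varepsilon_{a_{i-1},a_{i}}$ present on the left but absent from $t_{2i}t_{2i-2}t_{2i}$, and in that support the triple products $(a_{i-1},a_{i})(a_{i},a_{i+1})(a_{i-1},a_{i})$ and $(a_{i},a_{i+1})(a_{i-1},a_{i})(a_{i},a_{i+1})$ both collapse to $(a_{i-1},a_{i+1})$ acting on positions $1,\dots,i-2$, while position $i-1$ agrees on both sides because $(a_{i},a_{i+1})$ fixes $a_{i-1}$. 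Identity (ii) is the odd-indexed analogue and is verified identically. For (iii) and (iv), the triple product on the left acts, in the generic case of pairwise distinct $a_{i-1},a_{i},a_{i+1}$, as the single operator $t_{2i}$ (respectively $t_{2i-1}$); direct inspection in each of the non-generic regimes $a_{i-1}=a_{i}$, $a_{i}=a_{i+1}$, $a_{i-1}=a_{i+1}$ then shows that the additive corrections $-e_{2i-2}t_{2i}-t_{2i}e_{2i-2}$ (respectively $-e_{2i}t_{2i-1}-t_{2i-1}e_{2i}$) on the right exactly cancel the spurious contributions on the left. The main obstacle is purely the bookkeeping required to track how each $t$-factor alters the values that the next factor will read; the $\varepsilon$-factors kill precisely the degenerate branches where this dependency would otherwise produce a discrepancy.
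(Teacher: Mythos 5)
Your proposal is correct and follows essentially the same route as the paper: both reduce the four identities to computations on $V^{\otimes k}$ via \Cref{CrossingAction} and \Cref{SWD}, conclude with \Cref{TrivKer}, and organise the verification by the same case analysis on the coincidences among $a_{i-1},a_{i},a_{i+1}$ (pairwise distinct being the only surviving case for (i) and (ii), with the idempotent corrections absorbing the degenerate cases in (iii) and (iv)). One small caution: your ``locality principle'' as stated is slightly off for odd indices, since $\psi_{n,k}(t_{2j+1})$ does alter positions $j$ and $j+1$ (it swaps their values), but your subsequent prescription of tracking which earlier factors have modified the entries being read is exactly the bookkeeping the paper carries out (e.g.\ via the element $b=(a_{i},a_{i+1})(a_{i-1})$), so the argument goes through.
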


\begin{proof}
To ease notation, for any tuple $\bm{a}=(a_{1},\dots,a_{k})\in [n]^{k}$, we represent a simple tensor in $V^{\otimes k}$ by a word in the entries of $\bm{a}$, that is $a_{1}\cdots a_{k}:=v_{a_{1}}\otimes \cdots \otimes v_{a_{k}}$. We will prove these relations by showing that they hold under $\psi_{n,k}$ for all $n\geq 1$, and then employ \emph{\Cref{TrivKer}}. For each relation we will have to consider different cases based on the relative values of the entries $a_{i-1}$, $a_{i}$, and $a_{i+1}$, although most cases are trivial. Also note that $\psi_{n,k}(1-e_{2i})(\bm{a}) = \varepsilon_{a_{i},a_{i+1}}\bm{a}$.

\vspace{2mm}
\noindent
\emph{(7)(i)}: If $a_{i-1}=a_{i}$ or $a_{i}=a_{i+1}$, then it is easy to check that both $t_{2i-2}t_{2i}t_{2i-2}$ and $t_{2i}t_{2i-2}t_{2i}(1-e_{2i-2})$ will act on $\bm{a}$ by 0. Assume that $a_{i}\neq a_{i-1}=a_{i+1}$, then
\begin{align*}
\psi_{n,k}(t_{2i-2}t_{2i}&t_{2i-2})(\bm{a}) = \psi_{n,k}(t_{2i-2}t_{2i})\Big((a_{i-1},a_{i})(a_{1}\cdots a_{i-2})a_{i-1}\cdots a_{k}\Big) \\
&= \psi_{n,k}(t_{2i-2})\Big((a_{i},a_{i+1})(a_{i-1},a_{i})(a_{1}\cdots a_{i-2})a_{i}a_{i}\cdots a_{k}\Big) = 0.
\end{align*}
Similarly one can show that $\psi_{n,k}(t_{2i}t_{2i-2}t_{2i}(1-e_{2i-2}))(\bm{a})=0$ when $a_{i}\neq a_{i-1}=a_{i+1}$. Lastly assume that $a_{i-1},a_{i}$, and $a_{i+1}$ are pairwise distinct, in particular $\varepsilon_{a,b}=1$ for any $a,b\in\{a_{i-1},a_{i},a_{i+1}\}$. Then
\begin{align*}
\psi_{n,k}(t_{2i}t_{2i-2}t_{2i}(1-e_{2i-2}))(\bm{a}) &= \psi_{n,k}(t_{2i}t_{2i-2}t_{2i})(\bm{a}) \\
&=\psi_{n,k}(t_{2i}t_{2i-2})\Big( (a_{i},a_{i+1})(a_{1}\cdots a_{i-1})a_{i}\cdots a_{k}\Big) \\
&=\psi_{n,k}(t_{2i}t_{2i-2})\Big( (a_{i},a_{i+1})(a_{1}\cdots a_{i-2})a_{i-1}\cdots a_{k}\Big) \\
&=\psi_{n,k}(t_{2i})\Big( (a_{i-1},a_{i})(a_{i},a_{i+1})(a_{1}\cdots a_{i-2})a_{i-1}\cdots a_{k}\Big) \\
&=\Big( (a_{i},a_{i+1})(a_{i-1},a_{i})(a_{i},a_{i+1})(a_{1}\cdots a_{i-2})a_{i-1}\cdots a_{k}\Big) \\
&=\Big( (a_{i-1},a_{i})(a_{i},a_{i+1})(a_{i-1},a_{i})(a_{1}\cdots a_{i-2})a_{i-1}\cdots a_{k}\Big) \\
&=\psi_{n,k}(t_{2i-2})\Big((a_{i},a_{i+1})(a_{i-1},a_{i})(a_{1}\cdots a_{i-2})a_{i-1}\cdots a_{k}\Big) \\
&=\psi_{n,k}(t_{2i-2}t_{2i})\Big((a_{i-1},a_{i})(a_{1}\cdots a_{i-2})a_{i-1}\cdots a_{k}\Big) \\
&=\psi_{n,k}(t_{2i-2}t_{2i}t_{2i-2})(\bm{a})
\end{align*}

\vspace{2mm}
\noindent
\emph{(7)(ii)}: If $a_{i}=a_{i+1}$ then its clear that both $t_{2i+1}t_{2i-1}t_{2i+1}$ and $t_{2i-1}t_{2i+1}t_{2i-1}(1-e_{2i})$ act on $\bm{a}$ by 0. Assume that $a_{i}\neq a_{i+1}$ and $a_{i-1}\in\{a_{i},a_{i+1}\}$, then
\begin{align*}
\psi_{n,k}&(t_{2i+1}t_{2i-1}t_{2i+1})(\bm{a}) = \psi_{n,k}(t_{2i+1}t_{2i-1})\Big((a_{i},a_{i+1})(a_{1}\cdots a_{i-1})a_{i+1}a_{i}a_{i+2}\cdots a_{k}\Big) \\
&= \varepsilon_{b,a_{i+1}}\psi_{n,k}(t_{2i+1})\Big((b,a_{i+1})(a_{i},a_{i+1})(a_{1}\cdots a_{i-1})ba_{i}a_{i+2}\cdots a_{k}\Big) \\
&= \varepsilon_{b,a_{i}}\varepsilon_{b,a_{i+1}}\Big((b,a_{i})(b,a_{i+1})(a_{i},a_{i+1})(a_{1}\cdots a_{i-1})a_{i}ba_{i+2}\cdots a_{k}\Big)
\end{align*}
where $b=(a_{i},a_{i+1})(a_{i-1})$. Since $a_{i-1}\in\{a_{i},a_{i+1}\}$, we have that $\varepsilon_{b,a_{i}}\varepsilon_{b,a_{i+1}}=0$, and so $t_{2i+1}t_{2i-1}t_{2i+1}$ acts on $\bm{a}$ by 0. Similarly one can check that $t_{2i-1}t_{2i+1}t_{2i-1}(1-e_{2i})$ also acts on $\bm{a}$ by 0. Lastly assume that $a_{i-1},a_{i}$, and $a_{i+1}$ are pairwise distinct. Then
\begin{align*}
\psi_{n,k}(t_{2i-1}&t_{2i+1}t_{2i-1}(1-e_{2i}))(\bm{a}) = \psi_{n,k}(t_{2i-1}t_{2i+1}t_{2i-1})(\bm{a}) \\
&=\psi_{n,k}(t_{2i-1}t_{2i+1})\Big( (a_{i-1},a_{i})(a_{1}\cdots a_{i-2})a_{i}a_{i-1}a_{i+1}\cdots a_{k}\Big) \\
&=\psi_{n,k}(t_{2i-1})\Big( (a_{i-1},a_{i+1})(a_{i-1},a_{i})(a_{1}\cdots a_{i-2})a_{i}a_{i+1}a_{i-1}a_{i+2}\cdots a_{k}\Big) \\
&=(a_{i},a_{i+1})(a_{i-1},a_{i+1})(a_{i-1},a_{i})(a_{1}\cdots a_{i-2})a_{i+1}a_{i}a_{i-1}a_{i+2}\cdots a_{k} \\
&=(a_{i-1},a_{i})(a_{i-1},a_{i+1})(a_{i},a_{i+1})(a_{1}\cdots a_{i-2})a_{i+1}a_{i}a_{i-1}a_{i+2}\cdots a_{k} \\
&=\psi_{n,k}(t_{2i+1})\Big((a_{i-1},a_{i+1})(a_{i},a_{i+1})(a_{1}\cdots a_{i-2})a_{i+1}a_{i-1}a_{i}a_{i+2}\cdots a_{k}\Big) \\
&=\psi_{n,k}(t_{2i+1}t_{2i-1})\Big((a_{i},a_{i+1})(a_{1}\cdots a_{i-2})a_{i-1}a_{i+1}a_{i}a_{i+2}\cdots a_{k}\Big) \\
&=\psi_{n,k}(t_{2i+1}t_{2i-1}t_{2i+1})(\bm{a}) \\
\end{align*}

\vspace{2mm}
\noindent
\emph{(7)(iii)}: Assume $a_{i}=a_{i+1}$, then it is easy to check that $ t_{2i}-e_{2i-2}t_{2i}-t_{2i}e_{2i-2}$ acts on $\bm{a}$ by 0. Similarly
\begin{align*}
\psi_{n,k}&(t_{2i-1}t_{2i}t_{2i-1})(\bm{a}) = \varepsilon_{a_{i-1},a_{i}}\psi_{n,k}(t_{2i-1}t_{2i})\Big( (a_{i-1},a_{i})(a_{1}\cdots a_{i-2})a_{i}a_{i-1}a_{i+1}\cdots a_{k}\Big) \\
&= \varepsilon_{a_{i-1},a_{i+1}}\varepsilon_{a_{i-1},a_{i}}\psi_{n,k}(t_{2i-1})\Big( (a_{i-1},a_{i+1})(a_{i-1},a_{i})(a_{1}\cdots a_{i-2})a_{i-1}a_{i-1}a_{i+1}\cdots a_{k}\Big) \\
&= 0.
\end{align*}
Now assume $a_{i}\neq a_{i+1}$ and $a_{i-1}\in\{a_{i},a_{i+1}\}$. Then
\[ \psi_{n,k}(t_{2i}-e_{2i-2}t_{2i}-t_{2i}e_{2i-2})(\bm{a}) = (1-\delta_{(a_{i},a_{i+1})(a_{i-1}),a_{i}})(a_{i},a_{i+1})(a_{1}\cdots a_{i-1})a_{i}\cdots a_{k}. \]
In either case for $a_{i-1}=a_{i}$ or $a_{i-1}=a_{i+1}$, we have $\psi_{n,k}(t_{2i}-e_{2i-2}t_{2i}-t_{2i}e_{2i-2})(\bm{a})=0$. Also, from above we see that $\psi_{n,k}(t_{2i-1}t_{2i}t_{2i-1})(\bm{a})=0$ since the factor $\varepsilon_{a_{i-1},a_{i+1}}\varepsilon_{a_{i-1},a_{i}}$ comes into play. Lastly, assume that $a_{i-1}$, $a_{i}$, and $a_{i+1}$ are pairwise distinct, then it is easy to check that $\psi_{n,k}(e_{2i-2}t_{2i})(\bm{a})=\psi_{n,k}(t_{2i}e_{2i-2})(\bm{a})=0$. Also,
\begin{align*}
\psi_{n,k}(t_{2i-1}t_{2i}t_{2i-1})(\bm{a}) &= \psi_{n,k}(t_{2i-1}t_{2i})\Big( (a_{i-1},a_{i})(a_{1}\cdots a_{i-2})a_{i}a_{i-1}a_{i+1}\cdots a_{k}\Big) \\
&= \psi_{n,k}(t_{2i-1})\Big( (a_{i-1},a_{i+1})(a_{i-1},a_{i})(a_{1}\cdots a_{i-2})a_{i}a_{i-1}a_{i+1}\cdots a_{k}\Big) \\
&= (a_{i-1},a_{i})(a_{i-1},a_{i+1})(a_{i-1},a_{i})(a_{1}\cdots a_{i-2})a_{i-1}\cdots a_{k} \\
&= (a_{i},a_{i+1})(a_{1}\cdots a_{i-2})a_{i-1}\cdots a_{k} \\
&= \psi_{n,k}(t_{2i})(\bm{a}) = \psi_{n,k}(t_{2i}-e_{2i-2}t_{2i}-t_{2i}e_{2i-2})(\bm{a}).
\end{align*}

\vspace{2mm}
\noindent
\emph{(7)(iv)}: This relation can be proved by analogous computations to \emph{(7)(iii)} above.

\end{proof}


\section{Affine partition algebra}


\subsection{Definition of $\mathcal{A}_{2k}^{\text{aff}}(z)$ and basic results}


In this section we give the definition of the affine partition algebra $\mathcal{A}_{2k}^{\text{aff}}$ by generators and relations. We prove some basic properties about this algebra including the fact that the partition algebra $\mathcal{A}_{2k}$ is both a quotient and subalgebra of $\mathcal{A}_{2k}^{\text{aff}}$. We also show that the polynomial algebra $\mathbb{C}[x_{1},\dots,x_{2k}]$ is a subalgebra and that $\mathcal{H}_{k}\otimes \mathcal{H}_{k}$ is a quotient, where $\mathcal{H}_{k}$ is the degenerate affine Hecke algebra. We will prove a variety of relations in $\mathcal{A}_{2k}^{\text{aff}}$ including counterparts to the recursive definition of both the Jucys-Murphy elements and Enyang's generators.

\begin{defn} \label{APADefn}
We define the \emph{affine partition algebra} $\mathcal{A}_{2k}^{\text{aff}}$ to be the associative unitial $\mathbb{C}$-algebra with set of generators
\[ \{ \tau_{i}, e_{j}, x_{r}, z_{l} \ | \ 2\leq i \leq 2k-1, 1\leq j \leq 2k-1, r\in[2k], l\in\mathbb{Z}_{\geq 0}\} \]
and defining relations
\begin{itemize}
\item[(1)] (Involutions)
\begin{itemize}
\item[(i)] $\tau_{2i}^{2} = 1-e_{2i}$, for $i\in [k-1]$.
\item[(ii)] $\tau_{2i+1}^{2} = 1-e_{2i}$, for $i\in [k-1]$.
\end{itemize}
\item[(2)] (Braid relations)
\begin{itemize}
\item[(i)] $\tau_{2i+1}\tau_{2j} = \tau_{2j}\tau_{2i+1}$ for $j \neq i+1$.
\item[(ii)] $\tau_{2i+1}\tau_{2j+1} = \tau_{2j+1}\tau_{2i+1}$ for $j \neq i\pm1$.
\item[(iii)] $\tau_{2i}\tau_{2j} = \tau_{2j}\tau_{2i}$ for $j \neq i\pm1$.
\item[(iv)] $s_{i}s_{i+1}s_{i} = s_{i+1}s_{i}s_{i+1}$, for $i\in[k-2]$, where $s_{j} := \tau_{2j}\tau_{2j+1}+e_{2j}$.
\end{itemize}
\item[(3)] (Idempotent relations)
\begin{itemize}
\item[(i)] $e_{2i-1}^{2} = z_{0}e_{2i-1}$ for $i \in [k]$. 
\item[(ii)] $e_{2i}^{2} = e_{2i}$ for $i \in [k-1]$.
\item[(iii)] $\tau_{2i+1}e_{2i} = e_{2i}\tau_{2i+1} = 0$ for $i\in [k-1]$.
\item[(iv)] $\tau_{2i}e_{2i} = e_{2i}\tau_{2i} = 0$ for $i\in [k-1]$.
\item[(v)] $\tau_{2i}e_{2i-1}e_{2i+1} = \tau_{2i+1}e_{2i-1}e_{2i+1}$ for $i\in [k-1]$.
\item[(vi)] $e_{2i+1}e_{2i-1}\tau_{2i} = e_{2i+1}e_{2i-1}\tau_{2i+1}$ for $i\in [k-1]$.
\end{itemize}
\item[(4)] (Commutation relations)
\begin{itemize}
\item[(i)] $e_{i}e_{j} = e_{j}e_{i}$, if $|i-j| \geq 2$. 
\item[(ii)] $\tau_{2i-1}e_{2j-1} = e_{2j-1}\tau_{2i-1}$, if $j \neq i-1,i$.
\item[(iii)] $\tau_{2i-1}e_{2j} = e_{2j}\tau_{2i-1}$, if $j \neq i$.
\item[(iv)] $\tau_{2i}e_{2j-1} = e_{2j-1}\tau_{2i}$, if $j \neq i,i+1$.
\item[(v)] $\tau_{2i}e_{2j} = e_{2j}\tau_{2i}$, if $j \neq i-1$.
\end{itemize}
\item[(5)] (Contractions)
\begin{itemize}
\item[(i)] $e_{i}e_{i+1}e_{i} = e_{i}$ and $e_{i+1}e_{i}e_{i+1} = e_{i+1}$, for $i \in [2n-2]$.
\item[(ii)] $\tau_{2i}e_{2i-1}\tau_{2i} = \tau_{2i+1}e_{2i+1}\tau_{2i+1}$, for $i \in [k-1]$.
\item[(iii)] $\tau_{2i}e_{2i-2}\tau_{2i} = \tau_{2i-1}e_{2i}\tau_{2i-1}$, for $2\leq i \leq k-1$.
\end{itemize}
\item[(6)] (Affine Commuting Relations)
\begin{itemize}
\item[(i)] $x_{i}x_{j}=x_{j}x_{i}$ for all $i,j \in [2k]$
\item[(ii)] $\tau_{i}x_{j} = x_{j}\tau_{i}$ for $j \neq i-1, i, i+1$
\item[(iii)] $e_{i}x_{j}=x_{j}e_{i}$ for $j\neq i, i+1$
\end{itemize}
\item[(7)] (Braid-like relations)
\begin{itemize}
\item[(i)] $\tau_{2i-2}\tau_{2i}\tau_{2i-2} = \tau_{2i}\tau_{2i-2}\tau_{2i}(1-e_{2i-2})$.
\item[(ii)] $\tau_{2i+1}\tau_{2i-1}\tau_{2i+1} = \tau_{2i-1}\tau_{2i+1}\tau_{2i-1}(1-e_{2i})$.
\item[(iii)] $\tau_{2i-1}\tau_{2i}\tau_{2i-1} = \tau_{2i}-e_{2i-2}\tau_{2i}-\tau_{2i}e_{2i-2}$.
\item[(iv)] $\tau_{2i}\tau_{2i-1}\tau_{2i} = \tau_{2i-1}-e_{2i}\tau_{2i-1}-\tau_{2i-1}e_{2i}$.
\end{itemize}
\item[(8)] (Skein-like Relations)
\begin{itemize}
\item[(i)] $x_{2i+1}=\tau_{2i}x_{2i-1}\tau_{2i}+e_{2i}e_{2i-1}\tau_{2i}+\tau_{2i}e_{2i-1}e_{2i}-\tau_{2i}$.
\item[(ii)] $x_{2i+2}=\tau_{2i+1}x_{2i}\tau_{2i+1}+e_{2i}e_{2i+1}\tau_{2i+1}e_{2i+1}e_{2i}+\tau_{2i+1}$.
\item[(iii)] $x_{2i}=\tau_{2i}x_{2i}\tau_{2i}+e_{2i}e_{2i-1}\tau_{2i}+\tau_{2i}e_{2i-1}e_{2i}$.
\item[(iv)] $x_{2i+1}=\tau_{2i+1}x_{2i+1}\tau_{2i+1}+e_{2i}e_{2i+1}\tau_{2i+1}+\tau_{2i+1}e_{2i+1}e_{2i}$.
\end{itemize}
\item[(9)] (Anti-symmetry Relations)
\begin{itemize}
\item[(i)] $e_{i}(x_{i}-x_{i+1}) = 0$ for $i\in [2k-1]$.
\item[(ii)] $(x_{i}-x_{i+1})e_{i} = 0$ for $i\in [2k-1]$.
\end{itemize}
\item[(10)] (Bubble Relations)
\begin{itemize}
\item[(i)] $e_{1}x_{1}^{l}e_{1} = z_{l}e_{1}$, for all $l \in \mathbb{N}$.
\item[(ii)] $z_{l}$ is central for all $l \in \mathbb{Z}_{\geq 0}$.
\end{itemize}
\end{itemize}

\end{defn}

Note we have overloaded the symbols $e_{i}$ and $s_{j}$ as elements in $\mathcal{A}_{2k}$ and $\mathcal{A}_{2k}^{\text{aff}}$, however we will show shortly that the mapping $\mathcal{A}_{2k}\rightarrow \mathcal{A}_{2k}^{\text{aff}}$ via $z\mapsto z_{0}$, $e_{i}\mapsto e_{i}$, and $s_{j}\mapsto s_{j}$ realises the subalgebra $\langle e_{i}, s_{j}, z_{0} \rangle$ of $\mathcal{A}_{2k}^{\text{aff}}$ as an isomorphic copy of the partition algebra $\mathcal{A}_{2k}$. The defining relations above are those present in \emph{\Cref{AffPrep}}, except where the Jucys-Murphy elements $X_{i}$ have been replaced with the affine generators $x_{i}$, Enyang's generators $t_{j}$ have been replaced by new generators $\tau_{j}$, and the polynomials $z(z-1)^{l}$ have been replaced by central generators $z_{l}$. It is worth mentioning that the map $\mathcal{A}_{2k}\rightarrow \mathcal{A}_{2k}^{\text{aff}}$ given by $z\mapsto z_{0}$, $e_{i}\mapsto e_{i}$, and $t_{j}\mapsto \tau_{j}$ does not realise an algebra homomorphism. This is since $\tau_{2}$ is a non-trivial generator in $\mathcal{A}_{2k}^{\text{aff}}$, while $t_{2}$ is abscent in the presentation of \emph{\Cref{EnyPres}} since it equals $1-e_{2}$, hence the braid relation \emph{(E2)(iv)} is not respected under such a map. The subalgebra $\langle e_{i}, \tau_{j}, z_{0} \rangle$ of $\mathcal{A}_{2k}^{\text{aff}}$ is not isomorphic to the partition algebra, and in fact one can show that this subalgebra is infinite dimensional as an $\mathbb{C}[z_{0}]$-module (see \Cref{infinitesub} below).

Replacing the Jucys-Murphy elements with commuting variables, and introducing new central generators is very much analogous to the \lq affinization' process employed on other diagram algebras. In particular relations $(6)$ to $(10)$ (except $(7)$) are comparable to the relations in \cite[Section 4]{Naz96} which were chosen as the defining relations for the affine Wenzl algebra. The Skein-like relations $(8)$ tell us how the affine generators $x_{i}$ interact with the generators $\tau_{j}$ when they do not commute. These relations are to $\mathcal{A}_{2k}^{\text{aff}}$ what the defining relation $y_{i+1}=s_{i}y_{i}s_{i}+s_{i}$ is to the degenerate affine  Hecke algebra $\mathcal{H}_{k}$. In the next section we provide a projection of $\mathcal{A}_{2k}^{\text{aff}}$ onto a diagram algebra living within the Heisenberg category. Under this projection the Skein-like relations will correspond to moving a decoration over crossings.

We have also chosen to replace the generators $t_{j}$ with new generators $\tau_{j}$, which appears to be a departure from the \lq affinization' process. However, we will show that these elements are not needed to generate the algebra, that is $\mathcal{A}_{2k}^{\text{aff}}=\langle e_{i},s_{i},x_{i},z_{l}\rangle$. Hence to go from $\mathcal{A}_{2k}$ to $\mathcal{A}_{2k}^{\text{aff}}$ we have indeed just adjoined new affine and central genrators. The reason for letting the elements $\tau_{j}$ play the role of generators is to allow us to give a cleaner presentation which is more comparable to its counterparts within the literature. We have chosen to include the Braid-like relations $(7)$ as they tell us how none commuting $\tau_{j}$ generators interact in a manner which resembles the braid relations of the Coxeter generators $s_{i}$. These relations will allow us to give counterparts to the recursive definition of Enyang's generators (see \Cref{enyangrecursion} below). 

We begin by showing that the partition algebra is a quotient of the affine partition algebra. This follows naturally from its construction.

\begin{lem}
We have a surjective $\mathbb{C}$-algebra homomorphism $\mathsf{pr}:\mathcal{A}_{2k}^{\emph{aff}}\rightarrow \mathcal{A}_{2k}$, given on the generators by
\[ \mathsf{pr}(\tau_{i})=t_{i}, \hspace{7mm} \mathsf{pr}(e_{i})=e_{i}, \hspace{7mm} \mathsf{pr}(x_{i})=X_{i}, \hspace{7mm} \mathsf{pr}(z_{l}) = z(z-1)^{l}. \]
\end{lem}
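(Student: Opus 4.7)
The plan is essentially a verification argument: show that the stipulated assignment on generators respects each of the defining relations (1)--(10) of $\mathcal{A}_{2k}^{\text{aff}}$, and then observe that the image contains a generating set for $\mathcal{A}_{2k}$. Almost all the work has already been done in the preceding section, since the relations of \Cref{APADefn} were deliberately modelled on those proved to hold in $\mathcal{A}_{2k}$ in \Cref{AffPrep}.

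Concretely, I would invoke the universal property: since $\mathcal{A}_{2k}^{\text{aff}}$ is defined by generators and relations, to exhibit an algebra homomorphism from it into $\mathcal{A}_{2k}$ it suffices to check that the images $t_i$, $e_i$, $X_i$, and $z(z-1)^l$ satisfy all of relations (1)--(10). Relations (1)--(5) of \Cref{APADefn} are literally the relations (1)--(5) of \Cref{AffPrep}, which provide a presentation of $\mathcal{A}_{2k}$ in the $(e_i, t_i)$ generators, so they hold tautologically. Relations (6)--(9) of \Cref{APADefn} are exactly the additional relations (6)--(9) of \Cref{AffPrep}, verified there to hold in $\mathcal{A}_{2k}$. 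The Braid-like relations (7) of \Cref{APADefn} are the content of \Cref{BraidLikeRels}. Thus (1)--(9) all pull back without extra work.

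The only relation that requires a small independent computation is the bubble relation (10). For (10)(i), I use that $\mathsf{pr}(x_1) = X_1 = z - 1 - L_1 = z - 1$ (since $L_1 = 0$), so
\[
\mathsf{pr}(e_1 x_1^l e_1) = e_1 (z-1)^l e_1 = (z-1)^l e_1^2 = z(z-1)^l e_1 = \mathsf{pr}(z_l) \cdot \mathsf{pr}(e_1),
\]
matching what (10)(i) demands. For (10)(ii), the element $z(z-1)^l$ lies in $\mathbb{C}[z]$ and is therefore central in $\mathcal{A}_{2k}$, so $\mathsf{pr}(z_l)$ is central as required.

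Having established that $\mathsf{pr}$ is a well-defined algebra homomorphism, surjectivity is immediate. Taking $l=0$ in the definition gives $\mathsf{pr}(z_0) = z$, and together with $\mathsf{pr}(\tau_i) = t_i$ and $\mathsf{pr}(e_j) = e_j$ the image contains the generating set $\{z, t_i, e_j\}$ for $\mathcal{A}_{2k}$ supplied by \Cref{AffPrep}. I do not expect a serious obstacle here; the entire thrust of Section 2.3 was precisely to collect, in one place, the exact list of relations we would need so that this verification becomes routine.
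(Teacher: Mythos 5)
Your proposal is correct and follows essentially the same route as the paper, which likewise proves the lemma by matching the defining relations of $\mathcal{A}_{2k}^{\text{aff}}$ against the identically numbered relations of \Cref{AffPrep} (where the bubble relation (10)(i) you recompute is in fact already listed) and deduces surjectivity from $\langle t_i, e_j, z\rangle = \mathcal{A}_{2k}$.
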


\begin{proof}
This follows by comparing the defining relations with those of the same numbering in \emph{\Cref{AffPrep}}, and surjectivity follows since $\langle t_{i}, e_{j}, z \rangle = \mathcal{A}_{2k}$.

\end{proof}

Similar to the partition algebra, the affine partition algebra has a corresponding anti-automorphism which fixes the generators.

\begin{lem}
The mapping $*:\mathcal{A}_{2k}^{\emph{aff}}\rightarrow\mathcal{A}_{2k}^{\emph{aff}}$ which fixes the generators, extended $\mathbb{C}$-linearly, gives an anti-automorphism.
\end{lem}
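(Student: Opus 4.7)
The plan is to view $*$ as the linear extension of the reverse-monomial operation on the free unital $\mathbb{C}$-algebra $F$ on the generating set of \Cref{APADefn}. This is tautologically an anti-homomorphism of $F$ whose square is the identity. To descend to $\mathcal{A}_{2k}^{\text{aff}}$ I would verify that for every defining relation $R_1 = R_2$, the reversed equation $R_1^* = R_2^*$ is itself a consequence of the defining relations; granted this, the induced map is an anti-homomorphism of $\mathcal{A}_{2k}^{\text{aff}}$ whose square fixes each generator, hence is the identity, so the induced map is bijective.

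Most of the relations in \Cref{APADefn} are of two easy types. The \emph{palindromic} relations, in which both sides are invariant under monomial reversal, include the involutions (1), the commutation relations (2)(i)--(iii), (4), (6), the idempotent relations (3)(i)--(ii), the symmetric contractions (5)(ii)--(iii), all four Skein-like relations (8), and the braid-like relations (7)(iii)--(iv) (since $-e_{2i-2}\tau_{2i}-\tau_{2i}e_{2i-2}$ is its own reverse). The relations (3)(iii)--(iv), each of the form $AB = BA = 0$, are palindromic in the same sense. The \emph{dual pairs}, where a relation and its reverse both appear as defining relations, are (3)(v)--(vi), the two sub-relations of (5)(i), and (9)(i)--(ii). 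For (10)(i), the reversed statement $e_1 x_1^\ell e_1 = e_1 z_\ell$ reduces to (10)(i) via the centrality of $z_\ell$ asserted in (10)(ii). All of these cases are automatic.

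What I expect to be the main obstacle is verifying the reversed versions of the braid relation (2)(iv) and of the asymmetric braid-like relations (7)(i)--(ii). For (7)(i), the reversed equation is $\tau_{2i-2}\tau_{2i}\tau_{2i-2} = (1-e_{2i-2})\tau_{2i}\tau_{2i-2}\tau_{2i}$, which combined with (7)(i) amounts to $e_{2i-2}$ commuting with $\tau_{2i}\tau_{2i-2}\tau_{2i}$. My strategy is first to use (3)(iv) and the involution (1)(i) (noting $(1-e_{2i-2})\tau_{2i-2} = \tau_{2i-2} = \tau_{2i-2}(1-e_{2i-2})$) to rewrite (7)(i) in the palindromic form $\tau_{2i-2}\tau_{2i}\tau_{2i-2} = (1-e_{2i-2})\tau_{2i}\tau_{2i-2}\tau_{2i}(1-e_{2i-2})$, and then to exploit the contraction (5)(iii) to derive the explicit identities $\tau_{2i}e_{2i-2} = \tau_{2i-1}e_{2i}\tau_{2i-1}\tau_{2i}$ and $e_{2i-2}\tau_{2i} = \tau_{2i}\tau_{2i-1}e_{2i}\tau_{2i-1}$ (obtained by multiplying (5)(iii) by $\tau_{2i}$ on either side and simplifying via (1)(i), (4)(i), (3)(iv)); combining these should yield the desired one-sided cancellation. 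The relation (7)(ii) is handled in the same way with odd/even indices swapped. Finally, for (2)(iv), the reversed braid relation is a braid relation for $s_j^* = \tau_{2j+1}\tau_{2j}+e_{2j}$ in place of $s_j = \tau_{2j}\tau_{2j+1}+e_{2j}$; after establishing $s_j = s_j^*$ in $\mathcal{A}_{2k}^{\text{aff}}$ (via multiplying the identity $s_j\tau_{2j} = \tau_{2j}\tau_{2j+1}\tau_{2j}$ and its reverse by $\tau_{2j}$ and using $\tau_{2j}^2 = 1-e_{2j}$ together with (3)(iv)), the braid relation for $s_j^*$ reduces to the original (2)(iv), completing the verification.
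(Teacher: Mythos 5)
Your proposal is correct and follows essentially the same route as the paper: all defining relations except (7)(i)--(ii) are closed under monomial reversal, and for those two one shows that $e_{2i-2}$ commutes with $\tau_{2i}\tau_{2i-2}\tau_{2i}$ (resp.\ $e_{2i}$ with $\tau_{2i-1}\tau_{2i+1}\tau_{2i-1}$) using exactly the consequences of (5)(iii) that you identify. The only superfluous step is your treatment of (2)(iv), since $s_j=s_j^*$ is immediate from (2)(i) applied with $j=i$, which gives $\tau_{2j}\tau_{2j+1}=\tau_{2j+1}\tau_{2j}$.
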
 

\begin{proof}
All defining relations of \emph{\Cref{APADefn}} are symmetric in the generators except relations (7)(i) and (7)(ii). Thus it is clear that the result holds if we can show that $e_{2i-2}$ and $\tau_{2i}\tau_{2i-2}\tau_{2i}$ commute, and that $e_{2i}$ and $\tau_{2i-1}\tau_{2i+1}\tau_{2i-1}$ commute. For the former we have
\begin{align*}
\tau_{2i}\tau_{2i-2}\tau_{2i}e_{2i-2} &= \tau_{2i}\tau_{2i-2}\tau_{2i-1}e_{2i}\tau_{2i-1}\tau_{2i} \\
&= \tau_{2i}\tau_{2i-1}e_{2i}\tau_{2i-1}\tau_{2i-2}\tau_{2i} \\
&= e_{2i-2}\tau_{2i}\tau_{2i-2}\tau_{2i}
\end{align*}
where the first equaltiy can be deduced from relation (5)(iii) of \emph{\Cref{APADefn}}, the second relations follows since $\tau_{2i-2}$ commutes with $\tau_{2i-1}$ and $e_{2i}$, then the last equality again is deducable from relation (5)(iii) of \emph{\Cref{APADefn}}. Showing that $e_{2i}$ and $\tau_{2i-1}\tau_{2i+1}\tau_{2i-1}$ commute follows in a similar manner.

\end{proof}

We now seek to show that $\mathcal{A}_{2k}$ is the subalgebra $\langle s_{i}, e_{j}, z_{0} \rangle$ of $\mathcal{A}_{2k}^{\text{aff}}$. We first prove a few helpful relations.

\begin{lem} \label{AffMixRels}
The following relations hold:
\begin{itemize}
\item[(i)] $e_{2i}x_{2i} = e_{2i}e_{2i-1}\tau_{2i}$, and \ $x_{2i}e_{2i}=\tau_{2i}e_{2i-1}e_{2i}$
\item[(ii)] $e_{2i}x_{2i+1} = e_{2i}e_{2i+1}\tau_{2i+1}$, and \ $x_{2i+1}e_{2i}=\tau_{2i+1}e_{2i+1}e_{2i}$
\item[(ii)] $e_{2i}e_{2i-1}\tau_{2i}=e_{2i}e_{2i+1}\tau_{2i+1}$, and \ $\tau_{2i}e_{2i-1}e_{2i}=\tau_{2i+1}e_{2i+1}e_{2i}$
\end{itemize}
\end{lem}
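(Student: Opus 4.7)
The proof should proceed directly from the Skein-like relations (8), the idempotent relations (3), and the anti-symmetry relations (9) in \Cref{APADefn}, with the second half of each statement obtained by applying the anti-automorphism $*$ established just above.

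First, for part (i), the plan is to start from the Skein-like relation (8)(iii):
\[ x_{2i} = \tau_{2i}x_{2i}\tau_{2i} + e_{2i}e_{2i-1}\tau_{2i} + \tau_{2i}e_{2i-1}e_{2i} \]
and multiply on the left by $e_{2i}$. The key observation is that $e_{2i}\tau_{2i} = 0$ by relation (3)(iv), which kills the first and third terms on the right hand side. Combined with $e_{2i}^2 = e_{2i}$ from (3)(ii), this yields the desired identity $e_{2i}x_{2i} = e_{2i}e_{2i-1}\tau_{2i}$. The companion equation $x_{2i}e_{2i} = \tau_{2i}e_{2i-1}e_{2i}$ follows either by multiplying on the right instead (using $\tau_{2i}e_{2i}=0$) or by applying the anti-automorphism $*$ just established, which fixes each of $e_{2i}$, $x_{2i}$, $\tau_{2i}$, and $e_{2i-1}$.

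Part (ii) is obtained by the same strategy applied to the Skein-like relation (8)(iv):
\[ x_{2i+1} = \tau_{2i+1}x_{2i+1}\tau_{2i+1} + e_{2i}e_{2i+1}\tau_{2i+1} + \tau_{2i+1}e_{2i+1}e_{2i}, \]
multiplying on the left by $e_{2i}$ and using $e_{2i}\tau_{2i+1}=0$ from (3)(iii) to eliminate the first and third terms. The second identity follows again by applying $*$.

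Finally, part (iii) is immediate from (i), (ii), and the anti-symmetry relation (9)(i), namely $e_{2i}(x_{2i}-x_{2i+1}) = 0$, which gives $e_{2i}x_{2i} = e_{2i}x_{2i+1}$. Substituting the right hand sides obtained in (i) and (ii) yields the first identity in (iii), and the second is obtained analogously from relation (9)(ii) (or by applying $*$). No step here should present a real obstacle, since each reduction uses only a single Skein-like relation together with an annihilation $e_{2i}\tau_{2i}=0$ or $e_{2i}\tau_{2i+1}=0$; the lemma is essentially a bookkeeping consequence of the defining relations that prepares for the more substantive results to come.
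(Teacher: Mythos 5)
Your proposal is correct and follows essentially the same route as the paper: parts (i) and (ii) are obtained by multiplying the Skein-like relations (8)(iii) and (8)(iv) on the left by $e_{2i}$ and using the annihilations $e_{2i}\tau_{2i}=0$ and $e_{2i}\tau_{2i+1}=0$, with the companion identities via $*$, and part (iii) follows from (i), (ii) and the anti-symmetry relations (9). No gaps.
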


\begin{proof}
\emph{(i)}: Multiplying \emph{(8)(iii)} of \emph{\Cref{APADefn}} on the left by $e_{2i}$ gives
\[
e_{2i}x_{2i} = e_{2i}\tau_{2i}x_{2i}\tau_{2i}+e_{2i}e_{2i}e_{2i-1}\tau_{2i}+e_{2i}\tau_{2i}e_{2i-1}e_{2i}= e_{2i}e_{2i-1}\tau_{2i}
\]
since $e_{2i}\tau_{2i}=0$ and $e_{2i}e_{2i}=e_{2i}$. The relation $x_{2i}e_{2i}=\tau_{2i}e_{2i-1}e_{2i}$ follows by $*$.

\vspace{2mm}
\noindent
\emph{(ii)}: Multiplying \emph{(8)(iv)} of \emph{\Cref{APADefn}} on the left by $e_{2i}$ gives
\[
e_{2i}x_{2i+1} = e_{2i}\tau_{2i+1}x_{2i+1}\tau_{2i+1}+e_{2i}e_{2i}e_{2i+1}\tau_{2i+1}+e_{2i}\tau_{2i+1}e_{2i+1}e_{2i}= e_{2i}e_{2i+1}\tau_{2i+1}
\]
since $e_{2i}\tau_{2i+1}=0$ and $e_{2i}e_{2i}=e_{2i}$. The relation $x_{2i+1}e_{2i}=\tau_{2i+1}e_{2i+1}e_{2i}$ follows by $*$.

\vspace{2mm}
\noindent
\emph{(iii)}: By \emph{(9)(i), (ii)} of \emph{\Cref{APADefn}}, $e_{2i}x_{2i}=e_{2i}x_{2i+1}$ and $x_{2i}e_{2i}=x_{2i+1}e_{2i}$. So \emph{(i)} and \emph{(ii)} imply \emph{(iii)}.

\end{proof}

\begin{prop}
We have a injective $\mathbb{C}$-algebra homomorphism $\iota:\mathcal{A}_{2k} \rightarrow \mathcal{A}_{2k}^{\emph{aff}}$ given on the generators by $\iota(z)=z_{0}$, $\iota(s_{i}) = \tau_{2i}\tau_{2i+1}+e_{2i}$, and $\iota(e_{i})=e_{i}$.
\end{prop}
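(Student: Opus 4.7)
The plan is to verify that $\iota$ descends to a well-defined algebra homomorphism by checking each defining relation of the Halverson--Ram presentation \Cref{HRPres} is satisfied by $\iota(s_{i})=\tau_{2i}\tau_{2i+1}+e_{2i}$, $\iota(e_{j})=e_{j}$, $\iota(z)=z_{0}$, and then to extract injectivity by producing a left inverse. For the left inverse, composition with the projection $\mathsf{pr}$ of the previous lemma fixes each generator of $\mathcal{A}_{2k}$: indeed $\mathsf{pr}\circ\iota(z)=z$, $\mathsf{pr}\circ\iota(e_{j})=e_{j}$, and $\mathsf{pr}\circ\iota(s_{i})=t_{2i}t_{2i+1}+e_{2i}=s_{i}$, the last identity being the $t$-version of Enyang's braid definition (precisely \Cref{AffPrep}(2)(iv); the case $j=1$ also works since $t_{2}=1-e_{2}$). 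Hence $\mathsf{pr}\circ\iota=\mathrm{id}_{\mathcal{A}_{2k}}$, forcing $\iota$ to be injective.

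The bulk of the proof is therefore the well-definedness check. Three easy observations reduce most cases to a one-line calculation: \emph{(a)} by \Cref{APADefn}(2)(i) applied with $j=i$, the elements $\tau_{2i}$ and $\tau_{2i+1}$ commute, so $s_{i}$ may be written equivalently as $\tau_{2i+1}\tau_{2i}+e_{2i}$; \emph{(b)} by the involutions (1)(i)(ii), $\tau_{2i}^{2}=\tau_{2i+1}^{2}=1-e_{2i}$; and \emph{(c)} by the idempotent relations (3)(iii)(iv), both $\tau_{2i}$ and $\tau_{2i+1}$ annihilate $e_{2i}$ from either side. These suffice immediately to verify
\[ s_{i}^{2}=\tau_{2i}\tau_{2i+1}^{2}\tau_{2i}+e_{2i}^{2}=\tau_{2i}(1-e_{2i})\tau_{2i}+e_{2i}=\tau_{2i}^{2}+e_{2i}=1, \]
which is (HR1)(i). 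The braid relation (HR1)(iii) is literally \Cref{APADefn}(2)(iv); the far commutations (HR1)(ii), (HR3)(iii)--(v) follow routinely from (2) and (4); the idempotent relations (HR2)(i)--(iii) and contractions (HR4) follow from (3)(i)(ii), observation (c), and (5)(i); and (HR2)(iv) collapses via a short computation combining (3)(v)(vi) with (b) and contraction.

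The main obstacle is the pair of contraction identities (HR3)(vi) $s_{i}e_{2i-1}s_{i}=e_{2i+1}$ and (HR3)(vii) $s_{i}e_{2i-2}s_{i}=s_{i-1}e_{2i}s_{i-1}$. For (HR3)(vi) I would expand $s_{i}e_{2i-1}s_{i}$ by writing $s_{i}=\tau_{2i+1}\tau_{2i}+e_{2i}$ on the left and $s_{i}=\tau_{2i}\tau_{2i+1}+e_{2i}$ on the right, obtaining four summands. The diagonal term $\tau_{2i+1}\tau_{2i}e_{2i-1}\tau_{2i}\tau_{2i+1}$ is collapsed via the contraction (5)(ii) $\tau_{2i}e_{2i-1}\tau_{2i}=\tau_{2i+1}e_{2i+1}\tau_{2i+1}$ together with (b) to give $(1-e_{2i})e_{2i+1}(1-e_{2i})$; the two mixed terms are simplified via \Cref{AffMixRels}(iii) and (b) to $e_{2i+1}e_{2i}-e_{2i}$ and $e_{2i}e_{2i+1}-e_{2i}$; the last term is $e_{2i}e_{2i-1}e_{2i}=e_{2i}$ by (5)(i). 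All contributions involving $e_{2i}$, $e_{2i}e_{2i+1}$, and $e_{2i+1}e_{2i}$ cancel and only $e_{2i+1}$ survives. Relation (HR3)(vii) is handled by the same template using (5)(iii) in place of (5)(ii) and expanding $s_{i-1}=\tau_{2i-2}\tau_{2i-1}+e_{2i-2}$. The careful bookkeeping in these two cancellations is the only genuine hurdle; every other Halverson--Ram relation reduces to one or two applications of the defining relations of $\mathcal{A}_{2k}^{\text{aff}}$.
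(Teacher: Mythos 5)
Your proposal is correct and follows essentially the same route as the paper: verify each Halverson--Ram relation under $\iota$ (with the hard cases (HR3)(vi) and (HR3)(vii) handled by exactly the same combination of the contractions (5)(ii)/(5)(iii), \Cref{AffMixRels}(iii), and the involutions), and then deduce injectivity from $\mathsf{pr}\circ\iota=\mathrm{id}$. Your explicit remark that the $j=1$ case of $\mathsf{pr}\circ\iota(s_{1})=s_{1}$ works because $t_{2}=1-e_{2}$ and $e_{2}t_{3}=0$ is a small but welcome clarification that the paper leaves implicit.
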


\begin{proof}
We first prove that $\iota$ is a homomorphism. To do this we show that each of the defining relations of $\mathcal{A}_{2k}$ given in \emph{\Cref{HRPres}} is respected under $\iota$. We only check the relations involving $s_{i}$ since the others are accounted for in the definition of $\mathcal{A}_{2k}^{\text{aff}}$.

\vspace{2mm}
\noindent
\emph{(HR1)(i)}:
\[
\iota(s_{i}^{2}) = (\tau_{2i}\tau_{2i+1}+e_{2i})(\tau_{2i}\tau_{2i+1}+e_{2i})
= \tau_{2i}^{2}\tau_{2i+1}^{2}+e_{2i}
= (1-e_{2i})(1-e_{2i})+e_{2i}
= 1-2e_{2i}+2e_{2i} =1
\]
where we used \emph{(1), (2)(i), (3)(ii), (3)(iii)}, and \emph{(3)(iv)}.

\vspace{2mm}
\noindent
\emph{(HR1)(ii)}: This holds by relations \emph{(2)(i), (2)(ii), (2)(iii)} and \emph{(4)}.

\vspace{2mm}
\noindent
\emph{(HR1)(iii)}: This is precisely \emph{(2)(iv)}.

\vspace{2mm}
\noindent
\emph{(HR2)(iii)}: 
\[ \iota(e_{2i}s_{i}) = e_{2i}(\tau_{2i}\tau_{2i+1}+e_{2i}) = e_{2i} = \iota(e_{2i}) \]
where we used \emph{(3)(iii)} and \emph{(3)(ii)}. Similarly we have $\iota(s_{i}e_{2i})=\iota(e_{2i})$.

\vspace{2mm}
\noindent
\emph{(HR2)(iv)}:
\begin{align*}
\iota(s_{i}e_{2i-1}e_{2i+1}) &= (\tau_{2i}\tau_{2i+1}+e_{2i})e_{2i-1}e_{2i+1} \\
&= \tau_{2i}\tau_{2i+1}e_{2i-1}e_{2i+1} + e_{2i}e_{2i-1}e_{2i+1} \\
&= \tau_{2i}^{2}e_{2i-1}e_{2i+1} + e_{2i}e_{2i-1}e_{2i+1} \\
&= e_{2i-1}e_{2i+1}-e_{2i}e_{2i-1}e_{2i+1}+e_{2i}e_{2i-1}e_{2i+1} \\
&= e_{2i-1}e_{2i+1} = \iota(e_{2i-1}e_{2i+1})
\end{align*}
where the third equality follows from \emph{(3)(v)} and the forth from \emph{(1)(i)}. Similarly we have $\iota(e_{2i-1}e_{2i+1}s_{i})=\iota(e_{2i-1}e_{2i+1})$.

\vspace{2mm}
\noindent
\emph{(HR3)(iv)}: Follows from commuting relations \emph{(4)(i), (4)(ii)}, and \emph{(4)(iv)}. 

\vspace{2mm}
\noindent
\emph{(HR3)(v)}: Follows from commuting relations \emph{(4)(i), (4)(iii)}, and \emph{(4)(v)}. 

\vspace{2mm}
\noindent
\emph{(HR3)(vi)}: 
\begin{align*}
\iota(s_{i}e_{2i-1}s_{i}) &= (\tau_{2i}\tau_{2i+1}+e_{2i})e_{2i-1}(\tau_{2i}\tau_{2i+1}+e_{2i}) \\
&= \tau_{2i+1}\tau_{2i}e_{2i-1}\tau_{2i}\tau_{2i+1}+\tau_{2i+1}\tau_{2i}e_{2i-1}e_{2i}+e_{2i}e_{2i-1}\tau_{2i}\tau_{2+1}+e_{2i} \\
&= \tau_{2i+1}^{2}e_{2i+1}\tau_{2i+1}^{2}+\tau_{2i+1}^{2}e_{2i+1}e_{2i}+e_{2i}e_{2i+1}\tau_{2i+1}^{2}+e_{2i} \\
&= (1-e_{2i})e_{2i+1}(1-e_{2i})+e_{2i+1}e_{2i}-e_{2i}+e_{2i}e_{2i+1}-e_{2i}+e_{2i} \\
&= e_{2i+1}-e_{2i}e_{2i+1}-e_{2i+1}e_{2i}+e_{2i}+e_{2i+1}e_{2i}-e_{2i}+e_{2i}e_{2i+1} \\
&= e_{2i+1} = \iota(e_{2i+1})
\end{align*}
where the third equality follows by \emph{\Cref{AffMixRels} (iii)} and \emph{(5)(ii)}, and the forth from $\tau_{2i+1}^{2}=1-e_{2i}$. 

\vspace{2mm}
\noindent
\emph{(HR3)(vii)}:
\begin{align*}
\iota(s_{i}e_{2i-2}s_{i}) &= (\tau_{2i+1}\tau_{2i}+e_{2i})e_{2i-2}(\tau_{2i}\tau_{2i+1}+e_{2i}) \\
&= \tau_{2i}\tau_{2i+1}e_{2i-2}\tau_{2i+1}\tau_{2i}+\tau_{2i+1}\tau_{2i}e_{2i-2}e_{2i}+e_{2i}e_{2i-2}\tau_{2i}\tau_{2i+1}+e_{2i}e_{2i-2}e_{2i} \\
&= \tau_{2i}\tau_{2i+1}^{2}e_{2i-2}\tau_{2i}+e_{2i}e_{2i-2} \\
&= \tau_{2i}e_{2i-2}\tau_{2i}+e_{2i}e_{2i-2} \\
&= \tau_{2i-1}e_{2i}\tau_{2i-1}+e_{2i}e_{2i-2}
\end{align*}
where the third equality follows since $\tau_{2i+1}$ and $e_{2i}$ commute with $e_{2i-2}$, $e_{2i}^{2}=e_{2i}$, and $e_{2i}\tau_{2i}=\tau_{2i}e_{2i}=0$. We also have
\begin{align*}
\iota(s_{i-1}e_{2i}s_{i-1}) &= (\tau_{2i-2}\tau_{2i-1}+e_{2i-2})e_{2i}(\tau_{2i-2}\tau_{2i-1}+e_{2i-2}) \\
&= \tau_{2i-1}\tau_{2i-2}e_{2i}\tau_{2i-2}\tau_{2i-1}+\tau_{2i-1}\tau_{2i-2}e_{2i}e_{2i-2}+e_{2i-2}e_{2i}\tau_{2i-2}\tau_{2i-1}+e_{2i-2}e_{2i}e_{2i-2} \\
&= \tau_{2i-1}\tau_{2i-2}^{2}e_{2i}\tau_{2i-1}+e_{2i}e_{2i-2} \\
&= \tau_{2i-1}e_{2i}\tau_{2i-1}-\tau_{2i-1}e_{2i-2}e_{2i}\tau_{2i-1}+e_{2i}e_{2i-2} \\
&= \tau_{2i-1}e_{2i}\tau_{2i-1}+e_{2i}e_{2i-2}
\end{align*}
where the third equality follows since $\tau_{2i-2}$ and $e_{2i-2}$ commute with $e_{2i}$, $e_{2i-2}^{2}=e_{2i-2}$, and $e_{2i-2}\tau_{2i-2}=\tau_{2i-2}e_{2i-2}=0$. The forth equality follows since $\tau_{2i-1}e_{2i-2}=0$. Comparing to above, we see that $\iota(s_{i}e_{2i-2}s_{i})=\iota(s_{i-1}e_{2i}s_{i-1})$.

\vspace{2mm}
Hence we have shown that $\iota$ is indeed an algebra homomorphism. For injectivity, note that $\mathsf{pr}\circ\iota = \mathsf{id}$ where $\mathsf{id}:\mathcal{A}_{2k}\rightarrow\mathcal{A}_{2k}$ is the identity morphism. Thus $\iota$ has a left inverse, and so is injective.

\end{proof}

Therefore the partition algebra $\mathcal{A}_{2k}$ is both a subalgebra and quotient of the affine partition algebra $\mathcal{A}_{2k}^{\text{aff}}$. Also note that restricting $*$ down to the partition algebra coincides with the anti-automorphism of flipping a diagram. We now seek to give affine counterparts to the recursive definition of the Jucys-Murphy elements given in \emph{\Cref{EnyJMDefn}}. 

\begin{lem} \label{AffRecRels}
The following relations hold in $\mathcal{A}_{2k}^{\emph{aff}}$:
\begin{itemize}
\item[(i)] $x_{2i+1} = s_{i}x_{2i-1}s_{i} + x_{2i}e_{2i} + e_{2i}x_{2i} - x_{2i-1}e_{2i} - \tau_{2i}$
\item[(ii)] $x_{2i+2} = s_{i}x_{2i}s_{i} - s_{i}x_{2i}e_{2i} - e_{2i}x_{2i}s_{i} + e_{2i}x_{2i}e_{2i+1}e_{2i} + \tau_{2i+1}$
\end{itemize}
\end{lem}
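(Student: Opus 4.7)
The plan is to derive both identities directly from the Skein-like relations (8) of \Cref{APADefn}, by converting conjugation by $\tau_{2i}$ or $\tau_{2i+1}$ into conjugation by the Coxeter element $s_i = \tau_{2i}\tau_{2i+1} + e_{2i}$, with the resulting error terms absorbed using \Cref{AffMixRels}. Throughout I would rely on: the commutation relations (6) for the $x_j$; the squares $\tau_{2i}^2 = \tau_{2i+1}^2 = 1 - e_{2i}$ from (1); the annihilation identities $e_{2i}\tau_{2i} = \tau_{2i}e_{2i} = e_{2i}\tau_{2i+1} = \tau_{2i+1}e_{2i} = 0$ from (3); and, crucially, the commutativity $\tau_{2i}\tau_{2i+1} = \tau_{2i+1}\tau_{2i}$, which is the $j = i$ instance of (2)(i), since the hypothesis $j \neq i+1$ does not exclude $j = i$.

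For part (i), I would start from (8)(i), rewrite its two middle terms as $e_{2i}x_{2i}$ and $x_{2i}e_{2i}$ via \Cref{AffMixRels}(i), and aim to identify $\tau_{2i}x_{2i-1}\tau_{2i}$ with $s_i x_{2i-1} s_i - x_{2i-1}e_{2i}$. Expanding $(\tau_{2i}\tau_{2i+1} + e_{2i})\,x_{2i-1}\,(\tau_{2i}\tau_{2i+1} + e_{2i})$ and using that $x_{2i-1}$ commutes with both $\tau_{2i+1}$ (by (6)(ii)) and $e_{2i}$ (by (6)(iii)), the two cross terms become multiples of $\tau_{2i+1}e_{2i}$ and $e_{2i}\tau_{2i+1}$ and hence vanish; the leading term collapses to $\tau_{2i}x_{2i-1}\tau_{2i}$ after sliding $\tau_{2i+1}$ past $x_{2i-1}$ and $\tau_{2i}$ (via the commutativity just noted), applying $\tau_{2i+1}^2 = 1 - e_{2i}$, and killing the residual $\tau_{2i}e_{2i}$; and the corner $e_{2i}x_{2i-1}e_{2i}$ survives as $x_{2i-1}e_{2i}$. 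Substituting back into (8)(i) yields (i).

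For part (ii), the plan is to massage $s_i x_{2i} s_i - s_i x_{2i}e_{2i} - e_{2i}x_{2i}s_i$ into the quantity $\tau_{2i+1}x_{2i}\tau_{2i+1}$ appearing in (8)(ii). A direct expansion with $s_i = \tau_{2i}\tau_{2i+1} + e_{2i}$ shows that all mixed terms cancel except for $\tau_{2i}\tau_{2i+1}\,x_{2i}\,\tau_{2i}\tau_{2i+1}$ and a single $e_{2i}x_{2i}e_{2i}$ contribution, and the latter vanishes because $e_{2i}x_{2i} = e_{2i}e_{2i-1}\tau_{2i}$ (by \Cref{AffMixRels}(i)) and $\tau_{2i}e_{2i} = 0$. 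Using $\tau_{2i+1}\tau_{2i} = \tau_{2i}\tau_{2i+1}$, the remainder factors as $\tau_{2i+1}(\tau_{2i}x_{2i}\tau_{2i})\tau_{2i+1}$; then (8)(iii) combined with \Cref{AffMixRels}(i) gives $\tau_{2i}x_{2i}\tau_{2i} = x_{2i} - e_{2i}x_{2i} - x_{2i}e_{2i}$, and the conjugating $\tau_{2i+1}$ kills the two correction terms via $\tau_{2i+1}e_{2i} = e_{2i}\tau_{2i+1} = 0$, leaving precisely $\tau_{2i+1}x_{2i}\tau_{2i+1}$. Finally, \Cref{AffMixRels}(i) and (iii) rewrite $e_{2i}x_{2i}e_{2i+1}e_{2i} = e_{2i}e_{2i+1}\tau_{2i+1}e_{2i+1}e_{2i}$, matching the remaining term in (8)(ii), and (ii) follows.

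The main subtlety to flag is the commutativity $\tau_{2i}\tau_{2i+1} = \tau_{2i+1}\tau_{2i}$: it is not singled out as its own listed relation, but it is exactly what permits the swap between conjugation by $s_i$ and conjugation by $\tau_{2i+1}$ alone. Once this is available, the remainder of the argument is a routine application of the $e_{2i}$-annihilation relations, the squares for $\tau_{2i\pm 1}$, and the identifications already established in \Cref{AffMixRels}.
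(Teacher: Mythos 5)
Your proposal is correct and follows essentially the same route as the paper: both arguments hinge on the substitution $s_{i}=\tau_{2i}\tau_{2i+1}+e_{2i}$ together with the commutativity $\tau_{2i}\tau_{2i+1}=\tau_{2i+1}\tau_{2i}$ (which the paper also uses, as the $j=i$ case of relation (2)(i) of \Cref{APADefn}), the annihilation relations $\tau_{2i}e_{2i}=e_{2i}\tau_{2i}=\tau_{2i+1}e_{2i}=e_{2i}\tau_{2i+1}=0$, and \Cref{AffMixRels} to pass between the Skein-like relations (8) and the stated recursions. The only cosmetic difference is that for part (i) you expand $s_{i}x_{2i-1}s_{i}$ directly and so never need relation (8)(iv), whereas the paper conjugates (8)(i) by $\tau_{2i+1}$ and then invokes (8)(iv) to unwind $\tau_{2i+1}x_{2i+1}\tau_{2i+1}$; the two computations are the same identity read in opposite directions.
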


\begin{proof}
\emph{(i)}: Multiplying on the left and right of equation \emph{(8)(i)} in \emph{\Cref{APADefn}} by $\tau_{2i+1}$ gives
\begin{align*}
\tau_{2i+1}x_{2i+1}\tau_{2i+1} &= \tau_{2i+1}\tau_{2i}x_{2i-1}\tau_{2i}\tau_{2i+1}-\tau_{2i+1}\tau_{2i}\tau_{2i+1} \\
&= (s_{i}-e_{2i})x_{2i-1}(s_{i}-e_{2i})-(s_{i}-e_{2i})\tau_{2i+1} \\
&= s_{i}x_{2i-1}s_{i}-e_{2i}x_{2i-1}s_{i}-s_{i}x_{2i-1}e_{2i}+x_{2i+1}-\tau_{2i} \\
&= s_{i}x_{2i-1}s_{i}-x_{2i-1}e_{2i}-\tau_{2i} 
\end{align*}
where, in the first equality we used the fact that $\tau_{2i+1}e_{2i}=e_{2i}\tau_{2i+1}=0$, the second equaltiy we used the substitution $\tau_{2i}\tau_{2i+1}=\tau_{2i+1}\tau_{2i}=s_{i}-e_{2i}$, and the last equality we used the fact that $e_{2i}$ and $x_{2i-1}$ commute. Now applying \emph{(8)(iv)} from \emph{\Cref{APADefn}} to the left hand side of above, we obtain
\[ x_{2i+1}-e_{2i}e_{2i+1}\tau_{2i+1}-\tau_{2i+1}e_{2i+1}e_{2i} = s_{i}x_{2i-1}s_{i}-x_{2i-1}e_{2i}-\tau_{2i}. \]
By applying \emph{\Cref{AffMixRels} (ii)}, and rearranging, we arrive at \emph{(i)}. Item \emph{(ii)} is proved in an analogous manner were we instead employ relations \emph{(8)(ii)} and \emph{(8)(iii)} from \emph{\Cref{APADefn}}.

\end{proof}

By rearranging the relations in the above Lemma in terms of the generators $\tau_{2i}$ and $\tau_{2i+1}$, we immediately obtain the following:

\begin{cor}
We have that $\mathcal{A}_{2k}^{\emph{aff}}=\langle e_{i},s_{j},x_{k},z_{l}\rangle_{i,j,k,l}$.
\begin{flushright} $\square$ \end{flushright}
\end{cor}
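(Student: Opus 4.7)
The plan is to show that every generator $\tau_i$ in Definition 3.1.1 can be expressed as an element of the subalgebra $\langle e_i, s_j, x_k, z_l\rangle$, after which the result follows immediately from the fact that the original generating set is $\{\tau_i, e_j, x_r, z_l\}$. The key ingredient is the preceding lemma (AffRecRels), which already packages together the information we need; the only work is to rearrange.

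First, I would solve for $\tau_{2i}$ in part (i) of Lemma 3.1.5, obtaining
\[
\tau_{2i} \;=\; s_{i}x_{2i-1}s_{i} + x_{2i}e_{2i} + e_{2i}x_{2i} - x_{2i-1}e_{2i} - x_{2i+1}
\]
for each $i \in [k-1]$. This covers the even-indexed generators $\tau_2,\tau_4,\dots,\tau_{2k-2}$ and exhibits each of them as a polynomial expression in the symbols $e_j, s_l, x_m$. Next, I would rearrange part (ii) of the same lemma to solve for $\tau_{2i+1}$:
\[
\tau_{2i+1} \;=\; x_{2i+2} - s_{i}x_{2i}s_{i} + s_{i}x_{2i}e_{2i} + e_{2i}x_{2i}s_{i} - e_{2i}x_{2i}e_{2i+1}e_{2i},
\]
again for $i \in [k-1]$, which covers $\tau_3,\tau_5,\dots,\tau_{2k-1}$.

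Together, these two identities exhibit every $\tau_i$ with $2 \leq i \leq 2k-1$ as an element of $\langle e_i, s_j, x_k\rangle$. Since $\mathcal{A}_{2k}^{\text{aff}}$ is by definition generated by $\{\tau_i, e_j, x_r, z_l\}$, eliminating the $\tau_i$'s in favour of the above expressions yields $\mathcal{A}_{2k}^{\text{aff}} = \langle e_i, s_j, x_k, z_l\rangle$. There is no real obstacle here: the content of the corollary is entirely absorbed into Lemma 3.1.5, and the proof is simply the observation that both of its identities can be solved for a single $\tau$ generator with coefficient $\pm 1$.
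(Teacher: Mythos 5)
Your proposal is correct and is exactly the paper's argument: the authors state that the corollary follows "by rearranging the relations in the above Lemma in terms of the generators $\tau_{2i}$ and $\tau_{2i+1}$," which is precisely the two explicit rearrangements you write down. The index ranges also check out, since $i\in[k-1]$ in both identities covers all generators $\tau_{2},\dots,\tau_{2k-1}$.
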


Recall that the degenerate affine  Hecke algebra $\mathcal{H}_{k}$ is the $\mathbb{C}$-algebra given as a vector space by the tensor prooduct $\mathbb{C}[y_{1},\dots,y_{k}]\otimes\mathbb{C}S(k)$, where $\mathbb{C}[y_{1},\dots,y_{k}]$ is the polynomial algebra in commuting variables $y_{1},\dots,y_{k}$. The defining relations of $\mathcal{H}_{k}$ are such that $\mathbb{C}[y_{1},\dots,y_{k}]$ and $\mathbb{C}S(k)$ are subalgebras, and
\begin{align*}
s_{i}y_{j} &= y_{j}s_{i}, \hspace{12.7mm} \text{ for all } j\neq i,i+1, \\
y_{i+1} &= s_{i}y_{i}s_{i}+s_{i}, \hspace{3mm} \text{ for each } i\in[k-1].
\end{align*} 
It turns out that $\mathcal{H}_{k}\otimes\mathcal{H}_{k}$ is a quotient of $\mathcal{A}_{2k}^{\text{aff}}$. 

\begin{prop}
Let $\bm{\lambda}=(\lambda_{l})_{l=0}^{\infty}$ be any sequence of constants in $\mathbb{C}$. Then we have a surjective $\mathbb{C}$-algebra homomorphism $f_{\bm{\lambda}}:\mathcal{A}_{2k}^{\emph{aff}}\rightarrow \mathcal{H}_{k}\otimes \mathcal{H}_{k}$ given on the generators by
\begin{center}
\begin{itemize}
\begin{minipage}{0.52\linewidth}
    \item[ ] \hspace{17mm} $f_{\bm{\lambda}}(\tau_{2i+1}) = s_{i}\otimes 1$,
    \item[ ] \hspace{17mm} $f_{\bm{\lambda}}(\tau_{2i}) = 1 \otimes s_{i}$,
    \item[ ] \hspace{17mm} $f_{\bm{\lambda}}(e_{i})=0$,
\end{minipage}
\begin{minipage}{0.4\linewidth}
    \item[ ] $f_{\bm{\lambda}}(x_{2i-1}) = -1\otimes y_{i}$,
    \item[ ] $f_{\bm{\lambda}}(x_{2i}) = y_{i}\otimes 1$,
    \item[ ] $f_{\bm{\lambda}}(z_{l}) = \lambda_{l}$.
\end{minipage}
\end{itemize}
\end{center}
\end{prop}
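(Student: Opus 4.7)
The plan is to show that $f_{\bm{\lambda}}$ extends to a well-defined algebra homomorphism by verifying that each defining relation (1)--(10) of \Cref{APADefn} is respected, and then to establish surjectivity. The key structural observation is that the images $\tau_{2i+1}\mapsto s_i\otimes 1$ and $x_{2i}\mapsto y_i\otimes 1$ land in the first tensor factor, while $\tau_{2i}\mapsto 1\otimes s_i$ and $x_{2i-1}\mapsto -1\otimes y_i$ land in the second. Consequently many cross-parity relations hold automatically by the tensor structure. Moreover, since $f_{\bm{\lambda}}(e_i)=0$, every relation containing an $e_i$ on both sides---namely (3), all of (4) except the $j=i-1,i,i+1$ cases (which are absent), (5), (6)(iii), the factor $(1-e_{2i-2})$ and $(1-e_{2i})$ in (7)(i),(ii), (9), and (10)(i)---becomes the trivial identity $0=0$ (using that $\lambda_l\cdot 0=0$ for the bubble relation and for (3)(i)).

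The nontrivial checks are then: (1) both involutions reduce to $s_i^2=1$; (2)(i) is automatic because $s_i\otimes 1$ and $1\otimes s_j$ commute in any tensor product; (2)(ii),(2)(iii) are the commutation/braid relations in a single factor of $\mathbb{C}S(k)$; (2)(iv) applied to $s_j=\tau_{2j}\tau_{2j+1}+e_{2j}$ gives $s_j\mapsto s_j\otimes s_j$, so the braid relation follows from $s_is_{i+1}s_i=s_{i+1}s_is_{i+1}$ holding in each factor separately. For (6)(i),(ii), the tensor structure handles opposite-parity cases; for same-parity cases one needs $y_iy_j=y_jy_i$ and $s_iy_j=y_js_i$ when $j\neq i,i+1$, both of which hold in $\mathcal{H}_k$. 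The braid-like relations (7)(iii),(iv) reduce, after using $s_{i-1}^2=1$ (respectively $s_i^2=1$) in the factor carrying the outer $\tau$'s, to $1\otimes s_i=1\otimes s_i$ (respectively $s_i\otimes 1 = s_i\otimes 1$).

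The main step, and the one that directly encodes why the degenerate affine Hecke algebra is the right target, is the Skein-like family (8). For (8)(i), the right-hand side maps to
\begin{equation*}
(1\otimes s_i)(-1\otimes y_i)(1\otimes s_i)-(1\otimes s_i)=-1\otimes(s_iy_is_i+s_i)=-1\otimes y_{i+1},
\end{equation*}
using the defining relation $y_{i+1}=s_iy_is_i+s_i$ of $\mathcal{H}_k$, which agrees with $f_{\bm{\lambda}}(x_{2i+1})$. Relation (8)(ii) is verified by the same identity applied in the first factor. For (8)(iii) and (8)(iv), the middle terms vanish and one is left with $(1\otimes s_i)(y_i\otimes 1)(1\otimes s_i)=y_i\otimes 1$ and $(s_i\otimes 1)(-1\otimes y_{i+1})(s_i\otimes 1)=-1\otimes y_{i+1}$, using $s_i^2=1$ and the tensor structure. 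Finally (10)(ii) holds because each $\lambda_l$ is a scalar, hence central.

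Surjectivity is immediate once the homomorphism is established: the images
\begin{equation*}
f_{\bm{\lambda}}(\tau_{2i+1})=s_i\otimes 1,\quad f_{\bm{\lambda}}(\tau_{2i})=1\otimes s_i,\quad f_{\bm{\lambda}}(x_{2i})=y_i\otimes 1,\quad -f_{\bm{\lambda}}(x_{2i-1})=1\otimes y_i
\end{equation*}
together generate $\mathcal{H}_k\otimes\mathcal{H}_k$ as a $\mathbb{C}$-algebra, since $\mathcal{H}_k$ itself is generated by $s_1,\dots,s_{k-1},y_1,\dots,y_k$. The bulk of the work is the Skein-like verification in (8), but no single step is genuinely hard---the proof is a systematic, relation-by-relation check made tractable by the vanishing of $e_i$.
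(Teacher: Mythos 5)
Your proposal is correct and follows essentially the same route as the paper: a relation-by-relation verification in which the vanishing of the $e_{i}$ trivialises most relations, the tensor structure handles the parity-separated commutations, the Skein-like relations (8)(i),(ii) reduce to the defining relation $y_{i+1}=s_{i}y_{i}s_{i}+s_{i}$ of $\mathcal{H}_{k}$, and surjectivity follows because the images of the $\tau$'s and $x$'s generate $\mathcal{H}_{k}\otimes\mathcal{H}_{k}$. The only cosmetic difference is that you fold the $(1-e_{2i-2})$ factors of (7)(i),(ii) into the list of trivialised terms without explicitly noting that what remains is the Coxeter braid relation in a single tensor factor, but that residual check is immediate.
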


\begin{proof}
We show that each of the defining relations of $\mathcal{A}_{2k}^{\text{aff}}$ are upheld under $f_{\bm{\lambda}}$. Since $f_{\bm{\lambda}}(e_{i})=0$, one may observe that most of the defining relations involving generators $e_{i}$ are trivially upheld.

\vspace{2mm}
\noindent
\emph{(1)(i)}: $f_{\bm{\lambda}}(\tau_{2i}^{2}) = (1\otimes s_{i})(1\otimes s_{i}) = 1\otimes s_{i}^{2} = 1 = f_{\bm{\lambda}}(1-e_{2i})$.

\vspace{2mm}
\noindent
\emph{(1)(ii)}: Similar to \emph{(1)(i)} above.

\vspace{2mm}
\noindent
\emph{(2)(i)}: For any $j\neq i+1$, $f_{\bm{\lambda}}(\tau_{2i+1}\tau_{2j})=(s_{i}\otimes 1)(1\otimes s_{j}) = (1\otimes s_{j})(s_{i}\otimes 1) = f_{\bm{\lambda}}(\tau_{2j}\tau_{2i+1})$.

\vspace{2mm}
\noindent
\emph{(2)(ii)}: For any $j\neq i\pm1$,
\[
f(\tau_{2i+1}\tau_{2j+1}) = (s_{i}\otimes 1)(s_{j}\otimes 1)
= s_{i}s_{j}\otimes 1
= s_{j}s_{i}\otimes 1
= (s_{j}\otimes 1)(s_{i}\otimes 1) = f(\tau_{2j+1}\tau_{2i+1}).
\]

\vspace{2mm}
\noindent
\emph{(2)(iii)}: Similar to \emph{(2)(ii)} above.

\vspace{2mm}
\noindent
\emph{(2)(iv)}: Noting that $f_{\bm{\lambda}}(s_{i})=f_{\bm{\lambda}}(\tau_{2i}\tau_{2i+1}+e_{2i})=f_{\bm{\lambda}}(\tau_{2i})f_{\bm{\lambda}}(\tau_{2i+1})=s_{i}\otimes s_{i}$, then
\[
f_{\bm{\lambda}}(s_{i}s_{i+1}s_{i}) = s_{i}s_{i+1}s_{i}\otimes s_{i}s_{i+1}s_{i}
= s_{i+1}s_{i}s_{i+1}\otimes s_{i+1}s_{i}s_{i+1}
= f_{\bm{\lambda}}(s_{i+1}s_{i}s_{i+1}).
\]

\vspace{2mm}
\noindent
\emph{(6)(i)}: Follows since $y_{1},\dots,y_{k}$ pariwise commute.

\vspace{2mm}
\noindent
\emph{(6)(ii)}: Follows since $s_{i}y_{j}=y_{j}s_{i}$ whenever $j\neq i,i+1$.

\vspace{2mm}
\noindent
\emph{(7)(i)}: 
\[
f_{\bm{\lambda}}(\tau_{2i-2}\tau_{2i}\tau_{2i-2}) = 1\otimes s_{i-1}s_{i}s_{i-1}
= 1\otimes s_{i}s_{i-1}s_{i}
= f_{\bm{\lambda}}(\tau_{2i}\tau_{2i-2}\tau_{2i}) = f_{\bm{\lambda}}(\tau_{2i}\tau_{2i-2}\tau_{2i}(1-e_{2i-2}))
\]

\vspace{2mm}
\noindent
\emph{(7)(ii)}: Similar to \emph{(7)(i)}.

\vspace{2mm}
\noindent
\emph{(7)(iii)}: $f_{\bm{\lambda}}(\tau_{2i-1}\tau_{2i}\tau_{2i-1}) = s_{i-1}^{2}\otimes s_{i} = 1\otimes s_{i}= f_{\bm{\lambda}}(\tau_{2i}) = f_{\bm{\lambda}}(\tau_{2i}-e_{2i-2}\tau_{2i}-\tau_{2i}e_{2i-2})$.

\vspace{2mm}
\noindent
\emph{(7)(iv)}: Similar to \emph{(7)(iii)}.

\vspace{2mm}
\noindent
\emph{(8)(i)}:
\begin{align*}
f_{\bm{\lambda}}(\tau_{2i}x_{2i-1}\tau_{2i}+e_{2i}e_{2i-1}\tau_{2i}+\tau_{2i}e_{2i-1}e_{2i}-\tau_{2i}) &= f_{\bm{\lambda}}(\tau_{2i}x_{2i-1}\tau_{2i})-f_{\bm{\lambda}}(\tau_{2i}) \\
&= (1\otimes s_{i})(-1\otimes y_{i})(1\otimes s_{i})-1\otimes s_{i} \\
&= -1\otimes s_{i}y_{i}s_{i} - 1\otimes s_{i} \\
&= -1\otimes (y_{i+1}-s_{i})- 1\otimes s_{i} \\
&= -1\otimes y_{i+1} \\
&= f_{\bm{\lambda}}(x_{2i+1})
\end{align*}
where the forth equality follows since $s_{i}y_{i}s_{i}=y_{i+1}-s_{i}$ in $\mathcal{H}_{k}$.

\vspace{2mm}
\noindent
\emph{(8)(ii)}:
\begin{align*}
f_{\bm{\lambda}}(\tau_{2i+1}x_{2i}\tau_{2i+1}+e_{2i}e_{2i+1}\tau_{2i+1}e_{2i+1}e_{2i}+\tau_{2i+1}) &= f_{\bm{\lambda}}(\tau_{2i+1}x_{2i}\tau_{2i+1})+f_{\bm{\lambda}}(\tau_{2i+1}) \\
&= (s_{i}\otimes 1)(y_{i}\otimes 1)(s_{i}\otimes 1)+s_{i}\otimes 1 \\
&= (s_{i}y_{i}s_{i}+s_{i})\otimes 1 \\
&= y_{i+1}\otimes 1 \\
&= f_{\bm{\lambda}}(x_{2i+2})
\end{align*}
where the forth equality follows since $y_{i+1}=s_{i}y_{i}s_{i}+s_{i}$ in $\mathcal{H}_{k}$.

\vspace{2mm}
\noindent
\emph{(8)(iii)}:
\begin{align*}
f_{\bm{\lambda}}(\tau_{2i}x_{2i}\tau_{2i}+e_{2i}e_{2i-1}\tau_{2i-1} + \tau_{2i-1}e_{2i-1}e_{2i}) &= f_{\bm{\lambda}}(\tau_{2i}x_{2i}\tau_{2i}), \\
&= (1\otimes s_{i})(y_{i}\otimes 1)(1\otimes s_{i}), \\
&= y_{i}\otimes 1, \\
&= f_{\bm{\lambda}}(x_{2i}).
\end{align*}

\vspace{2mm}
\noindent
\emph{(8)(iv)}:
\begin{align*}
f_{\bm{\lambda}}(\tau_{2i+1}x_{2i+1}\tau_{2i+1}+e_{2i}e_{2i+1}\tau_{2i+1} + \tau_{2i+1}e_{2i+1}e_{2i}) &= f_{\bm{\lambda}}(\tau_{2i+1}x_{2i+1}\tau_{2i+1}), \\
&= (s_{i}\otimes 1)(-1\otimes y_{i})(s_{i}\otimes 1), \\
&= -1\otimes y_{i}, \\
&= f_{\bm{\lambda}}(x_{2i+1}).
\end{align*}

\vspace{2mm}
\noindent
\emph{(10)(i)} and \emph{(10)(ii)}: Immediate.

\vspace{2mm}
\noindent
Thus $f_{\bm{\lambda}}$ is a homomorphism. Surjectivity follows as $\langle f_{\bm{\lambda}}(\tau_{i}), f_{\bm{\lambda}}(x_{j})\rangle_{i,j}=\mathcal{H}_{k}\otimes\mathcal{H}_{k}$.

\end{proof}

\begin{cor}
The polynomial algebra $\mathbb{C}[x_{1},\dots,x_{2k}]$ is a subalgebra of $\mathcal{A}_{2k}^{\emph{aff}}$.
\end{cor}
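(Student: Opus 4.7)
The plan is to use the surjection $f_{\bm{\lambda}}:\mathcal{A}_{2k}^{\text{aff}}\rightarrow\mathcal{H}_k\otimes\mathcal{H}_k$ established in the preceding proposition as a detector of algebraic independence. Because the affine generators $x_1,\dots,x_{2k}$ pairwise commute by relation (6)(i) of \Cref{APADefn}, the universal property of the polynomial ring gives a well-defined $\mathbb{C}$-algebra homomorphism
\[ \phi:\mathbb{C}[x_1,\dots,x_{2k}]\rightarrow\mathcal{A}_{2k}^{\text{aff}}, \qquad x_i\mapsto x_i. \]
The corollary amounts to showing $\phi$ is injective.

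To prove injectivity, I would fix any sequence $\bm{\lambda}$ and consider the composition $f_{\bm{\lambda}}\circ\phi:\mathbb{C}[x_1,\dots,x_{2k}]\rightarrow\mathcal{H}_k\otimes\mathcal{H}_k$. By the formulas in the preceding proposition this sends $x_{2i-1}\mapsto -1\otimes y_i$ and $x_{2i}\mapsto y_i\otimes 1$ for $i\in[k]$. The image of $\phi$ therefore lands in the subalgebra $\mathbb{C}[y_1,\dots,y_k]\otimes\mathbb{C}[y_1,\dots,y_k]$ of $\mathcal{H}_k\otimes\mathcal{H}_k$, which is itself a polynomial algebra in $2k$ commuting variables (since $\mathbb{C}[y_1,\dots,y_k]$ sits inside $\mathcal{H}_k$ as a polynomial subalgebra by definition). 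The $2k$ elements $y_1\otimes 1,\dots,y_k\otimes 1,\,1\otimes y_1,\dots,1\otimes y_k$ are algebraically independent, so $f_{\bm{\lambda}}\circ\phi$ is an isomorphism of polynomial rings onto its image.

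In particular $f_{\bm{\lambda}}\circ\phi$ is injective, which forces $\phi$ to be injective as well. This identifies $\mathbb{C}[x_1,\dots,x_{2k}]$ with the subalgebra $\langle x_1,\dots,x_{2k}\rangle$ of $\mathcal{A}_{2k}^{\text{aff}}$, as required. There is no real obstacle here; the work has all been done in constructing the homomorphism $f_{\bm{\lambda}}$ in the previous proposition, and the argument is the standard trick of using a known quotient to detect relations among the affine generators.
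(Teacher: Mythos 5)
Your argument is correct and is essentially the paper's own proof: both use the surjection $f_{\bm{\lambda}}$ onto $\mathcal{H}_k\otimes\mathcal{H}_k$ to detect that the monomials in $x_1,\dots,x_{2k}$ are linearly independent, since their images land in the polynomial subalgebra $\mathbb{C}[y_1,\dots,y_k]\otimes\mathbb{C}[y_1,\dots,y_k]$ where the corresponding elements are algebraically independent. Your write-up just makes explicit the universal-property step that the paper leaves implicit.
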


\begin{proof}
This is the same as asking that all monomials in the generators of the subalgebra $\langle x_{1},\dots,x_{2k} \rangle$ of $\mathcal{A}_{2k}^{\text{aff}}$ are linearly independent, which follows since their images under $f_{\bm{\lambda}}$ are.

\end{proof}

To end this section we establish a counterpart to the recursive relations of Enyang's generators. To do so, we collect the more technical relations needed into the following lemma:

\begin{lem}\label{TauRecPrepRels}
The following relations hold in $\mathcal{A}_{2k}^{\emph{aff}}$:
\begin{itemize}
\item[(i)] $e_{2i}x_{2i}e_{2i}=0$
\item[(ii)] $e_{2i}\tau_{2i-1}e_{2i}=0$
\item[(iii)] $e_{2i-2}\tau_{2i}e_{2i-2}=0$
\item[(iv)] $e_{2i-2}\tau_{2i}=e_{2i-2}x_{2i-2}s_{i}e_{2i-2}s_{i}$
\item[(v)] $\tau_{2i}e_{2i-2}=s_{i}e_{2i-2}s_{i}x_{2i-2}e_{2i-2}$
\item[(vi)] $\tau_{2i}\tau_{2i-2}\tau_{2i}e_{2i-2}=e_{2i-2}x_{2i-2}s_{i-1}e_{2i}e_{2i-1}e_{2i-2}$
\item[(vii)] $\tau_{2i-1}e_{2i}s_{i-1}=s_{i}e_{2i-2}e_{2i-1}e_{2i}s_{i-1}x_{2i-2}e_{2i-2}s_{i}$
\item[(viii)] $\tau_{2i}\tau_{2i-2}\tau_{2i}e_{2i-2}=e_{2i-2}\tau_{2i}\tau_{2i-2}\tau_{2i}$
\end{itemize}
\end{lem}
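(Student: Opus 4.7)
The eight identities fall naturally into three groups, and I would organise the proof accordingly.

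The three vanishing identities \emph{(i)}--\emph{(iii)} admit short arguments. For \emph{(i)}, I combine \Cref{AffMixRels}\emph{(i)}, which reads $e_{2i}x_{2i} = e_{2i}e_{2i-1}\tau_{2i}$, with the idempotent relation $\tau_{2i}e_{2i} = 0$ from \emph{(3)(iv)}. For \emph{(ii)}, sandwich the braid-like relation \emph{(7)(iv)}, $\tau_{2i}\tau_{2i-1}\tau_{2i} = \tau_{2i-1} - e_{2i}\tau_{2i-1} - \tau_{2i-1}e_{2i}$, between two copies of $e_{2i}$: the left-hand side vanishes since $\tau_{2i}e_{2i} = 0$, and the right-hand side collapses to $-e_{2i}\tau_{2i-1}e_{2i}$. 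For \emph{(iii)}, do the same with \emph{(7)(iii)} and $e_{2i-2}$, using that $\tau_{2i-1}$ commutes with $e_{2i-2}$ by \emph{(4)(iii)}. Setting $A := e_{2i-2}\tau_{2i}e_{2i-2}$, this yields $\tau_{2i-1}A\tau_{2i-1} = -A$. Multiplying on the left by $\tau_{2i-1}$, invoking $\tau_{2i-1}^{2} = 1-e_{2i-2}$, and using $e_{2i-2}A = A$, one obtains $\tau_{2i-1}A = 0$, which then forces $A = 0$.

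Identities \emph{(iv)} and \emph{(v)} are exchanged by the anti-involution $*$, so it suffices to establish \emph{(iv)}. My approach is first to simplify
\[
s_{i}e_{2i-2}s_{i} = (\tau_{2i}\tau_{2i+1}+e_{2i})\,e_{2i-2}\,(\tau_{2i}\tau_{2i+1}+e_{2i}) = \tau_{2i}e_{2i-2}\tau_{2i} + e_{2i-2}e_{2i},
\]
where the simplification uses that $\tau_{2i+1}$ commutes with both $e_{2i-2}$ and $\tau_{2i}$ (relations \emph{(4)(iii)} and \emph{(2)(i)}), the involution $\tau_{2i+1}^{2} = 1-e_{2i}$, and the annihilations $\tau_{2i+1}e_{2i} = e_{2i}\tau_{2i} = 0$. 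The contraction \emph{(5)(iii)} rewrites this as $\tau_{2i-1}e_{2i}\tau_{2i-1} + e_{2i-2}e_{2i}$. Multiplying on the left by $e_{2i-2}x_{2i-2} = e_{2i-2}e_{2i-3}\tau_{2i-2}$ (from the shifted version of \Cref{AffMixRels}\emph{(i)}) eliminates the $e_{2i-2}e_{2i}$ summand because $\tau_{2i-2}e_{2i-2}=0$. The remaining term is massaged using $\tau_{2i-2}\tau_{2i-1} = s_{i-1}-e_{2i-2}$, the contraction $e_{2i-2}e_{2i-3}e_{2i-2}=e_{2i-2}$, and the partition-algebra identities $e_{2i-3}s_{i-1} = s_{i-1}e_{2i-1}$ and $e_{2i-2}s_{i-1} = e_{2i-2}$. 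A final appeal to \emph{(5)(iii)} together with $e_{2i}\tau_{2i}=0$ shows that the result equals $\tau_{2i}\tau_{2i-1}e_{2i}\tau_{2i-1}$, which in turn equals $e_{2i-2}\tau_{2i}$, establishing \emph{(iv)}.

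Items \emph{(vi)}, \emph{(vii)}, \emph{(viii)} are then derived from \emph{(iv)}, \emph{(v)}, and the braid-like relation \emph{(7)(i)}. For \emph{(viii)}, substitute the formula $\tau_{2i}e_{2i-2} = \tau_{2i-1}e_{2i}\tau_{2i-1}\tau_{2i}$ (obtained from \emph{(5)(iii)} multiplied on the right by $\tau_{2i}$, using $\tau_{2i}e_{2i-2}e_{2i}=0$) into $\tau_{2i}\tau_{2i-2}\tau_{2i}e_{2i-2}$, commute $\tau_{2i-2}$ past $\tau_{2i-1}$ and $e_{2i}$, and apply the $*$-dual to re-collect $e_{2i-2}$ on the left; this is essentially the computation used earlier to prove that $*$ is an anti-involution. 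Item \emph{(vi)} follows from the identity $\tau_{2i}\tau_{2i-2}\tau_{2i}e_{2i-2} = \tau_{2i}\tau_{2i-2}\tau_{2i} - \tau_{2i-2}\tau_{2i}\tau_{2i-2}$, which is immediate from \emph{(7)(i)}, paired with a parallel expansion of the right-hand side using $e_{2i-2}x_{2i-2}=e_{2i-2}e_{2i-3}\tau_{2i-2}$ together with the contractions and commutations listed above. Item \emph{(vii)} is the most intricate: expand $s_{i-1}$ on both sides, use \emph{(v)} to eliminate the $x_{2i-2}$ factor on the right, and simplify by repeated commutations together with \emph{(5)}. The main obstacle throughout is \emph{(iv)} itself, where one must track many commutation moves between the $\tau_{j}$'s and the $e_{k}$'s; once the key simplification $s_{i}e_{2i-2}s_{i} = \tau_{2i}e_{2i-2}\tau_{2i} + e_{2i-2}e_{2i}$ is available, the remaining manipulations in all three groups reduce to careful bookkeeping with the defining relations.
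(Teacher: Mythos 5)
Your items \emph{(i)}, \emph{(ii)}, \emph{(iii)}, \emph{(v)} and \emph{(viii)} are correct and essentially coincide with the paper's arguments (for \emph{(ii)}/\emph{(iii)} you sandwich the braid-like relations rather than first rearranging them, and for \emph{(iii)} your detour through $\tau_{2i-1}A\tau_{2i-1}=-A$ is harmless but unnecessary since $e_{2i-2}\tau_{2i-1}=0$ already by \emph{(3)(iii)}; your \emph{(viii)} is exactly the computation from the anti-involution lemma, as you note).

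The problem is \emph{(iv)}, which is the load-bearing item. Your route is genuinely different from the paper's: you expand $s_ie_{2i-2}s_i=\tau_{2i}e_{2i-2}\tau_{2i}+e_{2i-2}e_{2i}=\tau_{2i-1}e_{2i}\tau_{2i-1}+e_{2i-2}e_{2i}$ and multiply by $e_{2i-2}x_{2i-2}=e_{2i-2}e_{2i-3}\tau_{2i-2}$, whereas the paper replaces $e_{2i-2}x_{2i-2}$ by $e_{2i-2}x_{2i-1}$ via \emph{(9)(i)} and then expands $x_{2i-1}s_i$ using the JM recursion \emph{\Cref{AffRecRels}(i)}, reducing to five terms that each collapse. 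Your intermediate steps are all valid and bring you correctly to
\[
e_{2i-2}x_{2i-2}s_ie_{2i-2}s_i \;=\; e_{2i-2}e_{2i-1}e_{2i}\tau_{2i-1}-e_{2i-2}e_{2i}\tau_{2i-1},
\]
but the final step is a gap. Relation \emph{(5)(iii)} together with $e_{2i}\tau_{2i}=0$ does give $\tau_{2i}\tau_{2i-1}e_{2i}\tau_{2i-1}=\tau_{2i}^2e_{2i-2}\tau_{2i}=e_{2i-2}\tau_{2i}$, i.e.\ the \emph{second} equality in your chain; it does nothing to establish the \emph{first} equality
\[
e_{2i-2}e_{2i-1}e_{2i}\tau_{2i-1}-e_{2i-2}e_{2i}\tau_{2i-1}\;=\;\tau_{2i}\tau_{2i-1}e_{2i}\tau_{2i-1},
\]
which amounts to the identity $\tau_{2i}\tau_{2i-1}e_{2i}=e_{2i-2}e_{2i-1}e_{2i}-e_{2i-2}e_{2i}$ (composed with $\tau_{2i-1}$ on the right). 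That identity is true — it holds under $\mathsf{pr}$ and can be checked on tensor space — but it is not among the defining relations or the lemmas proved so far, and deriving it from \emph{\Cref{APADefn}} appears to be no easier than \emph{(iv)} itself. As written, you have replaced one unproved identity by another of comparable difficulty. Since \emph{(v)}, \emph{(vi)} and \emph{(vii)} all rest on \emph{(iv)}, this gap propagates. Separately, your treatments of \emph{(vi)} and \emph{(vii)} are only strategy sketches ("a parallel expansion\dots together with the contractions and commutations listed above", "simplify by repeated commutations") with no verifiable chain of equalities; the paper's proofs of these are each ten-step computations using \emph{(iv)}, \emph{(v)}, \emph{(5)(iii)}, \emph{(8)(iii)} and \emph{\Cref{AffMixRels}}, and nothing in your outline guarantees the sketched expansions actually close up.
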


\begin{proof}
\emph{(i)}: We have $e_{2i}x_{2i}e_{2i} = e_{2i}e_{2i-1}\tau_{2i}e_{2i}=0$, by employing \emph{\Cref{AffMixRels} (i)} and \emph{\Cref{APADefn} (3)(iv)}.

\vspace{2mm}
\noindent
\emph{(ii)}: By rearranging \emph{(7)(iv)} of \emph{\Cref{APADefn}} in terms of $\tau_{2i-1}$, we have that
\[
e_{2i}\tau_{2i-1}e_{2i} = e_{2i}(\tau_{2i}\tau_{2i-1}\tau_{2i}+e_{2i}\tau_{2i-1}+\tau_{2i-1}e_{2i})e_{2i}
= e_{2i}\tau_{2i-1}e_{2i} +e_{2i}\tau_{2i-1}e_{2i},
\]
where we used relation $e_{2i}\tau_{2i}=0$. Rearranging gives $e_{2i}\tau_{2i-1}e_{2i}=0$. Item \emph{(iii)} follows in a similar manner.

\vspace{2mm}
\noindent
\emph{(iv)}: We have
\begin{align*}
e_{2i-2}x_{2i-2}s_{i}e_{2i-2}s_{i} &= e_{2i-2}x_{2i-1}s_{i}e_{2i-2}s_{i} \\
&= e_{2i-2}(s_{i}x_{2i+1}-s_{i}x_{2i}e_{2i}-e_{2i}x_{2i}+x_{2i-1}e_{2i}+\tau_{2i+1})e_{2i-2}s_{i} \\
&= e_{2i-2}s_{i}x_{2i+1}e_{2i-2}s_{i}-e_{2i-2}s_{i}x_{2i}e_{2i}e_{2i-2}s_{i}-e_{2i-2}e_{2i}x_{2i}e_{2i-2}s_{i} \\
& \hspace{10mm}+e_{2i-2}x_{2i-1}e_{2i}e_{2i-2}s_{i}+e_{2i-2}\tau_{2i+1}e_{2i-2}s_{i}
\end{align*}
where the first equality follows from \emph{(9)(i)} of \emph{\Cref{APADefn}}, and the second from \emph{\Cref{AffRecRels} (i)}. We examine the five terms above:
\begin{itemize}
\item[(1)] $e_{2i-2}s_{i}x_{2i+1}e_{2i-2}s_{i} = e_{2i-2}s_{i}e_{2i-2}x_{2i+1}s_{i} = e_{2i-2}e_{2i}x_{2i+1}s_{i}$,
\item[(2)] $-e_{2i-2}s_{i}x_{2i}e_{2i}e_{2i-2}s_{i} = -e_{2i-2}s_{i}e_{2i-2}x_{2i}e_{2i} = -e_{2i-2}e_{2i}x_{2i}e_{2i}=0$,
\item[(3)] $-e_{2i-2}e_{2i}x_{2i}e_{2i-2}s_{i} = -e_{2i-2}e_{2i}x_{2i}s_{i} = -e_{2i-2}e_{2i}x_{2i+1}s_{i}$,
\item[(4)] $e_{2i-2}x_{2i-1}e_{2i}e_{2i-2}s_{i}=e_{2i-2}x_{2i-1}e_{2i-2}e_{2i}s_{i} = 0$,
\item[(5)] $e_{2i-2}\tau_{2i+1}e_{2i-2}s_{i}=e_{2i-2}\tau_{2i+1}s_{i}=e_{2i-2}\tau_{2i}$.
\end{itemize}
Substituting back into the above equation gives $e_{2i-2}x_{2i-2}s_{i}e_{2i-2}s_{i} =e_{2i-2}\tau_{2i}$ as desired.

\vspace{2mm}
\noindent
\emph{(v)}: This follows by applying the anti-automorphism $*$ to \emph{(iv)}.

\vspace{2mm}
\noindent
\emph{(vi)}:
\begin{align*}
\tau_{2i}\tau_{2i-2}\tau_{2i}e_{2i-2} &= \tau_{2i}\tau_{2i-2}(s_{i}e_{2i-2}s_{i}x_{2i-2}e_{2i-2}), \\
&= \tau_{2i}\tau_{2i-2}s_{i-1}e_{2i}s_{i-1}x_{2i-2}e_{2i-2}, \\
&= \tau_{2i}\tau_{2i-1}e_{2i}s_{i-1}x_{2i-2}e_{2i-2}, \\
&= \tau_{2i}(\tau_{2i}e_{2i-2}\tau_{2i}\tau_{2i-1})s_{i-1}x_{2i-2}e_{2i-2}, \\
&= (1-e_{2i})e_{2i-2}\tau_{2i}\tau_{2i-2}x_{2i-2}e_{2i-2}, \\
&= e_{2i-2}\tau_{2i}\tau_{2i-2}x_{2i-2}e_{2i-2}, \\
&= e_{2i-2}\tau_{2i}(x_{2i-2}\tau_{2i-2}+e_{2i-3}e_{2i-2}-e_{2i-2}e_{2i-3})e_{2i-2}, \\
&= e_{2i-2}\tau_{2i}e_{2i-3}e_{2i-2}, \\
&= (e_{2i-2}x_{2i-2}s_{i}e_{2i-2}s_{i})e_{2i-3}e_{2i-2}, \\
&= e_{2i-2}x_{2i-2}s_{i-1}e_{2i}e_{2i-1}e_{2i-2}.
\end{align*}
The first equality follows by \emph{(v)}, the forth from \emph{(5)(iii)} of \emph{\Cref{APADefn}}, the sixth since $e_{2i}\tau_{2i}=0$, the seventh from \emph{(8)(iii)} of \emph{\Cref{APADefn}}, the ninth from $\tau_{2i-2}e_{2i-2}=0$ and \emph{(iii)}, and the tenth from \emph{(iv)}.

\vspace{2mm}
\noindent
\emph{(vii)}:
\begin{align*}
s_{i}e_{2i-2}e_{2i-1}e_{2i}s_{i-1}x_{2i-2}e_{2i-2}s_{i} &= s_{i}e_{2i-2}e_{2i-1}s_{i-1}s_{i}e_{2i-2}s_{i}x_{2i-2}e_{2i-2}s_{i} \\
&= s_{i}e_{2i-2}s_{i}e_{2i-3}e_{2i-2}x_{2i-2}s_{i}e_{2i-2}s_{i} \\
&= s_{i-1}e_{2i}e_{2i-1}e_{2i-2}x_{2i-2}s_{i-1}e_{2i}s_{i-1} \\
&= s_{i-1}e_{2i}e_{2i-1}e_{2i-2}e_{2i-1}\tau_{2i-1}s_{i-1}e_{2i}s_{i-1} \\
&= s_{i-1}e_{2i}e_{2i-1}\tau_{2i-2}e_{2i}s_{i-1} \\
&= s_{i-1}e_{2i}e_{2i-1}e_{2i}\tau_{2i-1} \\
&= s_{i-1}e_{2i}\tau_{2i-1} \\
&= s_{i-1}e_{2i}\tau_{2i-2}s_{i-1} \\
&= s_{i-1}\tau_{2i-2}e_{2i}s_{i-1} \\
&= \tau_{2i-1}e_{2i}s_{i-1} \\
\end{align*}
where the forth equality follows from \emph{\Cref{AffMixRels} (i)}.

\vspace{2mm}
\noindent
\emph{(viii)}:
\begin{align*}
\tau_{2i}\tau_{2i-2}\tau_{2i}e_{2i-2} &= \tau_{2i}\tau_{2i-2}(\tau_{2i-1}e_{2i}\tau_{2i-1}\tau_{2i}) \\
&= \tau_{2i}(s_{i-1}-e_{2i-2})e_{2i}\tau_{2i-1}\tau_{2i} \\
&= \tau_{2i}s_{i-1}e_{2i}\tau_{2i-1}\tau_{2i} \\
&= \tau_{2i}s_{i}e_{2i-2}s_{i}s_{i-1}\tau_{2i-1}\tau_{2i} \\
&= \tau_{2i+1}e_{2i-2}s_{i}\tau_{2i-2}\tau_{2i} \\
&= e_{2i-2}\tau_{2i}\tau_{2i-2}\tau_{2i}
\end{align*}
where the first equality follows from \emph{(5)(iii)} of \emph{\Cref{APADefn}}, the second since $s_{i-1}=\tau_{2i-1}\tau_{2i-2}+e_{2i-2}$, the third since $e_{2i-2}\tau_{2i-1}=0$, and the sixth since $\tau_{2i+1}$ and $e_{2i-2}$ commute.

\end{proof}

\begin{lem}\label{enyangrecursion}
The following relations hold in $\mathcal{A}_{2k}^{\emph{aff}}$:
\begin{align*}
\tau_{2i} = s_{i-1}s_{i}\tau_{2i-2}s_{i}&s_{i-1}+e_{2i-2}x_{2i-2}s_{i}e_{2i-2}s_{i} + s_{i}e_{2i-2}x_{2i-2}s_{i}e_{2i-2} \\
&-e_{2i-2}x_{2i-2}s_{i-1}e_{2i}e_{2i-1}e_{2i-2} - s_{i}e_{2i-2}e_{2i-1}e_{2i}s_{i-1}x_{2i-2}e_{2i-2}s_{i}.
\end{align*}
and
\begin{align*}
\tau_{2i+1} = s_{i-1}s_{i}\tau_{2i-1}s_{i}&s_{i-1}+s_{i}e_{2i-2}x_{2i-2}s_{i}e_{2i-2}s_{i} + e_{2i-2}x_{2i-2}s_{i}e_{2i-2} \\
&- s_{i}e_{2i-2}x_{2i-2}s_{i-1}e_{2i}e_{2i-1}e_{2i-2} - e_{2i-2}e_{2i-1}e_{2i}s_{i-1}x_{2i-2}e_{2i-2}s_{i}.
\end{align*}
\end{lem}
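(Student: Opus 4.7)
The plan is to establish the formula for $\tau_{2i}$ first and then to deduce the formula for $\tau_{2i+1}$ by multiplying the established identity on the left by $s_i$. For the deduction, the only extra ingredients are the braid relation $s_is_{i-1}s_i=s_{i-1}s_is_{i-1}$ of (2)(iv), the shifted identity $s_{i-1}\tau_{2i-2}=\tau_{2i-1}$, and its analogue $s_i\tau_{2i}=\tau_{2i+1}$. The latter drops out of $s_i=\tau_{2i}\tau_{2i+1}+e_{2i}$ together with (3)(iii)--(iv) and $\tau_{2i+1}^2=1-e_{2i}$. Multiplying the $\tau_{2i}$ identity on the left by $s_i$ converts the leading term into $s_is_{i-1}s_i\tau_{2i-2}s_is_{i-1}=s_{i-1}s_is_{i-1}\tau_{2i-2}s_is_{i-1}=s_{i-1}s_i\tau_{2i-1}s_is_{i-1}$, while each of the four correction terms simply acquires a factor $s_i$ on the left (with two of them using $s_i^2=1$ to absorb it), reproducing one for one the summands in the $\tau_{2i+1}$ formula.

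For the main case, I first apply parts (iv), (v), (vi), (vii) of \Cref{TauRecPrepRels} to rewrite the four correction summands on the right-hand side of the $\tau_{2i}$ identity as $e_{2i-2}\tau_{2i}$, $\tau_{2i}e_{2i-2}$, $\tau_{2i}\tau_{2i-2}\tau_{2i}e_{2i-2}$, and $\tau_{2i-1}e_{2i}s_{i-1}$ respectively; the second rewriting uses in addition the commutation of $x_{2i-2}$ with $s_i$ coming from (6)(ii)--(iii). Next I invoke relation (7)(iii) of \Cref{APADefn} in the form $\tau_{2i-1}\tau_{2i}\tau_{2i-1}=\tau_{2i}-e_{2i-2}\tau_{2i}-\tau_{2i}e_{2i-2}$ together with (7)(i) in the form $\tau_{2i}\tau_{2i-2}\tau_{2i}e_{2i-2}=\tau_{2i}\tau_{2i-2}\tau_{2i}-\tau_{2i-2}\tau_{2i}\tau_{2i-2}$ to reduce the claim to the purely $\tau$--$e$--$s$ identity
\[
s_{i-1}s_i\tau_{2i-2}s_is_{i-1}=\tau_{2i-1}\tau_{2i}\tau_{2i-1}+\tau_{2i}\tau_{2i-2}\tau_{2i}-\tau_{2i-2}\tau_{2i}\tau_{2i-2}+\tau_{2i-1}e_{2i}s_{i-1}.
\]

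To attack this I first compute the intermediate expression $s_i\tau_{2i-2}s_i$. Expanding $s_i=\tau_{2i}\tau_{2i+1}+e_{2i}$ and using that $\tau_{2i+1}$ commutes with both $\tau_{2i-2}$ and $\tau_{2i}$ by (2)(i), that $\tau_{2i+1}e_{2i}=e_{2i}\tau_{2i}=0$ by (3)(iii)--(iv), and the derived identity $\tau_{2i+1}\tau_{2i}\tau_{2i+1}=\tau_{2i}$, which follows from the involution $\tau_{2i+1}^2=1-e_{2i}$, the commutation $\tau_{2i}\tau_{2i+1}=\tau_{2i+1}\tau_{2i}$, and $\tau_{2i}e_{2i}=0$, I obtain
\[
s_i\tau_{2i-2}s_i=\tau_{2i}\tau_{2i-2}\tau_{2i}+\tau_{2i-2}e_{2i}.
\]
Conjugating by $s_{i-1}$ splits the left-hand side of the reduced identity as $s_{i-1}\tau_{2i}\tau_{2i-2}\tau_{2i}s_{i-1}+s_{i-1}\tau_{2i-2}e_{2i}s_{i-1}$; the second piece collapses at once to $\tau_{2i-1}e_{2i}s_{i-1}$ via the identity $s_{i-1}\tau_{2i-2}=\tau_{2i-1}$ and the commutation of $\tau_{2i-2}$ with $e_{2i}$.

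The main obstacle is then matching the remaining piece $s_{i-1}\tau_{2i}\tau_{2i-2}\tau_{2i}s_{i-1}$ with the sum $\tau_{2i-1}\tau_{2i}\tau_{2i-1}+\tau_{2i}\tau_{2i-2}\tau_{2i}-\tau_{2i-2}\tau_{2i}\tau_{2i-2}$. This is accomplished by expanding $s_{i-1}=\tau_{2i-2}\tau_{2i-1}+e_{2i-2}$ on each side and systematically applying the contraction (5)(iii) $\tau_{2i}e_{2i-2}\tau_{2i}=\tau_{2i-1}e_{2i}\tau_{2i-1}$, the braid-likes (7)(i) and (7)(iii), the commutation relations (4), and the vanishing relations $\tau_{2i-2}e_{2i-2}=e_{2i-2}\tau_{2i-2}=0$, $\tau_{2i-1}e_{2i-2}=e_{2i-2}\tau_{2i-1}=0$, and $e_{2i-2}\tau_{2i}e_{2i-2}=0$ from \Cref{TauRecPrepRels}(iii). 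The bookkeeping is intricate but purely mechanical, and no ingredient beyond those already assembled in \Cref{APADefn} and \Cref{TauRecPrepRels} enters the argument.
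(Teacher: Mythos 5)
Your proposal is correct and follows essentially the same route as the paper: both prove the $\tau_{2i}$ identity first and obtain the $\tau_{2i+1}$ one by left-multiplying by $s_i$, both hinge on the computation $s_i\tau_{2i-2}s_i=\tau_{2i}\tau_{2i-2}\tau_{2i}+\tau_{2i-2}e_{2i}$, and both reduce the correction terms via \Cref{TauRecPrepRels}(iv)--(vii) together with the braid-like relations (7)(i) and (7)(iii). The only cosmetic difference is that you run the argument in reverse (rewriting the claim first, then verifying a pure $\tau$--$e$--$s$ identity by expanding $s_{i-1}$ on both sides), whereas the paper works forwards and handles the conjugation by $s_{i-1}$ more economically using \Cref{TauRecPrepRels}(viii) — the commutation $\tau_{2i}\tau_{2i-2}\tau_{2i}e_{2i-2}=e_{2i-2}\tau_{2i}\tau_{2i-2}\tau_{2i}$ — which your expansion also needs (to kill the two cross terms) and which is covered by your blanket appeal to \Cref{TauRecPrepRels}.
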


\begin{proof}
We prove the first relation, the second follows from by multiplying on the left by $s_{i}$. We have that
\begin{align*}
s_{i}\tau_{2i-2}s_{i} &= (\tau_{2i}\tau_{2i+1}+e_{2i})\tau_{2i-2}(\tau_{2i+1}\tau_{2i}+e_{2i}) \\
&= \tau_{2i}\tau_{2i+1}^{2}\tau_{2i-2}\tau_{2i}+\tau_{2i-2}e_{2i} \\
&= \tau_{2i}\tau_{2i-2}\tau_{2i}+\tau_{2i-2}e_{2i} 
\end{align*}
where the second equality follows since $\tau_{2i-2}$ commutes with $\tau_{2i+1}$ and $e_{2i}\tau_{2i}=\tau_{2i}e_{2i}=0$. Substituting the above we get
\begin{equation}
s_{i-1}s_{i}\tau_{2i-2}s_{i}s_{i-1} = s_{i-1}\tau_{2i}\tau_{2i-2}\tau_{2i}s_{i-1}+\tau_{2i-1}e_{2i}s_{i-1}.
\end{equation}
For the first term in equation \emph{(10)} we have
\begin{align*}
s_{i-1}\tau_{2i}\tau_{2i-2}\tau_{2i}s_{i-1} &= s_{i-1}(\tau_{2i-2}\tau_{2i}\tau_{2i-2} + \tau_{2i}\tau_{2i-2}\tau_{2i}e_{2i-2})s_{i-1} \\
&= \tau_{2i-1}\tau_{2i}\tau_{2i-1} + s_{i-1}\tau_{2i}\tau_{2i-2}\tau_{2i}e_{2i-2} \\
&= \tau_{2i-1}\tau_{2i}\tau_{2i-1} + \tau_{2i}\tau_{2i-2}\tau_{2i}e_{2i-2} \\
&= \tau_{2i}-e_{2i-2}\tau_{2i}-\tau_{2i}e_{2i-2}+\tau_{2i}\tau_{2i-2}\tau_{2i}e_{2i-2}
\end{align*}
where the first equality follows by \emph{(7)(i)} of \emph{\Cref{APADefn}}, the second from $s_{i-1}\tau_{2i-2}=\tau_{2i-2}s_{i-1}=\tau_{2i-1}$, the third from \emph{\Cref{TauRecPrepRels} (viii)}, and the forth from \emph{(7)(iii)} of \emph{\Cref{APADefn}}. Substituting this back into equation \emph{(10)}, and rearranging yields
\[ \tau_{2i} = s_{i-1}s_{i}\tau_{2i-2}s_{i}s_{i-1}+e_{2i-2}\tau_{2i}+\tau_{2i}e_{2i-2}-\tau_{2i}\tau_{2i-2}\tau_{2i}e_{2i-2}-\tau_{2i-1}e_{2i}s_{i-1}. \]
The desired relation is obtained by applying relations \emph{(iv)} to \emph{(vii)} of \emph{\Cref{TauRecPrepRels}}.

\end{proof}


\subsection{Central elements in $\mathcal{A}_{2k}^{\text{aff}}$}


In this section we describe a central subalgebra of $\mathcal{A}_{2k}^{\text{aff}}$ consisting of certain polynomials in the affine generators. 
We end the section with a conjecture describing the center of $\mathcal{A}_{2k}^{\text{aff}}$.

\begin{lem}\label{AffCostComRels}
The following relations hold:
\begin{itemize}
\item[(i)] $\tau_{2i}x_{2i+1}=x_{2i-1}\tau_{2i}+e_{2i-1}e_{2i}-1$.
\item[(ii)] $\tau_{2i+1}x_{2i+2}=x_{2i}\tau_{2i+1}-e_{2i}e_{2i+1}+1$.
\item[(iii)] $\tau_{2i}x_{2i}=x_{2i}\tau_{2i}+e_{2i-1}e_{2i}-e_{2i}e_{2i-1}$.
\item[(iv)] $\tau_{2i+1}x_{2i+1}=x_{2i+1}\tau_{2i+1}-e_{2i}e_{2i+1}+e_{2i+1}e_{2i}$.
\end{itemize}
\end{lem}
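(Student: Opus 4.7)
The plan is to derive each of the four relations by left-multiplying the corresponding Skein-like relation (8) of \Cref{APADefn} by the appropriate $\tau$ generator, and then simplifying via the involutions (1), the orthogonality relations (3)(iii)--(iv), the commutation relations (6)(iii), the contraction relation (5)(i), and the absorption identities of \Cref{AffMixRels}.

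For (i), starting from (8)(i) and left-multiplying by $\tau_{2i}$, the middle term $\tau_{2i}e_{2i}e_{2i-1}\tau_{2i}$ vanishes since $\tau_{2i}e_{2i}=0$, and $\tau_{2i}^{2}=1-e_{2i}$ converts the outer factors. One then notes that $e_{2i}x_{2i-1}\tau_{2i}=x_{2i-1}e_{2i}\tau_{2i}=0$ (using (6)(iii) together with $e_{2i}\tau_{2i}=0$) and $e_{2i}e_{2i-1}e_{2i}=e_{2i}$ by (5)(i), after which the surviving terms collapse to $x_{2i-1}\tau_{2i}+e_{2i-1}e_{2i}-1$.

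For (iii) and (iv), the same recipe applied to (8)(iii) and (8)(iv) respectively works. The key new input is that the correction term $e_{2i}x_{2i}\tau_{2i}$ (respectively $e_{2i}x_{2i+1}\tau_{2i+1}$) is not zero, but it can be rewritten via \Cref{AffMixRels}(i) (respectively (ii)) as $e_{2i}e_{2i-1}\tau_{2i}^{2}=e_{2i}e_{2i-1}(1-e_{2i})$, which simplifies to $e_{2i}e_{2i-1}-e_{2i}$ by (5)(i) (and similarly for the odd case). Combining with the remaining terms yields the stated identities.

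Case (ii) is the only real subtlety. On left-multiplying (8)(ii) by $\tau_{2i+1}$, the term $\tau_{2i+1}e_{2i}e_{2i+1}\tau_{2i+1}e_{2i+1}e_{2i}$ vanishes, and a correction term $e_{2i}x_{2i}\tau_{2i+1}$ appears. Here $e_{2i}$ does \emph{not} commute with $x_{2i}$ under (6)(iii), so one cannot simply push it through. The fix is to first use the anti-symmetry relation (9)(i) to replace $e_{2i}x_{2i}$ by $e_{2i}x_{2i+1}$, then apply \Cref{AffMixRels}(ii) to rewrite this as $e_{2i}e_{2i+1}\tau_{2i+1}$, after which $\tau_{2i+1}^{2}=1-e_{2i}$ and $e_{2i}e_{2i+1}e_{2i}=e_{2i}$ produce the cancellation. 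This detour through (9) and \Cref{AffMixRels} is what makes the asymmetric form of (ii) (featuring $-e_{2i}e_{2i+1}$ rather than $-e_{2i+1}e_{2i}$) arise naturally, and is the only step requiring more than routine bookkeeping.
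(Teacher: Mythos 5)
Your proposal is correct and follows essentially the same route as the paper: each relation is obtained by left-multiplying the corresponding Skein-like relation (8) by the relevant $\tau$ generator, using $\tau^2=1-e$ and $\tau e=e\tau=0$, and resolving the correction term $e_{2i}x_{\bullet}\tau_{\bullet}$ via \Cref{AffMixRels} (with the detour through the anti-symmetry relation (9)(i) in case (ii), exactly as in the paper). No substantive differences.
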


\begin{proof}
\emph{(i)}: Multiplying \emph{(8)(i)} of \emph{\Cref{APADefn}} on the left by $\tau_{2i}$ gives
\begin{align*}
\tau_{2i}x_{2i+1} &= \tau_{2i}^{2}x_{2i-1}\tau_{2i}+\tau_{2i}e_{2i}e_{2i-1}\tau_{2i}+\tau_{2i}^{2}e_{2i-1}e_{2i}-\tau_{2i}^{2} \\
&= (1-e_{2i})x_{2i-1}\tau_{2i}+(1-e_{2i})e_{2i-1}e_{2i}-(1-e_{2i}) \\
&= x_{2i-1}\tau_{2i}+x_{2i-1}e_{2i}\tau_{2i}+e_{2i-1}e_{2i}-e_{2i}-1+e_{2i} \\
&= x_{2i-1}\tau_{2i}+e_{2i-1}e_{2i}-1
\end{align*}
where the second equality follows as $\tau_{2i}^{2}=1-e_{2i}$ and $t_{2i}e_{2i}=0$, and the third since $x_{2i-1}$ and $e_{2i}$ commute.

\vspace{2mm}
\noindent
\emph{(ii)}: Multiplying \emph{(8)(ii)} of \emph{\Cref{APADefn}} on the left by $\tau_{2i+1}$ gives
\begin{align*}
\tau_{2i+1}x_{2i+2} &= \tau_{2i+1}^{2}x_{2i}\tau_{2i+1}+\tau_{2i+1}e_{2i}e_{2i+1}\tau_{2i+1}e_{2i+1}e_{2i}+\tau_{2i+1}^{2} \\
&= (1-e_{2i})x_{2i}\tau_{2i+1}+1-e_{2i} \\
&= x_{2i}\tau_{2i+1}-e_{2i}x_{2i}\tau_{2i+1}+1-e_{2i} \\
&=  x_{2i}\tau_{2i+1}-e_{2i}x_{2i+1}\tau_{2i+1}+1-e_{2i} \\
&= x_{2i}\tau_{2i+1}-e_{2i}e_{2i-1}\tau_{2i+1}^{2}+1-e_{2i} \\
&= x_{2i}\tau_{2i+1}-e_{2i}e_{2i-1}+e_{2i}+1-e_{2i} \\
&= x_{2i}\tau_{2i+1}-e_{2i}e_{2i-1}+1
\end{align*}
where the second equality follows since $\tau_{2i+1}e_{2i}=0$ and $\tau_{2i+1}^{2}=1-e_{2i}$, the forth equality follows since $e_{2i}x_{2i}=e_{2i}x_{2i+1}$, and the fifth equality follows since $e_{2i}x_{2i}=e_{2i}e_{2i-1}\tau_{2i+1}$ (by \emph{\Cref{AffMixRels} (ii)} and \emph{(iii)}).

\vspace{2mm}
\noindent
\emph{(iii)}: Multiplying \emph{(8)(iii)} of \emph{\Cref{APADefn}} on the left by $\tau_{2i}$ gives
\begin{align*}
\tau_{2i}x_{2i} &= \tau_{2i}^{2}x_{2i}\tau_{2i}+\tau_{2i}e_{2i}e_{2i-1}\tau_{2i}+\tau_{2i}^{2}e_{2i-1}e_{2i} \\
&= (1-e_{2i})x_{2i}\tau_{2i}+(1-e_{2i})e_{2i-1}e_{2i} \\
&= x_{2i}\tau_{2i}-e_{2i}x_{2i}\tau_{2i}+e_{2i-1}e_{2i}-e_{2i} \\
&= x_{2i}\tau_{2i}-e_{2i}e_{2i-1}\tau_{2i}^{2}+e_{2i-1}e_{2i}-e_{2i} \\
&= x_{2i}\tau_{2i}-e_{2i}e_{2i-1}+e_{2i}+e_{2i-1}e_{2i}-e_{2i} \\
&= x_{2i}\tau_{2i}-e_{2i}e_{2i-1}+e_{2i-1}e_{2i}
\end{align*}
where the second equality follows since $\tau_{2i}e_{2i}=0$ and $\tau_{2i+1}^{2}=1-e_{2i}$, and the forth equality follows since $e_{2i}x_{2i}=e_{2i}e_{2i-1}\tau_{2i}$ (by \emph{\Cref{AffMixRels} (i)}).

\vspace{2mm}
\noindent
\emph{(iv)}: Multiplying \emph{(8)(iv)} of \emph{\Cref{APADefn}} on the left by $\tau_{2i+1}$ gives
\begin{align*}
\tau_{2i+1}x_{2i+1} &= \tau_{2i+1}^{2}x_{2i+1}\tau_{2i+1}+\tau_{2i+1}e_{2i}e_{2i+1}\tau_{2i+1}+\tau_{2i+1}^{2}e_{2i+1}e_{2i} \\
&= (1-e_{2i})x_{2i+1}\tau_{2i+1}+(1-e_{2i})e_{2i+1}e_{2i} \\
&= x_{2i+1}\tau_{2i+1}-e_{2i}x_{2i+1}\tau_{2i+1}+e_{2i+1}e_{2i}-e_{2i} \\
&= x_{2i+1}\tau_{2i+1}-e_{2i}e_{2i+1}\tau_{2i+1}^{2}+e_{2i+1}e_{2i}-e_{2i}  \\
&= x_{2i}\tau_{2i}-e_{2i}e_{2i+1}+e_{2i}+e_{2i+1}e_{2i}-e_{2i} \\
&= x_{2i}\tau_{2i}-e_{2i}e_{2i+1}+e_{2i+1}e_{2i}
\end{align*}
where the second equality follows since $\tau_{2i+1}e_{2i}=0$ and $\tau_{2i+1}^{2}=1-e_{2i}$, and the forth equality follows since $e_{2i}x_{2i+1}=e_{2i}e_{2i+1}\tau_{2i+1}$ (by \emph{\Cref{AffMixRels} (ii)}).

\end{proof}

\begin{lem}\label{AffIndCostComRels}
For any $n\geq 1$, the following relations hold:
\begin{itemize}
\item[(i)] $\tau_{2i}x_{2i+1}^{n}=x_{2i-1}^{n}\tau_{2i}+\sum\limits_{\substack{a+b=n-1 \\ a,b\geq 0}}x_{2i-1}^{a}(e_{2i-1}e_{2i}-1)x_{2i+1}^{b}$.
\item[(ii)] $\tau_{2i}x_{2i}^{n}=x_{2i}^{n}\tau_{2i}+\sum\limits_{\substack{a+b=n-1 \\ a,b\geq 0}}x_{2i}^{a}(e_{2i-1}e_{2i}-e_{2i}e_{2i-1})x_{2i}^{b}$.
\item[(iii)] $\tau_{2i+1}x_{2i+2}^{n}=x_{2i}^{n}\tau_{2i+1}+\sum\limits_{\substack{a+b=n-1 \\ a,b\geq 0}}x_{2i}^{a}(-e_{2i}e_{2i+1}+1)x_{2i+2}^{b}$.
\item[(iv)] $\tau_{2i+1}x_{2i+1}^{n}=x_{2i+1}^{n}\tau_{2i+1}+\sum\limits_{\substack{a+b=n-1 \\ a,b\geq 0}}x_{2i+1}^{a}(-e_{2i}e_{2i+1}+e_{2i+1}e_{2i})x_{2i+1}^{b}$.
\end{itemize}

\end{lem}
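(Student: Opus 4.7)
The plan is to prove all four identities by induction on $n$, using \Cref{AffCostComRels} as the base case $n=1$. Each identity has the same structural form: $\tau \cdot x^n = y^n \cdot \tau + \sum_{a+b=n-1} y^a \cdot c \cdot x^b$ where $c$ is a ``correction'' term built from the $e_i$'s. The key ingredient that makes the induction run cleanly is relation \emph{(6)(i)} of \Cref{APADefn}, which ensures that all affine generators pairwise commute, so expressions like $x_{2i-1}^a$ and $x_{2i+1}^b$ can be freely rearranged across powers of the same generator.

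For the inductive step of \emph{(i)}, I would multiply the inductive hypothesis on the right by $x_{2i+1}$:
\begin{align*}
\tau_{2i}x_{2i+1}^{n+1} &= \Bigl(x_{2i-1}^{n}\tau_{2i}+\sum_{a+b=n-1}x_{2i-1}^{a}(e_{2i-1}e_{2i}-1)x_{2i+1}^{b}\Bigr)x_{2i+1} \\
&= x_{2i-1}^{n}\tau_{2i}x_{2i+1} + \sum_{a+b=n-1}x_{2i-1}^{a}(e_{2i-1}e_{2i}-1)x_{2i+1}^{b+1},
\end{align*}
then apply \Cref{AffCostComRels}\emph{(i)} to the first summand to rewrite $\tau_{2i}x_{2i+1}=x_{2i-1}\tau_{2i}+e_{2i-1}e_{2i}-1$. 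This produces the new leading term $x_{2i-1}^{n+1}\tau_{2i}$ together with the extra contribution $x_{2i-1}^{n}(e_{2i-1}e_{2i}-1)$, which is precisely the $(a,b)=(n,0)$ summand missing from the sum. Reindexing gives the desired identity at level $n+1$. Cases \emph{(ii)}, \emph{(iii)}, \emph{(iv)} proceed identically, using the respective parts of \Cref{AffCostComRels}; in \emph{(ii)} and \emph{(iv)} one multiplies by $x_{2i}$ and $x_{2i+1}$ respectively (so the generator on both sides of $\tau$ is the same), while in \emph{(iii)} the right-multiplication is by $x_{2i+2}$ with the base relation $\tau_{2i+1}x_{2i+2}=x_{2i}\tau_{2i+1}-e_{2i}e_{2i+1}+1$.

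There is no serious obstacle: the argument is a routine telescoping induction, and the only thing to verify carefully is that the indexing of the sum shifts correctly so that the new extremal term and the shifted summands reassemble into the single sum over $a+b=n$. Since none of the correction terms $c$ interact with the affine generators via any further relations that would create extra error terms (the $e_i$'s commute with $x_j$ for $j \neq i, i+1$ by \emph{(6)(iii)}, but we do \emph{not} need this here because we never push an $e$-factor past an $x$), the calculation closes in one step at each stage of the induction.
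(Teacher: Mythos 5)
Your proof is correct and matches the paper's approach exactly: the paper's entire proof of this lemma is the one-line statement that it follows from \Cref{AffCostComRels} by induction on $n$, and your telescoping induction (right-multiplying the inductive hypothesis and applying the $n=1$ relation to the leading term, with the reindexed sum absorbing the new $(a,b)=(n,0)$ contribution) is the intended argument. One minor remark: the commutativity relation \emph{(6)(i)} you cite as a ``key ingredient'' is not actually needed anywhere in the computation, as you yourself effectively note at the end.
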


\begin{proof}
This follows from \emph{\Cref{AffCostComRels}} by induction on $n$.

\end{proof}

Let $y_{1},\dots,y_{2k}$ be commuting variables. We let $\mathsf{SSym}[y_{1},\dots,y_{2k}]$ denote the subalgebra of the polynomial algebra $\mathbb{C}[y_{1},\dots,y_{2k}]$ generated by the supersymmetric power-sum polynomials
\[ p_{n}(y_{1},\dots,y_{2k}) := y_{1}^{n}+y_{3}^{n}+\dots+y_{2k-1}^{n}-(y_{2}^{n}+y_{4}^{n}+\dots+y_{2k}^{n}) \]
for all $n\geq 1$.
We have an injective algebra homomorphism $\mathsf{SSym}[y_{1},\dots,y_{2k}]\rightarrow \mathcal{A}_{2k}^{\text{aff}}$ via $y_{i}\mapsto x_{i}$. We denote the image by $\mathsf{SSym}[x_{1}\dots,x_{2k}]$, and let $p_{n}$ denote $p_{n}(x_{1},\dots,x_{2k})$. Let $Z(\mathcal{A}_{2k}^{\text{aff}})$ denote the center of $\mathcal{A}_{2k}^{\text{aff}}$.

\begin{prop}\label{AffSSymPolyCent}
We have that $\mathsf{SSym}[x_{1},\dots,x_{2k}] \subset Z(\mathcal{A}_{2k}^{\emph{aff}})$.
\end{prop}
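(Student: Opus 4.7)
The plan is to verify that each $p_n$ commutes with every generator of $\mathcal{A}_{2k}^{\text{aff}}$ listed in \Cref{APADefn}. Commutation with the variables $x_r$ is immediate from (6)(i), and commutation with the $z_l$ is built into (10)(ii), so only the generators $e_j$ and $\tau_i$ require substantive work.

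For $e_j$: by (6)(iii), $e_j$ commutes with $x_r$ for every $r \neq j, j+1$, so $[e_j, p_n]$ reduces to the contribution from $x_j^n$ and $x_{j+1}^n$. Since $j$ and $j+1$ have opposite parities, these two terms appear in $p_n$ with opposite signs. Iterating (9)(i), and using that $x_j$ and $x_{j+1}$ commute by (6)(i), gives $e_j x_j^n = e_j x_{j+1}^n$; dually (9)(ii) gives $x_j^n e_j = x_{j+1}^n e_j$. Hence the two adjacent contributions cancel on both sides and $[e_j, p_n]=0$.

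For $\tau_i$: by (6)(ii) the non-commutation is confined to indices $r \in \{i-1, i, i+1\}$, so $[\tau_i, p_n]$ reduces to the commutator of $\tau_i$ with the three-term expression $P = \epsilon_{i-1} x_{i-1}^n + \epsilon_i x_i^n + \epsilon_{i+1} x_{i+1}^n$ (alternating signs). I would handle the even case $\tau_{2m}$ in detail; the odd case $\tau_{2m+1}$ is parallel. Applying \Cref{AffIndCostComRels}(i) and (ii) together with the $*$-image of (i), one obtains three identities of the shape $\tau_{2m} x_r^n = x_{r'}^n \tau_{2m} + E_r$, where the error terms are explicit double sums that I shall call $A_n$, $B_n$, $C_n$ (coming from (i), its $*$-dual, and (ii) respectively). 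Combining them with the signs dictated by $P$ yields
\[
[\tau_{2m}, P] \;=\; A_n - B_n - C_n,
\]
so the whole question reduces to the algebraic identity $A_n - B_n = C_n$.

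Proving this identity is the main obstacle. My approach is to normalize $A_n$ and $B_n$ using the anti-symmetry relations (9): iterated use gives $x_{2m-1}^a e_{2m-1} = x_{2m}^a e_{2m-1}$ and $e_{2m} x_{2m+1}^b = e_{2m} x_{2m}^b$, together with their right-hand analogues, while (6)(iii) lets $e_{2m-1}$ slide past $x_{2m+1}^b$ and $e_{2m}$ past $x_{2m-1}^a$. After these substitutions the ``polynomial'' pieces $\sum x_{2m-1}^a x_{2m+1}^b$ inside $A_n$ and $B_n$ match exactly (since $x_{2m-1}$ and $x_{2m+1}$ commute), while the remaining ``$e$-parts'' collapse to $\sum x_{2m}^a e_{2m-1} e_{2m} x_{2m}^b$ and $\sum x_{2m}^a e_{2m} e_{2m-1} x_{2m}^b$ respectively. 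Their difference is exactly $C_n$, which completes the reduction and yields $[\tau_i, p_n]=0$.
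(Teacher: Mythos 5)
Your proposal is correct and follows essentially the same route as the paper: reduce to checking each generator against $p_{n}$, handle $e_{j}$ via (6)(iii) and iterated (9), and handle $\tau_{i}$ via \Cref{AffIndCostComRels}(i), its $*$-image, and (ii), reducing everything to the identity $A_{n}-B_{n}=C_{n}$. The only cosmetic difference is the direction of normalisation in that last step (you rewrite $A_{n}$ and $B_{n}$ in terms of $x_{2m}$, while the paper rewrites $C_{n}$ in terms of $x_{2m\pm 1}$), which is the same cancellation read in the opposite direction.
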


\begin{proof}
We simply show that each generator of $\mathcal{A}_{2k}^{\text{aff}}$ commutes with each polynomial $p_{n}$. It is immediate that the generators $z_{l}$ and $x_{i}$ commute with $p_{n}$ for any $n\geq 1$ by \emph{(10)(ii)} and \emph{(6)(i)} of \emph{\Cref{APADefn}}. Let $[-,-]$ denote the commutator bracket.

\vspace{2mm}
\noindent
For the generators $e_{2i}$ we have
\[
[p_{n},e_{2i}] = (-x_{2i}^{n}+x_{2i+1}^{n})e_{2i} - e_{2i}(-x_{2i}^{n}+x_{2i+1}^{n})
= (-x_{2i}^{n}+x_{2i}^{n})e_{2i} - e_{2i}(-x_{2i}^{n}+x_{2i}^{n}) = 0,
\]
where the first equality follows from the commuting relation \emph{(6)(iii)} of \emph{\Cref{APADefn}}, and the second equality follows since $x_{2i+1}e_{2i}=x_{2i}e_{2i}$ and $e_{2i}x_{2i+1}=e_{2i}x_{2i}$ by \emph{(9)(ii)} and \emph{(9)(i)} of \emph{\Cref{APADefn}}. Similarly we have $[p_{n},e_{2i-1}]=0$.

\vspace{2mm}
\noindent
For the generator $\tau_{2i}$, the commuting relation \emph{(6)(ii)} of \emph{\Cref{APADefn}} tells us that
\[ [\tau_{2i},p_{n}] = \tau_{2i}(x_{2i-1}^{n}-x_{2i}^{n}+x_{2i+1}^{n}) - (x_{2i-1}^{n}-x_{2i}^{n}+x_{2i+1}^{n})\tau_{2i}. \]
By acting on relation \emph{(i)} of \emph{\Cref{AffIndCostComRels}} by the anti-automorphism $*$, and rearranging, we obtain
\[ \tau_{2i}x_{2i-1}^{n}= x_{2i+1}^{n}\tau_{2i}-\sum\limits_{\substack{a+b=n-1 \\ a,b\geq 0}}x_{2i+1}^{a}(e_{2i}e_{2i-1}-1)x_{2i-1}^{b}. \]
Employing this and relations \emph{(i)} and \emph{(ii)} of \emph{\Cref{AffIndCostComRels}}, we have
\begin{align*}
\tau_{2i}(x_{2i-1}^{n}&-x_{2i}^{n}+x_{2i+1}^{n}) = (x_{2i-1}^{n}-x_{2i}^{n}+x_{2i+1}^{n})\tau_{2i} + \sum\limits_{\substack{a+b=n-1 \\ a,b\geq 0}}x_{2i-1}^{a}(e_{2i-1}e_{2i}-1)x_{2i+1}^{b} \\
&-\sum\limits_{\substack{a+b=n-1 \\ a,b\geq 0}}x_{2i}^{a}(e_{2i-1}e_{2i}-e_{2i}e_{2i-1})x_{2i}^{b} -\sum\limits_{\substack{a+b=n-1 \\ a,b\geq 0}}x_{2i+1}^{a}(e_{2i}e_{2i-1}-1)x_{2i-1}^{b}
\end{align*}
Hence showing that $[\tau_{2i},p_{n}]=0$ is equivalent to showing that the three summations above sum to zero. This follows by changing the second summation accordingly:
\[ -\sum\limits_{\substack{a+b=n-1 \\ a,b\geq 0}}x_{2i}^{a}(e_{2i-1}e_{2i}-e_{2i}e_{2i-1})x_{2i}^{b} =  -\sum\limits_{\substack{a+b=n-1 \\ a,b\geq 0}}x_{2i}^{a}e_{2i-1}e_{2i}x_{2i}^{b}-x_{2i}^{a}e_{2i}e_{2i-1}x_{2i}^{b}\]
\[ \hspace{4mm} =-\sum\limits_{\substack{a+b=n-1 \\ a,b\geq 0}}x_{2i-1}^{a}e_{2i-1}e_{2i}x_{2i+1}^{b}-x_{2i+1}^{a}e_{2i}e_{2i-1}x_{2i-1}^{b} \]
by repeat application of relations \emph{(9)(i)} and \emph{(9)(ii)} of \emph{\Cref{APADefn}}. One shows $[\tau_{2i+1},p_{n}]=0$ analogously.

\end{proof}

Under the projection $\mathsf{pr}:\mathcal{A}_{2k}^{\text{aff}}\rightarrow \mathcal{A}_{2k}$ the subalgebra $\mathsf{SSym}[x_{1},\dots,x_{2k}]$ gets sent to $\mathsf{SSym}[X_{1},\dots,X_{2k}]$, showing that such a subalgebra is central in $\mathcal{A}_{2k}$. It was shown in \cite[Thm 4.2.6]{Cre21} that  this is in fact the whole center of $\mathcal{A}_{2k}$.  Note that in \cite{Cre21}, the centre is given as $\mathsf{SSym}[N_{1},\dots,N_{2k}]$ where
\[ N_{i} := \begin{cases}
                 \frac{z}{2}-1-X_{i}, & \text{ if $i$ odd}, \\
                 X_{i}-\frac{z}{2}+1, & \text{ if $i$ even}. 
               \end{cases} \]
But one  can easily see  that $\mathsf{SSym}[X_{1},\dots,X_{2k}]=\mathsf{SSym}[N_{1},\dots,N_{2k}]$. Based on this and comparing with the centers of other affine diagram algebras, see \cite[Corollary 4.10]{Naz96} and \cite[Theorem 4.2]{DVR11}, leads to a natural conjecture for the center of $\mathcal{A}_{2k}^{\text{aff}}$:

\begin{conj}
$Z(\mathcal{A}_{2k}^{\text{aff}})=\langle z_{l}, p_{n} \ | \ l,n\in\mathbb{Z}_{\geq 0} \rangle$.
\end{conj}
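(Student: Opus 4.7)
The inclusion $\langle z_l, p_n \mid l,n \in \mathbb{Z}_{\geq 0}\rangle \subseteq Z(\mathcal{A}_{2k}^{\text{aff}})$ is already available from relation \emph{(10)(ii)} of \Cref{APADefn} together with \Cref{AffSSymPolyCent}, so the task is to prove the reverse inclusion. My plan would be to adapt Nazarov's strategy for the centre of the affine Wenzl algebra \cite{Naz96}: filter $\mathcal{A}_{2k}^{\text{aff}}$ by total degree in $x_1,\ldots,x_{2k}$, analyse the top-degree component of a central element in the associated graded algebra, and then reduce the degree inductively by subtracting off polynomials in the known central generators $z_l$ and $p_n$.

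The first ingredient I would need, and the step I expect to be the principal obstacle, is a PBW-type normal form: every element of $\mathcal{A}_{2k}^{\text{aff}}$ should be a unique $\mathbb{C}$-linear combination of expressions of the form $a \cdot m(x_1,\ldots,x_{2k}) \cdot b \cdot q(z_0, z_1, \ldots)$, where $a, b$ represent partition diagrams via $\iota$, $m$ is a monomial, and $q$ a polynomial in the central generators. The Skein-like relations \emph{(8)}, the recursive formulas in \Cref{AffRecRels} and \Cref{enyangrecursion}, the commuting relations \emph{(6)}, and the Bubble relations (which replace $e_1 x_1^l e_1$ by $z_l e_1$) collectively provide the straightening rules needed for the spanning statement. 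Linear independence, however, is precisely the open question noted in the introduction of whether the surjection $\mathcal{A}_{2k}^{\text{aff}} \twoheadrightarrow AP_k$ is an isomorphism, since $AP_k$ has a candidate basis coming from Khovanov's description of the Heisenberg category.

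Granting such a basis, the inductive step proceeds as follows. From \Cref{AffCostComRels} one reads off that modulo lower $x$-degree terms $\tau_{2i}\, x_{2i+1} \equiv x_{2i-1}\, \tau_{2i}$ and $\tau_{2i+1}\, x_{2i+2} \equiv x_{2i}\, \tau_{2i+1}$, so in the associated graded algebra the $\tau_{2i}$'s swap $x_{2i-1} \leftrightarrow x_{2i+1}$ and the $\tau_{2i+1}$'s swap $x_{2i} \leftrightarrow x_{2i+2}$. Hence the top-degree part $\bar c$ of a central element $c$ is invariant under an $S_k \times S_k$ action permuting odd- and even-indexed variables separately. Commutation of $c$ with each $e_i$, combined with the anti-symmetry relations \emph{(9)} giving $e_i\, x_i = e_i\, x_{i+1}$, imposes further identifications linking odd- and even-indexed variables that together force $\bar c \in \mathsf{SSym}[x_1,\ldots,x_{2k}]$. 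Writing $\bar c$ as a polynomial in the $p_n$'s and subtracting the corresponding central element kills the top degree of $c$. The base case (total $x$-degree zero) requires handling the subalgebra $\langle e_i, \tau_j, z_l\rangle$, which is infinite-dimensional over $\mathbb{C}[z_0]$; the expectation is that modulo the central $z_l$'s it maps to $\mathcal{A}_{2k}$ under $\mathsf{pr}$, reducing to the known identification $Z(\mathcal{A}_{2k}) = \mathsf{SSym}[X_1,\ldots,X_{2k}]$ of \cite[Thm 4.2.6]{Cre21}.

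The main obstacle is therefore the PBW basis: without it, the top-degree component argument is not well-defined. A reasonable detour would be to first prove the analogous centre statement for $AP_k$ using the explicit basis coming from the Heisenberg category, where both a basis and the action of the central generators are explicit, and then to pull the result back along $\mathcal{A}_{2k}^{\text{aff}} \twoheadrightarrow AP_k$ while controlling any additional central elements that might live in the kernel of that surjection.
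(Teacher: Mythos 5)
The statement you are addressing is stated in the paper as a \emph{conjecture}: the paper offers no proof of the equality $Z(\mathcal{A}_{2k}^{\text{aff}})=\langle z_{l}, p_{n}\rangle$, only of the easy inclusion $\supseteq$ (via relation \emph{(10)(ii)} of \Cref{APADefn} and \Cref{AffSSymPolyCent}, exactly as you note). Your proposal is therefore not being measured against an existing argument, and, as written, it does not close the conjecture either. You are candid about this yourself: the entire inductive scheme rests on a PBW-type normal form for $\mathcal{A}_{2k}^{\text{aff}}$, and no such basis is known. Without it, the notions of \lq\lq total degree in $x_1,\dots,x_{2k}$'' and \lq\lq top-degree component of a central element'' are not well-defined, because an element could have several presentations of different apparent degrees. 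This is precisely the obstruction that the authors flag in the introduction (they do not know whether $\varphi:\mathcal{A}_{2k}^{\text{aff}}\to AP_k$ is injective, equivalently whether the candidate spanning set is a basis), and it is the reason the statement is left as a conjecture rather than a theorem.

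Two further points on the proposed detour. First, even granting the explicit basis $\mathsf{B}((\uparrow\downarrow)^k)$ of $AP_k$, computing $Z(AP_k)$ is itself a nontrivial piece of work not carried out in the paper. Second, pulling the answer back along the surjection $\varphi$ only yields $Z(\mathcal{A}_{2k}^{\text{aff}})\subseteq \varphi^{-1}(Z(AP_k))$; to conclude you would need to show that no element of $\ker\varphi$ outside $\langle z_l,p_n\rangle$ is central, which again requires structural control of $\mathcal{A}_{2k}^{\text{aff}}$ that is currently unavailable. The heuristic for the top-degree analysis (the $S_k\times S_k$ action from \Cref{AffCostComRels} together with the identifications forced by relations \emph{(9)}) is sensible and consistent with how the centre of the ordinary partition algebra is computed in \cite{Cre21}, but as it stands your text is a research plan with a correctly identified open gap, not a proof.
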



\subsection{Extending the Action on Tensor Spaces}


\vspace{2mm}
We now seek to extend the action of $\mathcal{A}_{2k}$ on $V^{\otimes k}$ to one of $\mathcal{A}_{2k}^{\text{aff}}$ on $M\otimes V^{\otimes k}$, where $M$ is any $\mathbb{C}S(n)$-module. The tensor space $M\otimes V^{\otimes k}$ is also viewed as an $\mathbb{C}S(n)$-module by the diagonal action. Before extending the action, we briefly define some central elements in $\mathbb{C}S(n)$. For each $b\in[n]$ and $l\in \mathbb{N}$, we let
\[ T_{n,b} := \sum_{a\in[n]\backslash\{b\}}(a,b), \hspace{3mm} \text{and} \hspace{3mm} Z_{n,l}:=\sum_{b\in[n]}T_{n,b}^{l}. \]
So $T_{n,b}$ is the sum of all transposition containing $b$, and $Z_{n,l}$ is the $l$-power sum in $T_{n,b}$ as $b$ runs from 1 to $n$.

\begin{lem}
For each $l\in\mathbb{N}$, we have that $Z_{n,l}$ belongs to the center of $\mathbb{C}S(n)$.
\end{lem}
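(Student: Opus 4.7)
The plan is to show that $Z_{n,l}$ is fixed under conjugation by every element of $S(n)$, which suffices for centrality since $\mathbb{C}S(n)$ is generated by $S(n)$.

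First, I would record the basic conjugation formula for transpositions: for any $\pi \in S(n)$ and any $a,b \in [n]$ we have $\pi(a,b)\pi^{-1} = (\pi(a),\pi(b))$. Summing this identity over $a \in [n]\setminus\{b\}$ and reindexing via $a' = \pi(a)$ (which runs over $[n]\setminus\{\pi(b)\}$) gives
\[ \pi T_{n,b} \pi^{-1} = \sum_{a \neq b}(\pi(a),\pi(b)) = \sum_{a' \neq \pi(b)}(a',\pi(b)) = T_{n,\pi(b)}. \]
Thus conjugation by $\pi$ permutes the family $\{T_{n,b}\}_{b \in [n]}$ according to the action of $\pi$ on $[n]$.

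Next, raising to the $l$-th power is preserved by conjugation, so $\pi T_{n,b}^{l}\pi^{-1} = T_{n,\pi(b)}^{l}$. Summing over $b \in [n]$ and reindexing $b' = \pi(b)$ yields
\[ \pi Z_{n,l} \pi^{-1} = \sum_{b \in [n]} T_{n,\pi(b)}^{l} = \sum_{b' \in [n]} T_{n,b'}^{l} = Z_{n,l}, \]
so $\pi Z_{n,l} = Z_{n,l}\pi$ for every $\pi \in S(n)$. Extending linearly shows $Z_{n,l}$ commutes with all of $\mathbb{C}S(n)$, and hence lies in the center.

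There is no substantial obstacle here: the argument reduces to the elementary fact that conjugation permutes the elements $T_{n,b}$ by the natural action on the index $b$, after which $Z_{n,l}$ is central simply because it is a symmetric function of the $T_{n,b}$.
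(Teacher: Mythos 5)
Your proof is correct and follows essentially the same route as the paper, whose one-line argument is precisely the identity $\pi T_{n,b} = T_{n,\pi(b)}\pi$; you simply spell out the remaining steps (conjugation commutes with powers, then reindex the sum over $b$). No issues.
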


\begin{proof}
This follows since $\pi T_{n,b} = T_{n,\pi(b)}\pi$ for any $\pi\in S(n)$. 

\end{proof}

\begin{thm}
Given any $\mathbb{C}S(n)$-module $M=\mathsf{Span}_{\mathbb{C}}\{m_{1},\dots,m_{d}\}$, we have a $\mathbb{C}$-algebra homomorphism
\[ \psi_{n,k}^{(M)}:\mathcal{A}_{2k}^{\emph{aff}}\rightarrow \mathsf{End}_{S(n)}(M\otimes V^{\otimes k}) \]
defined on the generators by
\begin{align*}
\psi_{n,2k}^{(M)}(e_{2i-1})(m_{a_{0}}\otimes v_{\bm{a}}) &= \sum_{b=1}^{n}m\otimes v_{a_{1}}\otimes \dots \otimes v_{a_{i-1}} \otimes v_{b} \otimes v_{a_{i+1}}\otimes \dots \otimes v_{a_{k}}, \\
\psi_{n,2k}^{(M)}(e_{2i})(m_{a_{0}}\otimes v_{\bm{a}}) &= \delta_{a_{i}, a_{i+1}}m_{a_{0}}\otimes v_{\bm{a}}, \\
\psi_{n,2k}^{(M)}(\tau_{2i})(m_{a_{0}}\otimes v_{\bm{a}}) &= \varepsilon_{a_{i},a_{i+1}}(a_{i}, a_{i+1})(m_{a_{0}}\otimes v_{a_{1}}\otimes \dots \otimes v_{a_{i-1}})\otimes v_{a_{i}}\otimes \dots \otimes v_{a_{k}}, \\
\psi_{n,2k}^{(M)}(\tau_{2i+1})(m_{a_{0}}\otimes v_{\bm{a}}) &= \varepsilon_{a_{i},a_{i+1}}(a_{i}, a_{i+1})(m_{a_{0}}\otimes v_{a_{1}}\otimes \dots \otimes v_{a_{i+1}})\otimes v_{a_{i+2}}\otimes \dots \otimes v_{a_{k}}, \\
\psi_{n,2k}^{(M)}(x_{2i-1})(m_{a_{0}}\otimes v_{\bm{a}}) &= \sum\limits_{\substack{b=1 \\ b\neq a_{i}}}^{n}(b, a_{i})(m\otimes v_{a_{1}}\otimes \dots \otimes v_{a_{i-1}})\otimes v_{a_{i}}\otimes \dots \otimes v_{a_{k}}, \\
\psi_{n,2k}^{(M)}(x_{2i})(m_{a_{0}}\otimes v_{\bm{a}}) &= \sum\limits_{\substack{b=1 \\ b\neq a_{i}}}^{n}(b, a_{i})(m\otimes v_{a_{1}}\otimes \dots \otimes v_{a_{i}})\otimes v_{a_{i+1}}\otimes \dots \otimes v_{a_{k}},  \\  
\psi_{n,2k}^{(M)}(z_{l})(m_{a_{0}}\otimes v_{\bm{a}}) &= \left(Z_{n,l}m_{a_{0}}\right)\otimes v_{\bm{a}},
\end{align*}
for all $(a_{0},\bm{a})\in[d]\times [n]^{k}$, extended $\mathbb{C}$-linearly across $M\otimes V^{\otimes k}$.
\end{thm}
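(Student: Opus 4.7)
The plan is to check: (i) every image is $S(n)$-equivariant, so lies in $\mathsf{End}_{S(n)}(M\otimes V^{\otimes k})$; and (ii) every defining relation of \Cref{APADefn} is satisfied. Equivariance is straightforward: $\psi_{n,k}^{(M)}(e_j)$ leaves $m_{a_0}$ untouched and is equivariant by the argument of \Cref{SWD}; for $\psi_{n,k}^{(M)}(\tau_j)$ and $\psi_{n,k}^{(M)}(x_i)$ equivariance follows from the identity $\pi(b,a)\pi^{-1}=(\pi(b),\pi(a))$ together with a reindexing of the summation; and $\psi_{n,k}^{(M)}(z_l)$ is equivariant because $Z_{n,l}$ is central in $\mathbb{C}S(n)$.

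The key observation for (ii) is that $\psi_{n,k}^{(M)}(e_j)$ acts as the identity on $m_{a_0}$, while $\psi_{n,k}^{(M)}(\tau_j)$ and $\psi_{n,k}^{(M)}(x_i)$ apply a transposition (respectively a sum of transpositions) diagonally to the initial subword $m_{a_0}\otimes v_{a_1}\otimes\cdots\otimes v_{a_{i-1}}$. Consequently, the partition-algebra style relations (1)--(5) and (7) reduce case by case to the verifications already carried out in \Cref{AffPrep} and \Cref{BraidLikeRels}, supplemented by the check that the same composition of transpositions is produced on the $m_{a_0}$-factor on both sides; the latter follows from the same case splits on the entries $a_{i-1},a_i,a_{i+1}$ used in the proof of \Cref{BraidLikeRels}, yielding elementary identities such as $(a,b)(b,c)(a,b)=(a,c)$ in $S(n)$.

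For the remaining families: relation (6) follows from the disjoint tensor supports of the operators involved, together with commutativity of sums of transpositions acting on a single tensor factor; relations (8) are verified by a direct computation on basis vectors that mirrors the proof of \Cref{AffPrep}(8), with each appearance of $X_i$ and $\sigma_i$ replaced by the actions of $x_i$ and $\tau_i$ (together with the appropriate $e$-correction); relation (9) is immediate from the Kronecker delta $\delta_{a_i,a_{i+1}}$ produced by $\psi_{n,k}^{(M)}(e_{2i})$, or the symmetrising effect of $\psi_{n,k}^{(M)}(e_{2i-1})$, which collapses the difference $\psi_{n,k}^{(M)}(x_{2i}-x_{2i+1})$ (respectively $\psi_{n,k}^{(M)}(x_{2i-1}-x_{2i})$) to zero.

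The bubble relation (10) is the cleanest new computation. An easy induction gives $\psi_{n,k}^{(M)}(x_1^l)(m_{a_0}\otimes v_{\bm a})=T_{n,a_1}^l(m_{a_0})\otimes v_{\bm a}$ since $\psi_{n,k}^{(M)}(x_1)$ does not alter the $V^{\otimes k}$-factor. Therefore
\[
\psi_{n,k}^{(M)}(e_1 x_1^l e_1)(m_{a_0}\otimes v_{\bm a}) = \sum_{b,c=1}^n T_{n,c}^l(m_{a_0})\otimes v_b\otimes v_{a_2}\otimes\cdots\otimes v_{a_k} = \sum_{b=1}^n Z_{n,l}(m_{a_0})\otimes v_b\otimes v_{a_2}\otimes\cdots\otimes v_{a_k},
\]
which agrees with $\psi_{n,k}^{(M)}(z_l e_1)(m_{a_0}\otimes v_{\bm a})$. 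Centrality of $z_l$ under the map then reduces to centrality of $Z_{n,l}$ in $\mathbb{C}S(n)$. The principal obstacle is bureaucratic rather than conceptual: the case analyses needed for relations (1)--(5), (7), and (8) are voluminous, though each individual case is routine once the transposition produced on $m_{a_0}$ is tracked alongside the underlying partition-algebra computation.
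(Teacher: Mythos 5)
Your proposal is correct and follows essentially the same route as the paper: first checking $S(n)$-equivariance generator by generator (citing \Cref{SWD} for the $e_j$, using conjugation of transpositions plus reindexing for the $\tau_j$ and $x_i$, and centrality of $Z_{n,l}$ for the $z_l$), and then verifying the defining relations by direct computation on basis vectors with the same case splits on $a_{i-1},a_i,a_{i+1}$; your treatment of the bubble relation (10) and the anti-symmetry relations (9) matches the paper's computations, and like the paper you leave relations (1)--(5) and (7) as routine but lengthy checks analogous to \Cref{BraidLikeRels}.
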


\begin{proof}
This can been shown by direct computations, much of which are fairly simple but lengthy. To ease notation, for any tuple $\bm{a}=(a_{0},a_{1},\dots,a_{k})\in[d]\times [n]^{k}$, we represent a simple tensor in $M\otimes V^{\otimes k}$ by a word in the entries of $\bm{a}$, that is $a_{0}a_{1}\dots a_{k} := m_{a_{0}}\otimes v_{a_{1}}\otimes \dots \otimes v_{a_{k}}$. We begin by showing that $\psi_{n,k}^{(M)}$ is well-defined, that is to confirm that these endomorphisms do indeed commute with the diagonal action of $S(n)$. We do this by showing for any $\pi\in S(n)$, that $\pi\psi_{n,k}^{(M)}(g)\pi^{-1}=\psi_{n,k}^{(M)}(g)$ for each generator $g$ of $\mathcal{A}_{2k}^{\text{aff}}$.

\vspace{2mm}
\noindent
One can deduce that $\pi\psi_{n,k}^{(M)}(e_{i})\pi^{-1}=\psi_{n,k}^{(M)}(e_{i})$ since the action of the generators $e_{i}$ ignores the $M$ component, and hence this follows from \emph{\Cref{SWD}}.

\vspace{2mm}
\noindent
For the generators $\tau_{2i}$,
\begin{align*}
\pi&\psi_{n,k}^{(M)}(\tau_{2i})\pi^{-1}(\bm{a}) = \pi\psi_{n,k}^{(M)}(\tau_{2i})\Big(\pi^{-1}(a_{0}a_{1}\dots a_{k})\Big) \\
&= \varepsilon_{a_{i},a_{i+1}}\pi\Big((\pi^{-1}(a_{i}),\pi^{-1}(a_{i+1}))\pi^{-1}(a_{0}a_{1}\dots a_{i-1})\pi^{-1}(a_{i}\dots a_{k})\Big) \\
&= \varepsilon_{a_{i},a_{i+1}}\pi(\pi^{-1}(a_{i}),\pi^{-1}(a_{i+1}))\pi^{-1}(a_{0}a_{1}\dots a_{i-1})a_{i}\dots a_{k} \\
&= \varepsilon_{a_{i},a_{i+1}}(a_{i},a_{i+1})(a_{0}a_{1}\dots a_{i-1})a_{i}\dots a_{k} \\
&= \psi_{n,k}^{(M)}(\tau_{2i})(\bm{a})
\end{align*}
noting $\varepsilon_{\pi^{-1}(a_{i}),\pi^{-1}(a_{i+1})}=\varepsilon_{a_{i},a_{i+1}}$. One can show $\pi\psi_{n,k}^{(M)}(\tau_{2i+1})\pi^{-1}=\psi_{n,k}(\tau_{2i+1})$ in a similar manner.

\vspace{2mm}
\noindent
For the generators $x_{2i-1}$,
\begin{align*}
\pi&\psi_{n,k}^{(M)}(x_{2i-1})\pi^{-1}(\bm{a}) = \pi\psi_{n,k}^{(M)}(x_{2i-1})\Big(\pi^{-1}(a_{0}a_{1}\dots a_{k})\Big) \\
&= \pi\left(\sum\limits_{\substack{b\in[n] \\ b\neq \pi^{-1}(a_{i})}}(b,\pi^{-1}(a_{i}))\pi^{-1}(a_{0}a_{1}\dots a_{i-1})\pi^{-1}(a_{i}\dots a_{k})\right) \\
&= \sum\limits_{\substack{b\in[n] \\ b\neq \pi^{-1}(a_{i})}}\pi(b,\pi^{-1}(a_{i}))\pi^{-1}(a_{0}a_{1}\dots a_{i-1})a_{i}\dots a_{k} \\
&= \sum\limits_{\substack{b\in[n] \\ b\neq \pi^{-1}(a_{i})}}(\pi(b),a_{i})(a_{0}a_{1}\dots a_{i-1})a_{i}\dots a_{k} \\
&= \sum\limits_{\substack{b'\in[n] \\ b'\neq a_{i}}}(b',a_{i})(a_{0}a_{1}\dots a_{i-1})a_{i}\dots a_{k} \\
&= \psi_{n,2k}^{(M)}(x_{2i-1})(\bm{a}) 
\end{align*}
by the substitution $b'=\pi(b)$. One can show $\pi\psi_{n,k}^{(M)}(x_{2i})\pi^{-1}=\psi_{n,k}(x_{2i})$ in a similar manner. Lastly $\pi\psi_{n,k}^{(M)}(z_{l})\pi^{-1}=\psi_{n,k}(z_{l})$ can be seen since $Z_{n,l}$ are central in $\mathbb{C}S(n)$.

One now needs to confirm that the defining relations of $\mathcal{A}_{2k}^{\text{aff}}$ in \emph{\Cref{APADefn}} are upheld under $\psi_{n,k}^{(M)}$. As mentioned, these can be shown by direct, but lengthy computations. With this in mind, we will only give details of some of the more difficult relations, namely relations \emph{(8)} through \emph{(10)}. Note that the Braid-like relations \emph{(7)} follow in a analogous manner to the proof of \emph{\Cref{BraidLikeRels}}.

\vspace{2mm}
\noindent
\emph{(8)(i)}: We seek to show that
\[ \psi_{n,k}^{(M)}(x_{2i+1})=\psi_{n,k}^{(M)}(\tau_{2i}x_{2i-1}\tau_{2i}+e_{2i}e_{2i-1}\tau_{2i}+\tau_{2i}e_{2i-1}e_{2i}-\tau_{2i}). \]
To show this we examine how each term on the hand right side acts on the simple tensor $\bm{a}$, and show that the sum recovers the action of $x_{2i+1}$. It proves easier to do this by tackling two cases, when $a_{i}\neq a_{i+1}$ and when $a_{i}=a_{i+1}$.

(Case 1): Assume $a_{i}\neq a_{i+1}$, then for the first term we have
\begin{align*}
&\psi_{n,k}(\tau_{2i}x_{2i-1}\tau_{2i})(\bm{a}) = \psi_{n,k}(\tau_{2i}x_{2i-1})\Big((a_{i},a_{i+1})(a_{0}a_{1}\dots a_{i-1})a_{i}\dots a_{k}\Big) \\
&= \psi_{n,k}(\tau_{2i})\left(\sum\limits_{\substack{b\in[n] \\ b\neq a_{i}}}(b,a_{i})(a_{i},a_{i+1})(a_{0}a_{1}\dots a_{i-1})a_{i}\dots a_{k}\right) \\
&= \sum\limits_{\substack{b\in[n] \\ b\neq a_{i}}}(a_{i},a_{i+1})(b,a_{i})(a_{i},a_{i+1})(a_{0}a_{1}\dots a_{i-1})a_{i}\dots a_{k} \\
&= \sum\limits_{\substack{b\in[n] \\ b\neq a_{i}}}((a_{i},a_{i+1})(b),a_{i+1})(a_{0}a_{1}\dots a_{i-1})a_{i}\dots a_{k} \\
&= \sum\limits_{\substack{c\in[n] \\ c\neq a_{i+1}}}(c,a_{i+1})(a_{0}a_{1}\dots a_{i-1})a_{i}\dots a_{k} \\
&= \sum\limits_{\substack{c\in[n] \\ c\neq a_{i+1}}}(c,a_{i+1})(a_{0}a_{1}\dots a_{i})a_{i+1}\dots a_{k} + (a_{i},a_{i+1})(a_{0}a_{1}\dots a_{i-1})a_{i}\dots a_{k} \\
& \hspace{45mm}-(a_{i},a_{i+1})(a_{0}a_{1}\dots a_{i})a_{i+1}\dots a_{k} \\
\\
&= \psi_{n,k}^{(M)}(x_{2i+1})(\bm{a}) + \psi_{n,k}^{(M)}(\tau_{2i})(\bm{a}) - (a_{i},a_{i+1})(a_{0}a_{1}\dots a_{i})a_{i+1}\dots a_{k}
\end{align*}
where we employed the substitution $c=(a_{i},a_{i+1})(b)$. For the second term,
\begin{align*}
\psi_{n,k}^{(M)}&(e_{2i}e_{2i-1}\tau_{2i})(\bm{a}) = \psi_{n,k}^{(M)}(e_{2i}e_{2i-1})\Big((a_{i},a_{i+1})(a_{0}a_{1}\dots a_{i-1})a_{i}\dots a_{k}\Big) \\
&= \psi_{n,k}^{(M)}(e_{2i})\left(\sum_{b=1}^{n}(a_{i},a_{i+1})(a_{0}a_{1}\dots a_{i-1})ba_{i+1}\dots a_{k}\right) \\
&= (a_{i},a_{i+1})(a_{0}a_{1}\dots a_{i-1})a_{i+1}a_{i+1}\dots a_{k} \\
\\
&= (a_{i},a_{i+1})(a_{0}a_{1}\dots a_{i})a_{i+1}\dots a_{k}. 
\end{align*}
For the third term $\psi_{n,k}^{(M)}(\tau_{2i}e_{2i-1}e_{2i})(\bm{a})=0$ since $a_{i}\neq a_{i+1}$. Thus collectively,
\begin{align*}
\psi_{n,k}^{(M)}&(\tau_{2i}x_{2i-1}\tau_{2i}+e_{2i}e_{2i-1}\tau_{2i}+\tau_{2i}e_{2i-1}e_{2i}-\tau_{2i}) \\
&=\psi_{n,k}^{(M)}(\tau_{2i}x_{2i-1}\tau_{2i})+\psi_{n,k}^{(M)}(e_{2i}e_{2i-1}\tau_{2i})+\psi_{n,k}^{(M)}(\tau_{2i}e_{2i-1}e_{2i})-\psi_{n,k}^{(M)}(\tau_{2i}) \\
&=\psi_{n,k}^{(M)}(x_{2i+1})(\bm{a}) + \psi_{n,k}^{(M)}(\tau_{2i})(\bm{a}) - (a_{i},a_{i+1})(a_{0}a_{1}\dots a_{i})a_{i+1}\dots a_{k} \\
&\hspace{25mm}+(a_{i},a_{i+1})(a_{0}a_{1}\dots a_{i})a_{i+1}\dots a_{k} -\psi_{n,k}^{(M)}(\tau_{2i}) \\
&=\psi_{n,k}^{(M)}(x_{2i+1})(\bm{a}).
\end{align*}

(Case 2): Assume $a_{i}=a_{i+1}$. Then $\psi_{n,k}^{(M)}(\tau_{2i})(\bm{a})=0$, and so
\[ \psi_{n,k}^{(M)}(\tau_{2i}x_{2i-1}\tau_{2i}+e_{2i}e_{2i-1}\tau_{2i}+\tau_{2i}e_{2i-1}e_{2i}-\tau_{2i}) = \psi_{n,k}^{(M)}(\tau_{2i}e_{2i-1}e_{2i}). \]
Hence we just need to confirm that $\psi_{n,k}^{(M)}(x_{2i+1})=\psi_{n,k}^{(M)}(\tau_{2i}e_{2i-1}e_{2i})$. Well,
\begin{align*}
\psi_{n,k}^{(M)}(\tau_{2i}e_{2i-1}e_{2i})(\bm{a}) &= \psi_{n,k}^{(M)}(\tau_{2i})\left(\sum_{b=1}^{n}a_{0}a_{1}\dots a_{i-1}ba_{i+1}\dots a_{k}\right) = \sum_{b=1}^{n}(b,a_{i+1})(a_{0}a_{1}\dots a_{i-1})ba_{i+1}\dots a_{k} \\
&= \sum_{b=1}^{n}(b,a_{i+1})(a_{0}a_{1}\dots a_{i})a_{i+1}\dots a_{k} = \psi_{n,k}^{(M)}(x_{2i+1}).
\end{align*}
The remaining \emph{Skein-like} relations follow by employing similar arguments.

\vspace{2mm}
\noindent
\emph{(9)(i)}: We seek to show $\psi_{n,k}^{(M)}(e_{i}x_{i})=\psi_{n,k}^{(M)}(e_{i}x_{i+1})$. We show this first when working with $e_{2i}$, then with $e_{2i-1}$. Assume $a_{i}\neq a_{i+1}$, then
\begin{align*}
\psi_{n,k}^{(M)}(e_{2i}x_{2i})(\bm{a}) &= \psi_{n,k}^{(M)}(e_{2i})\left(\sum\limits_{\substack{b=1 \\ b\neq a_{i}}}(b,a_{i})(a_{0}a_{1}\dots a_{i})a_{i+1}\dots a_{k}\right) = (a_{i},a_{i+1})(a_{0}a_{1}\dots a_{i})a_{i+1}\dots a_{k},
\end{align*}
\begin{align*}
\psi_{n,k}^{(M)}(e_{2i}x_{2i+1})(\bm{a}) &= \psi_{n,k}^{(M)}(e_{2i})\left(\sum\limits_{\substack{b=1 \\ b\neq a_{i+1}}}(b,a_{i+1})(a_{0}a_{1}\dots a_{i})a_{i+1}\dots a_{k}\right) = (a_{i},a_{i+1})(a_{0}a_{1}\dots a_{i})a_{i+1}\dots a_{k}.
\end{align*}
When $a_{i}=a_{i+1}$ one can check that $\psi_{n,k}^{(M)}(e_{2i}x_{2i})=\psi_{n,k}^{(M)}(e_{2i}x_{2i+1})=0$, thus $\psi_{n,k}^{(M)}(e_{2i}x_{2i})=\psi_{n,k}^{(M)}(e_{2i}x_{2i+1})$. For odd indices we have
\begin{align*}
\psi_{n,k}^{(M)}(e_{2i-1}x_{2i-1})(\bm{a}) &= \psi_{n,k}^{(M)}(e_{2i-1})\left(\sum\limits_{\substack{b=1 \\ b\neq a_{i}}}(b,a_{i})(a_{0}a_{1}\dots a_{i-1})a_{i}\dots a_{k}\right) = \sum_{c=1}^{n}\sum\limits_{\substack{b=1 \\ b\neq a_{i}}}(b,a_{i})(a_{0}a_{1}\dots a_{i-1})ca_{i+1}\dots a_{k},
\end{align*}
\begin{align*}
\psi_{n,k}^{(M)}(e_{2i-1}x_{2i})(\bm{a}) &= \psi_{n,k}^{(M)}(e_{2i-1})\left(\sum\limits_{\substack{b=1 \\ b\neq a_{i}}}(b,a_{i})(a_{0}a_{1}\dots a_{i})a_{i+1}\dots a_{k}\right) = \sum_{c=1}^{n}\sum\limits_{\substack{b=1 \\ b\neq a_{i}}}(b,a_{i})(a_{0}a_{1}\dots a_{i-1})ca_{i+1}\dots a_{k}.
\end{align*}
Thus $\psi_{n,k}^{(M)}(e_{i}x_{i})=\psi_{n,k}^{(M)}(e_{i}x_{i+1})$. Relation \emph{(9)(ii)} may be shown in a similar manner.

\vspace{2mm}
\noindent
\emph{(10)(i)}:
\begin{align*}
\psi_{n,k}^{(M)}(e_{1}x_{1}^{l}e_{1})(\bm{a}) &= \psi_{n,k}^{(M)}(e_{1}x_{1}^{l})\left(\sum_{b=1}^{n}a_{0}ba_{2}\dots a_{k}\right) = \psi_{n,k}^{(M)}(e_{1})\left(\sum_{b=1}^{n}(T_{n,b}^{l}a_{0})ba_{2}\dots a_{k}\right) \\
&= \sum_{c=1}^{n}\left(\sum_{b=1}^{n}T_{n,b}^{l}a_{0}\right)ca_{2}\dots a_{k} = \sum_{c=1}^{n}\left(Z_{n,l}a_{0}\right)ca_{2}\dots a_{k} \\
&=\psi_{n,k}^{(M)}(z_{l})\left(\sum_{c=1}^{n}a_{0}ca_{2}\dots a_{k}\right) = \psi_{n,k}^{(M)}(z_{l})\left(\psi_{n,k}^{(M)}(e_{1})(a_{0}a_{1}a_{2}\dots a_{k})\right) = \psi_{n,k}^{(M)}(z_{l}e_{1})(\bm{a}).
\end{align*}
Lastly relation \emph{(10)(ii)} is simple to check since $Z_{n,l}$ belongs to the center of $\mathbb{C}S(n)$. 

\end{proof}

\begin{cor}\label{infinitesub}
The subalgebra $\langle \tau_{i}, e_{j} \rangle_{i,j}$ of $\mathcal{A}_{2k}^{\emph{aff}}$ is infinite dimensional over $\mathbb{C}[z_{0}]$. 
\end{cor}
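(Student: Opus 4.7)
Assume $k\geq 2$ (for $k=1$ no $\tau_i$ exist and the claim is vacuous/false). The strategy is to produce an explicit infinite family
\[
\gamma_l := (e_1\tau_2)^l e_1,\qquad l\geq 0,
\]
of elements inside $\langle \tau_i,e_j\rangle$ and to prove that $\{\gamma_l\}_{l\geq 0}$ is $\mathbb{C}[z_0]$-linearly independent by pushing forward along the Schur--Weyl-type homomorphism $\psi := \psi_{n,k}^{(M)}$ of the previous theorem, with $M = \mathbb{C}S(n)$ the regular representation. Under this choice $\psi(z_0) = Z_{n,0} = n$ acts as a scalar.

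From the explicit formulas for $\psi(e_1)$ and $\psi(\tau_2)$, a one-line induction on $l$ establishes
\[
\psi(\gamma_l)\bigl(m\otimes v_{a_1}\otimes v_{a_2}\otimes \cdots \otimes v_{a_k}\bigr)
\;=\; \sum_{c=1}^{n}\bigl(T_{n,a_2}^{\,l}\cdot m\bigr)\otimes v_c\otimes v_{a_2}\otimes \cdots \otimes v_{a_k}.
\]
The inductive step amounts to the identity $\psi(e_1\tau_2)\bigl(\sum_c X\otimes v_c\otimes v_{a_2}\otimes\cdots\bigr)=\sum_{c'}(T_{n,a_2}\cdot X)\otimes v_{c'}\otimes v_{a_2}\otimes\cdots$, which follows immediately from $\tau_2$ acting as the weighted transposition $\varepsilon_{c,a_2}(c,a_2)$ on the $M$-factor, together with the formula for $e_1$.

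Now suppose $\sum_{l=0}^L p_l(z_0)\gamma_l = 0$ in $\mathcal{A}_{2k}^{\mathrm{aff}}$ for some polynomials $p_l\in\mathbb{C}[z_0]$. Applying $\psi$ and evaluating on the simple tensor $1_{\mathbb{C}S(n)}\otimes v_{a_1}\otimes v_b\otimes v_{a_3}\otimes\cdots$ for any fixed $b\in[n]$, and reading off the coefficient of a chosen basis vector $v_c$ in the second slot, gives
\[
\sum_{l=0}^{L} p_l(n)\,T_{n,b}^{\,l} \;=\; 0 \quad\text{in } \mathbb{C}S(n).
\]
But $T_{n,b}$ is conjugate in $S(n)$ to the Jucys--Murphy element $J_n = \sum_{a<n}(a,n)$, and across the irreducibles of $S(n)$ the eigenvalues of $J_n$ realise every integer in $\{-(n-1),\dots,n-1\}$ (the contents of boxes of Young diagrams of size $n$). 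Hence the minimal polynomial of $T_{n,b}$ in $\mathbb{C}S(n)$ has degree at least $2n-1$. Choosing $n$ with $2n-1 > L$ forces $p_l(n)=0$ for all $l\leq L$, and since this holds for infinitely many $n$ we conclude $p_l\equiv 0$. Thus the $\gamma_l$ are $\mathbb{C}[z_0]$-linearly independent and $\langle \tau_i,e_j\rangle$ is infinite dimensional over $\mathbb{C}[z_0]$.

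The main technical input is the minimal-polynomial bound for $T_{n,b}$ via Jucys--Murphy eigenvalues; once this standard fact is invoked, the rest is a short direct calculation with the action formulas established in the preceding theorem.
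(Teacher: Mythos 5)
Your proposal is correct and follows the same overall strategy as the paper's proof: produce an explicit infinite family inside $\langle\tau_i,e_j\rangle$ and establish its $\mathbb{C}[z_0]$-linear independence by pushing forward along $\psi_{n,k}^{(M)}$, reducing everything to the linear independence of the powers $T_{n,b}^l$ in $\mathbb{C}S(n)$. The differences are in the supporting details, and each buys something. The paper works with $d_m:=x_1^m e_2 e_1$ (which equals $e_2\gamma_m$ in your notation) and must first run an induction, via \Cref{AffMixRels} and relations (6)(iii) and (9)(ii) of \Cref{APADefn}, just to see that $d_m$ lies in the subalgebra; your $\gamma_l=(e_1\tau_2)^l e_1$ is literally a word in the generators, so membership is immediate --- a genuine simplification. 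For the independence of $\{T_{n,b}^l\}_{0\le l\le L}$ the paper uses a free module and an elementary support argument (for $n>L+1$ the top power contains an $(l+1)$-cycle that cannot occur in lower powers) and then chooses a single $n$ avoiding the roots of the finitely many coefficient polynomials, whereas you bound the minimal polynomial of $T_{n,b}$ via the Jucys--Murphy spectrum and let $n$ range over infinitely many values; both are valid. One small caveat on your key input: the eigenvalues of $J_n$ are the contents of the \emph{removable} corners of partitions of $n$, and these exhaust $\{-(n-1),\dots,n-1\}$ only for $n\ge 4$ (the value $0$ is missed for $n=2,3$), so the bound $2n-1$ on the degree of the minimal polynomial should be asserted for $n\ge 4$; since you take $n$ large anyway, this does not affect the conclusion.
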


\begin{proof}
Set $d_{m}:=x_{1}^{m}e_{2}e_{1}$ for all $m\in\mathbb{N}$. We first show that $d_{m}\in\langle \tau_{i}, e_{j} \rangle$ by induction on $m$. By \emph{\Cref{AffMixRels}} (i) we see that $e_{2}x_{2}\in\langle \tau_{i}, e_{j} \rangle$. Then multiplying on the right by $e_{1}$ yeilds $e_{2}x_{2}e_{1} = e_{2}x_{1}e_{1}=x_{1}e_{2}e_{1} = d_{1}$, where the first equality follows from (9)(ii) of \emph{\Cref{APADefn}}, and the second from (6)(iii). Thus we have the base case $d_{1}\in\langle \tau_{i}, e_{j} \rangle$. Assume $d_{m'}\in\langle \tau_{i}, e_{j} \rangle$ for all $m'<m$ with $m\geq 2$, we seek to show that $d_{m}\in\langle \tau_{i}, e_{j} \rangle$. Well
\begin{align*}
d_{m-1}\tau_{2}e_{1}&=x_{1}^{m-1}e_{2}e_{1}\tau_{2}e_{1}=x_{1}^{m-1}e_{2}x_{2}e_{1} = x_{1}^{m-1}e_{2}x_{1}e_{1} = x_{1}^{m}e_{2}e_{1}=d_{n},
\end{align*}
where the second equality follows from \emph{\Cref{AffMixRels}} (i), and the remaining equalities follow in the same manner as the base case. Hence $d_{m}\in\langle \tau_{i}, e_{j} \rangle$ completing induction. We now seek to show that the set $\{d_{m} \ | \ m\in\mathbb{N}\}$ is $\mathbb{C}[z_{0}]$-linearly independent in $\mathcal{A}_{2k}^{\text{aff}}$, which will complete the proof. Let $I\subset\mathbb{N}$ be finite and assume
\[ \sum_{m\in I}h_{m}(z_{0})d_{m} = 0, \]
where $h_{m}(z_{0})$ are polynomials in $\mathbb{C}[z_{0}]$. We seek to show that $h_{m}(z_{0})=0$ for each $m\in I$. Let $M\in I$ be the maximal element, and let $R$ be the set of roots for each $h_{m}(z_{0})$. Pick an $n\in\mathbb{N}$ such that $n>M+1$ and $n\notin R$. Let $F$ be any free $\mathbb{C}S(n)$-module. For any $f\in F$ and $(a_{1},\dots,a_{k})\in [n]^{k}$, we have
\[ \psi_{n,k}^{(F)}(d_{m})(f\otimes v_{a_{1}}\otimes v_{a_{2}}\otimes \dots \otimes v_{a_{k}}) = \left(T_{n,a_{2}}^{m}f\right)\otimes v_{a_{2}}\otimes v_{a_{2}}\otimes v_{a_{3}}\otimes \dots \otimes v_{a_{k}}. \]
Since $F$ is free, it will follow that the set $\{\psi_{n,k}^{(F)}(d_{m}) \ | \ m\in I\}$ is linear independent in $\mathsf{End}_{S(n)}(F\otimes V^{\otimes k})$ if the set $\{T_{n,a_{2}}^{m} \ | \ m \in I\}$ is linearly independent in $\mathbb{C}S(n)$. This follows since $n>M+1$, and hence $T_{n,a_{2}}^{m}$ contains a permutation consisting of a single cycle of size $m+1$, while all permutations in $T_{n,a_{2}}^{m'}$ must have smaller support whenever $m'<m$. now consider the equation
\[ \psi_{n,k}^{(F)}\left(\sum_{m\in I}h_{m}(z_{0})d_{m}\right) = \sum_{m\in I}h_{m}(n)\psi_{n,k}^{(F)}(d_{m}) = 0. \]
Since $n$ is not a root of any $h_{m}(z_{0})$, and the set $\{\psi_{n,k}^{(F)}(d_{m}) \ | \ m\in I\}$ is linear independent, we must have that $h_{m}(z_{0})=0$ for each $m\in I$.

\end{proof}


\section{Connections with the Heisenberg category}

J. Brundan and M. Vargas recently defined in \cite{BV21} an affine partition category $\mathsf{APar}$ as a monoidal subcategory of the Heisenberg category introduced by Khovanov in \cite{Kho14} generated by certain objects and morphisms. This was based on the observation made by S. Likeng and A. Savage in \cite{LS21} that the partition category can be realised inside the Heisenberg category.
This affine partition category naturally gives rise to another definition of an affine partition algebra, which they denote by $AP_k$ by taking the endomorphism algebra ${\rm End}_{\mathsf{APar}}((\uparrow \downarrow)^k)$ for the object $(\uparrow \downarrow)^k$ in $\mathsf{APar}$ (see section 4.1. below).

 Inspired by the work of Brundan and Vargas, we construct a surjective homomorphism $\varphi$ from $\mathcal{A}^{\text{aff}}_{2k}$ to ${\rm End}_{\mathsf{Heis}}((\uparrow \downarrow)^k)$. In fact, our argument generalises to show that Brundan and Vargas' affine partition category $\mathsf{APar}$ is the \emph{full} monoidal subcategory in $\mathsf{Heis}$ generated by the object $\uparrow \downarrow$. As a corollary we obtain that $AP_k$ is a quotient of $\mathcal{A}^{\text{aff}}_{2k}$. 
 
 We start by recalling the definition of the Heisenberg category.


\subsection{Heisenberg Category}


The Heisenberg Category $\mathsf{Heis}$ is a $\mathbb{C}$-linear monoidal category originally defined by M. Khovanov in \cite{Kho14}. The objects of $\mathsf{Heis}$ are generated, as a monoidal category, by the two objects $\uparrow$ and $\downarrow$. We use juxtaposition to denote the tensor product of objects, and the monoidal identity object is the empty word $\emptyset$. Hence we view the free monoid $\langle \uparrow, \downarrow\rangle$ as the set of objects in $\mathsf{Heis}$. Consider two objects $\bm{a}=a_{1}\cdots a_{n}$ and $\bm{b}=b_{1}\cdots b_{m}$ for $a_{i},b_{i}\in\{\uparrow,\downarrow\}$. The space of morphisms $\mathsf{Hom}_{\mathsf{Heis}}(\bm{a},\bm{b})$ is the $\mathbb{C}$-vector space generated by certain diagrams modulo local relations. We call such diagrams $(\bm{a},\bm{b})$-diagrams and define them as follows: Firstly, we work in the strip $\mathbb{R}\times [0,1]$ with boundary $\mathbb{B}:=\mathbb{R}\times\{1\} \cup \mathbb{R}\times \{0\}$. We call an orientated immersion of the interval $[0,1]$ and circle $S^{1}$ a \emph{string} and \emph{loop} respectively. We denote orientations by drawing an arrow on the curve. Now consider the set of points $E=[n]\times \{1\}\cup[m]\times \{0\}$, and colour $(i,1)\in [n]\times\{1\}$ and $(j,0)\in [m]\times\{0\}$ with the symbols $a_{i}$ and $b_{j}$ respectively. We say that a set partition of $E$ into pairs is an $(\bm{a},\bm{b})$-matching if pairs of points in the same row are coloured by opposite arrows, while pairs of points in different rows are coloured by the same arrow. Then an $(\bm{a},\bm{b})$-diagram is a finite collection of strings and loops, modulo rel boundary isotopies, such that: 
\begin{itemize}
\item[(D1)] The endpoints of the strings induce an $(\bm{a},\bm{b})$-matching on $E$
\item[(D2)] There are only finitely many points of intersection, and no triple or tangential intersections occur
\item[(D3)] The boundary $\mathbb{B}$ doesn't intersect any loops, and only intersects strings at the endpoints $E$
\end{itemize}
For example let $\bm{a}=\downarrow\downarrow\uparrow$ and $\bm{b}=\uparrow\downarrow\downarrow\downarrow\uparrow$, then
\[ \begin{tikzpicture}[scale=1.4]
    \node[label=above:{$\scriptsize \downarrow$}] at (1,1){};
    \node[label=above:{$\scriptsize \downarrow$}] at (2,1){};
    \node[label=above:{$\scriptsize \uparrow$}] at (3,1){};

    \node[label=below:{$\scriptsize \uparrow$}] at (1,0){};
    \node[label=below:{$\scriptsize \downarrow$}] at (2,0){};
    \node[label=below:{$\scriptsize \downarrow$}] at (3,0){};
    \node[label=below:{$\scriptsize \downarrow$}] at (4,0){};
    \node[label=below:{$\scriptsize \uparrow$}] at (5,0){};

    \draw[->] (1,0) to [curve through={(1.3,0.4) .. (2.5,0.3) .. (3.1,0.7)}] (3,1);
    \draw[<-] (2,0) to [curve through={(2,0.3) .. (1.7,0.8) .. (1.4,0.65) .. (1.7,0.55) .. (2.15,0.8)}] (2,1);
    \draw[<-] (3,0) to [curve through={(3.3,0.4) .. (4,0.6) .. (4.7,0.4)}] (5,0);
    \draw[<-] (4,0) to [curve through={(2.4,0.85) .. (1.3,0.7)}] (1,1);

    \draw[<-] (3.9,0.75) to [curve through={(4.1,0.95) .. (4.3,0.75) .. (4.45,0.37) .. (4.7,0.75) .. (4.5,0.9)  (4.3,0.75) .. (4.1,0.65)}] (3.9,0.75);

\begin{scope}[decoration={markings, mark=at position 0.25 with {\arrow{<}}}]
           \draw[postaction={decorate}] (0.77,0.55) ellipse (3mm and 1.7mm);
\end{scope}

\begin{scope}[dashed]
          \draw (0.45,1.05) to (5.1,1.05);
          \draw (0.45,-0.05) to (5.1,-0.05);
\end{scope}

\end{tikzpicture} \]
is a $(\bm{a},\bm{b})$-diagram. Isotopic deformation of the interior of $\mathbb{R}\times [0,1]$ is allowed, and will preserve the relative structure of the points of intersection. If a loop contains no intersections we call it a \emph{bubble}. Bubbles can have clockwise or anticlockwise orientation. If the endpoints of a string occur in different rows we call it a \emph{vertical} string, and it has either a \emph{down} or \emph{up} orientation. If the endpoints belong to the same row then we call it an \emph{arc}. Non self-intersecting arcs have either  a \emph{clockwise} or \emph{anti-clockwise} orientation. In the above example there are two loops, one of which is a bubble, and four strings, three of which are vertical and one an arc. We call an endpoint of a string a \emph{source} if the arrow of orientation points away from it, and a \emph{target} otherwise. We consider $(\bm{a},\bm{b})$-diagrams modulo the following local relations:
\begin{itemize}
\item[(H1)] \hspace{52mm}
\begin{tikzpicture}[baseline=-1ex, scale=0.5]
\begin{scope}[dotted]
    \draw (0,0) circle (1.414cm);
\end{scope}
    \draw (-1,-1) to (1,1);
    \draw (1,-1) to (-1,1);
    \draw (0,1.414) to [curve through={((0,1.3) .. (-0.7,0) .. (0,-1.3)}] (0,-1.414);
\end{tikzpicture} \hspace{1mm} = \hspace{1mm} 
\begin{tikzpicture}[baseline=-1ex, scale=0.5]
\begin{scope}[dotted]
    \draw (0,0) circle (1.414cm);
\end{scope}
    \draw (-1,-1) to (1,1);
    \draw (1,-1) to (-1,1);
    \draw (0,1.414) to [curve through={((0,1.3) .. (0.7,0) .. (0,-1.3)}] (0,-1.414);
\end{tikzpicture}
\item[(H2)] \hspace{25mm}
\begin{tikzpicture}[baseline=-1ex, scale=0.5]
\begin{scope}[dotted]
    \draw (0,0) circle (1.414cm);
\end{scope}
    \draw[->] (-1,-1) to [curve through={(-0.8,-0.9) .. (0.45,0) .. (-0.8,0.9)}] (-1,1);
    \draw[->] (1,-1) to [curve through={(0.8,-0.9) .. (-0.45,0) .. (0.8,0.9)}] (1,1);
\end{tikzpicture} \hspace{1mm} = \hspace{1mm}
\begin{tikzpicture}[baseline=-1ex, scale=0.5]
\begin{scope}[dotted]
    \draw (0,0) circle (1.414cm);
\end{scope}
    \draw[->] (-1,-1) to [curve through={(-0.8,0)}] (-1,1);
    \draw[->] (1,-1) to [curve through={(0.8,0)}] (1,1);
\end{tikzpicture} , \hspace{10mm}
\begin{tikzpicture}[baseline=-1ex, scale=0.5]
\begin{scope}[dotted]
    \draw (0,0) circle (1.414cm);
\end{scope}
    \draw[->] (-1,-1) to [curve through={(-0.8,-0.9) .. (0.45,0) .. (-0.8,0.9)}] (-1,1);
    \draw[<-] (1,-1) to [curve through={(0.8,-0.9) .. (-0.45,0) .. (0.8,0.9)}] (1,1);
\end{tikzpicture} \hspace{1mm} = \hspace{1mm}
\begin{tikzpicture}[baseline=-1ex, scale=0.5]
\begin{scope}[dotted]
    \draw (0,0) circle (1.414cm);
\end{scope}
    \draw[->] (-1,-1) to [curve through={(-0.8,0)}] (-1,1);
    \draw[<-] (1,-1) to [curve through={(0.8,0)}] (1,1);
\end{tikzpicture}
\item[(H3)] \hspace{40mm}
\begin{tikzpicture}[baseline=-1ex, scale=0.5]
\begin{scope}[dotted]
    \draw (0,0) circle (1.414cm);
\end{scope}
    \draw[<-] (-1,-1) to [curve through={(-0.8,-0.9) .. (0.45,0) .. (-0.8,0.9)}] (-1,1);
    \draw[->] (1,-1) to [curve through={(0.8,-0.9) .. (-0.45,0) .. (0.8,0.9)}] (1,1);
\end{tikzpicture} \hspace{1mm} = \hspace{1mm}
\begin{tikzpicture}[baseline=-1ex, scale=0.5]
\begin{scope}[dotted]
    \draw (0,0) circle (1.414cm);
\end{scope}
    \draw[<-] (-1,-1) to [curve through={(-0.8,0)}] (-1,1);
    \draw[->] (1,-1) to [curve through={(0.8,0)}] (1,1);
\end{tikzpicture} \hspace{1mm} - \hspace{1mm}
\begin{tikzpicture}[baseline=-1ex, scale=0.5]
\begin{scope}[dotted]
    \draw (0,0) circle (1.414cm);
\end{scope}
    \draw[<-] (-1,-1) to [curve through={(0,-0.8)}] (1,-1);
    \draw[->] (-1,1) to [curve through={(0,0.8)}] (1,1);
\end{tikzpicture}
\item[(H4)] \hspace{40mm}
\begin{tikzpicture}[baseline=-1ex, scale=0.6]
\begin{scope}[dotted]
         \draw (0,0) circle (1.15cm);
\end{scope}
         \draw[->] (0,-1.15) to [curve through={(0.1,-0.9) .. (-0.45,0.5) .. (-0.7,0) .. (-0.45,-0.5) .. (0.1,0.9)}] (0,1.15);
\end{tikzpicture} \hspace{1mm} = 0, \hspace{10mm}
\begin{tikzpicture}[baseline=-1ex, scale=0.5]
\begin{scope}[dotted]
    \draw (0,0) circle (1.414cm);
\end{scope}
    \draw[->] (0,1) to [curve through={(-1,0) .. (0,-1) .. (1,0)}] (0,1);
\end{tikzpicture} \hspace{1mm} = 1
\end{itemize}
Relation $(H1)$ holds regardless of orientations. To apply such a local relation to an $(\bm{a},\bm{b})$-diagram one locates a disk which is isotopic to one of the disks above, then replace such a disk according to the corresponding equation. Note that any of the local relations may be rotated in any way to give an equivalent relation. Relation $(H1)$ tells us that any curve may past over a crossing, and relations $(H2)$ and $(H3)$ tells us how to pull part orientated curves, where $(H3)$ shows that this can not always be done for free. Relation $(H4)$ tells us that left curls kill $(\bm{a},\bm{b})$-diagrams, and that any anti-clockwise bubble may be removed for free. 

The composition of morphisms is given by vertical concatenation of diagrams, and rescaling (and extending $\mathbb{C}$-linearly). We denote composition by juxtaposition of symbols. When $\bm{a}=\bm{b}$ we write $\bm{a}$-diagram instead of $(\bm{a},\bm{a})$-diagram. The morphism space $\mathsf{End}_{\mathsf{Heis}}(\bm{a})$ is a $\mathbb{C}$-algebra with identity given by the diagram of non-intersecting vertical strings. Now for later use, we collect some relations regarding arbitrary $(\bm{a},\bm{b})$-diagrams. The following local relation follows from (H2) and (H3), see also \cite[(3.5)]{LS21}:
\begin{lem}\label{BubComTwoStrings}
Clockwise bubbles satisfy the commuting relation
\[ 
\begin{tikzpicture}[baseline=2ex, scale=0.7]
    \draw[->] (1,0) to (1,1);
    \draw[<-] (2,0) to (2,1);
    \draw[<-] (3,1) to [curve through={(2.5,0.5) .. (3,0) .. (3.5,0.5)}] (3,1);
\end{tikzpicture} \hspace{2mm} = \hspace{2mm}
\begin{tikzpicture}[baseline=2ex, scale=0.7]
    \draw[->] (1,0) to (1,1);
    \draw[<-] (2,0) to (2,1);
    \draw[<-] (0,1) to [curve through={(-0.5,0.5) .. (0,0) .. (0.5,0.5)}] (0,1);
\end{tikzpicture} \]
\begin{flushright} $\square$ \end{flushright}
\end{lem}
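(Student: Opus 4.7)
The plan is to move the clockwise bubble leftward past each of the two vertical strings in turn, using planar isotopy to bring the bubble into a configuration where the local relations (H2) and (H3) can resolve any crossings that arise. Since morphism diagrams are considered modulo isotopy rel boundary, the only obstruction to sliding the bubble is the presence of the vertical strings, which force at least one transient crossing configuration along the way.

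First, I would slide the bubble past the down-string. The idea is to isotope the clockwise bubble leftward so that its boundary encloses a segment of the down-string; this produces exactly two crossings between the bubble's curve and the down-string. Since the bubble is clockwise, its right arc is oriented downward, matching the downward orientation of the down-string, so the two crossings together with the enclosed segments form a same-orientation bigon. A $180^\circ$ rotation of relation (H2) then identifies this bigon with a pair of parallel non-crossing strands, and the net effect is that the bubble ends up on the left of the down-string.

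Second, I would repeat the procedure with the up-string. Wrapping the (now relocated) bubble around the up-string produces two further crossings; this time the bigon is formed by the bubble's left arc, oriented upward by the clockwise orientation, together with the upward-oriented up-string. Relation (H2) in its original form resolves this bigon to parallel non-crossing strands, placing the bubble to the left of the up-string, exactly as required by the right-hand side of the identity.

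The main technical point is the orientation bookkeeping: one must check that each wrapping step yields a same-orientation bigon (so that (H2) applies cleanly) rather than an opposite-orientation bigon (which would invoke (H3) and introduce a cup-cap correction term). The matching between the downward right arc and the down-string, and between the upward left arc and the up-string, is precisely what the clockwise orientation of the bubble guarantees. Should any correction terms arise from (H3) under an alternative wrapping, they can be eliminated using the basic isotopy identities and relation (H4). Alternatively, one may appeal directly to \cite[(3.5)]{LS21}, which records this sliding identity as a standard fact in the Heisenberg category.
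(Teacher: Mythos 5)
Your proposal is correct and matches the paper's treatment: the paper gives no proof of this lemma, simply noting that it follows from (H2) and (H3) and pointing to \cite[(3.5)]{LS21}, and your bigon-sliding argument is the standard verification of exactly that fact. One small orientation-bookkeeping slip: to push the bubble \emph{leftward} past the up-string you should resolve the bigon formed by the up-string and the bubble's \emph{right} (downward-oriented) arc --- the up-on-the-left, down-on-the-right case of (H2) --- since resolving the bigon with the left (upward) arc instead returns the bubble to the right of the strand; as both bigons are (H2)-resolvable no (H3) correction terms arise in either case, so your conclusion stands.
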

Although left curls annihilate diagrams, right curls do not, and they play an important role. We will represent right curls by a decoration, and label such decorations with weights to denote multiplicity:
\[ \begin{tikzpicture}[baseline=3ex, scale=1.4]
    \draw[->] (1,0) to (1,1);
    \filldraw (1,0.5) circle (2pt);
\end{tikzpicture} \hspace{1mm} := \hspace{1mm} 
\begin{tikzpicture}[baseline=3ex, scale=1.4]
    \draw[->] (1,0) to [curve through={(1.1,0.5) .. (1.25,0.65) .. (1.4,0.5) .. (1.25,0.35) .. (1.1,0.5)}] (1,1);
\end{tikzpicture}, \hspace{15mm}
\begin{tikzpicture}[baseline=3ex, scale=1.4]
    \node[label=right:{$l$}] (l) at (1,0.5){};
    \draw[->] (1,0) to (1,1);
    \filldraw (1,0.5) circle (2pt);
\end{tikzpicture} \hspace{1mm} := \hspace{1mm} 
\begin{tikzpicture}[baseline=3ex, scale=1.4]
    \node[label=right:{$\Big\} \hspace{1mm} l$ times}] (l) at (1.05,0.5){};
    \node[ ] (vd) at (1.1,0.55){$\vdots$};
    \draw[->] (1,0) to (1,1);
    \filldraw (1,0.7) circle (2pt);
    \filldraw (1,0.3) circle (2pt);
\end{tikzpicture} \]

The following result is a simple application of the local relations. 

\begin{lem}\label{DecRels}
The following two local relations hold:
\[ \begin{tikzpicture}[baseline=2.5ex, scale=0.9]
\begin{scope}[dotted]
    \draw (1.5,0.5) circle (0.7071cm);
\end{scope}
    \draw[->] (1,0) to (2,1);
    \draw[->] (2,0) to (1,1);
    \filldraw (1.25,0.25) circle (2pt);
\end{tikzpicture} \hspace{1mm} = \hspace{1mm} 
\begin{tikzpicture}[baseline=2.5ex, scale=0.9]
\begin{scope}[dotted]
    \draw (1.5,0.5) circle (0.7071cm);
\end{scope}
    \draw[->] (1,0) to (2,1);
    \draw[->] (2,0) to (1,1);
    \filldraw (1.75,0.75) circle (2pt);
\end{tikzpicture} \hspace{1mm} + \hspace{1mm}
\begin{tikzpicture}[baseline=2.5ex, scale=0.9]
\begin{scope}[dotted]
    \draw (1.5,0.5) circle (0.7071cm);
\end{scope}
    \draw[->] (1,0) to (1,1);
    \draw[->] (2,0) to (2,1);
\end{tikzpicture},  \hspace{10mm}
\begin{tikzpicture}[baseline=2.5ex, scale=0.9]
\begin{scope}[dotted]
    \draw (1.5,0.5) circle (0.7071cm);
\end{scope}
    \draw[->] (1,0) to (2,1);
    \draw[->] (2,0) to (1,1);
    \filldraw (1.75,0.25) circle (2pt);
\end{tikzpicture} \hspace{1mm} = \hspace{1mm} 
\begin{tikzpicture}[baseline=2.5ex, scale=0.9]
\begin{scope}[dotted]
    \draw (1.5,0.5) circle (0.7071cm);
\end{scope}
    \draw[->] (1,0) to (2,1);
    \draw[->] (2,0) to (1,1);
    \filldraw (1.25,0.75) circle (2pt);
\end{tikzpicture} \hspace{1mm} - \hspace{1mm}
\begin{tikzpicture}[baseline=2.5ex, scale=0.9]
\begin{scope}[dotted]
    \draw (1.5,0.5) circle (0.7071cm);
\end{scope}
    \draw[->] (1,0) to (1,1);
    \draw[->] (2,0) to (2,1);
\end{tikzpicture},  \]
\begin{flushright} $\square$ \end{flushright}
\end{lem}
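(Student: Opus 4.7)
The two relations are symmetric, so I would focus on the first one; the second will follow by an analogous argument (essentially a horizontal reflection, together with careful tracking of orientations). By the convention just introduced, the decoration on a strand denotes a small right curl, so the first step is to expand the dot on the lower-left strand of the LHS into its explicit right-curl form. This produces a diagram in which the left upward strand carries a small self-crossing (the right curl) just below the main crossing with the other upward strand.

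The key manoeuvre is to push the loop of the right curl upwards through the main crossing. Using an isotopy I would first bring the self-intersection of the curl right next to the main crossing, with the loop dangling off to one side. Then a single application of the triangle relation (H1) slides the self-intersection through the main crossing, which has the effect of transporting the loop over to the other side and placing the right curl on the strand that emerges at the top-right -- exactly the position of the dot in the first term of the RHS. Along the way the loop has to pass over the other upward strand, producing a pair of auxiliary crossings that can be resolved with (H2) for the same-oriented configuration.

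What is left to check is the leftover "error" term. After the triangle move and the (H2) simplifications, there remains a small local configuration in which two strands cross with opposite orientations; this is precisely the input of (H3). Applying (H3) resolves this configuration into two summands: the first is just the vertical identity (contributing the ``$+$~identity'' term to the first relation), while the second is a cap--cup that, combined with the residual arc of the curl and the relevant piece of the adjacent strand, closes up into an anticlockwise bubble. By (H4) this bubble equals $1$, after which the two contributions collapse to exactly the RHS of the first relation. For the second relation, exactly the same sequence of moves is carried out on a dot in the opposite corner: the orientations encountered when invoking (H3) are reversed, which switches the sign of the extra term and yields the claimed ``$-$~identity'' correction.

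The main obstacle is bookkeeping rather than conceptual: one has to be careful to keep track of orientations so that the application of (H3) is valid and produces the correct sign, and to verify that the spurious bubbles produced along the way are indeed anticlockwise (hence equal to~$1$ by (H4)) and not clockwise (which would instead be governed by \Cref{BubComTwoStrings}). Once the orientations are in place, the proof is a routine sequence of applications of (H1)--(H4), and both relations in \Cref{DecRels} fall out.
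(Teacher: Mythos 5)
The paper itself offers no written proof of this lemma (it is asserted to be ``a simple application of the local relations''), so your proposal has to stand on its own. Your overall strategy is the standard and correct one: expand the dot as a right curl, push its self-intersection through the main crossing with (H1), pull the loop across the other strand with (H2)/(H3), and let the (H3) correction term account for the $\pm$ identity. The problem is in the one step that carries all the content of the lemma, namely the resolution of the (H3) bigon. You identify the first (pulled-apart) summand of (H3) with the vertical identity and the second (cup--cap) summand with the dotted crossing times an anticlockwise bubble. This is backwards. In the pulled-apart summand the curl's loop and the other strand are merely disjoint again, but the curl's self-intersection and the main crossing are both still present, so that term is the dotted crossing (with the dot now on the far side), not the identity. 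It is the cup--cap summand that produces the identity: the reconnection splices the loop of the curl into the other strand, and what remains is two like-oriented strands crossing twice, which (H2) straightens to two parallel strands --- no anticlockwise bubble arises on this route, so the appeal to (H4) is misplaced. More seriously, the cup--cap summand enters (H3) with a minus sign, so if one takes your assignment literally the first relation comes out as $[\text{dot below}] = [\text{identity}] - [\text{dot above}]$, which is false. Getting the $+$ in the first relation versus the $-$ in the second requires tracking exactly when (H3) is applied in the bigon-removing direction versus the bigon-creating direction (equivalently, running the slide from the top term downwards for one of the two relations); your write-up does not do this, and since the sign is the whole point of the lemma, this is a genuine gap rather than mere bookkeeping.

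A secondary caution: the opening claim that the second relation is ``essentially a horizontal reflection'' of the first cannot be taken at face value, because a left--right reflection turns a right curl into a left curl, which (H4) annihilates; the reflected statement is therefore not the second relation. Your closing remark --- rerun the same sequence of moves with the dot on the other strand and observe that the orientations met at the (H3) step are reversed, flipping the sign --- is the correct repair, but it amounts to redoing the computation, not to applying a symmetry.
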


We now recall a basis for the morphism spaces $\mathsf{Hom}_{\mathsf{Heis}}(\bm{a},\bm{b})$ presented in \cite{Kho14}. We first introduce a few definitions to help us describe this basis in a manner which will lend itself better for later results. 
\begin{defn} \label{SimDia}
For $\bm{a},\bm{b}\in\langle\uparrow,\downarrow\rangle$, we say an $(\bm{a},\bm{b})$-diagram is \emph{simple} if it contains no loops, no self-intersections, and two strings intersect at most once. Let $\mathsf{Sim}(\bm{a},\bm{b})$ denote the set of simple $(\bm{a},\bm{b})$-diagrams, and write $\mathsf{Sim}(\bm{a})$ for $\mathsf{Sim}(\bm{a},\bm{a})$.
\end{defn} 
Given words $\bm{a}=a_{1}\cdots a_{n}, \bm{b}=b_{1}\cdots b_{m}\in\langle\uparrow,\downarrow\rangle$ with $a_{i},b_{j}\in\{\uparrow,\downarrow\}$, let $\bm{b}^{*}$ denote the word obtained from $\bm{b}$ by replacing up arrows with down arrows, and down arrows with up arrows. Let $u$ equal the number of up arrows appearing in $\bm{a}$ and $\bm{b}^{*}$, and $d$ the number of a down arrows. Then by $(D1)$, one can deduce that $\mathsf{Hom}_{\mathsf{Heis}}(\bm{a},\bm{b})$ is non-empty if and only if $u=d$. In this situation we have that $|\mathsf{Sim}(\bm{a},\bm{b})|=u!$, since there is one simple $(\bm{a},\bm{b})$-diagram for every $(\bm{a},\bm{b})$-matching. Such a correspondence is given by reading the pairings of endpoints formed from the strings of a simple diagram.

\begin{ex} \label{SimpleDiaEx}
Consider the words $\bm{a}=\uparrow\downarrow$ and $\bm{b}=\downarrow\uparrow\uparrow\downarrow$. Then the $6=3!$ simple $(\bm{a},\bm{b})$-diagrams are
\[ 
\begin{tikzpicture}[scale=0.6]
    \draw[->] (1,1) to (0,0);
    \draw[->] (1,0) to (0,1);
    \draw[->] (2,0) to [curve through={(2.5,0.5)}] (3,0);
\end{tikzpicture}, \hspace{8mm}
\begin{tikzpicture}[scale=0.6]
    \draw[->] (1,1) to (0,0);
    \draw[<-] (0,1) to (2,0);
    \draw[->] (1,0) to [curve through={(2,0.5)}] (3,0);
\end{tikzpicture}, \hspace{8mm}
\begin{tikzpicture}[scale=0.6]
    \draw[->] (1,1) to (3,0);
    \draw[->] (1,0) to (0,1);
    \draw[<-] (0,0) to [curve through={(1,0.5)}] (2,0);
\end{tikzpicture},
\]
\[
\begin{tikzpicture}[scale=0.6]
    \draw[<-] (0,1) to (2,0);
    \draw[->] (1,1) to (3,0);
    \draw[<-] (0,0) to [curve through={(0.5,0.5)}] (1,0);
\end{tikzpicture}, \hspace{8mm}
\begin{tikzpicture}[scale=0.6]
    \draw[<-] (0,1) to [curve through={(0.5,0.55)}] (1,1);
    \draw[<-] (0,0) to [curve through={(0.5,0.45)}] (1,0);
    \draw[->] (2,0) to [curve through={(2.5,0.5)}] (3,0);
\end{tikzpicture}, \hspace{8mm}
\begin{tikzpicture}[scale=0.6]
    \draw[<-] (0,1) to [curve through={(0.5,0.5)}] (1,1);
    \draw[<-] (0,0) to [curve through={(1,0.5)}] (2,0);
    \draw[->] (1,0) to [curve through={(2,0.5)}] (3,0);
\end{tikzpicture}.
\]
These diagrams are in a one-to-one correspondence with the $(\bm{a},\bm{b})$-matchings of the set of endpoints $E=\{(i,1),(j,0) \ | \ i\in[2], j\in[4]\}$. As an example, for the first diagram above we have
\[ \begin{tikzpicture}[baseline=1.5ex, scale=0.6]
    \draw[->] (1,1) to (0,0);
    \draw[->] (1,0) to (0,1);
    \draw[->] (2,0) to [curve through={(2.5,0.5)}] (3,0);
\end{tikzpicture} \hspace{4mm} \leftrightarrow \hspace{4mm} \Big\{ \{(1,1),(2,0)\}, \{(2,1),(1,0)\}, \{(3,0),(4,0)\} \Big\}. \] 
\end{ex}
The basis for $\mathsf{Hom}_{\mathsf{Heis}}(\bm{a},\bm{b})$ we describe below is obtained by adding decorations (right curls) and decorated clockwise loops to all the simple $(\bm{a},\bm{b})$-diagrams in a partiuclar manner. We describe this by introducing some basic diagrams and using the composition of diagrams.
\begin{defn}
Let $\bm{a}=a_{1}\cdots a_{n}\in\langle\uparrow,\downarrow\rangle$ for $a_{i}\in\{\uparrow,\downarrow\}$. For $i\in[n], w\in\mathbb{Z}_{\geq 0}$, define the $\bm{a}$-diagrams
\[ r_{i} \ = \
\begin{tikzpicture}[baseline=1.5ex, scale=0.7]
    \draw (1,0) to (1,1);
    \node[label=above:{$\scriptsize \cdots$}] at (2,0.2){};
    \draw (3,0) to (3,1);
    \draw (4,0) to (4,1);
    \draw (5,0) to (5,1);
    \node[label=above:{$\scriptsize \cdots$}] at (6,0.2){};
    \draw (7,0) to (7,1);
    \node[label=above:{$\scriptsize a_{1}$}] at (1,1){};
    \node[label=above:{$\scriptsize a_{i-1}$}] at (3,1){};
    \node[label=above:{$\scriptsize a_{i}$}] at (4,1){};
    \node[label=above:{$\scriptsize a_{i+1}$}] at (5,1){};
    \node[label=above:{$\scriptsize a_{n}$}] at (7,1){};
    \filldraw (4,0.5) circle (1.8pt);
\end{tikzpicture}, \hspace{10mm}
c_{w} \ = \hspace{2mm}
\begin{tikzpicture}[baseline=1.5ex, scale=0.7]
    \draw (1,0) to (1,1);
    \node[label=above:{$\scriptsize \cdots$}] at (2,0.2){};
    \draw (3,0) to (3,1);
    \node[label=above:{$\scriptsize a_{1}$}] at (1,1){};
    \node[label=above:{$\scriptsize a_{n}$}] at (3,1){};
\begin{scope}[decoration={markings, mark=at position 0.25 with {\arrow{<}}}]
           \draw[postaction={decorate}] (0,0.6) circle (3.5mm);
\end{scope}
    \filldraw (0,0.25) circle (1.8pt);
    \node[label=below:{$w$}] (w) at (0,0.25){};
\end{tikzpicture}
\]
The orientation of strings is taken to match $\bm{a}$. Although both $r_{i}$ and $c_{w}$ depend on $\bm{a}$, we surpress this fact as it should be clear from context. 
\end{defn}

\begin{defn} \label{DiaBasis}
Given any $\bm{a}=a_{1}\cdots a_{n},\bm{b}=b_{1}\cdots b_{m}\in\langle\uparrow,\downarrow\rangle$ for $a_{i},b_{j}\in\{\uparrow,\downarrow\}$, let $\mathsf{B}(\bm{a},\bm{b})$ be the set of $(\bm{a},\bm{b})$-diagrams of the form
\[ c_{w}^{k_{w}}\cdots c_{1}^{k_{1}}c_{0}^{k_{0}}r_{1}^{s_{1}}\cdots r_{n}^{s_{n}}\alpha r_{1}^{t_{1}}\cdots r_{m}^{t_{m}} \]
where $\alpha\in\mathsf{Sim}(\bm{a},\bm{b})$, $w,k_{l},s_{i},t_{j}\in\mathbb{Z}_{\geq 0}$, and $s_{i}=t_{j}=0$ whenever $(i,1)$ and $(j,0)$ are sources respectively. We write $\mathsf{B}(\bm{a})$ for $\mathsf{B}(\bm{a},\bm{a})$.
\end{defn}

\begin{ex}
Given $\bm{a}=\uparrow\downarrow$ and $\bm{b}=\downarrow\uparrow\uparrow\downarrow$, an example of an element of $\mathsf{B}(\bm{a},\bm{b})$ is
\[ c_{5}c_{0}^{2}r_{1}^{3}\alpha r_{4}^{2} \hspace{2mm} = \hspace{2mm}
\begin{tikzpicture}[baseline=4ex, scale=1.2]
    \draw[->] (1,1) to (3,0);
    \draw[->] (1,0) to (0,1);
    \draw[<-] (0,0) to [curve through={(1,0.5)}] (2,0);
\begin{scope}[decoration={markings, mark=at position 0.25 with {\arrow{<}}}]
           \draw[postaction={decorate}] (-0.5,0.7) circle (2.5mm);
\end{scope}
\begin{scope}[decoration={markings, mark=at position 0.25 with {\arrow{<}}}]
           \draw[postaction={decorate}] (-0.95,0.3) circle (2.4mm);
\end{scope}
\begin{scope}[decoration={markings, mark=at position 0.25 with {\arrow{<}}}]
           \draw[postaction={decorate}] (-1.45,0.5) circle (2.5mm);
\end{scope}
    \filldraw (0.2,0.8) circle (1.8pt);
    \filldraw (2.7,0.15) circle (1.8pt);
    \filldraw (-1.45,0.25) circle (1.8pt);
    \node[label=right:{$3$}] (s_{1}) at (0.2,0.8){};
    \node[label=above:{$2$}] (t_{4}) at (2.7,0.15){};
    \node[label=below:{$5$}] (k_{3}) at (-1.45,0.25){};
\end{tikzpicture}
\]
where $\alpha$ is the third simple $(\bm{a},\bm{b})$-diagram in the list given in \emph{\Cref{SimpleDiaEx}}.
\end{ex}
The following result is \emph{Proposition 5} of \cite{Kho14}.
\begin{thm} \label{BasisThm}
The set $\mathsf{B}(\bm{a},\bm{b})$ is a basis for $\mathsf{Hom}_{\mathsf{Heis}}(\bm{a},\bm{b})$.
\end{thm}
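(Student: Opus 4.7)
The plan is to establish the two standard halves: that $\mathsf{B}(\bm{a},\bm{b})$ spans $\mathsf{Hom}_{\mathsf{Heis}}(\bm{a},\bm{b})$, and that its elements are linearly independent.

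For spanning, I would induct on a lexicographic complexity statistic of a diagram, for concreteness the triple consisting of the number of closed components which are not already clockwise bubbles, the total number of crossings between strings, and the number of right-curl decorations not yet sitting at a target endpoint. First, (H4) immediately annihilates any diagram containing a left curl and deletes each anticlockwise bubble at no cost. Any self-intersecting loop can be reduced by repeated applications of (H1) to a simple loop bearing some accumulated right-curl decorations; a clockwise such loop becomes one of the $c_w$, an anticlockwise one vanishes. Using \Cref{BubComTwoStrings} and its orientation-flipped analogues (all consequences of (H1)--(H3)), each decorated clockwise bubble migrates freely past every vertical strand, so all bubbles can be pushed to the far left of the diagram in the prescribed order. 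For the strings themselves, pairs of parallel same-oriented strands crossing twice are straightened via (H2); pairs of oppositely oriented strands with a double crossing are resolved via (H3), which replaces the crossing-squared subdiagram by the identity minus a cup--cap term, strictly decreasing the crossing count modulo a lower-complexity summand. After finitely many such reductions the underlying configuration is simple, and then by \Cref{DecRels} combined with isotopy every remaining decoration can be slid along its string to the unique target endpoint (the constraint $s_i=t_j=0$ at source positions being then automatic), placing the diagram in the required normal form.

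For linear independence I would follow the classical categorification approach and construct a faithful action of $\mathsf{Heis}$ on the tower of symmetric group module categories $\bigoplus_{n\geq 0}\mathbb{C}S(n)\text{-}\mathsf{Mod}$, with $\uparrow$ acting as the induction functor $\mathrm{Ind}^{S(n+1)}_{S(n)}$, $\downarrow$ as the restriction functor $\mathrm{Res}^{S(n)}_{S(n-1)}$, a single decoration $r_i$ on an up-strand as the Jucys--Murphy natural transformation at the appropriate position, and a decorated clockwise bubble $c_w$ as a power sum in these Jucys--Murphy elements (precisely the pattern encoded by the bubble relations of $\mathcal{A}_{2k}^{\text{aff}}$). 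Checking the four defining local relations reduces to standard facts: naturality of the symmetric tensor structure for (H1), the unit and counit of the induction--restriction adjunction for (H2), the Mackey decomposition $\mathrm{Res}\,\mathrm{Ind}\cong\mathrm{Ind}\,\mathrm{Res}\oplus\mathrm{id}$ for (H3), and the vanishing of the pertinent Jucys--Murphy combination on the trivial module for (H4). Once this action is in place, any two distinct basis diagrams are separated by evaluating them on a sufficiently large regular representation and reading off the bubble count, decoration multiplicities, and underlying simple matching separately, forcing linear independence in the image and hence in the original morphism space.

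The main obstacle is clearly this second half. The spanning argument is essentially a finite normal-form reduction that can in principle be mechanised using only the local relations together with \Cref{BubComTwoStrings} and \Cref{DecRels}. In contrast, faithfulness of the categorical action relies on genuinely external input — the Mackey formula and the theory of Jucys--Murphy elements — together with careful choices of test modules to separate every basis element; this is the only part of the argument that cannot be carried out entirely inside the diagrammatic category itself.
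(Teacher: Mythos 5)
The first thing to say is that the paper does not prove this statement at all: it is imported verbatim as Proposition~5 of \cite{Kho14}, so there is no internal proof to compare against. Your outline is essentially a reconstruction of Khovanov's original argument --- spanning by diagrammatic reduction to a normal form, injectivity via an action on the tower of symmetric group module categories --- so the overall strategy is the standard and correct one.

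Two points need repair. In the spanning half, a decorated anticlockwise bubble does not vanish: the undecorated one equals $1$ by (H4), and the decorated ones are (generally nonzero) polynomials in the clockwise bubbles $c_w$ --- this is precisely why they are absent from $\mathsf{B}(\bm{a},\bm{b})$, as the paper's remark following the theorem notes, citing \cite[Proposition 2]{Kho14}. The reduction still terminates, but the correct statement is that anticlockwise bubbles get rewritten, not killed. More seriously, the linear-independence half has a genuine logical gap: showing that any two \emph{distinct} elements of $\mathsf{B}(\bm{a},\bm{b})$ are separated by some evaluation only proves that their images are pairwise distinct, not that they are linearly independent --- a nontrivial linear combination could still map to zero under every evaluation you consider. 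Khovanov's actual argument identifies the images of $\uparrow$ and $\downarrow$ as explicit induction and restriction bimodules over symmetric group algebras, computes the relevant bimodule $\mathrm{Hom}$ spaces for $n$ sufficiently large, and shows that the images of \emph{all} elements of $\mathsf{B}(\bm{a},\bm{b})$ are jointly linearly independent there via a filtration (leading-term) argument. You would need to supply that computation, or an equivalent triangularity argument, to close the proof; note also that faithfulness of the whole categorical action is neither needed nor what is actually established --- what one proves is injectivity on the span of the candidate basis in each individual $\mathrm{Hom}$ space for large enough $n$.
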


\begin{rmk}
The description of this basis is analogous to the basis given by regular monomials presented in \cite[Theorem 4.6]{Naz96}. Note that no decorated anti-clockwise bubbles appear. This is due to the fact that any decorated anti-clockwise bubble may be expressed as a linear combination of decorated clockwise bubbles, see for example \cite[Proposition 2]{Kho14}.
\end{rmk}


\subsection{A surjective homomorphism $\varphi:\mathcal{A}_{2k}^{\text{aff}}\rightarrow\mathsf{End}_{\mathsf{Heis}}((\uparrow\downarrow)^{k})$}


 When drawing a $(\uparrow\downarrow)^{k}$-diagram, instead of labelling the endpoints with arrows, we instead will label the points $(i,1)$ with $i$ for each $1\leq i\leq 2k$, since the parity of the label recovers the orientation of the arrow. Also to ease notation we employ the following diagrammatic shorthand for elements of $\mathsf{B}((\uparrow\downarrow)^{k})$:
\[
\begin{tikzpicture}[baseline=2.5ex, scale=0.8]
    \draw[->] (1,0) to (1,1);
    \draw[<-] (2,0) to (2,1);
    \node (d1) at (2.5,0.5){\dots};
    \draw (3,0) to (3,1);
    
    \node (a) at (5,0.5){$\beta$};
\begin{scope}[dashed]
          \draw (4,0) to (4,1);
          \draw (4,0) to (6,0);
          \draw (6,0) to (6,1);
          \draw (4,1) to (6,1);
\end{scope}

    \node (r) at (0.2,0.5){$\rho$};
\begin{scope}[dashed]
    \draw (0.2,0.5) circle (5mm);
\end{scope}

    \draw (7,0) to (7,1);
    \node (d2) at (7.5,0.5){\dots};
    \draw[->] (8,0) to (8,1);
    \draw[<-] (9,0) to (9,1);

    \node[label=above:{\scriptsize $1$}] (1) at (1,1){};
    \node[label=above:{\scriptsize $u$}] (u) at (4,1){};
    \node[label=above:{\scriptsize $v$}] (v) at (6,1){};
    \node[label=above:{\scriptsize $2k$}] (2k) at (9,1){};
\end{tikzpicture} \leftrightsquigarrow \hspace{3mm}
\begin{tikzpicture}[baseline=2.5ex, scale=0.8]
    \node (r) at (0.2,0.5){$\rho$};
\begin{scope}[dashed]
    \draw (0.2,0.5) circle (5mm);
\end{scope}
    \node (a) at (2,0.5){$\beta$};
\begin{scope}[dashed]
          \draw (1,0) to (1,1);
          \draw (1,0) to (3,0);
          \draw (3,0) to (3,1);
          \draw (1,1) to (3,1);
\end{scope}
    \node[label=above:{\scriptsize $u$}] (u) at (1,1){};
    \node[label=above:{\scriptsize $v$}] (v) at (3,1){};
\end{tikzpicture},  \]
where $\beta$ is loopless, $\rho$ is a collection of (possibly decorated) clockwise bubbles, and $u,v\in[2k]$. Hence we drop the trivial vertical strings but retain the labels $u$ through $v$, allowing one to recover the original diagram.  

\begin{prop} \label{APADiaHom}
We have a $\mathbb{C}$-algebra homomorphism
\[ \varphi:\mathcal{A}_{2k}^{\emph{aff}}\rightarrow\mathsf{End}_{\mathsf{Heis}}((\uparrow\downarrow)^{k}) \]
given on the generators by
\begin{align*}
\varphi(e_{2i-1}) \hspace{1mm} &= 
\begin{tikzpicture}[baseline=2ex, scale=0.9]
    \node[label=above:{\scriptsize $2i-1$}] (2i-1) at (1,1){};
    \node[label=above:{\scriptsize $2i$}] (2i) at (2,1){};
    \draw[<-] (1,1) to [bend right] (2,1);
    \draw[->] (1,0) to [bend left] (2,0);
\end{tikzpicture}, \hspace{5mm}
\varphi(e_{2i}) \hspace{1mm} = 
\begin{tikzpicture}[baseline=2ex, scale=0.9]
    \node[label=above:{\scriptsize $2i$}] (2i) at (1,1){};
    \node[label=above:{\scriptsize $2i+1$}] (2i+1) at (2,1){};
    \draw[->] (1,1) to [bend right] (2,1);
    \draw[<-] (1,0) to [bend left] (2,0);
\end{tikzpicture}, \hspace{5mm}
\varphi(x_{2i-1}) \hspace{1mm} = 
\begin{tikzpicture}[baseline=2ex, scale=0.9]
    \node[label=above:{\scriptsize $2i-1$}] (2i-1) at (1,1){};
    \draw[->] (1,0) to (1,1);
    \filldraw (1,0.5) circle (2pt);
\end{tikzpicture}, \hspace{5mm}
\varphi(x_{2i}) \hspace{1mm} = 
\begin{tikzpicture}[baseline=2ex, scale=0.9]
    \node[label=above:{\scriptsize $2i$}] (2i) at (1,1){};
    \draw[<-] (1,0) to (1,1);
    \filldraw (1,0.5) circle (2pt);
\end{tikzpicture}, \\
\\
\varphi(\tau_{2i}) \hspace{1mm} &= 
\begin{tikzpicture}[baseline=2ex, scale=0.9]
    \node[label=above:{\scriptsize $2i-1$}] (2i-1) at (1,1){};
    \node[label=above:{\scriptsize $2i$}] (2i) at (2,1){};
    \node[label=above:{\scriptsize $2i+1$}] (2i+1) at (3,1){};
    \draw[->] (1,0) to (3,1);
    \draw[<-] (2,0) to [curve through={(1.9,0.2) .. (1.6,0.5) .. (1.9,0.8)}] (2,1);
    \draw[->] (3,0) to (1,1);
\end{tikzpicture}, \hspace{5mm}
\varphi(\tau_{2i+1}) \hspace{1mm} = 
\begin{tikzpicture}[baseline=2ex, scale=0.9]
    \node[label=above:{\scriptsize $2i$}] (2i) at (1,1){};
    \node[label=above:{\scriptsize $2i+1$}] (2i+1) at (2,1){};
    \node[label=above:{\scriptsize $2i+2$}] (2i+2) at (3,1){};
    \draw[<-] (1,0) to (3,1);
    \draw[->] (2,0) to [curve through={(1.9,0.2) .. (1.6,0.5) .. (1.9,0.8)}] (2,1);
    \draw[<-] (3,0) to (1,1);
\end{tikzpicture}, \hspace{5mm}
\varphi(z_{l}) \hspace{1mm} = \hspace{1mm}
\begin{tikzpicture}[baseline=2.2ex, scale=0.9]
    \node[label=right:{$\scriptsize $l$$}] (l) at (0.9,0.5){};
\begin{scope}[decoration={markings, mark=at position 0.25 with {\arrow{<}}}]
           \draw[postaction={decorate}] (0.5,0.5) ellipse (4mm and 3.3mm);
\end{scope}
    \filldraw (0.9,0.5) circle (2pt); 
\end{tikzpicture}
\end{align*}
\end{prop}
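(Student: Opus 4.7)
The plan is to verify that each of the ten families of defining relations in \Cref{APADefn} is preserved under $\varphi$; since $\mathcal{A}^{\text{aff}}_{2k}$ is presented by generators and relations, this suffices. The relations split naturally by difficulty. The commutation relations (2)(i)--(iii), (4), (6), the idempotent relations (3)(ii)--(iv), and the contractions (5)(i) are immediate: in each case the diagrams obtained under $\varphi$ involve strands that are spatially separated on $\mathbb{R}\times[0,1]$ and can be freely rearranged via ambient isotopy. Relation (3)(i) becomes the statement that stacking a cap onto a cup produces a clockwise undecorated bubble sitting next to an identity cup-cap, which matches our declaration $\varphi(z_0)=$ clockwise undecorated bubble. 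The centrality in (10)(ii) follows from \Cref{BubComTwoStrings} applied repeatedly to slide the decorated bubble past every strand, and (10)(i) is transparent once one stacks $\varphi(e_1)\varphi(x_1)^l\varphi(e_1)$ and recognises the decorated bubble that appears.

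The involution relations (1) and the triple braid relation (2)(iv) require a little more work. For (1), one applies (H2) to cancel one pair of crossings in $\varphi(\tau_{2i})^2$, and then the ``up–down'' exchange relation (H3) produces exactly the cup–cap correction, yielding $\mathsf{id}-\varphi(e_{2i})$. The braid relation (2)(iv) for the Coxeter-like elements $s_i=\tau_{2i}\tau_{2i+1}+e_{2i}$ follows because $\varphi(s_i)$ is precisely the standard diagrammatic transposition of the $i$-th and $(i+1)$-st $\uparrow\downarrow$ pairs, and one can invoke (or re-derive via (H1)–(H3)) the Likeng–Savage embedding of the (non-affine) partition category into $\mathsf{Heis}$; the remaining partition-algebra relations (3)(v)--(vi) and (5)(ii)--(iii) likewise hold because they hold in the partition category.

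The real work lies in the braid-like (7), Skein-like (8), and anti-symmetry (9) relations. For (8), note that $\varphi(\tau_{2i}x_{2i-1}\tau_{2i})$ places a decoration on a strand that has been dragged across a crossing pattern; using \Cref{DecRels}, the decoration can be pushed through each crossing at the cost of a correction term, and these corrections are exactly the $e_{2i}e_{2i-1}\tau_{2i}$ and $\tau_{2i}e_{2i-1}e_{2i}$ summands appearing on the right-hand side of (8)(i)--(iv). For the braid-like relations (7), one invokes (H1) to slide strings over one another and then tracks the cup–cap corrections coming from (H3), which produce the $e$-factors on the right. Relations (9)(i)--(ii) are an isotopy computation: a decoration on an $\uparrow$-strand meeting a cup is isotopic to a decoration on the adjacent $\downarrow$-strand, and similarly for caps. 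The step I expect to be the main obstacle is the careful bookkeeping in (8), because $\varphi(x_{2i-1})$ and $\varphi(x_{2i})$ decorate strands of \emph{opposite} orientation, so each of the four cases in (8) requires a slightly different combination of \Cref{DecRels} and (H3), with parity-dependent signs and error terms that must be matched term by term with the algebraic relation.
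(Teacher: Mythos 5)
Your overall strategy is the same as the paper's: verify each defining relation of \Cref{APADefn} diagrammatically using the local relations of $\mathsf{Heis}$, use \Cref{DecRels} to push decorations through crossings for the Skein-like relations (8), and delegate the braid relation (2)(iv) to the Likeng--Savage embedding. Your identification of where the real work lies ((7) and (8)) and your treatment of (9), (10) and (1) all match the paper's proof in substance. However, there are two concrete misattributions that would cause your write-up to fail as stated. First, you classify the idempotent relations (3)(ii)--(iv) as ``immediate by ambient isotopy,'' but (3)(iii) and (3)(iv) assert that $\tau_{2i\pm 1}e_{2i}$ and $e_{2i}\tau_{2i}$ are \emph{zero}; no isotopy can turn a nonempty diagram into $0$. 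These relations hold because composing the crossing diagram $\varphi(\tau_{2i})$ (or $\varphi(\tau_{2i+1})$) with the cup--cap $\varphi(e_{2i})$ produces a left curl, which is annihilated by relation (H4). You never invoke (H4) anywhere in your argument, yet it is also what evaluates the closed loop in (3)(ii) and underlies the bubble identifications in (3)(i) and (10)(i); this is a missing ingredient, not a cosmetic one.

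Second, your claim that (3)(v)--(vi) and (5)(ii)--(iii) ``hold because they hold in the partition category'' does not cover all cases. These relations are stated for ranges of $i$ that include $i=1$, hence involve $\tau_{2}$, and $\varphi(\tau_{2})$ is a genuine crossing diagram which is \emph{not} the image of $t_{2}=1-e_{2}$ under the embedding of the partition algebra into $\mathsf{Heis}$ (indeed the paper stresses that $t_{j}\mapsto\tau_{j}$ is not a homomorphism precisely because of $\tau_2$, and by Khovanov's basis theorem the crossing is linearly independent from $1$ and the cup--cap). So for those instances the relations are not consequences of partition-algebra relations and must be checked directly; the paper does this uniformly in $i$ with short (H1)/(H2) computations, and you should do the same rather than appeal to the partition category.
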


\begin{proof}
This will be shown by directly checking that each of the defining relations in \emph{\Cref{APADefn}} is satisfied under the map $\varphi$. Most of these are simple to check but lengthy, hence for such relations we do not give full details.

\vspace{2mm}
\noindent
\emph{(1)(i)}:
\begin{align*}
\varphi(\tau_{2i}^{2}) \hspace{1mm} &= 
\begin{tikzpicture}[baseline=5ex, scale=0.8]
    \node[label=above:{\scriptsize $2i-1$}] (2i-1) at (1,2){};
    \node[label=above:{\scriptsize $2i$}] (2i) at (2,2){};
    \node[label=above:{\scriptsize $2i+1$}] (2i+1) at (3,2){};
    \draw[->] (1,1) to (3,2);
    \draw[<-] (2,1) to [curve through={(1.9,1.2) .. (1.6,1.5) .. (1.9,1.8)}] (2,2);
    \draw[->] (3,1) to (1,2);
    \draw[->] (1,0) to (3,1);
    \draw[<-] (2,0) to [curve through={(1.9,0.2) .. (1.6,0.5) .. (1.9,0.8)}] (2,1);
    \draw[->] (3,0) to (1,1);
\end{tikzpicture} \hspace{1mm} = \hspace{1mm}
\begin{tikzpicture}[baseline=2.5ex, scale=0.8]
    \node[label=above:{\scriptsize $2i-1$}] (2i-1) at (1,1){};
    \node[label=above:{\scriptsize $2i$}] (2i) at (2,1){};
    \node[label=above:{\scriptsize $2i+1$}] (2i+1) at (3,1){};
    \draw[->] (1,0) to [curve through={(1.3,0.15) .. (2.5,0.35) .. (2.7,0.5) .. (2.5,0.65) .. (1.3,0.85)}] (1,1);
    \draw[->] (3,0) to [curve through={(2.7,0.15) .. (1.5,0.35) .. (1.3,0.5) .. (1.5,0.65) .. (2.7,0.85)}] (3,1);
    \draw[<-] (2,0) to [curve through={(2.15,0.15) .. (2.5,0.5) .. (2.15,0.85)}] (2,1);
\end{tikzpicture} \hspace{4mm} \text{by (H1)} \\
&= 
\begin{tikzpicture}[baseline=2.5ex, scale=0.8]
    \node[label=above:{\scriptsize $2i-1$}] (2i-1) at (1,1){};
    \node[label=above:{\scriptsize $2i$}] (2i) at (2,1){};
    \node[label=above:{\scriptsize $2i+1$}] (2i+1) at (3,1){};
    \draw[->] (1,0) to (1,1);
    \draw[->] (3,0) to [curve through={(2.7,0.15) .. (2.2,0.35) .. (2,0.5) .. (2.2,0.65) .. (2.7,0.85)}] (3,1);
    \draw[<-] (2,0) to [curve through={(2.15,0.15) .. (2.5,0.5) .. (2.15,0.85)}] (2,1);
\end{tikzpicture} \hspace{37mm} \text{by (H2)} \\
&= 
\begin{tikzpicture}[baseline=2.5ex, scale=0.8]
    \node[label=above:{\scriptsize $2i-1$}] (2i-1) at (1,1){};
    \node[label=above:{\scriptsize $2i$}] (2i) at (2,1){};
    \node[label=above:{\scriptsize $2i+1$}] (2i+1) at (3,1){};
    \draw[->] (1,0) to (1,1);
    \draw[->] (3,0) to (3,1);
    \draw[<-] (2,0) to (2,1);
\end{tikzpicture} \hspace{1mm} - \hspace{1mm}
\begin{tikzpicture}[baseline=2.5ex, scale=0.8]
    \node[label=above:{\scriptsize $2i-1$}] (2i-1) at (1,1){};
    \node[label=above:{\scriptsize $2i$}] (2i) at (2,1){};
    \node[label=above:{\scriptsize $2i+1$}] (2i+1) at (3,1){};
    \draw[->] (1,0) to (1,1);
    \draw[->] (3,0) to [bend right] (2,0);
    \draw[->] (2,1) to [bend right] (3,1);
\end{tikzpicture} \hspace{5mm} \text{by (H3)}
\end{align*}
which equals $\varphi(1-e_{2i})$. One can show that relation \emph{(1)(ii)} is upheld in a similar manner.

\vspace{2mm}
\noindent
\emph{(2)}: Relation \emph{(2)(i)} is $\tau_{2i+1}\tau_{2j}=\tau_{2j}\tau_{2i+1}$ for all $j\neq i+1$. When $j\neq i$, it is clear to see diagrammatically that this relation is upheld under $\varphi$. For case $j=i$, one applies (H1) and then (H2) to see that
\[ \varphi(\tau_{2i+1}\tau_{2i}) = 
\begin{tikzpicture}[baseline=2ex, scale=0.8]
    \node[label=above:{\scriptsize $2i-1$}] (2i-1) at (1,1){};
    \node[label=above:{\scriptsize $2i$}] (2i) at (2,1){};
    \node[label=above:{\scriptsize $2i+1$}] (2i+1) at (3,1){};
    \node[label=above:{\scriptsize $2i+2$}] (2i+2) at (4,1){};
    \draw[->] (1,0) to (3,1);
    \draw[<-] (2,0) to (4,1);
    \draw[->] (3,0) to (1,1);
    \draw[<-] (4,0) to (2,1);
\end{tikzpicture}
= \varphi(\tau_{2i}\tau_{2i+1}). \]
Both relations \emph{(2)(ii)} and \emph{(2)(iii)} can be seen to hold under $\varphi$ diagrammatically. For relation \emph{(2)(iv)}, we have that
\[ \varphi(s_{i}) = \varphi(\tau_{2i+1}\tau_{2i} +e_{2i}) = 
\begin{tikzpicture}[baseline=2ex, scale=0.8]
    \node[label=above:{\scriptsize $2i-1$}] (2i-1) at (1,1){};
    \node[label=above:{\scriptsize $2i$}] (2i) at (2,1){};
    \node[label=above:{\scriptsize $2i+1$}] (2i+1) at (3,1){};
    \node[label=above:{\scriptsize $2i+2$}] (2i+2) at (4,1){};
    \draw[->] (1,0) to (3,1);
    \draw[<-] (2,0) to (4,1);
    \draw[->] (3,0) to (1,1);
    \draw[<-] (4,0) to (2,1);
\end{tikzpicture} \hspace{1mm} + \hspace{1mm}
\begin{tikzpicture}[baseline=2ex, scale=0.8]
    \node[label=above:{\scriptsize $2i-1$}] (2i-1) at (1,1){};
    \node[label=above:{\scriptsize $2i$}] (2i) at (2,1){};
    \node[label=above:{\scriptsize $2i+1$}] (2i+1) at (3,1){};
    \node[label=above:{\scriptsize $2i+2$}] (2i+2) at (4,1){};
    \draw[->] (1,0) to (1,1);
    \draw[<-] (2,0) to [bend left] (3,0);
    \draw[->] (2,1) to [bend right] (3,1);
    \draw[<-] (4,0) to (4,1);
\end{tikzpicture}. \]
Such elements satisfy the braid relation $s_{i}s_{i+1}s_{i}=s_{i+1}s_{i}s_{i+1}$ by \cite[Theorem 4.1]{LS21}.

\vspace{2mm}
\noindent
\emph{(3)}: Relation \emph{(3)(i)} is upheld under $\varphi$ by applying \emph{\Cref{BubComTwoStrings}}, and \emph{(3)(ii)} is upheld by (H4). Relations \emph{(3)(iii)} and \emph{(3)(iv)} are upheld by the fact that left curls are annihilated. For relation \emph{(3)(v)}, it is clear that applying (H1) allows one to go from the diagram $\varphi(\tau_{2i}e_{2i-1}e_{2i+1})$ to $\varphi(\tau_{2i+1}e_{2i-1}e_{2i+1})$, and similarly for relation \emph{(3)(vi)}.

\vspace{2mm}
\noindent
\emph{(4)}: All of these relations follow diagrammatically and from \emph{(3)(iii)} and \emph{(3)(iv)}.

\vspace{2mm}
\noindent
\emph{(5)}: Relation \emph{(5)(i)} is simple to check since the diagrams contain no points of intersection. For \emph{(5)(ii)}, applying (H1) and (H2) we see that
\[ \varphi(\tau_{2i}e_{2i-1}\tau_{2i}) \hspace{2mm} =
\begin{tikzpicture}[baseline=8.7ex, scale=0.8]
    \node[label=above:{\scriptsize $2i-1$}] (2i-1) at (1,3){};
    \node[label=above:{\scriptsize $2i$}] (2i) at (2,3){};
    \node[label=above:{\scriptsize $2i+1$}] (2i+1) at (3,3){};
    \node[label=above:{\scriptsize $2i+2$}] (2i+2) at (4,3){};
    \draw[->] (1,2) to (3,3);
    \draw[<-] (2,2) to [curve through={(1.9,2.2) .. (1.6,2.5) .. (1.9,2.8)}] (2,3);
    \draw[->] (3,2) to (1,3);
    \draw[<-] (4,2) to (4,3);

    \draw[<-] (1,2) to [bend right] (2,2);
    \draw[->] (1,1) to [bend left] (2,1);
    \draw[->] (3,1) to (3,2);
    \draw[<-] (4,1) to (4,2);

    \draw[->] (1,0) to (3,1);
    \draw[<-] (2,0) to [curve through={(1.9,0.2) .. (1.6,0.5) .. (1.9,0.8)}] (2,1);
    \draw[->] (3,0) to (1,1);
    \draw[<-] (4,0) to (4,1);
\end{tikzpicture} = 
\begin{tikzpicture}[baseline=8.7ex, scale=0.8]
    \node[label=above:{\scriptsize $2i-1$}] (2i-1) at (1,3){};
    \node[label=above:{\scriptsize $2i$}] (2i) at (2,3){};
    \node[label=above:{\scriptsize $2i+1$}] (2i+1) at (3,3){};
    \node[label=above:{\scriptsize $2i+2$}] (2i+2) at (4,3){};
    \draw[->] (1,2) to (1,3);
    \draw[->] (2,3) to [curve through={(2.5,2.5) .. (2.65,2.3) .. (2.5,2.1) .. (2.35,2.3) .. (2.5,2.5)}] (3,3);
    \draw (4,2) to (4,3);

    \draw (1,1) to (1,2);
    \draw (4,1) to (4,2);

    \draw (1,0) to (1,1);
    \draw[<-] (2,0) to [curve through={(2.5,0.5) .. (2.65,0.7) .. (2.5,0.9) .. (2.35,0.7) .. (2.5,0.5)}] (3,0);
    \draw[<-] (4,0) to (4,1);
\end{tikzpicture}, \] 
\[ \varphi(\tau_{2i+1}e_{2i+1}\tau_{2i+1}) \hspace{2mm} =
\begin{tikzpicture}[baseline=8.7ex, scale=0.8]
    \node[label=above:{\scriptsize $2i-1$}] (2i-1) at (1,3){};
    \node[label=above:{\scriptsize $2i$}] (2i) at (2,3){};
    \node[label=above:{\scriptsize $2i+1$}] (2i+1) at (3,3){};
    \node[label=above:{\scriptsize $2i+2$}] (2i+2) at (4,3){};
    \draw[<-] (2,2) to (4,3);
    \draw[->] (3,2) to [curve through={(2.9,2.2) .. (2.6,2.5) .. (2.9,2.8)}] (3,3);
    \draw[<-] (4,2) to (2,3);
    \draw[->] (1,2) to (1,3);

    \draw[<-] (3,2) to [bend right] (4,2);
    \draw[->] (3,1) to [bend left] (4,1);
    \draw[->] (1,1) to (1,2);
    \draw[<-] (2,1) to (2,2);

    \draw[<-] (2,0) to (4,1);
    \draw[->] (3,0) to [curve through={(2.9,0.2) .. (2.6,0.5) .. (2.9,0.8)}] (3,1);
    \draw[<-] (4,0) to (2,1);
    \draw[->] (1,0) to (1,1);
\end{tikzpicture} = 
\begin{tikzpicture}[baseline=8.7ex, scale=0.8]
    \node[label=above:{\scriptsize $2i-1$}] (2i-1) at (1,3){};
    \node[label=above:{\scriptsize $2i$}] (2i) at (2,3){};
    \node[label=above:{\scriptsize $2i+1$}] (2i+1) at (3,3){};
    \node[label=above:{\scriptsize $2i+2$}] (2i+2) at (4,3){};
    \draw[->] (1,2) to (1,3);
    \draw[->] (2,3) to [curve through={(2.5,2.5) .. (2.65,2.3) .. (2.5,2.1) .. (2.35,2.3) .. (2.5,2.5)}] (3,3);
    \draw (4,2) to (4,3);

    \draw (1,1) to (1,2);
    \draw (4,1) to (4,2);

    \draw (1,0) to (1,1);
    \draw[<-] (2,0) to [curve through={(2.5,0.5) .. (2.65,0.7) .. (2.5,0.9) .. (2.35,0.7) .. (2.5,0.5)}] (3,0);
    \draw[<-] (4,0) to (4,1);
\end{tikzpicture}, \]
thus $\varphi(\tau_{2i}e_{2i-1}\tau_{2i})=\varphi(\tau_{2i+1}e_{2i+1}\tau_{2i+1})$. In a similar manner, for relation \emph{(5)(iii)} one can show,
\[ \varphi(\tau_{2i}e_{2i-2}\tau_{2i}) \hspace{2mm} = 
\begin{tikzpicture}[baseline=2ex, scale=0.8]
    \node[label=above:{\scriptsize $2i-2$}] (2i-2) at (1,1){};
    \node[label=above:{\scriptsize $2i+1$}] (2i+1) at (4,1){};
    \draw[->] (2,0) to (2,1);
    \draw[<-] (3,0) to (3,1);
    \draw[<-] (1,0) to [curve through={(1.8,0.35) .. (2.5,0.45) .. (3.2,0.35)}] (4,0);
    \draw[->] (1,1) to [curve through={(1.8,0.65) .. (2.5,0.55) .. (3.2,0.65)}] (4,1);
\end{tikzpicture} = \hspace{2mm} \varphi(\tau_{2i-1}e_{2i}\tau_{2i-1}). \]

\vspace{2mm}
\noindent
\emph{(6)}: These relations are immediately seen to be upheld diagrammatically.

\vspace{2mm}
\noindent
\emph{(7)(i)}: We seek to show $\varphi(\tau_{2i-2}\tau_{2i}\tau_{2i-2})=\varphi(\tau_{2i}\tau_{2i-2}\tau_{2i}(1-e_{2i-2}))$. The left hand side gives
\begin{align*}
\varphi(&\tau_{2i-2}\tau_{2i}\tau_{2i-2}) \hspace{1mm} = \hspace{1mm} 
\begin{tikzpicture}[baseline=9ex, scale=0.8]
    \node[label=above:{\scriptsize $2i-3$}] (2i-3) at (1,3){};
    \node[label=above:{\scriptsize $2i-2$}] (2i-2) at (2,3){};
    \node[label=above:{\scriptsize $2i-1$}] (2i-1) at (3,3){};
    \node[label=above:{\scriptsize $2i$}] (2i) at (4,3){};
    \node[label=above:{\scriptsize $2i+1$}] (2i+1) at (5,3){};
    \draw[->] (1,0) to (3,1);
    \draw[<-] (2,0) to [curve through={(1.9,0.2) .. (1.6,0.5) .. (1.9,0.8)}] (2,1);
    \draw[->] (3,0) to (1,1);
    \draw[<-] (4,0) to (4,1);
    \draw[->] (5,0) to (5,1);
    \draw[->] (1,1) to (1,2);
    \draw[<-] (2,1) to (2,2);
    \draw[->] (3,1) to (5,2);
    \draw[<-] (4,1) to [curve through={(3.9,1.2) .. (3.6,1.5) .. (3.9,1.8)}] (4,2);
    \draw[->] (5,1) to (3,2);
    \draw[->] (1,2) to (3,3);
    \draw[<-] (2,2) to [curve through={(1.9,2.2) .. (1.6,2.5) .. (1.9,2.8)}] (2,3);
    \draw[->] (3,2) to (1,3);
    \draw[<-] (4,2) to (4,3);
    \draw[->] (5,2) to (5,3);
\end{tikzpicture} \\
&= \hspace{1mm} 
\begin{tikzpicture}[baseline=2ex, scale=0.8]
    \node[label=above:{\scriptsize $2i-3$}] (2i-3) at (1,1){};
    \node[label=above:{\scriptsize $2i-2$}] (2i-2) at (2,1){};
    \node[label=above:{\scriptsize $2i-1$}] (2i-1) at (3,1){};
    \node[label=above:{\scriptsize $2i$}] (2i) at (4,1){};
    \node[label=above:{\scriptsize $2i+1$}] (2i+1) at (5,1){};
    \draw[->] (2,1) to (2,0);
    \draw[->] (3,0) to (3,1);
    \draw[->] (4,1) to (4,0);
    \draw[->] (1,0) to [curve through={(1.3,0.1) .. (4.4,0.5)}] (5,1);
    \draw[<-] (1,1) to [curve through={(1.3,0.9) .. (4.4,0.5)}] (5,0);
\end{tikzpicture} \hspace{1mm} - \hspace{1mm}
\begin{tikzpicture}[baseline=2ex, scale=0.8]
    \node[label=above:{\scriptsize $2i-3$}] (2i-3) at (1,1){};
    \node[label=above:{\scriptsize $2i-2$}] (2i-2) at (2,1){};
    \node[label=above:{\scriptsize $2i-1$}] (2i-1) at (3,1){};
    \node[label=above:{\scriptsize $2i$}] (2i) at (4,1){};
    \node[label=above:{\scriptsize $2i+1$}] (2i+1) at (5,1){};
    \draw[->] (2,1) to [curve through={(2.5,0.6)}] (3,1);
    \draw[<-] (2,0) to [curve through={(2.5,0.4)}] (3,0);
    \draw[->] (4,1) to (4,0);
    \draw[->] (1,0) to [curve through={(1.3,0.1) .. (4.4,0.5)}] (5,1);
    \draw[<-] (1,1) to [curve through={(1.3,0.9) .. (4.4,0.5)}] (5,0);
\end{tikzpicture} \\
&= \hspace{1mm} 
\begin{tikzpicture}[baseline=2ex, scale=0.8]
    \node[label=above:{\scriptsize $2i-3$}] (2i-3) at (1,1){};
    \node[label=above:{\scriptsize $2i-2$}] (2i-2) at (2,1){};
    \node[label=above:{\scriptsize $2i-1$}] (2i-1) at (3,1){};
    \node[label=above:{\scriptsize $2i$}] (2i) at (4,1){};
    \node[label=above:{\scriptsize $2i+1$}] (2i+1) at (5,1){};
    \draw[->] (2,1) to (2,0);
    \draw[->] (3,0) to (3,1);
    \draw[->] (4,1) to (4,0);
    \draw[->] (1,0) to [curve through={(1.3,0.1) .. (4.4,0.5)}] (5,1);
    \draw[<-] (1,1) to [curve through={(1.3,0.9) .. (4.4,0.5)}] (5,0);
\end{tikzpicture} \hspace{1mm} - \hspace{1mm}
\begin{tikzpicture}[baseline=2ex, scale=0.8]
    \node[label=above:{\scriptsize $2i-3$}] (2i-3) at (1,1){};
    \node[label=above:{\scriptsize $2i-2$}] (2i-2) at (2,1){};
    \node[label=above:{\scriptsize $2i-1$}] (2i-1) at (3,1){};
    \node[label=above:{\scriptsize $2i$}] (2i) at (4,1){};
    \node[label=above:{\scriptsize $2i+1$}] (2i+1) at (5,1){};
    \draw[->] (2,1) to [curve through={(2.5,0.7)}] (3,1);
    \draw[<-] (2,0) to [curve through={(2.5,0.3)}] (3,0);
    \draw[->] (4,1) to (4,0);
    \draw[->] (1,0) to [curve through={(2.5,0.4) .. (4.4,0.5)}] (5,1);
    \draw[<-] (1,1) to [curve through={(2.5,0.6) .. (4.4,0.5)}] (5,0);
\end{tikzpicture}
\end{align*}
where the second equality follows by applying (H3), and the third equality follows from (H2). By applying (H1) and (H2), one can check that the first term above is $\varphi(\tau_{2i}\tau_{2i-2}\tau_{2i})$ and the second term above is $\varphi(\tau_{2i}\tau_{2i-2}\tau_{2i}e_{2i-2})$, hence \emph{(7)(i)} holds. Relation \emph{(7)(ii)} can be shown in an analogous manner.

\vspace{2mm}
\noindent
\emph{(7)(iii)}: We seek to show that $\varphi(\tau_{2i-1}\tau_{2i}\tau_{2i-1})=\varphi(\tau_{2i}-e_{2i-2}\tau_{2i}-\tau_{2i}e_{2i-2})$. The left hand side gives
\begin{align*}
\varphi(&\tau_{2i-1}\tau_{2i}\tau_{2i-1}) \hspace{1mm} = \hspace{1mm}
\begin{tikzpicture}[baseline=9ex, scale=0.8]
    \node[label=above:{\scriptsize $2i-2$}] (2i-2) at (1,3){};
    \node[label=above:{\scriptsize $2i-1$}] (2i-1) at (2,3){};
    \node[label=above:{\scriptsize $2i$}] (2i) at (3,3){};
    \node[label=above:{\scriptsize $2i+1$}] (2i+1) at (4,3){};
    \draw[<-] (1,0) to (3,1);
    \draw[->] (2,0) to [curve through={(2.1,0.2) .. (2.4,0.5) .. (2.1,0.8)}] (2,1);
    \draw[<-] (3,0) to (1,1);
    \draw[->] (4,0) to (4,1);
    \draw[->] (2,1) to (4,2);
    \draw[<-] (3,1) to [curve through={(2.9,1.2) .. (2.6,1.5) .. (2.9,1.8)}] (3,2);
    \draw[->] (4,1) to (2,2);
    \draw[<-] (1,1) to (1,2);
    \draw[<-] (1,2) to (3,3);
    \draw[->] (2,2) to [curve through={(2.1,2.2) .. (2.4,2.5) .. (2.1,2.8)}] (2,3);
    \draw[<-] (3,2) to (1,3);
    \draw[->] (4,2) to (4,3);
\end{tikzpicture} \hspace{1mm} = \hspace{1mm} 
\begin{tikzpicture}[baseline=6ex, scale=0.8]
    \node[label=above:{\scriptsize $2i-2$}] (2i-2) at (1,2){};
    \node[label=above:{\scriptsize $2i-1$}] (2i-1) at (2,2){};
    \node[label=above:{\scriptsize $2i$}] (2i) at (3,2){};
    \node[label=above:{\scriptsize $2i+1$}] (2i+1) at (4,2){};
    \draw[->] (2,0) to [curve through={(2.5,0.4) .. (3.8,1)}] (4,2);
    \draw[->] (3,2) to [curve through={(2.7,1.85) .. (1.4,1.5) .. (1.1,1) .. (1.4,0.5) .. (2.7,0.15)}] (3,0);
    \draw[->] (1,2) to [curve through={(1.9,1.4) .. (2.5,1.15) .. (1.3,0.25)}] (1,0);
    \draw[->] (4,0) to [curve through={(3.8,0.2) .. (1.9,1.15)}] (2,2);
\end{tikzpicture} \hspace{1mm} - \hspace{1mm}
\begin{tikzpicture}[baseline=6ex, scale=0.8]
    \node[label=above:{\scriptsize $2i-2$}] (2i-2) at (1,2){};
    \node[label=above:{\scriptsize $2i-1$}] (2i-1) at (2,2){};
    \node[label=above:{\scriptsize $2i$}] (2i) at (3,2){};
    \node[label=above:{\scriptsize $2i+1$}] (2i+1) at (4,2){};
    \draw[->] (2,0) to [curve through={(1.9,0.55) .. (1.2,0.15)}] (1,0);
    \draw[->] (3,2) to [curve through={(2.7,1.85) .. (1.4,1.5) .. (1.1,1) .. (1.4,0.5) .. (2.7,0.15)}] (3,0);
    \draw[->] (1,2) to [curve through={(1.9,1.4) .. (2.5,1.15) .. (2.2,0.65) .. (2.7,0.75) .. (3.8,1.1)}] (4,2);
    \draw[->] (4,0) to [curve through={(3.8,0.2) .. (1.9,1.15)}] (2,2);
\end{tikzpicture}.
\end{align*}
By applying (H2) twice and (H1), the second term above straightens out to
\[
\begin{tikzpicture}[baseline=5ex, scale=0.8]
    \node[label=above:{\scriptsize $2i-2$}] (2i-2) at (1,2){};
    \node[label=above:{\scriptsize $2i-1$}] (2i-1) at (2,2){};
    \node[label=above:{\scriptsize $2i$}] (2i) at (3,2){};
    \node[label=above:{\scriptsize $2i+1$}] (2i+1) at (4,2){};
    \draw[->] (2,0) to [curve through={(1.9,0.55) .. (1.2,0.15)}] (1,0);
    \draw[->] (3,2) to [curve through={(2.7,1.85) .. (1.4,1.5) .. (1.1,1) .. (1.4,0.5) .. (2.7,0.15)}] (3,0);
    \draw[->] (1,2) to [curve through={(1.9,1.4) .. (2.5,1.15) .. (2.2,0.65) .. (2.7,0.75) .. (3.8,1.1)}] (4,2);
    \draw[->] (4,0) to [curve through={(3.8,0.2) .. (1.9,1.15)}] (2,2);
\end{tikzpicture} \hspace{1mm} = \hspace{1mm}
\begin{tikzpicture}[baseline=2ex, scale=0.8]
    \node[label=above:{\scriptsize $2i-2$}] (2i-2) at (1,1){};
    \node[label=above:{\scriptsize $2i-1$}] (2i-1) at (2,1){};
    \node[label=above:{\scriptsize $2i$}] (2i) at (3,1){};
    \node[label=above:{\scriptsize $2i+1$}] (2i+1) at (4,1){};
    \draw[->] (3,1) to (3,0);
    \draw[->] (2,0) to [curve through={(1.5,0.35)}] (1,0);
    \draw[->] (1,1) to [curve through={(2.5,0.65)}] (4,1);
    \draw[->] (4,0) to [curve through={(3.5,0.4) .. (2.5,0.45)}] (2,1);
\end{tikzpicture} \hspace{1mm} = \hspace{1mm} \varphi(\tau_{2i}e_{2i-2}).
\] 
For the first term we get
\begin{align*}
\begin{tikzpicture}[baseline=6ex, scale=0.8]
    \node[label=above:{\scriptsize $2i-2$}] (2i-2) at (1,2){};
    \node[label=above:{\scriptsize $2i-1$}] (2i-1) at (2,2){};
    \node[label=above:{\scriptsize $2i$}] (2i) at (3,2){};
    \node[label=above:{\scriptsize $2i+1$}] (2i+1) at (4,2){};
    \draw[->] (2,0) to [curve through={(2.5,0.4) .. (3.8,1)}] (4,2);
    \draw[->] (3,2) to [curve through={(2.7,1.85) .. (1.4,1.5) .. (1.1,1) .. (1.4,0.5) .. (2.7,0.15)}] (3,0);
    \draw[->] (1,2) to [curve through={(1.9,1.4) .. (2.5,1.15) .. (1.3,0.25)}] (1,0);
    \draw[->] (4,0) to [curve through={(3.8,0.2) .. (1.9,1.15)}] (2,2);
\end{tikzpicture} \hspace{1mm} &= \hspace{1mm} 
\begin{tikzpicture}[baseline=6ex, scale=0.8]
    \node[label=above:{\scriptsize $2i-2$}] (2i-2) at (1,2){};
    \node[label=above:{\scriptsize $2i-1$}] (2i-1) at (2,2){};
    \node[label=above:{\scriptsize $2i$}] (2i) at (3,2){};
    \node[label=above:{\scriptsize $2i+1$}] (2i+1) at (4,2){};
    \draw[->] (2,0) to [curve through={(2.5,0.4) .. (3.8,1)}] (4,2);
    \draw[->] (3,2) to [curve through={(2.7,1.85) .. (1.4,1.5) .. (1.1,1) .. (1.4,0.5) .. (2.7,0.15)}] (3,0);
    \draw[->] (1,2) to [curve through={(1.4,1.4) .. (1.6,1.1) .. (1.3,0.25)}] (1,0);
    \draw[->] (4,0) to [curve through={(3.8,0.2) .. (2,1.15)}] (2,2);
\end{tikzpicture} \hspace{1mm} - \hspace{1mm}
\begin{tikzpicture}[baseline=6ex, scale=0.8]
    \node[label=above:{\scriptsize $2i-2$}] (2i-2) at (1,2){};
    \node[label=above:{\scriptsize $2i-1$}] (2i-1) at (2,2){};
    \node[label=above:{\scriptsize $2i$}] (2i) at (3,2){};
    \node[label=above:{\scriptsize $2i+1$}] (2i+1) at (4,2){};
    \draw[->] (2,0) to [curve through={(2.5,0.4) .. (3.8,1)}] (4,2);
    \draw[->] (3,2) to [curve through={(2.7,1.85) .. (1.4,1.5) .. (1.1,1) .. (1.4,0.5) .. (2.7,0.15)}] (3,0);
    \draw[->] (1,2) to [curve through={(1.2,1.8) .. (1.85,1.3)}] (2,2);
    \draw[->] (4,0) to [curve through={(3.8,0.2) .. (2.1,0.85)}] (1,0);
\end{tikzpicture} \\
&= \hspace{1mm}
\begin{tikzpicture}[baseline=2ex, scale=0.8]
    \node[label=above:{\scriptsize $2i-2$}] (2i-2) at (1,1){};
    \node[label=above:{\scriptsize $2i-1$}] (2i-1) at (2,1){};
    \node[label=above:{\scriptsize $2i$}] (2i) at (3,1){};
    \node[label=above:{\scriptsize $2i+1$}] (2i+1) at (4,1){};
    \draw[->] (2,0) to (4,1);
    \draw[<-] (3,0) to [curve through={(2.9,0.2) .. (2.6,0.5) .. (2.9,0.8)}] (3,1);
    \draw[->] (4,0) to (2,1);
    \draw[<-] (1,0) to (1,1);
\end{tikzpicture} \hspace{1mm} - \hspace{1mm}
 \begin{tikzpicture}[baseline=2ex, scale=0.8]
    \node[label=above:{\scriptsize $2i-2$}] (2i-2) at (1,1){};
    \node[label=above:{\scriptsize $2i-1$}] (2i-1) at (2,1){};
    \node[label=above:{\scriptsize $2i$}] (2i) at (3,1){};
    \node[label=above:{\scriptsize $2i+1$}] (2i+1) at (4,1){};
    \draw[<-] (3,0) to (3,1);
    \draw[<-] (2,1) to [curve through={(1.5,0.65)}] (1,1);
    \draw[<-] (1,0) to [curve through={(2.5,0.35)}] (4,0);
    \draw[<-] (4,1) to [curve through={(3.5,0.6) .. (2.5,0.5)}] (2,0);
\end{tikzpicture} \hspace{1mm} = \hspace{1mm} \varphi(\tau_{2i}) + \varphi(e_{2i-2}\tau_{2i})
\end{align*}
where the first equality follows by applying (H3), and the second equality by (H1) and (H2). Therefore collectively we have show \emph{(7)(iii)}. Relation \emph{(7)(iv)} follows in an analogous manner.

\vspace{2mm}
\noindent
\emph{(8)(i)}: We seek to show that 
\begin{equation} \label{(Eq10)}
\varphi(x_{2i+1}) = \varphi(\tau_{2i}x_{2i-1}\tau_{2i})+\varphi(e_{2i}e_{i-1}\tau_{2i})+\varphi(\tau_{2i}e_{2i-1}e_{2i})-\varphi(\tau_{2i}).
\end{equation}
One can check that
\[ \varphi(e_{2i}e_{2i-1}\tau_{2i}) = 
\begin{tikzpicture}[baseline=2ex, scale=0.8]
    \node[label=above:{\scriptsize $2i-1$}] (2i-1) at (1,1){};
    \node[label=above:{\scriptsize $2i$}] (2i) at (2,1){};
    \node[label=above:{\scriptsize $2i+1$}] (2i+1) at (3,1){};
    \draw[->] (1,0) to (1,1);
    \draw[<-] (2,0) to [curve through={(2.5,0.2)}] (3,0);
    \draw[->] (2,1) to [curve through={(2.5,0.8)}] (3,1);
    \filldraw (2.5,0.2) circle (2pt);
\end{tikzpicture}, \hspace{5mm}
\varphi(\tau_{2i}e_{2i-1}e_{2i}) = 
\begin{tikzpicture}[baseline=2ex, scale=0.8]
    \node[label=above:{\scriptsize $2i-1$}] (2i-1) at (1,1){};
    \node[label=above:{\scriptsize $2i$}] (2i) at (2,1){};
    \node[label=above:{\scriptsize $2i+1$}] (2i+1) at (3,1){};
    \draw[->] (1,0) to (1,1);
    \draw[<-] (2,0) to [curve through={(2.5,0.2)}] (3,0);
    \draw[->] (2,1) to [curve through={(2.5,0.8)}] (3,1);
    \filldraw (2.5,0.8) circle (2pt);
\end{tikzpicture}. \]
By (H1), (H2), (H3), and applying \emph{\Cref{DecRels}} (and a $90^{\circ}$ clockwise rotation of  \emph{\Cref{DecRels}}), we have
\begin{align*}
\varphi(\tau_{2i}x_{2i-1}\tau_{2i}) \hspace{2mm} &= 
\begin{tikzpicture}[baseline=7.5ex, scale=0.8]
    \node[label=above:{\scriptsize $2i-1$}] (2i-1) at (1,3){};
    \node[label=above:{\scriptsize $2i$}] (2i) at (2,3){};
    \node[label=above:{\scriptsize $2i+1$}] (2i+1) at (3,3){};
    \draw[->] (1,2) to (3,3);
    \draw[<-] (2,2) to [curve through={(1.9,2.2) .. (1.6,2.5) .. (1.9,2.8)}] (2,3);
    \draw[->] (3,2) to (1,3);
    \draw[->] (1,1) to (1,2);
    \draw[<-] (2,1) to (2,2);
    \draw[->] (3,1) to (3,2);
    \draw[->] (1,0) to (3,1);
    \draw[<-] (2,0) to [curve through={(1.9,0.2) .. (1.6,0.5) .. (1.9,0.8)}] (2,1);
    \draw[->] (3,0) to (1,1);
    \filldraw (1,1.5) circle (2pt);
\end{tikzpicture} = \hspace{2mm}
\begin{tikzpicture}[baseline=3ex, scale=0.8]
    \node[label=above:{\scriptsize $2i-1$}] (2i-1) at (1,1){};
    \node[label=above:{\scriptsize $2i$}] (2i) at (2,1){};
    \node[label=above:{\scriptsize $2i+1$}] (2i+1) at (3,1){};
    \draw[->] (1,0) to [curve through={(1.3,0.25) .. (1.4,0.5) .. (1.3,0.75)}] (1,1);
    \draw[<-] (2,0) to [curve through={(2.3,0.25) .. (2.4,0.5) .. (2.3,0.75)}] (2,1);
    \draw[->] (3,0) to [curve through={(2.3,0.25) .. (1.3,0.3) .. (1.1,0.5) .. (1.3,0.7) .. (2.3,0.8)}] (3,1);
    \filldraw (1.1,0.5) circle (1.5pt);
\end{tikzpicture} = 
\begin{tikzpicture}[baseline=3ex, scale=0.8]
    \node[label=above:{\scriptsize $2i-1$}] (2i-1) at (1,1){};
    \node[label=above:{\scriptsize $2i$}] (2i) at (2,1){};
    \node[label=above:{\scriptsize $2i+1$}] (2i+1) at (3,1){};
    \draw[->] (1,0) to [curve through={(1.3,0.25) .. (1.4,0.5) .. (1.3,0.75)}] (1,1);
    \draw[<-] (2,0) to [curve through={(2.3,0.25) .. (2.4,0.5) .. (2.3,0.75)}] (2,1);
    \draw[->] (3,0) to [curve through={(2.3,0.25) .. (1.3,0.3) .. (1.1,0.5) .. (1.3,0.7) .. (2.3,0.8)}] (3,1);
    \filldraw (1.55,0.75) circle (1.5pt);
\end{tikzpicture} \hspace{2mm} + \hspace{2mm}
\begin{tikzpicture}[baseline=3ex, scale=0.8]
    \node[label=above:{\scriptsize $2i-1$}] (2i-1) at (1,1){};
    \node[label=above:{\scriptsize $2i$}] (2i) at (2,1){};
    \node[label=above:{\scriptsize $2i+1$}] (2i+1) at (3,1){};
    \draw[->] (1,0) to [curve through={(1.3,0.25) .. (1.32,0.67) .. (1.9,0.75) .. (2.3,0.75)}] (3,1);
    \draw[<-] (2,0) to [curve through={(2.3,0.25) .. (2.4,0.5) .. (2.3,0.75)}] (2,1);
    \draw[->] (3,0) to [curve through={(2.3,0.25) .. (1.3,0.3) .. (1.1,0.5) .. (1.27,0.75) .. (1.1,0.9)}] (1,1);
\end{tikzpicture} \\
&= 
\begin{tikzpicture}[baseline=3ex, scale=0.8]
    \node[label=above:{\scriptsize $2i-1$}] (2i-1) at (1,1){};
    \node[label=above:{\scriptsize $2i$}] (2i) at (2,1){};
    \node[label=above:{\scriptsize $2i+1$}] (2i+1) at (3,1){};
    \draw[->] (1,0) to (1,1);
    \draw[<-] (2,0) to [curve through={(2.3,0.25) .. (2.4,0.5) .. (2.3,0.75)}] (2,1);
    \draw[->] (3,0) to [curve through={(2.3,0.25) .. (1.9,0.5) .. (2.3,0.8)}] (3,1);
    \filldraw (1.9,0.5) circle (1.5pt);
\end{tikzpicture} \hspace{2mm} + \hspace{2mm}
\begin{tikzpicture}[baseline=3ex, scale=0.8]
    \node[label=above:{\scriptsize $2i-1$}] (2i-1) at (1,1){};
    \node[label=above:{\scriptsize $2i$}] (2i) at (2,1){};
    \node[label=above:{\scriptsize $2i+1$}] (2i+1) at (3,1){};
    \draw[->] (1,0) to (3,1);
    \draw[<-] (2,0) to [curve through={(1.9,0.2) .. (1.6,0.5) .. (1.9,0.8)}] (2,1);
    \draw[->] (3,0) to (1,1);
\end{tikzpicture} \\
&= 
\begin{tikzpicture}[baseline=3ex, scale=0.8]
    \node[label=above:{\scriptsize $2i-1$}] (2i-1) at (1,1){};
    \node[label=above:{\scriptsize $2i$}] (2i) at (2,1){};
    \node[label=above:{\scriptsize $2i+1$}] (2i+1) at (3,1){};
    \draw[->] (1,0) to (1,1);
    \draw[<-] (2,0) to [curve through={(2.3,0.25) .. (2.4,0.5) .. (2.3,0.75)}] (2,1);
    \draw[->] (3,0) to [curve through={(2.3,0.25) .. (1.9,0.5) .. (2.3,0.8)}] (3,1);
    \filldraw (2.5,0.83) circle (1.5pt);
\end{tikzpicture} \hspace{2mm} - \hspace{2mm}
\begin{tikzpicture}[baseline=3ex, scale=0.8]
    \node[label=above:{\scriptsize $2i-1$}] (2i-1) at (1,1){};
    \node[label=above:{\scriptsize $2i$}] (2i) at (2,1){};
    \node[label=above:{\scriptsize $2i+1$}] (2i+1) at (3,1){};
    \draw[->] (1,0) to (1,1);
    \draw[<-] (2,0) to [curve through={(2.3,0.25) .. (2.4,0.5) .. (2.3,0.7) .. (1.9,0.5) .. (2.3,0.25)}] (3,0);
    \draw[->] (2,1) to [curve through={(2.3,0.8)}] (3,1);
\end{tikzpicture} \hspace{2mm} + \hspace{2mm}
\begin{tikzpicture}[baseline=3ex, scale=0.8]
    \node[label=above:{\scriptsize $2i-1$}] (2i-1) at (1,1){};
    \node[label=above:{\scriptsize $2i$}] (2i) at (2,1){};
    \node[label=above:{\scriptsize $2i+1$}] (2i+1) at (3,1){};
    \draw[->] (1,0) to (3,1);
    \draw[<-] (2,0) to [curve through={(1.9,0.2) .. (1.6,0.5) .. (1.9,0.8)}] (2,1);
    \draw[->] (3,0) to (1,1);
\end{tikzpicture} \\
&= 
\begin{tikzpicture}[baseline=3ex, scale=0.8]
    \node[label=above:{\scriptsize $2i-1$}] (2i-1) at (1,1){};
    \node[label=above:{\scriptsize $2i$}] (2i) at (2,1){};
    \node[label=above:{\scriptsize $2i+1$}] (2i+1) at (3,1){};
    \draw[->] (1,0) to (1,1);
    \draw[<-] (2,0) to (2,1);
    \draw[->] (3,0) to (3,1);
    \filldraw (3,0.5) circle (1.5pt);
\end{tikzpicture} \hspace{2mm} - \hspace{2mm}
\begin{tikzpicture}[baseline=3ex, scale=0.8]
    \node[label=above:{\scriptsize $2i-1$}] (2i-1) at (1,1){};
    \node[label=above:{\scriptsize $2i$}] (2i) at (2,1){};
    \node[label=above:{\scriptsize $2i+1$}] (2i+1) at (3,1){};
    \draw[->] (1,0) to (1,1);
    \draw[<-] (2,0) to [bend left] (3,0);
    \draw[->] (2,1) to [bend right] (3,1);
    \filldraw (2.5,0.85) circle (1.5pt);
\end{tikzpicture} \hspace{2mm} - \hspace{2mm}
\begin{tikzpicture}[baseline=3ex, scale=0.8]
    \node[label=above:{\scriptsize $2i-1$}] (2i-1) at (1,1){};
    \node[label=above:{\scriptsize $2i$}] (2i) at (2,1){};
    \node[label=above:{\scriptsize $2i+1$}] (2i+1) at (3,1){};
    \draw[->] (1,0) to (1,1);
    \draw[<-] (2,0) to [bend left] (3,0);
    \draw[->] (2,1) to [bend right] (3,1);
    \filldraw (2.5,0.15) circle (1.5pt);
\end{tikzpicture} \hspace{2mm} + \hspace{2mm}
\begin{tikzpicture}[baseline=3ex, scale=0.8]
    \node[label=above:{\scriptsize $2i-1$}] (2i-1) at (1,1){};
    \node[label=above:{\scriptsize $2i$}] (2i) at (2,1){};
    \node[label=above:{\scriptsize $2i+1$}] (2i+1) at (3,1){};
    \draw[->] (1,0) to (3,1);
    \draw[<-] (2,0) to [curve through={(1.9,0.2) .. (1.6,0.5) .. (1.9,0.8)}] (2,1);
    \draw[->] (3,0) to (1,1);
\end{tikzpicture} \\
\\
&= \varphi(x_{2i+1})-\varphi(\tau_{2i}e_{2i-1}e_{2i})-\varphi(e_{2i}e_{2i-1}\tau_{2i})+\varphi(\tau_{2i}).
\end{align*}
Rearranging yields \emph{\Cref{(Eq10)}}. The remaining Skein-like relations \emph{(8)(ii)}, \emph{(8)(iii)}, and \emph{(8)(iv)}, following in a similar manner where we employ \emph{\Cref{DecRels}} to pull the decoration over various oriented crossings.

\vspace{2mm}
\noindent
\emph{(9)} and \emph{(10)}: These relations are immediately seen to be upheld diagrammatically.

\end{proof}

The remainder of this section seeks to show that the algebra homomorphism $\varphi$ in the above proposition is surjective. Firstly, from \emph{\Cref{BasisThm}} we know that $\mathsf{End}_{\mathsf{Heis}}((\uparrow\downarrow)^{k})$ has a basis given by
\[ c_w^{k_w} \ldots c_1^{k_1} c_0^{k_0} r_1^{s_1} r_3^{s_3} \ldots r_{2k-1}^{s_{2k-1} } \alpha r_2^{t_2} r_4^{t_4} \ldots r_{2k}^{t_{2k}} \]
where $\alpha\in\mathsf{Sim}((\uparrow\downarrow)^{k})$. Since $\varphi(z_{l})=c_{l}$ and $\varphi(x_{i})=r_{i}$, to prove that $\varphi$ is surjective it is enough to show that $\mathsf{Sim}((\uparrow\downarrow)^{k})\subset\mathsf{Im}(\varphi)$. We will prove that $\mathsf{Sim}((\uparrow\downarrow)^{k})\subset\langle \varphi(e_{i}), \varphi(\tau_{j})\rangle_{i,j} \subset \mathsf{Im}(\varphi)$. We say that a simple diagram is \emph{planar} if no intersections occur among its strings, for example the diagrams $\varphi(e_{i})$ are all planar for each $i\in[2k-1]$. The total number of planar diagrams in $\mathsf{Sim}((\uparrow\downarrow)^{k})$ is $C_{2k}$, the $2k$-th Catalan number. These diagrams are precisely oriented versions of the Temperley-Lieb diagrams. The Jones normal form gives a way of writing the Temperley-Lieb diagrams as a product of generators  (see \cite{J83}, and also \cite[Theorem 4.3 and Figure 16]{Kau90}) which does not involve bubbles, and so may be applied here for the elements $\varphi(e_{i})$ to show that any planar diagram belongs to $\langle \varphi(e_{i}) \rangle_{i}$ and hence to $\mathsf{Im}(\varphi)$.

\begin{defn} \label{UpDownPerms}
Let $\pi\in S(k)$. Then we define the following simple $(\uparrow\downarrow)^{k}$-diagrams:
\begin{itemize}
\item[(i)] $\pi^{\uparrow}$ by pairings of endpoints $\{(2i-1,0), (2\pi(i)-1,1)\}$ and $\{(2i,0),(2i,1)\}$ for each $1\leq i \leq k$.
\item[(ii)] $\pi^{\downarrow}$ by pairings of endpoints $\{(2i-1,0), (2i-1,1)\}$ and $\{(2\pi(i),0),(2i,1)\}$ for each $1\leq i \leq k$.
\end{itemize}
\end{defn}

\begin{ex}\label{permutation}
For $k=3$ and $\pi=(1,2,3)\in S(3)$, we have
\[ \pi^{\uparrow} = \
\begin{tikzpicture}[baseline=2ex, scale=0.7]
   \draw[->] (1,0) to (3,1);
   \draw[<-] (2,0) to (2,1);
   \draw[->] (3,0) to (5,1);
   \draw[<-] (4,0) to (4,1);
   \draw[->] (5,0) to (1,1);
   \draw[<-] (6,0) to (6,1);
\end{tikzpicture}, \hspace{5mm} 
\pi^{\downarrow} = \
\begin{tikzpicture}[baseline=2ex, scale=0.7]
   \draw[->] (1,0) to (1,1);
   \draw[<-] (2,0) to (6,1);
   \draw[->] (3,0) to (3,1);
   \draw[<-] (4,0) to (2,1);
   \draw[->] (5,0) to (5,1);
   \draw[<-] (6,0) to (4,1);
\end{tikzpicture}  \]
\end{ex}

\noindent
For any $\pi\in S(k)$, it is shown in \cite{Stem97} that we have a reduced expression of the form
\[ \pi = (s_{m_{1}}s_{m_{1}+1}\cdots s_{n_{1}})(s_{m_{2}}s_{m_{2}+1}\cdots s_{n_{2}})\cdots (s_{m_{l}}s_{m_{l}+1}\cdots s_{n_{l}}), \]
where $k>n_{1}>n_{2}>\cdots >n_{l}$ and $n_{i}\geq m_{i}$. Noting that $s_{i}^{\downarrow}=\varphi(\tau_{2i+1})$, consider
\[ \alpha^{\downarrow}(w):=(s_{m_{1}}^{\downarrow}s_{m_{1}+1}^{\downarrow}\cdots s_{n_{1}}^{\downarrow})(s_{m_{2}}^{\downarrow}s_{m_{2}+1}^{\downarrow}\cdots s_{n_{2}}^{\downarrow})\cdots (s_{m_{l}}^{\downarrow}s_{m_{l}+1}^{\downarrow}\cdots s_{n_{l}}^{\downarrow})\in\mathsf{Im}(\varphi). \]
Strings in $\alpha^{\downarrow}(w)$ may intersect one another more than once, but we can resolve such double crossings by pulling strings apart via the local relations. The  descending condition on the indices in this reduced expression means we will never need to employ (H3) to pull strings apart, and thus we must have that $\alpha^{\downarrow}(w)=\pi^{\downarrow}$. Hence $\pi^{\downarrow}\in\mathsf{Im}(\varphi)$. Rotating $\pi^{\downarrow}$ by $180^{\circ}$ yields $(\rho\pi\rho^{-1})^{\uparrow}$  where $\rho$ is the product of transposition $(i,k-i+1)$ for each $i\in[k]$. Thus we also have that $\pi^{\uparrow}\in\mathsf{Im}(\varphi)$ for all $\pi\in S(k)$.

To aid upcoming proofs we define a collection of diagrams which loosen the conditions on simple diagrams.

\begin{defn}
We call an $(\bm{a},\bm{b})$-diagram \emph{semisimple} if the following hold:
\begin{itemize}
\item[(1)] It contains no loops or self intersections.
\item[(2)] No top arc intersects a bottom arc.
\end{itemize}
Let $\mathsf{SSim}(\bm{a},\bm{b})$ denote the set of semisimple $(\bm{a},\bm{b})$-diagrams, and write $\mathsf{SSim}(\bm{a})$ for $\mathsf{SSim}(\bm{a},\bm{a})$.
\end{defn} 

From definitions we have that $\mathsf{Sim}(\bm{a},\bm{b})\subset \mathsf{SSim}(\bm{a},\bm{b})$. Any diagram $\alpha\in\mathsf{SSim}((\uparrow\downarrow)^{k},(\uparrow\downarrow)^{l})$ contains precisely $k+l$ strings, and the endpoints of these strings induce an $((\uparrow\downarrow)^{k},(\uparrow\downarrow)^{l})$-matching of the endpoints $E$. We let $\overline{\alpha}$ denote the unique simple diagram corresponding to such a matching (recalling the discussion after \emph{\Cref{SimDia}}). 

\begin{lem} \label{SimBarDecomp}
Given any simple diagram $\alpha\in \mathsf{Sim}((\uparrow\downarrow)^{k},(\uparrow\downarrow)^{l})$, there exists $\pi\in S(k)$, $\sigma\in S(l)$, and a planar diagram $\beta \in \mathsf{Sim}((\uparrow\downarrow)^{k},(\uparrow\downarrow)^{l})$ such that $\pi^{\uparrow}\beta\sigma^{\downarrow}$ is semisimple and
\[ \alpha = \overline{\pi^{\uparrow}\beta\sigma^{\downarrow}}. \]
\end{lem}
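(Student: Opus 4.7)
The plan is to construct $\pi$, $\sigma$, and $\beta$ directly from the matching $M_\alpha$. First I would classify the strings of $\alpha$ into four types: top arcs, bottom arcs, up-verticals (top-up to bottom-up), and down-verticals (top-down to bottom-down). Because $\pi^\uparrow$ leaves the top-down positions $T^- = \{2,4,\ldots,2k\}$ fixed and $\sigma^\downarrow$ leaves the bottom-up positions $B^+ = \{1,3,\ldots,2l-1\}$ fixed, the subset $S^{TA} \subseteq T^-$ of top-down endpoints used in top arcs of $\alpha$, and the subset $S^{BA} \subseteq B^+$ of bottom-up endpoints used in bottom arcs of $\alpha$, are invariants that must be preserved between $\alpha$ and the diagram $\beta$ I am about to build.

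I would define $\beta$ by placing a small top arc between the adjacent positions $(d-1, d)$ for each $d \in S^{TA}$, a small bottom arc between $(e, e+1)$ for each $e \in S^{BA}$, and pairing the remaining top-up positions with the remaining bottom-up positions by monotone (leftmost-to-leftmost) up-verticals, and similarly the remaining top-downs with the remaining bottom-downs by down-verticals. Planarity of $\beta$ I would verify in the cyclic order of the boundary points. Adjacent-position chords never cross other chords; monotone pairings produce nested (hence non-crossing) families among the up-verticals and among the down-verticals. The key parity observation is that removing the pairs $\{d-1,d\}$ from $\{1,\ldots,2k\}$ preserves odd/even alternation, so the remaining top positions group into consecutive blocks $\{u_i, w_i\}$ with $u_i$ odd (the $i$-th remaining up) and $w_i = u_i + 1$ even (the $i$-th remaining down); the analogous statement holds at the bottom. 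A short direct comparison then shows that the $i$-th up-vertical and the $j$-th down-vertical never satisfy the crossing condition, so no up-vertical crosses a down-vertical, completing the planarity check.

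With planar $\beta$ in hand, I would define $\pi \in S(k)$ as the unique permutation such that $\pi^\uparrow$ reshuffles the top-up endpoints of $\beta$ into their positions in $\alpha$, and similarly $\sigma \in S(l)$ for the bottom-downs. By construction the matchings of $\alpha$ and $\pi^\uparrow \beta \sigma^\downarrow$ agree, yielding $\alpha = \overline{\pi^\uparrow \beta \sigma^\downarrow}$. Semisimplicity of the composition rests on three observations: $\pi^\uparrow$ and $\sigma^\downarrow$ contain no arcs, so no loops can form and each composite string remains simple; any top arc of the composition lies entirely in the upper strip (within $\pi^\uparrow$ and the top half of $\beta$), while any bottom arc lies entirely in the lower strip, so no top arc can meet a bottom arc. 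The main obstacle is the planarity of $\beta$: a priori up- and down-verticals could interleave and cross, and it is precisely the fact that the excluded positions come in adjacent odd-even pairs that rules this out via the parity observation. The remaining verifications (correct matching and semisimplicity of the composition) are then routine bookkeeping.
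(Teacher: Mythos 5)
Your strategy---constructing the planar diagram $\beta$ directly from the matching of $\alpha$ rather than reaching it by the paper's iterative uncrossing procedure---is a genuinely different and in principle legitimate route, and several ingredients are correct: the sets $S^{TA}$ and $S^{BA}$ really are invariants of composition with $\pi^{\uparrow}$ and $\sigma^{\downarrow}$, your parity argument does establish planarity of the particular $\beta$ you build, and the identification of $\pi$ and $\sigma$ from the matchings is sound. The gap is in the sentence ``each composite string remains simple'': your construction ignores the \emph{orientation} of the arcs, and that is exactly where self-intersections arise. Every top arc of your $\beta$ has its $\uparrow$-endpoint immediately to the \emph{left} of its $\downarrow$-endpoint, but a top arc of $\alpha$ may have its $\uparrow$-endpoint $a_{d}$ to the \emph{right} of its fixed $\downarrow$-endpoint $d$. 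In that case the string of $\pi^{\uparrow}$ carrying the movable endpoint from $d-1$ up to $a_{d}>d$ must cross the straight string of $\pi^{\uparrow}$ sitting at $x=d$ (their endpoints interleave on the boundary of that layer, so the crossing cannot be isotoped away), and since both pieces belong to the \emph{same} composite string, the composite top arc acquires a curl. The number of double points of a string is an ambient-isotopy invariant, so $\pi^{\uparrow}\beta\sigma^{\downarrow}$ is not semisimple and $\overline{\,\cdot\,}$ is not even defined for it. Concretely, for $k=l=2$ take $\alpha$ with top arcs $\{1,4\}$, $\{2,3\}$ and bottom arcs $\{1,2\}$, $\{3,4\}$: your $\beta$ has top arcs $\{1,2\}$, $\{3,4\}$, which forces $\pi=(1\,2)$, and the resulting composite arc through positions $2,1,3$ self-intersects (even though this $\alpha$ is itself planar and the lemma is trivial for it). The dual problem occurs for any bottom arc of $\alpha$ whose $\downarrow$-endpoint lies to the left of its $\uparrow$-endpoint.

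This is precisely the subtlety the paper's proof is organised around: it observes that composing with a transposition of endpoints fails to be semisimple exactly when the permutation ``swaps the orientation of an arc around'', and its case analysis chooses every uncrossing move so that arc orientations are preserved. To repair your construction you would need to place the movable endpoint of each small arc of $\beta$ on the same side of its fixed endpoint as in $\alpha$; but then the small arcs no longer occupy the blocks $\{2j-1,2j\}$, your parity argument for the planarity of $\beta$ collapses, and the naive rule ``use $\{d,d+1\}$ when $a_{d}>d$'' is not even well defined (for $S^{TA}=\{2,4\}$ with $a_{2}=3$ and $a_{4}=1$ both arcs demand position $3$). Producing a non-crossing, orientation-compatible arc system together with compatible vertical pairings is possible, but it requires a further argument and is the real content of the lemma; as written, the proposal does not supply it.
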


\begin{proof}
Given any simple diagram $\gamma\in\mathsf{Sim}((\uparrow\downarrow)^{k},(\uparrow\downarrow)^{l})$ let $(2i,0),(2j,0)\in E$ (respectively $(2i-1,1), (2i-j,1)$) be two $\downarrow$ (respectively $\uparrow$) endpoints in the bottom row (respectively top row) of $\gamma$. Let $\gamma'$ be the simple diagram obtained from $\gamma$ by permuting these two endpoints around. It can be seen that $\gamma (i,j)^{\downarrow}$ (respectively $(i,j)^{\uparrow}\gamma$) is semisimple as long as the permutation doesn't swap the orianetation of an arc around, since that is the only way a self intersection can occur. In this situation, one can see that
\[ \gamma' = \overline{\gamma (i,j)^{\downarrow}} \hspace{5mm} \Big(\text{respectively} =\overline{(i,j)^{\uparrow}\gamma}\Big). \]
Hence to prove this lemma it is enough to show that we can reach a planar diagram $\beta$ from $\alpha$ by repeatably permuting the endpoints in the bottom row coloured by $\downarrow$, and top row coloured by $\uparrow$, in such a way that the orientations of arcs are preserved. We focus on the bottom row, where the top row will follow in the same manner by a $180^{\circ}$ rotation of the diagrammatics. Starting with $\alpha$ we remove intersections one at a time by employing a suitable permutation of endpoints. There are a few cases to consider, and in each such case the endpoints of the strings in the following diagrams will be arbitrary:

\vspace{2mm}
\noindent
(Case 1): Crossing of two down strings:
\[ 
\begin{tikzpicture}[scale=1, baseline=3ex]
\begin{scope}[dotted]
          \draw (1,0) to (1,1);
          \draw (1,0) to (3,0);
          \draw (3,0) to (3,1);
          \draw (1,1) to (3,1);
\end{scope}
    \draw[->] (1.5,1) to (2.5,0);
    \draw[->] (2.5,1) to (1.5,0);
\end{tikzpicture} \hspace{2mm} \rightsquigarrow \hspace{2mm}
\begin{tikzpicture}[scale=1, baseline=3ex]
\begin{scope}[dotted]
          \draw (1,0) to (1,1);
          \draw (1,0) to (3,0);
          \draw (3,0) to (3,1);
          \draw (1,1) to (3,1);
\end{scope}
    \draw[->] (1.5,1) to (1.5,0);
    \draw[->] (2.5,1) to (2.5,0);
\end{tikzpicture}
\]

\vspace{2mm}
\noindent
(Case 2): Crossing of a down string with an clockwise/anti-clockwise arc:
\[ 
\begin{tikzpicture}[scale=1, baseline=3ex]
\begin{scope}[dotted]
          \draw (1,0) to (1,1);
          \draw (1,0) to (3,0);
          \draw (3,0) to (3,1);
          \draw (1,1) to (3,1);
\end{scope}
    \draw[->] (1.4,0) to [curve through={(2,0.5)}] (2.6,0);
    \draw[->] (2,1) to (2,0);
\end{tikzpicture} \hspace{2mm} \rightsquigarrow \hspace{2mm}
\begin{tikzpicture}[scale=1, baseline=3ex]
\begin{scope}[dotted]
          \draw (1,0) to (1,1);
          \draw (1,0) to (3,0);
          \draw (3,0) to (3,1);
          \draw (1,1) to (3,1);
\end{scope}
    \draw[->] (1.4,0) to [curve through={(1.7,0.3)}] (2,0);
    \draw[->] (2,1) to (2.6,0);
\end{tikzpicture}, \hspace{10mm}
\begin{tikzpicture}[scale=1, baseline=3ex]
\begin{scope}[dotted]
          \draw (1,0) to (1,1);
          \draw (1,0) to (3,0);
          \draw (3,0) to (3,1);
          \draw (1,1) to (3,1);
\end{scope}
    \draw[<-] (1.4,0) to [curve through={(2,0.5)}] (2.6,0);
    \draw[->] (2,1) to (2,0);
\end{tikzpicture} \hspace{2mm} \rightsquigarrow \hspace{2mm}
\begin{tikzpicture}[scale=1, baseline=3ex]
\begin{scope}[dotted]
          \draw (1,0) to (1,1);
          \draw (1,0) to (3,0);
          \draw (3,0) to (3,1);
          \draw (1,1) to (3,1);
\end{scope}
    \draw[<-] (2,0) to [curve through={(2.3,0.3)}] (2.6,0);
    \draw[->] (2,1) to (1.4,0);
\end{tikzpicture}
\]
Note in either situation the orientation of the arc is preserved by the permutation.

\vspace{2mm}
\noindent
(Case 3): Crossing of two arcs: There are four cases based on the orientations of the two arcs given by
\[ 
\begin{tikzpicture}[scale=1, baseline=3ex]
\begin{scope}[dotted]
          \draw (1,0) to (1,1);
          \draw (1,0) to (3,0);
          \draw (3,0) to (3,1);
          \draw (1,1) to (3,1);
\end{scope}
    \draw[->] (1.4,0) to [curve through={(1.8,0.4)}] (2.2,0);
    \draw[->] (1.8,0) to [curve through={(2.2,0.4)}] (2.6,0);
\end{tikzpicture} \hspace{2mm} \rightsquigarrow \hspace{2mm}
\begin{tikzpicture}[scale=1, baseline=3ex]
\begin{scope}[dotted]
          \draw (1,0) to (1,1);
          \draw (1,0) to (3,0);
          \draw (3,0) to (3,1);
          \draw (1,1) to (3,1);
\end{scope}
    \draw[->] (1.4,0) to [curve through={(2,0.6)}] (2.6,0);
    \draw[->] (1.8,0) to [curve through={(2.0,0.25)}] (2.2,0);
\end{tikzpicture}, \hspace{10mm}
\begin{tikzpicture}[scale=1, baseline=3ex]
\begin{scope}[dotted]
          \draw (1,0) to (1,1);
          \draw (1,0) to (3,0);
          \draw (3,0) to (3,1);
          \draw (1,1) to (3,1);
\end{scope}
    \draw[<-] (1.4,0) to [curve through={(1.8,0.4)}] (2.2,0);
    \draw[<-] (1.8,0) to [curve through={(2.2,0.4)}] (2.6,0);
\end{tikzpicture} \hspace{2mm} \rightsquigarrow \hspace{2mm}
\begin{tikzpicture}[scale=1, baseline=3ex]
\begin{scope}[dotted]
          \draw (1,0) to (1,1);
          \draw (1,0) to (3,0);
          \draw (3,0) to (3,1);
          \draw (1,1) to (3,1);
\end{scope}
    \draw[<-] (1.4,0) to [curve through={(2,0.6)}] (2.6,0);
    \draw[<-] (1.8,0) to [curve through={(2.0,0.25)}] (2.2,0);
\end{tikzpicture},
\]
\[ 
\begin{tikzpicture}[scale=1, baseline=3ex]
\begin{scope}[dotted]
          \draw (1,0) to (1,1);
          \draw (1,0) to (3,0);
          \draw (3,0) to (3,1);
          \draw (1,1) to (3,1);
\end{scope}
    \draw[->] (1.4,0) to [curve through={(1.8,0.4)}] (2.2,0);
    \draw[<-] (1.8,0) to [curve through={(2.2,0.4)}] (2.6,0);
\end{tikzpicture} \hspace{2mm} \rightsquigarrow \hspace{2mm}
\begin{tikzpicture}[scale=1, baseline=3ex]
\begin{scope}[dotted]
          \draw (1,0) to (1,1);
          \draw (1,0) to (3,0);
          \draw (3,0) to (3,1);
          \draw (1,1) to (3,1);
\end{scope}
    \draw[->] (1.4,0) to [curve through={(1.6,0.25)}] (1.8,0);
    \draw[<-] (2.2,0) to [curve through={(2.4,0.25)}] (2.6,0);
\end{tikzpicture}, \hspace{10mm}
\begin{tikzpicture}[scale=1, baseline=3ex]
\begin{scope}[dotted]
          \draw (1,0) to (1,1);
          \draw (1,0) to (3,0);
          \draw (3,0) to (3,1);
          \draw (1,1) to (3,1);
\end{scope}
    \draw[<-] (1.3,0) to [curve through={(1.7,0.4)}] (2.3,0);
    \draw[->] (1.7,0) to [curve through={(2.3,0.4)}] (2.7,0);
    \draw[<-] (2,0) to [curve through={(2.15,0.3)}] (2,1);
\end{tikzpicture} \hspace{2mm} \rightsquigarrow \hspace{2mm}
\begin{tikzpicture}[scale=1, baseline=3ex]
\begin{scope}[dotted]
          \draw (1,0) to (1,1);
          \draw (1,0) to (3,0);
          \draw (3,0) to (3,1);
          \draw (1,1) to (3,1);
\end{scope}
    \draw[<-] (2,0) to [curve through={(2.15,0.15)}] (2.3,0);
    \draw[->] (1.7,0) to [curve through={(2.3,0.4)}] (2.7,0);
    \draw[<-] (1.3,0) to (2,1);
\end{tikzpicture}.
\]
noting that in the last case such a down string must exist. Again, the orientations of the arcs are preserved under the permutation of endpoints in all four of the above situations.

In all three of the cases above, it can be seen that the new simple diagram we obtain after the permutation of endpoints has strictly less number of intersections. We claim that applying the moves above on the bottom row, and their $180^{\circ}$ counterparts on the top row, until all such intersections are removed will yield a planar diagram. For contradiction, suppose this is not the case. Thus even after removing all such types of intersections, the diagram still contains some other type of intersection. The other such intersections are either between an up string and arc on the bottom row, a down string and arc on the top row, or an up string and a down string. The former two are $180^{\circ}$ counterparts to one another, hence we only need to consider one such type. Firstly, if an up string intersects a clockwise arc on the bottom we have 
\[
\begin{tikzpicture}[scale=1.2]
\begin{scope}[dotted]
          \draw (1,0) to (1,1);
          \draw (1,0) to (3,0);
          \draw (3,0) to (3,1);
          \draw (1,1) to (3,1);
\end{scope}
    \draw[->] (1.4,0) to [curve through={(2,0.5)}] (2.6,0);
    \draw[<-] (2,1) to (2,0);
    \node[label=below:{\small a}] (a) at (1.4,0){};
    \node[label=below:{\small b}] (b) at (2,0){};
    \node[label=below:{\small c}] (c) at (2.6,0){};
\end{tikzpicture}
\]
Note that the parity of the number of endpoints on the bottom row strictly between $a$ and $b$ must be different to the partity of endpoints strictly between $b$ and $c$. Thus on can deduce that such an endpoint must be a target to a string which intersects the arc, and such an intersection would be accounted for by Case 2 or 3, hence a contradiction. The same argument can be used to show that the case of an up string intersecting an anti-clockwise arc on the bottom is also impossible. Note all intersections involving arcs have now been accounted for. Lastly assume an up string intersects a down string. We have two cases, one of which is
\[ 
\begin{tikzpicture}[scale=1.2, baseline=4ex]
\begin{scope}[dotted]
          \draw (1,0) to (1,1);
          \draw (1,0) to (3.2,0);
          \draw (3.2,0) to (3.2,1);
          \draw (1,1) to (3.2,1);
\end{scope}
\begin{scope}[dashed]
         \draw (2.7,1.2) to (2.7, -0.2);
\end{scope}
    \draw[->] (1.5,1) to (2.5,0);
    \draw[<-] (2.5,1) to (1.5,0);
    \node[label=above:{\small a}] (a) at (2.5,1){};
    \node[label=below:{\small b}] (b) at (2.5,0){};
\end{tikzpicture}. \]
The dashed vertical line is simply an aid for arguments to come, and has been draw so that the endpoints $a$ and $b$ are the closest endpoints to its left. The other case is given by rotating the above by $180^{\circ}$ and will follow analgously. The parity of the number of endpoints to the right of $a$ is odd, while the parity of the number of endpoints to the right of $b$ is even. This implies that there exists a string $s$ such that one of its endpoints belongs to the right of the dashed line, while the other belongs to the left. Moreover, since the right-most endpoint on the top and bottom row are coloured by $\downarrow$, we can say that the endpoint of $s$ which is to the right of the dashed line is a source while the endpoint to the left is a target. So $s$ must intersect one of the above strings, and must be a vertical string since all intersections with arcs are accounted for. Hence, colouring the string $s$ in red we have
\[ 
\begin{tikzpicture}[scale=1.2, baseline=3ex]
\begin{scope}[dotted]
          \draw (1,0) to (1,1);
          \draw (1,0) to (3,0);
          \draw (3,0) to (3,1);
          \draw (1,1) to (3,1);
\end{scope}
\begin{scope}[red]
    \draw[->] (2.75,0) to (2,1);
\end{scope}
    \draw[->] (1.5,1) to (2.5,0);
    \draw[<-] (2.5,1) to (1.5,0);
    \node[label=above:{\small a}] (a) at (2.5,1){};
    \node[label=below:{\small b}] (b) at (2.5,0){};
\end{tikzpicture} \hspace{5mm} \text{or} \hspace{5mm}
\begin{tikzpicture}[scale=1.2, baseline=3ex]
\begin{scope}[dotted]
          \draw (1,0) to (1,1);
          \draw (1,0) to (3,0);
          \draw (3,0) to (3,1);
          \draw (1,1) to (3,1);
\end{scope}
\begin{scope}[red]
    \draw[->] (2.75,1) to (2,0);
\end{scope}
    \draw[->] (1.5,1) to (2.5,0);
    \draw[<-] (2.5,1) to (1.5,0);
    \node[label=above:{\small a}] (a) at (2.5,1){};
    \node[label=below:{\small b}] (b) at (2.5,0){};
\end{tikzpicture}.
\]
Note that $s$ may intersect both of the other strings and not just one, but it is always forced to intersect the string depicted. As such each situation exhibits an intersection accounted for in Cases 1 (or its $180^{\circ}$ counterpart), giving the desired contradiction. Thus removing all intersections of the types presented in Cases 1 to 3 (and their $180^{\circ}$ rotated counterparts) will result in a planar diagram, completing the proof. 

\end{proof}

Let $\mathsf{R}$ be an open subspace of $\mathbb{R}\times [0,1]$ and let $\alpha$ be an $(\bm{a},\bm{b})$-diagram. Examining $\alpha$ locally in $\mathsf{R}$ will give a configuration of curve segments, and we refer to such as a \emph{region} of $\alpha$. Within a given region we treat distinct curve segments as different curves, even if in $\alpha$ itself the two segments belong to the same curve. In particular, if in $\mathsf{R}$ two distinct curve segments intersect one another, and in $\alpha$ these two segments belong to the same curve, we will not call such an intersection a self-intersection in $\mathsf{R}$, but it is a self-intersection in $\alpha$. 

Recall that the local relation (H4) tells us that if a left curl appears in a diagram we may annihilate such a diagram. This relation asks that the region enclosed in the curl is absent of any other strings. The following result shows that even if such a region is non-empty, as long as its  contains no loops or self-intersections, we can annihilate the diagram.

\begin{lem} \label{TearDropLem}
Let $\alpha$ be an $(\bm{a},\bm{b})$-diagram containing a left curl where the region bounded by the curl contains no loops or self-intersecting curve segments, then $\alpha=0$.
\end{lem}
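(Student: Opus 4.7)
The plan is to prove this by induction on the number $n$ of curve segments lying in the region $R$ bounded by the left curl of $\alpha$. The base case $n=0$ is exactly the local relation (H4): after an isotopy to bring the curl into the disk required by (H4), we read off $\alpha = 0$ directly.

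For the inductive step, suppose $R$ contains $n \geq 1$ curve segments and, by hypothesis, no loops or self-intersecting segments. Each such segment must have both endpoints on the boundary of $R$ (i.e.\ on the curl curve itself), since the boundary of the ambient strip $\mathbb{R}\times[0,1]$ lies outside $R$ and the segment cannot terminate in the interior nor form a loop. Because there are finitely many such segments and none self-intersects, I can choose an \emph{innermost} segment $\gamma$: one whose two endpoints on the curl curve, together with a sub-arc of the curl between them, bound a sub-disk of $R$ that contains no other curve segments. I would then use the local relations (H1)--(H3) to drag $\gamma$ across this sub-arc of the curl and out of $R$, thereby reducing $n$.

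When the local orientations of $\gamma$ and the curl allow it, (H1) (a strand past a crossing) and (H2) (zig-zag straightening) pull $\gamma$ out cleanly and leave the same left curl with $n-1$ interior segments, to which the inductive hypothesis applies. When the orientations force the use of (H3), we get a sum of two terms: a ``straightened'' term in which $\gamma$ has been pulled out (again a left curl with $n-1$ interior segments, handled by induction), and a ``cap-cup'' term. For the cap-cup term, I expect the configuration inside what used to be $R$ either to produce an anticlockwise bubble along the curl strand (evaluating to $1$ by (H4) and leaving a left curl with strictly fewer interior segments) or to re-route the curl through the cap-cup into a new left curl whose bounded region is strictly simpler; in either subcase the inductive hypothesis closes.

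The main obstacle will be the case analysis for the (H3) cap-cup term. In particular, one must separately treat the situation in which $\gamma$ belongs to the same strand as the curl itself, since then the cap-cup may splice together different portions of the curl strand, converting the left curl into a bubble plus a residual diagram. In every subcase I anticipate that the resulting diagram either vanishes outright or contains a left curl whose bounded region has strictly fewer segments, so that the induction goes through; verifying this uniformly across all orientation patterns is where the real work lies.
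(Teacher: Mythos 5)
Your overall strategy (induction plus the local relations, with the (H3) cap--cup term producing a new, smaller left curl) is in the same spirit as the paper's argument, but there is a genuine gap at the pivotal step: the claimed existence of an ``innermost'' segment $\gamma$ whose two endpoints, together with a sub-arc of the curl, bound a sub-disk of $R$ containing \emph{no other} curve segments. The hypotheses only forbid loops and self-intersections inside $R$; distinct segments are still allowed to cross one another. Take two segments $a$ and $b$ whose endpoints occur in interleaved cyclic order along the curl: they must cross, and then every sub-disk cut off by $a$ contains a piece of $b$ and vice versa, so neither is innermost, and no third segment need exist. Without an empty sub-disk you cannot apply (H1)--(H3) locally to drag $\gamma$ across the curl, so the inductive step does not get off the ground in general. (A secondary, smaller issue: your induction is on the number of segments, and in the cap--cup branch it is not immediate that the re-spliced diagram's new left curl bounds a region with strictly fewer \emph{segments}; what visibly decreases is the number of intersections.)

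The paper avoids both problems by inducting on the number of \emph{intersections} inside $\mathsf{R}$ rather than the number of segments, and by locating an innermost empty \emph{region} $\mathsf{R}'$ bounded by the curl together with possibly several curve segments $g_i, g_{i+1}, h_1,\dots,h_l$ --- such an empty face always exists, even when the segments cross each other. It first empties configurations using only (H1) (which does not change the intersection count), then applies (H2) or (H3); in the (H3) branch the two new curves obtained by splicing must contain a new left curl whose bounded region is a proper subregion of $\mathsf{R}$ with strictly fewer intersections, which closes the induction. Your base case ($n=0$, i.e.\ (H4)) and your treatment of the ``clean'' (H2) pull-outs are fine; to repair the proof you should replace the innermost-segment claim by an innermost-face argument and re-index the induction by intersections.
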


\begin{proof}
By assumption $\alpha$ contains a configuration of the form
\[ 
\begin{tikzpicture}[scale=0.6]
    \draw[->] (6,6) to [curve through={(3.5,5.5) .. (1.5,3) .. (3.5,1) .. (5.5,3) .. (3.5,5.5)}] (1,6);
    \node[label=above:{$\mathsf{R}$}] (R) at (3.5,2.5){};
    \draw (1,2.85) to (2,2.85); 
    \draw (5,2.85) to (6,2.85); 
    \draw (2.75,4.149) to (2.0429,4.8561); 
    \draw (4.25,4.149) to (4.957,4.8561); 
    \draw (2.75,1.551) to (2.0429,0.8439); 
    \draw (4.25,1.551) to (4.957,0.8439); 
    \node[label=left:{$g_{0}$}] (g1) at (2.0429,4.8561){};
    \node[label=left:{$g_{1}$}] (R) at (1,2.85){};
    \node[label=left:{$g_{2}$}] (R) at (2.0429,0.8439){};
    \node[label=right:{$g_{m-2}$}] (R) at (4.957,0.8439){};
    \node[label=right:{$g_{m-1}$}] (R) at (6,2.85){};
    \node[label=right:{$g_{m}$}] (R) at (4.957,4.8561){};
    \node[label=above:{$\mathlarger{\mathlarger{\mathlarger{\mathlarger{\cdots}}}}$}] (dots) at (3.5,0.1){};
\end{tikzpicture}
\]
where we let $R$ denote the interior region bounded by the curl, which contains no loops or self-intersecting curve segments, and $g_{0},\dots,g_{m}$ account for all the intersections which occur on the curl. Note we have only drawn the segments of the $g_{i}$'s which realise the intersection on the curl. We prove the result by induction on the number of intersections occurring in $\mathsf{R}$. Assume that no intersections occur in $\mathsf{R}$, hence $\mathsf{R}$ gives a planar configuration of strings. One can deduce that there exists neighbours $g_{i\text{ mod}(m+1)}$ and $g_{(i+1)\text{mod}(m+1)}$ such that either
\[
\begin{tikzpicture}[scale=0.6, baseline=10ex]
    \draw[->] (6,6) to [curve through={(3.5,5.5) .. (1.5,3) .. (3.5,1) .. (5.5,3) .. (3.5,5.5)}] (1,6);
    \node[label=above:{$\mathsf{R}$}] (R) at (3.5,2.5){};
    \draw (1,2.85) to (2,2.85); 
    \draw (2.75,1.551) to (2.0429,0.8439); 
    \draw (2,2.85) to [curve through={(2.75,2.45)}] (2.75,1.551);
    \node[label=left:{$g_{i}$}] (R) at (1,2.85){};
    \node[label=left:{$g_{i+1}$}] (R) at (2.0429,0.8439){};
\end{tikzpicture} \hspace{5mm} \text{or} \hspace{5mm}
\begin{tikzpicture}[scale=0.6, baseline=10ex]
    \draw[->] (6,6) to [curve through={(3.5,5.5) .. (1.5,3) .. (3.5,1) .. (5.5,3) .. (3.5,5.5)}] (1,6);
    \node[label=above:{$\mathsf{R}$}] (R) at (3.5,2.5){};
    \draw (2.75,4.149) to (2.0429,4.8561); 
    \draw (4.25,4.149) to (4.957,4.8561); 
    \draw (2.75,4.149) to [curve through={(3.5,3.85)}] (4.25,4.149);
    \node[label=left:{$g_{0}$}] (g1) at (2.0429,4.8561){};
    \node[label=right:{$g_{m}$}] (R) at (4.957,4.8561){};
\end{tikzpicture}.
\]
In the former situation, since we are dealing with a left curl, one can check that regardless of the orientation of the depicted string in $\mathsf{R}$, it may be pulled outside the curl by (H2). For the latter situation we may employ (H1) to pull the string out of the curl over the crossing at the top. Continually pulling out such strings one at a time will result in making $\mathsf{R}$ empty, and then applying (H4) gives $\alpha=0$.

\vspace{2mm}
\noindent
Now suppose that the result holds whenever $\mathsf{R}$ contains $n$ or less intersections for some $n\geq 0$, and assume that $\mathsf{R}$ contains $n+1$ intersections. It is clear that there must exist an empty region $\mathsf{R}'$ in $\mathsf{R}$ bounded by the curl  and various segments. Diagrammatically we have
\[
\begin{tikzpicture}
    \draw (-1.5,2) to [curve through={(2,0)}] (5.5,2);
    \draw (0.8,0) to (0.8,1.1);
    \draw (3.2,0) to (3.2,1);
    \draw (0.65,0.7) to (1.3,1.45);
    \draw (3.35,0.7) to (2.7,1.45);
    \draw (1,1.35) to (1.6,1.35);
    \draw (3,1.35) to (2.4,1.35);
    \node[label=above:{$\cdots$}] (dots) at (2,1){};
    \node[label=above:{$\mathsf{R}'$}] (R') at (2,0.25){};
    \node[label=below:{$g_{i}$}] (gi) at (0.8,0){};
    \node[label=below:{$g_{i+1}$}] (gi) at (3.2,0){};
    \node[label=left:{$h_{1}$}] (h1) at (0.8,0.7){};
    \node[label=right:{$h_{l}$}] (hl) at (3.25,0.7){};
    \node[label=left:{$h_{2}$}] (h2) at (1.15,1.425){};
    \node[label=right:{$h_{l-1}$}] (hl-1) at (2.925,1.45){};
\end{tikzpicture}
\]
where $g_{i}, g_{i+1}$, and the (possibly empty) set of curve segments $H=\{h_{1},\dots,h_{l}\}$ make up the remainder of the boundary of $\mathsf{R}'$. Note such curve segments may not be pairwise distinct in $\mathsf{R}$. In the case when $H$ is empty, we simply have the situation
\[ 
\begin{tikzpicture}[scale=0.6]
    \draw[->] (6,6) to [curve through={(3.5,5.5) .. (1.5,3) .. (3.5,1) .. (5.5,3) .. (3.5,5.5)}] (1,6);
    \draw (2.75,1.551) to (2.0429,0.8439) to (4.25,3.05); 
    \draw (4.25,1.551) to (4.957,0.8439) to (2.75,3.05); 
    \node[label=left:{$g_{i}$}] (gi) at (2.0429,0.8439){};
    \node[label=right:{$g_{i+1}$}] (gi+1) at (4.957,0.8439){};
    \node[label=above:{$\mathsf{R'}$}] (R') at (3.5,0.9){};
\end{tikzpicture}.
\]
Since $\mathsf{R}'$ is empty we may pull this crossing out of the curl by (H1), which will decrease the number of intersection in $\mathsf{R}$ and thus by induction $\alpha=0$. Hence we may assume that $H$ is non-empty. The general case $H=\{h_{1},\dots,h_{l}\}$ is solved by focusing on $h_{1}$, and in fact solving the case $H=\{h_{1}\}$ is sufficient to understand the general case, hence we only prove this case. So we are working with the sitaution
\[
\begin{tikzpicture}[scale=0.6]
    \draw[->] (6,6) to [curve through={(3.5,5.5) .. (1.5,3) .. (3.5,1) .. (5.5,3) .. (3.5,5.5)}] (1,6);
    \draw (1.9,1) to (2.5,3); 
    \draw (5.1,1) to (4.5,3); 
    \draw (2.1,2.7) to (4.9,2.7); 
    \node[label=left:{$g_{i}$}] (gi) at (2.2,0.8){};
    \node[label=right:{$g_{i+1}$}] (gi+1) at (4.8,0.8){};
    \node[label=above:{$\mathsf{R'}$}] (R') at (3.5,1.3){};
    \node[label=above:{$h_{1}$}] (h1) at (3.5,2.6){};
\end{tikzpicture}
\] 
There are two cases to consider based on the orientation of $h_{1}$. For the first case we have 
\[
\begin{tikzpicture}[scale=0.6, baseline=12ex]
    \draw[->] (6,6) to [curve through={(3.5,5.5) .. (1.5,3) .. (3.5,1) .. (5.5,3) .. (3.5,5.5)}] (1,6);
    \draw (1.9,1) to (2.5,3); 
    \draw (5.1,1) to (4.5,3); 
\begin{scope}[decoration={markings, mark=at position 0.5 with {\arrow{<}}}]
    \draw[postaction={decorate}] (2.1,2.7) to (4.9,2.7); 
\end{scope}
    \node[label=left:{$g_{i}$}] (gi) at (2.2,0.8){};
    \node[label=right:{$g_{i+1}$}] (gi+1) at (4.8,0.8){};
    \node[label=above:{$h_{1}$}] (h1) at (3.5,2.6){};
\end{tikzpicture} \hspace{2mm} = \hspace{4mm}
\begin{tikzpicture}[scale=0.6, baseline=12ex]
    \draw[->] (6,6) to [curve through={(3.5,5.5) .. (1.5,3) .. (3.5,1) .. (5.5,3) .. (3.5,5.5)}] (1,6);
    \draw (1.9,1) to (2.5,3); 
    \draw (5.1,1) to (4.5,3); 
\begin{scope}[decoration={markings, mark=at position 0.5 with {\arrow{<}}}]
    \draw[postaction={decorate}] (2.1,2.7) to [curve through={(3,1.8) .. (3.5,0.65) .. (4,1.8)}] (4.9,2.7); 
\end{scope}
    \node[label=left:{$g_{i}$}] (gi) at (2.2,0.8){};
    \node[label=right:{$g_{i+1}$}] (gi+1) at (4.8,0.8){};
    \node[label=above:{$h_{1}$}] (h1) at (3.5,2){};
\end{tikzpicture} 
\] 
by (H2). Then we may pull the crossing between either $g_{i}$ and $h_{1}$, or $g_{i+1}$ and $h_{1}$ out of the curl by (H1), which will decrease the number of intersections in $\mathsf{R}$ by one and so $\alpha=0$ by induction. With the opposite orientation on $h_{1}$ we have
\[
\begin{tikzpicture}[scale=0.6, baseline=12ex]
    \draw[->] (6,6) to [curve through={(3.5,5.5) .. (1.5,3) .. (3.5,1) .. (5.5,3) .. (3.5,5.5)}] (1,6);
    \draw (1.9,1) to (2.5,3); 
    \draw (5.1,1) to (4.5,3); 
\begin{scope}[decoration={markings, mark=at position 0.5 with {\arrow{>}}}]
    \draw[postaction={decorate}] (2.1,2.7) to (4.9,2.7); 
\end{scope}
    \node[label=left:{$g_{i}$}] (gi) at (2.2,0.8){};
    \node[label=right:{$g_{i+1}$}] (gi+1) at (4.8,0.8){};
    \node[label=above:{$h_{1}$}] (h1) at (3.5,2.6){};
\end{tikzpicture} \hspace{2mm} = \hspace{4mm}
\begin{tikzpicture}[scale=0.6, baseline=12ex]
    \draw[->] (6,6) to [curve through={(3.5,5.5) .. (1.5,3) .. (3.5,1) .. (5.5,3) .. (3.5,5.5)}] (1,6);
    \draw (1.9,1) to (2.5,3); 
    \draw (5.1,1) to (4.5,3); 
\begin{scope}[decoration={markings, mark=at position 0.5 with {\arrow{>}}}]
    \draw[postaction={decorate}] (2.1,2.7) to [curve through={(3,1.8) .. (3.5,0.65) .. (4,1.8)}] (4.9,2.7); 
\end{scope}
    \node[label=left:{$g_{i}$}] (gi) at (2.2,0.8){};
    \node[label=right:{$g_{i+1}$}] (gi+1) at (4.8,0.8){};
    \node[label=above:{$h_{1}$}] (h1) at (3.5,2){};
\end{tikzpicture} \hspace{2mm} + \hspace{4mm}
\begin{tikzpicture}[scale=0.6, baseline=12ex]
    \draw (1.9,1) to (2.5,3); 
    \draw (5.1,1) to (4.5,3); 
\begin{scope}[decoration={markings, mark=at position 0.75 with {\arrow{>}}}]
    \draw[postaction={decorate}] (6,6) to [curve through={(3.5,5.5) .. (1.5,3) .. (3,1.15)}] (2.1,2.7); 
\end{scope}
\begin{scope}[decoration={markings, mark=at position 0.75 with {\arrow{<}}}]
    \draw[postaction={decorate}] (1,6) to [curve through={(3.5,5.5) .. (5.5,3) .. (4,1.15)}] (4.9,2.7); 
\end{scope}
    \node[label=left:{$g_{i}$}] (gi) at (2.2,0.8){};
    \node[label=right:{$g_{i+1}$}] (gi+1) at (4.8,0.8){};
    \node[label=left:{$h_{1}^{(1)}$}] (h11) at (1.6,3){};
    \node[label=right:{$h_{1}^{(2)}$}] (h12) at (5.4,3){};
\end{tikzpicture}
\]
by (H3). Here denote the first diagram on the right of the above equation by $\alpha_{1}$ and the second by $\alpha_{2}$. For $\alpha_{1}$, as was done in the previous case we may pull one of the crossings outside of the curl, and thus decrease the number of intersections in $\mathsf{R}$ by one, and hence $\alpha_{1}=0$ by induction. For $\alpha_{2}$ the curve containing $h_{1}$ and the original left curl have been turned into the two new curves $h_{1}^{(1)}$ and $h_{1}^{(2)}$. Note the original left curl is no longer present, but regardless of how the original curve containing $h_{1}$ intersected the curl, at least one of the new curves $h_{1}^{(1)}$ and $h_{1}^{(2)}$ must form a new, smaller, left curl. The region bounded by this new curl is a subregion of $\mathsf{R}$ containing strictly less number of intersections. Hence by induction $\alpha_{2}=0$, and so collectively $\alpha=\alpha_{1}+\alpha_{2}=0$ completing the proof by induction. Note the general case for $H=\{h_{1},\dots,h_{l}\}$ is tackled in the exact same manner by pulling $h_{1}$ out of the curl, the diagrammatics are just more cluttered, but the remaining segments $h_{2},\dots,h_{l}$ do not interfer with the above argumenets.

\end{proof}

Let $\bm{a}=a_{1}\cdots a_{k}$ and $\bm{b}=b_{1}\cdots b_{l}$ for $a_{i},b_{i}\in\{\uparrow,\downarrow\}$, and consider the map $\mathsf{deg}:\mathsf{SSim}(\bm{a},\bm{b})\rightarrow \mathbb{Z}_{\geq 0}\times \mathbb{Z}_{\geq 0}$ be given by $\mathsf{deg}(\alpha)=(A(\alpha),C(\alpha))$ where $A(\alpha)$ is the number of arcs in $\alpha$, and $C(\alpha)$ is the number of clockwise arcs in $\alpha$. We order the degrees by using the lexicographical ordering $<$ on $\mathbb{Z}_{\geq 0}\times \mathbb{Z}_{\geq 0}$. 
Note that for any $\alpha\in \mathsf{SSim}(\bm{a}, \bm{b})$ we have $\mathsf{deg}(\overline{\alpha}) = \mathsf{deg}(\alpha)$. 

\begin{prop} \label{SSLeadingTerm}
Let $\alpha\in\mathsf{SSim}(\bm{a},\bm{b})$. Then 
\[ \alpha \hspace{2mm}=\hspace{2mm} \overline{\alpha}\hspace{2mm}+\sum\limits_{\substack{\beta\in\mathsf{Sim}(\bm{a},\bm{b}) \\ \mathsf{deg}(\beta)>\mathsf{deg}(\alpha)}}c_{\beta}\beta \]
where $c_{\beta}\in\mathbb{Z}$.
\end{prop}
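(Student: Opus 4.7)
The plan is to proceed by induction on the number of crossings $c(\alpha)$. The base case $c(\alpha)=0$ is immediate: a semisimple diagram without crossings is automatically simple (any two strings intersect zero times), so $\alpha=\bar\alpha$ and the sum is empty.

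For the inductive step, if $\alpha$ is already simple then $\alpha=\bar\alpha$ and we are done, since a simple diagram is uniquely determined (up to rel-boundary isotopy) by its induced matching. Otherwise some pair of distinct strings of $\alpha$ crosses at least twice, bounding a bigon. Using the triangle relation (H1) to slide any interfering strings out of an innermost such bigon, we reduce to the case where the bigon is empty. Depending on the relative orientations of the two bigon-strings at the two crossings, either (H2) or (H3) applies. In the (H2) case the bigon collapses cleanly to two parallel strands, producing a diagram with the same matching as $\alpha$ and two fewer crossings; since no arc is created or destroyed by (H2), the resulting diagram is still semisimple, and the inductive hypothesis immediately gives the desired expression.

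In the (H3) case we obtain $\alpha=\alpha'-\gamma$, where $\alpha'$ is the ``identity'' term (handled exactly as in the (H2) case) and $\gamma$ is the cap-cup correction. The matching of $\gamma$ is obtained from that of $\alpha$ by the surgery $\{\tilde p_1,\tilde q_1\},\{\tilde p_2,\tilde q_2\}\mapsto\{\tilde p_1,\tilde p_2\},\{\tilde q_1,\tilde q_2\}$, where $\tilde p_i,\tilde q_i$ denote the strip-boundary endpoints of the two bigon-strings, labelled by which half of the bigon they meet. A case analysis on whether each of these four endpoints lies on the top or bottom row shows $A(\bar\gamma)\geq A(\alpha)$: the only row configurations for which $A$ would strictly decrease correspond precisely to $s_1$ being a top arc with $s_2$ a bottom arc (or vice versa), and these are ruled out by the semisimplicity of $\alpha$, since such a pair of arcs cannot cross in $\alpha$. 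In the remaining configurations where $A(\bar\gamma)=A(\alpha)$, the anti-parallel orientation condition required by (H3) together with the fixed boundary orientations of $\bm{a}$ and $\bm{b}$ forces the two new pairs to be oriented so as to contribute strictly more clockwise arcs than the original pairs, giving $C(\bar\gamma)>C(\alpha)$. In either sub-case $\deg(\bar\gamma)>\deg(\alpha)$.

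The main obstacle I anticipate is that $\gamma$ itself need not be semisimple: the cap-cup surgery can create a new top arc that is now crossed by a bottom arc inherited from the rest of $\alpha$, so the inductive hypothesis does not apply to $\gamma$ directly. To circumvent this one must either broaden the inductive statement to an appropriate class of non-semisimple diagrams, or argue directly that every further (H2)/(H3) reduction applied to $\gamma$ either preserves its matching (and hence preserves $\deg(\bar\gamma)>\deg(\alpha)$) or performs another cap-cup swap whose degree strictness is controlled by the same orientation bookkeeping used above. The most delicate piece is verifying the strict $C$-increase in the $A$-preserving sub-case, which rests on the precise interplay between the anti-parallel orientation constraint of (H3) and the fixed boundary orientations of $\bm{a}$ and $\bm{b}$.
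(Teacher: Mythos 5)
Your skeleton — induct on a crossing count, locate an innermost bigon, empty it with (H1), resolve it with (H2) or (H3), and control the degree of the (H3) correction term by a surgery-on-matchings argument — is the same as the paper's, and your degree bookkeeping for the correction term (no top-arc/bottom-arc crossing by semisimplicity, hence $A$ cannot drop; strict $C$-increase when $A$ is preserved) matches the paper's case analysis in the situation where the two strings meet only at the two bigon vertices. But the obstacle you flag at the end and leave unresolved is a genuine gap, and it is in fact the heart of the proof. When the two strings $s,t$ forming the bigon intersect \emph{more} than twice, the (H3) surgery merges them into strings with self-intersections, so the correction term $\gamma$ is very far from semisimple and no broadening of the induction along the lines you suggest will recover it: its matching-degree is irrelevant because the diagram cannot be reduced to its associated simple diagram plus higher terms by the same induction.

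The paper's resolution is different in kind from either of your two proposed fixes: it shows that in this case $\gamma$ is simply \emph{zero}. One chooses the bigon so that it is adjacent (along the ordering of intersection points on $s$) to a third crossing $y$ of $s$ with $t$; the (H3) move then produces a correction term containing a left curl whose enclosed region — because $\alpha$ was semisimple — contains no loops and no self-intersecting segments. A separate lemma (\Cref{TearDropLem} in the paper, proved by its own induction on the number of crossings inside the curl, using (H1), (H2), (H3) and the left-curl relation in (H4)) shows that any diagram containing such a ``clean'' left curl vanishes. Without this annihilation statement, or something playing its role, your induction does not close: the only case you can complete is the one where every pair of strings crosses at most twice after the bigon is emptied, and nothing in the setup guarantees that. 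So the missing ingredient is not a refinement of the degree argument but the strengthened left-curl lemma itself.
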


\begin{proof}
Given two distinct strings $s$ and $t$ in $\alpha$, let $n$ be the number of intersections occurring between the two strings. If $n$ is even set $\mu(\{s,t\})=n$, while if $n$ is odd set $\mu(\{s,t\})=n-1$. Note $\mu(\{s,t\})$ is always even. We let
\[ \eta(\alpha)=\sum_{s,t}\mu(\{s,t\}), \]
where the sum runs over all unordered pairs of distinct strings of $\alpha$. Informally, $\eta(\alpha)$ is the number of intersections of $\alpha$ which prevent it from being simple, in particular $\eta(\alpha)=0$ if and only if $\alpha\in\mathsf{Sim}(\bm{a},\bm{b})$. We will prove this proposition by induction on $\eta(\alpha)$, where the base case of $\eta(\alpha)=0$ follows immediately since $\alpha=\overline{\alpha}$. Assume the result holds for all $\alpha\in\mathsf{SSim}(\bm{a},\bm{b})$ such that $\eta(\alpha)<n$ for some $n>0$. Now let $\alpha$ be such that $\eta(\alpha)=n$. Pick two strings $s$ and $t$ in $\alpha$ such that $\mu(\{s,t\})\geq 2$. Order the points of intersections between $s$ and $t$ according to when they appear as one travels from the source of $s$ to its target. Under this ordering pick two neighbouring points of intersection $p$ and $q$. Then diagrammatically we have a configuration of strings of the form
\[
\begin{tikzpicture}[scale=0.6]
    \draw (0,4) to [curve through={(2,3.5) .. (3,2) .. (2,0.5)}] (0,0);
    \draw (4,4) to [curve through={(2,3.5) .. (1,2) .. (2,0.5)}] (4,0);
    \draw (0.6,2.8) to (1.6,2.8);
    \draw (0.6,1.2) to (1.6,1.2);
    \draw (2.3,2.8) to (3.4,2.8);
    \draw (2.3,1.2) to (3.4,1.2);
    \node[label=above:{$\mathsf{R}$}] (R) at (2,1.4){};
    \node[label=left:{$g_{1}$}] (g1) at (0.8,2.8){};
    \node[label=left:{$g_{m}$}] (gm) at (0.8,1.2){};
    \node[label=right:{$h_{1}$}] (h1) at (3.25,2.8){};
    \node[label=right:{$h_{x}$}] (hx) at (3.25,1.2){};
    \node[label=above:{$\vdots$}] (dotsl) at (0.8,1.4){};
    \node[label=above:{$\vdots$}] (dotsr) at (3.2,1.4){};
    \node[label=left:{$s$}] (s) at (0.15,4){};
    \node[label=right:{$t$}] (t) at (4.15,4){};
    \node[label=above:{$p$}] (p) at (2,3.5){};
    \node[label=below:{$q$}] (q) at (2,0.5){};
\end{tikzpicture}
\]
where $\mathsf{R}$ is the interior region bounded by the curve segments of $s$ and $t$ between the points of intersection $p$ and $q$, and the two (possible empty) sets of string segments $G=\{g_{1},\dots,g_{m}\}$ and $H=\{h_{1},\dots,h_{x}\}$ account for all the intersections of the boundary of $\mathsf{R}$ through $t$ and $s$ respectively. We may assume that we are not in one of the following three situations:
\[
(i): \hspace{3mm} \begin{tikzpicture}[scale=0.6, baseline=7ex]
    \draw (0,4) to [curve through={(2,3.5) .. (3,2) .. (2,0.5)}] (0,0);
    \draw (4,4) to [curve through={(2,3.5) .. (1,2) .. (2,0.5)}] (4,0);
    \draw (0.6,1.6) to [curve through={(1.15,1.7) .. (1.6,2) .. (1.15,2.3)}] (0.6,2.4);
\end{tikzpicture} \hspace{8mm}
(ii): \hspace{3mm} \begin{tikzpicture}[scale=0.6, baseline=7ex]
    \draw (0,4) to [curve through={(2,3.5) .. (3,2) .. (2,0.5)}] (0,0);
    \draw (4,4) to [curve through={(2,3.5) .. (1,2) .. (2,0.5)}] (4,0);
    \draw (1.6,1.45) to [curve through={(2.15,1.7) .. (2.3,2) .. (2.15,2.3)}] (1.6,2.55);
    \draw (2.4,1.45) to [curve through={(1.85,1.7) .. (1.7,2) .. (1.85,2.3)}] (2.4,2.55);
\end{tikzpicture} \hspace{2mm} \text{or} \hspace{5mm}
(iii): \hspace{3mm} \begin{tikzpicture}[scale=0.6, baseline=7ex]
    \draw (0,4) to [curve through={(2,3.5) .. (3,2) .. (2,0.5)}] (0,0);
    \draw (4,4) to [curve through={(2,3.5) .. (1,2) .. (2,0.5)}] (4,0);
    \draw (3.4,1.6) to [curve through={(2.85,1.7) .. (2.4,2) .. (2.85,2.3)}] (3.4,2.4);
\end{tikzpicture}
\]
since otherwise we may pick the more nested pair of intersections to work with instead. Since situations $(i)$ and $(iii)$ are not present, any string segment $g_{i}$ must connect to a $h_{j}$ (rather than another segment in $G$). Hence $m=x$ and $\mathsf{R}$ realises a pairing of the string segments $G$ with $H$. Diagrammatically we have
\[
\begin{tikzpicture}[scale=0.7]
    \draw (0,4) to [curve through={(2,3.5) .. (3,2) .. (2,0.5)}] (0,0);
    \draw (4,4) to [curve through={(2,3.5) .. (1,2) .. (2,0.5)}] (4,0);
    \draw (0.5,2.5) to (1.5,2.5);
    \draw (0.5,1.5) to (1.5,1.5);
    \draw (2.4,2.5) to (3.3,2.5);
    \draw (2.4,1.5) to (3.3,1.5);
    \node[label=left:{$g_{1}$}] (g1) at (0.5,2.5){};
    \node[label=left:{$g_{m}$}] (gm) at (0.5,1.5){};
    \node[label=right:{$h_{1}$}] (h1) at (3.3,2.5){};
    \node[label=right:{$h_{m}$}] (hl) at (3.3,1.5){};
    \node[label=above:{$\vdots$}] (dotsl) at (0.8,1.45){};
    \node[label=above:{$\vdots$}] (dotsr) at (3.2,1.45){};
    \node[label=left:{$s$}] (s) at (0.15,4){};
    \node[label=right:{$t$}] (t) at (4.15,4){};
    \node[label=above:{$p$}] (p) at (2,3.5){};
    \node[label=below:{$q$}] (q) at (2,0.5){};
\begin{scope}[dashed]
    \draw (1.5,2.7) rectangle (2.4,1.3);
\end{scope}
    \node[label=above:{$\mathsf{B}$}] (P) at (1.95,1.45){};
\end{tikzpicture}
\]
where $\mathsf{B}$ is some permutation connecting segments in $G$ with those in $H$. Moreover, since situation $(ii)$ is not present, this means that no string segments in $\mathsf{B}$ can intersect more that once. In other words $\mathsf{B}$ is built out of crossings, and so we may pull all of $\mathsf{B}$ outside of the region $\mathsf{R}$ one crossing at a time by (H1), and thus obtain
\[
\begin{tikzpicture}[scale=0.7]
    \draw (0,4) to [curve through={(2,3.5) .. (3,2) .. (2,0.5)}] (0,0);
    \draw (4,4) to [curve through={(2,3.5) .. (1,2) .. (2,0.5)}] (4,0);
    \draw (0.5,2.5) to (3.5,2.5);
    \draw (0.5,1.5) to (3.5,1.5);
    \draw (4.4,2.5) to (5.3,2.5);
    \draw (4.4,1.5) to (5.3,1.5);
    \node[label=left:{$g_{1}$}] (g1) at (0.5,2.5){};
    \node[label=left:{$g_{m}$}] (gm) at (0.5,1.5){};
    \node[label=right:{$h_{1}$}] (h1) at (5.3,2.5){};
    \node[label=right:{$h_{m}$}] (hl) at (5.3,1.5){};
    \node[label=above:{$\vdots$}] (dotsl) at (2,1.45){};
    \node[label=above:{$\vdots$}] (dotsr) at (4.9,1.45){};
    \node[label=left:{$s$}] (s) at (0.15,4){};
    \node[label=right:{$t$}] (t) at (4.15,4){};
    \node[label=above:{$p$}] (p) at (2,3.5){};
    \node[label=below:{$q$}] (q) at (2,0.5){};
\begin{scope}[dashed]
    \draw (3.5,2.7) rectangle (4.4,1.3);
\end{scope}
    \node[label=above:{$\mathsf{B}$}] (P) at (3.95,1.45){};
\end{tikzpicture}
\]
Lastly we may pull these horizontal strings out of $\mathsf{R}$ through the top or bottom crossing by (H1). Hence we have emptied $\mathsf{R}$ by employing only local relation (H1), and so the value $\eta(\alpha)$ has remained the same. Now there are four different cases depending on the orientations of the strings $s$ and $t$. In three of these cases, since $\mathsf{R}$ is empty, we may pull the strings $s$ and $t$ apart by applying (H2), and thus remove the two intersections $p$ and $q$. This decreases $\eta(\alpha)$ by two, and so the result follows by induction. The last case is given with orientations as follows
\[
\begin{tikzpicture}[scale=0.6, baseline=7ex]
\begin{scope}[decoration={markings, mark=at position 0.5 with {\arrow{>}}}]
    \draw[postaction={decorate}] (0,4) to [curve through={(2,3.5) .. (3,2) .. (2,0.5)}] (0,0);
\end{scope}
\begin{scope}[decoration={markings, mark=at position 0.5 with {\arrow{<}}}]
    \draw[postaction={decorate}] (4,4) to [curve through={(2,3.5) .. (1,2) .. (2,0.5)}] (4,0);
\end{scope}
    \node[label=left:{$s$}] (s) at (0.15,4){};
    \node[label=right:{$t$}] (t) at (4.15,4){};
    \node[label=above:{$p$}] (p) at (2,3.5){};
    \node[label=below:{$q$}] (q) at (2,0.5){};
\end{tikzpicture} \hspace{3mm} = \hspace{3mm}
\begin{tikzpicture}[scale=0.6, baseline=7ex]
\begin{scope}[decoration={markings, mark=at position 0.5 with {\arrow{>}}}]
    \draw[postaction={decorate}] (0,4) to [curve through={(1.5,2)}] (0,0);
\end{scope}
\begin{scope}[decoration={markings, mark=at position 0.5 with {\arrow{<}}}]
    \draw[postaction={decorate}] (4,4) to [curve through={(2.5,2)}] (4,0);
\end{scope}
    \node[label=left:{$s$}] (s) at (0.15,4){};
    \node[label=right:{$t$}] (t) at (4.15,4){};
\end{tikzpicture} \hspace{3mm} - \hspace{3mm}
\begin{tikzpicture}[scale=0.6, baseline=7ex]
\begin{scope}[decoration={markings, mark=at position 0.5 with {\arrow{>}}}]
    \draw[postaction={decorate}] (0,4) to [curve through={(2,3)}] (4,4);
\end{scope}
\begin{scope}[decoration={markings, mark=at position 0.5 with {\arrow{<}}}]
    \draw[postaction={decorate}] (0,0) to [curve through={(2,1)}] (4,0);
\end{scope}
    \node[label=right:{$u$}] (s) at (4.15,4){};
    \node[label=right:{$v$}] (t) at (4.15,0){};
\end{tikzpicture} 
\]
where we have applied (H3). Let the two diagrams on the right hand side of the above equation be denoted by $\alpha_{1}$ and $\alpha_{2}$ respectively, hence $\alpha=\alpha_{1}-\alpha_{2}$. It is clear that $\overline{\alpha_{1}}=\overline{\alpha}$ and $\mathsf{deg}(\alpha_{1})=\mathsf{deg}(\alpha)$. Moreover we have that $\eta(\alpha_{1})=\eta(\alpha)-2$, and so by induction
\begin{equation} \label{alpha1}
\alpha_{1} = \overline{\alpha} + \sum\limits_{\substack{\beta\in\mathsf{Sim}(\bm{a},\bm{b}) \\ \mathsf{deg}(\beta)>\mathsf{deg}(\alpha)}}d_{\beta}\beta
\end{equation}
where $d_{\beta}\in\mathbb{Z}$. As for $\alpha_{2}$, the original strings $s$ and $t$ have been replaced by $u$ and $v$. Although the points of intersection $p$ and $q$ have been removed, in general we cannot apply the inductive step for $\alpha_{2}$ as it may not be semisimple, since the new strings $u$ and $v$ may contain self-intersections. This occurs precisely when there are more intersections between the strings $s$ and $t$ than just $p$ and $q$. So we break this situation into two cases:

\vspace{2mm}
\noindent
(Case 1) Assume that $p$ and $q$ are the only intersections between the strings $s$ and $t$ in $\alpha$, and so $\alpha_{2}$ is semisimple. Thus by induction we have
\begin{equation} \label{alpha2}
\alpha_{2} = \overline{\alpha_{2}} + \sum\limits_{\substack{\beta\in\mathsf{Sim}(\bm{a},\bm{b}) \\ \mathsf{deg}(\beta)>\mathsf{deg}(\alpha_{2})}}f_{\beta}\beta
\end{equation}
where $f_{\beta}\in\mathbb{Z}$. We seek to show that $\mathsf{deg}(\alpha_{2})>\mathsf{deg}(\alpha)$, and then subtracking \emph{\Cref{alpha2}} away from \emph{\Cref{alpha1}} will prove this case. One can show this by comparing the string types of the sets $\{s,t\}$ and $\{u,v\}$. We have the following to consider:
\begin{itemize}
\item[(1)] The set $\{s,t\}$ contains a down and up string.
\item[(2)] The set $\{s,t\}$ contains a vertical string and clockwise arc.
\item[(3)] The set $\{s,t\}$ contains two arcs on the same row, but not both anti-clockwise.
\end{itemize}
Note $\{s,t\}$ cannot contain a top and bottom arc since $\alpha$ is semisimple. The remaining cases which have been left out are due to the fact they can never realise the orientated double crossing of the strings $s$ and $t$ which we are considering. For (1) it is easy to see that $\{u,v\}$ consists of two arcs. For (2) one can deduce that $\{u,v\}$ contains a vertical string and anti-clockwise arc. For (3), when $\{s,t\}$ consists of two clockwise arcs one can check that $\{u,v\}$ consists of a clockwsie arc and an anti-clockwise arc. When $\{s,t\}$ contains a clockwise and anti-clockwise arc, one can check that $\{u,v\}$ consists of two anti-clockwise arcs. For all these case we have $\mathsf{deg}(\alpha_{2})>\mathsf{deg}(\alpha)$, completing Case 1.

\vspace{2mm}
\noindent
(Case 2) Assume now that there is at least one more point of intersection between the strings $s$ and $t$ beside $p$ or $q$. In the ordering of intersections discussed previously, pick a neighbouring point which either preceeds $p$ or proceeds $q$, say $y$. Without loss of generality assume $y$ preceeds $p$. Then diagrammatically the equation $\alpha=\alpha_{1}-\alpha_{2}$ is given by
\[
\begin{tikzpicture}[scale=0.8, baseline=7ex]
\begin{scope}[decoration={markings, mark=at position 0.37 with {\arrow{<}}}]
    \draw[postaction={decorate}] (0,0) to [curve through={(1,0.5) .. (1.5,1) .. (1,1.5) .. (0.5,2) .. (1,2.5)}] (2,3);
\end{scope}
\begin{scope}[decoration={markings, mark=at position 0.37 with {\arrow{>}}}]
    \draw[postaction={decorate}] (2,0) to [curve through={(1,0.5) .. (0.5,1) .. (1,1.5) .. (1.5,2) .. (1,2.5)}] (0,3);
\end{scope}
    \node[label=below:{$y$}] (y) at (1,2.55){};
    \node[label=below:{$p$}] (p) at (1,1.55){};
    \node[label=below:{$q$}] (q) at (1,0.55){};
    \node[label=left:{$s$}] (s) at (0.15,0){};
    \node[label=right:{$t$}] (t) at (1.85,0){};
\end{tikzpicture} \hspace{2mm} = \hspace{2mm}
\begin{tikzpicture}[scale=0.8, baseline=7ex]
\begin{scope}[decoration={markings, mark=at position 0.37 with {\arrow{<}}}]
    \draw[postaction={decorate}] (0,0) to [curve through={(0.5,1) .. (0.5,2) .. (1,2.5)}] (2,3);
\end{scope}
\begin{scope}[decoration={markings, mark=at position 0.37 with {\arrow{>}}}]
    \draw[postaction={decorate}] (2,0) to [curve through={(1.5,1) .. (1.5,2) .. (1,2.5)}] (0,3);
\end{scope}
    \node[label=below:{$y$}] (y) at (1,2.55){};
    \node[label=left:{$s$}] (s) at (0.15,0){};
    \node[label=right:{$t$}] (t) at (1.85,0){};
\end{tikzpicture} \hspace{2mm} - \hspace{2mm}
\begin{tikzpicture}[scale=0.8, baseline=7ex]
\begin{scope}[decoration={markings, mark=at position 0.5 with {\arrow{<}}}]
    \draw[postaction={decorate}] (0,0) to (1,0.5);
    \draw[postaction={decorate}] (1,0.5) to (2,0);
\end{scope}
\begin{scope}[decoration={markings, mark=at position 0.75 with {\arrow{<}}}]
    \draw[postaction={decorate}] (0,3) to [curve through={(1,2.5) .. (1.5,2)}] (1,1.5);
\end{scope}
\begin{scope}[decoration={markings, mark=at position 0.25 with {\arrow{<}}}]
    \draw[postaction={decorate}] (1,1.5) to [curve through={(0.5,2) .. (1,2.5)}] (2,3);
\end{scope}
    \node[label=below:{$y$}] (y) at (1,2.55){};
    \node[label=left:{$u$}] (s) at (0.15,3){};
    \node[label=right:{$v$}] (t) at (1.85,0){};
\end{tikzpicture} 
\]
by (H3). In $\alpha_{2}$ the interior region bounded by the left curl cannot contain loops or string segments with self-intersections since $\alpha$ is semisimple. Hence by \emph{\Cref{TearDropLem}} $\alpha_{2}=0$, and so $\alpha=\alpha_{1}$ and thus the result follows by \emph{\Cref{alpha1}}. 

\end{proof}

\begin{thm} \label{VarphiSurj}
The algebra homomorphism $\varphi:\mathcal{A}_{2k}^{\emph{aff}}\rightarrow \mathsf{End}_{\mathsf{Heis}}((\uparrow\downarrow)^{k})$ given in \emph{\Cref{APADiaHom}} is surjective.
\end{thm}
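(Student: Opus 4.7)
The plan is to combine the earlier results to show $\mathsf{Sim}((\uparrow\downarrow)^k) \subset \mathsf{Im}(\varphi)$, which by \Cref{BasisThm} together with the identifications $\varphi(z_l) = c_l$ and $\varphi(x_i) = r_i$ is enough to conclude surjectivity (every basis element of $\mathsf{End}_{\mathsf{Heis}}((\uparrow\downarrow)^k)$ is a product of images of $z_l$'s, $x_i$'s, and a simple diagram). So the whole task reduces to showing that each simple diagram lies in the image.

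First I would assemble the building blocks already known to lie in $\mathsf{Im}(\varphi)$: every planar simple diagram (via the Jones normal form for Temperley--Lieb diagrams applied to the $\varphi(e_i)$), and every permutation diagram $\pi^{\uparrow}, \pi^{\downarrow}$ for $\pi \in S(k)$ (via the Stembridge-style reduced expression argument that was sketched right after \Cref{permutation}). Consequently, for any $\pi,\sigma \in S(k)$ and any planar $\beta \in \mathsf{Sim}((\uparrow\downarrow)^k)$, the composition $\pi^{\uparrow} \beta\, \sigma^{\downarrow}$ lies in $\mathsf{Im}(\varphi)$.

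The main step is a downward induction on the degree $\mathsf{deg}$ on the finite set $\mathsf{Sim}((\uparrow\downarrow)^k)$ (using the lex order on $\mathbb{Z}_{\geq 0} \times \mathbb{Z}_{\geq 0}$). Fix $\alpha \in \mathsf{Sim}((\uparrow\downarrow)^k)$ and apply \Cref{SimBarDecomp} to obtain $\pi, \sigma \in S(k)$ and planar $\beta$ such that $\pi^{\uparrow}\beta\sigma^{\downarrow}$ is semisimple and $\overline{\pi^{\uparrow}\beta\sigma^{\downarrow}} = \alpha$. Since $\mathsf{deg}(\overline{\gamma}) = \mathsf{deg}(\gamma)$ for any semisimple $\gamma$, \Cref{SSLeadingTerm} yields the expansion
\[
\pi^{\uparrow}\beta\sigma^{\downarrow} \;=\; \alpha \;+\; \sum_{\substack{\gamma \in \mathsf{Sim}((\uparrow\downarrow)^k)\\ \mathsf{deg}(\gamma) > \mathsf{deg}(\alpha)}} c_{\gamma}\, \gamma
\]
inside $\mathsf{End}_{\mathsf{Heis}}((\uparrow\downarrow)^k)$. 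For $\alpha$ of maximal degree the sum is empty and we directly obtain $\alpha = \pi^{\uparrow}\beta\sigma^{\downarrow} \in \mathsf{Im}(\varphi)$; this is the base case. For smaller degree, the inductive hypothesis places every $\gamma$ of strictly larger degree in $\mathsf{Im}(\varphi)$, so rearranging the identity above shows $\alpha \in \mathsf{Im}(\varphi)$.

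The hard work, namely the straightening \Cref{SimBarDecomp}, the annihilation \Cref{TearDropLem}, and the triangularity \Cref{SSLeadingTerm}, has already been carried out; the only real task here is to verify that these combine cleanly with the Jones normal form for planar diagrams and the factorisation $\pi = \pi^{\uparrow}$, $\sigma = \sigma^{\downarrow}$ argument so that the downward induction on $\mathsf{deg}$ closes. I do not anticipate a serious obstacle beyond checking that the semisimplicity of $\pi^{\uparrow}\beta\sigma^{\downarrow}$ guaranteed by \Cref{SimBarDecomp} is indeed the hypothesis required to feed into \Cref{SSLeadingTerm}.
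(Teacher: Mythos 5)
Your proposal is correct and follows essentially the same strategy as the paper: reduce to showing $\mathsf{Sim}((\uparrow\downarrow)^{k})\subset\mathsf{Im}(\varphi)$ via \Cref{BasisThm}, then run a downward induction on $\mathsf{deg}$ using \Cref{SimBarDecomp} and the triangularity of \Cref{SSLeadingTerm}. The only (harmless) difference is in the base case, where the paper explicitly identifies the unique diagram of maximal degree $(2k,2k)$ as $\varphi\bigl(\prod_{i\in[k]}e_{2i-1}\bigr)$, whereas you observe that the correction sum is vacuously empty at maximal degree so that $\alpha=\pi^{\uparrow}\beta\sigma^{\downarrow}$ directly; both are valid.
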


\begin{proof}
As discussed previously, this will follow by showing that $\alpha \in \langle \varphi(e_{i}), \varphi(\tau_{j})\rangle_{i,j}$ for all $\alpha \in \mathsf{Sim}((\uparrow\downarrow)^{k})$. We prove this by downwards induction on $\mathsf{deg}(\alpha)$. It's easy to see that the maximum degree is $\mathsf{deg}(\alpha)=(2k,2k)$.  By considering what endpoints can be targets and sources of clockwise arcs, one can deduce the only element $\alpha\in \mathsf{Sim}((\uparrow\downarrow)^{k})$ satisfying $\mathsf{deg}(\alpha)=(2k,2k)$ is given by 
\[ 
\begin{tikzpicture}[scale=0.6, baseline=1ex]
    \draw[->] (0,0) to [bend left] (1,0);
    \draw[->] (2,0) to [bend left] (3,0);
    \draw[<-] (0,1) to [bend right] (1,1);
    \draw[<-] (2,1) to [bend right] (3,1);
    \node[label=above:{$\cdots$}] (dots) at (3.5,0){};
    \draw[->] (4,0) to [bend left] (5,0);
    \draw[<-] (4,1) to [bend right] (5,1);
\end{tikzpicture} = \varphi\left(\prod_{i\in[k]}e_{2i-1}\right) 
\]
This completes the base case. 
Now, pick $\alpha$ such that $\mathsf{deg}(\alpha)=(x,y)<(2k,2k)$ and assume that $\gamma\in\langle \varphi(e_{i}), \varphi(\tau_{j})\rangle_{i,j}$ for all $\gamma \in \mathsf{Sim}((\uparrow\downarrow)^{k})$ such that $\mathsf{deg}(\gamma)>(x,y)$. By \emph{\Cref{SimBarDecomp}} there exists $\pi,\sigma\in S(k)$ and a planar diagram $\beta\in\mathsf{Sim}((\uparrow\downarrow)^{k})$ such that $\pi^{\uparrow}\beta\sigma^{\downarrow}$ is semisimple and $\alpha=\overline{\pi^{\uparrow}\beta\sigma^{\downarrow}}$, in particular $\mathsf{deg}(\alpha)=\mathsf{deg}(\pi^{\uparrow}\beta\sigma^{\downarrow})$. Hence by \emph{\Cref{SSLeadingTerm}} we have that
\begin{equation} \label{FiltSum}
\pi^{\uparrow}\beta\sigma^{\downarrow} = \alpha + \sum\limits_{\substack{\gamma\in\mathsf{Sim}((\uparrow\downarrow)^{k}) \\ \mathsf{deg}(\gamma)>\mathsf{deg}(\alpha)}}c_{\gamma}\gamma,
\end{equation}
where $c_{\gamma}\in\mathbb{Z}$. By induction all the simple terms in the above summation belong to $\mathsf{Im}(\varphi)$. Also from previous discussions we know that $\pi^{\uparrow}\beta\sigma^{\downarrow}\in\mathsf{Im}(\varphi)$, thus rearranging the above equation shows that $\alpha\in\mathsf{Im}(\varphi)$, completing the proof by induction. 

\end{proof}

\begin{rmk}
\emph{\Cref{FiltSum}} is the key to \emph{\Cref{VarphiSurj}}, and follows from \emph{\Cref{SSLeadingTerm}}. This proposition applies to all semisimple diagrams which are much more general than those appearing here. Ideally, one would like to prove  that \emph{\Cref{FiltSum}} holds for $\pi^{\uparrow}\beta\sigma^{\downarrow}$ by some inductive argument without needing to show it for all semisimple diagrams. However it is a very delicate task to check which properties are preserved by an inductive process. So we ended up using this more general approach instead, even though many of the cases considered in proving \emph{\Cref{SSLeadingTerm}} probably won't occur in this case.
\end{rmk}


\subsection{The affine partition category of Brundan and Vargas}


In this last section we relate our affine partition algebra to the work of J. Brundan and M. Vargas in \cite{BV21} and prove a new result on their category.
We start by recalling the definition of their affine partition category $\mathsf{APar}$ as a subcategory of $\mathsf{Heis}$ generated by certain objects and morphisms, and  of their affine partition algebra $AP_{k}$, which is  an endomorphism algebra within $\mathsf{APar}$. 

\begin{defn} \cite[Definition 4.6 and Equation 4.47]{BV21}
The \emph{affine partition category} $\mathsf{APar}$ is the monoidal subcategory of $\mathsf{Heis}$ generated by the object $\uparrow\downarrow$ and the following morphisms:
\begin{equation} \label{HCox}
\begin{tikzpicture}[scale=0.6, baseline=1.5ex]
    \draw[->] (1,0) to (3,1);
    \draw[<-] (2,0) to (4,1);
    \draw[->] (3,0) to (1,1);
    \draw[<-] (4,0) to (2,1);
\end{tikzpicture} \hspace{2mm} + \hspace{2mm}
\begin{tikzpicture}[scale=0.6, baseline=1.5ex]
    \draw[->] (1,0) to (1,1);
    \draw[<-] (2,0) to [bend left] (3,0);
    \draw[->] (2,1) to [bend right] (3,1);
    \draw[<-] (4,0) to (4,1);
\end{tikzpicture}
\end{equation}

\begin{equation} \label{HTri}
\begin{tikzpicture}[scale=0.6]
    \draw[->] (0,0) to (1,1);
    \draw[->] (2,1) to (3,0);
    \draw[->] (2,0) to [bend right] (1,0);
\end{tikzpicture}, \hspace{5mm}
\begin{tikzpicture}[scale=0.6]
    \draw[<-] (0,1) to (1,0);
    \draw[->] (1,1) to [bend right] (2,1);
    \draw[->] (3,1) to (2,0);
\end{tikzpicture}
\end{equation}

\begin{equation} \label{CupsCaps}
\begin{tikzpicture}[scale=0.6]
    \draw[->] (1,0) to [curve through={(1.5,0.35)}] (2,0);
\end{tikzpicture}, \hspace{5mm}
\begin{tikzpicture}[scale=0.6]    
    \draw[<-] (1,0) to [curve through={(1.5,-0.35)}] (2,0);
\end{tikzpicture}
\end{equation}

\begin{equation} \label{HAff}
\begin{tikzpicture}[scale=0.6, baseline=1.5ex]
    \draw[->] (1,0) to (1,1);
    \draw[->] (2,1) to (2,0);
    \filldraw (1,0.5) circle (2pt);
\end{tikzpicture} \hspace{2mm} + \hspace{2mm}
\begin{tikzpicture}[scale=0.6, baseline=1.5ex]    
    \draw[->] (1,0) to (1,1);
    \draw[->] (2,1) to (2,0);
\end{tikzpicture}, \hspace{5mm}
\begin{tikzpicture}[scale=0.6, baseline=1.5ex]
    \draw[->] (1,0) to (1,1);
    \draw[->] (2,1) to (2,0);
    \filldraw (2,0.5) circle (2pt);
\end{tikzpicture} \hspace{2mm} + \hspace{2mm}
\begin{tikzpicture}[scale=0.6, baseline=1.5ex]    
    \draw[->] (1,0) to (1,1);
    \draw[->] (2,1) to (2,0);
\end{tikzpicture}
\end{equation}

\begin{equation} \label{HTau}
\begin{tikzpicture}[scale=0.6, baseline=2ex]
    \draw[->] (1,0) to (3,1);
    \draw[<-] (2,0) to [curve through={(1.9,0.2) .. (1.6,0.5) .. (1.9,0.8)}] (2,1);
    \draw[->] (3,0) to (1,1);
    \draw[->] (4,1) to (4,0);
\end{tikzpicture} \hspace{2mm} + \hspace{2mm}
\begin{tikzpicture}[scale=0.6, baseline=1.5ex]
    \draw[->] (1,0) to (1,1);
    \draw[<-] (2,0) to [bend left] (3,0);
    \draw[->] (2,1) to [bend right] (3,1);
    \draw[<-] (4,0) to (4,1);
\end{tikzpicture}, \hspace{5mm}
\begin{tikzpicture}[scale=0.6, baseline=1.5ex]
    \draw[<-] (1,0) to (3,1);
    \draw[->] (2,0) to [curve through={(1.9,0.2) .. (1.6,0.5) .. (1.9,0.8)}] (2,1);
    \draw[<-] (3,0) to (1,1);
    \draw[->] (0,0) to (0,1);
\end{tikzpicture} \hspace{2mm} + \hspace{2mm}
\begin{tikzpicture}[scale=0.6, baseline=1.5ex]
    \draw[->] (1,0) to (1,1);
    \draw[<-] (2,0) to [bend left] (3,0);
    \draw[->] (2,1) to [bend right] (3,1);
    \draw[<-] (4,0) to (4,1);
\end{tikzpicture}
\end{equation}

\vspace{2mm}
\noindent
The \emph{affine partition algebra} is defined to be $AP_{k}:=\mathsf{End}_{\mathsf{APar}}((\uparrow\downarrow)^{k})$.

\end{defn}

We can generalise the arguments in the proof of \Cref{VarphiSurj} to show the following result.

\begin{thm}
The affine partition category $\mathsf{APar}$ is the full monoidal subcategory of $\mathsf{Heis}$ generated by the object $\uparrow \downarrow$.
\end{thm}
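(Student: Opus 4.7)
The plan is to show that $\mathsf{Hom}_{\mathsf{APar}}((\uparrow\downarrow)^k,(\uparrow\downarrow)^l) = \mathsf{Hom}_{\mathsf{Heis}}((\uparrow\downarrow)^k,(\uparrow\downarrow)^l)$ for all $k,l \geq 0$ by generalising the argument of \Cref{VarphiSurj}. By Khovanov's basis (\Cref{BasisThm}), every morphism on the right is a linear combination of elements of the form $c_w^{k_w}\cdots c_0^{k_0}\,r_1^{s_1}\cdots\alpha\cdots r_{2l}^{t_{2l}}$ with $\alpha \in \mathsf{Sim}((\uparrow\downarrow)^k,(\uparrow\downarrow)^l)$. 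The decorated bubbles $c_w$ and decorations $r_i$ can be manufactured from the affine generator \Cref{HAff}: subtracting the identity isolates a single dot, dot-powers come from polynomial combinations, and the clockwise bubbles $c_w$ arise by closing decorated vertical strings with a cap and cup from \Cref{CupsCaps}. So the task reduces to showing every simple $\alpha \in \mathsf{Sim}((\uparrow\downarrow)^k,(\uparrow\downarrow)^l)$ lies in $\mathsf{APar}$.

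First I would check that all the building blocks used in \Cref{VarphiSurj} are available in $\mathsf{APar}$. The planar partition generators $\varphi(e_{2i-1})$ (a cap followed by a cup) and $\varphi(e_{2i})$ (two nested arcs) are assembled from \Cref{CupsCaps} and \Cref{HTri}; the twisted Coxeter morphism \Cref{HCox} is exactly $\varphi(s_i) = \varphi(\tau_{2i}\tau_{2i+1}) + \varphi(e_{2i})$, so subtracting $\varphi(e_{2i})$ isolates $\varphi(\tau_{2i}\tau_{2i+1})$; similarly \Cref{HTau} yields $\varphi(\tau_{2i})$ and $\varphi(\tau_{2i+1})$ individually once $\varphi(e_{2i})$ is available. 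Tensoring with identity strings places these building blocks at any position, and composition produces the permutation diagrams $\pi^\uparrow$, $\sigma^\downarrow$ (via Stembridge's descending reduced expressions as in the paragraph after \Cref{permutation}) and arbitrary planar diagrams $\beta$ (via the Jones normal form).

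Next I would generalise \Cref{SimBarDecomp} to $\mathsf{Sim}((\uparrow\downarrow)^k,(\uparrow\downarrow)^l)$. Its proof carries over verbatim because the endpoint-permutation moves on the top and bottom rows do not use $k=l$. Given $\alpha \in \mathsf{Sim}((\uparrow\downarrow)^k,(\uparrow\downarrow)^l)$, write $\alpha = \overline{\pi^\uparrow\beta\sigma^\downarrow}$ with $\pi \in S(k)$, $\sigma \in S(l)$ and $\beta$ planar; by the previous paragraph $\pi^\uparrow\beta\sigma^\downarrow \in \mathsf{APar}$. Applying \Cref{SSLeadingTerm} (which is already stated for arbitrary $(\bm a,\bm b)$) then gives the triangular expansion
\[
\pi^\uparrow\beta\sigma^\downarrow \;=\; \alpha + \sum_{\mathsf{deg}(\gamma)>\mathsf{deg}(\alpha)} c_\gamma\,\gamma,
\]
and a downward induction on $\mathsf{deg}$ produces every simple $\alpha$. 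The base case is the (essentially unique) maximal-degree simple diagram consisting entirely of oriented cups and caps, which lies in $\mathsf{APar}$ by \Cref{CupsCaps}. The corollary that $AP_k$ is a quotient of $\mathcal{A}_{2k}^{\text{aff}}$ follows immediately, since $AP_k = \mathsf{End}_{\mathsf{APar}}((\uparrow\downarrow)^k) = \mathsf{End}_{\mathsf{Heis}}((\uparrow\downarrow)^k)$ is the image of the surjection $\varphi$ from \Cref{VarphiSurj}.

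The main technical obstacle is the bookkeeping in the second paragraph: the $\mathsf{APar}$ generators \Cref{HCox} and \Cref{HTau} each bundle a crossing together with a planar partition term, so one must carefully isolate the individual pieces $\varphi(\tau_{2i})$, $\varphi(\tau_{2i+1})$ and $\varphi(e_{2i})$ before the inductive argument of \Cref{VarphiSurj} can be invoked. Once this is done, the surjectivity argument runs in parallel to the endomorphism-algebra case because \Cref{SimBarDecomp} and \Cref{SSLeadingTerm} are essentially indifferent to the choice of source and target objects.
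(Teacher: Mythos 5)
Your proposal is correct and follows essentially the same route as the paper's proof: reduce to equality of Hom-spaces via Khovanov's basis, manufacture the decorations $r_i$ and bubbles $c_w$ from the affine generators and cups/caps, produce planar diagrams via Jones' normal form and the permutation diagrams $\pi^{\uparrow},\sigma^{\downarrow}$ from the bundled crossing generators, and then run the downward induction on $\mathsf{deg}$ using \Cref{SimBarDecomp} and \Cref{SSLeadingTerm} with the all-arcs diagram as the base case. The only cosmetic difference is that you isolate $\varphi(\tau_{2i})$ and $\varphi(\tau_{2i+1})$ individually, whereas the paper works directly with $s_i^{\uparrow}$ and $s_i^{\downarrow}$; both suffice.
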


\begin{proof}
We need to show that 
$${\rm Hom}_{\mathsf{APar}}((\uparrow \downarrow)^k, (\uparrow \downarrow)^l) = {\rm Hom}_{\mathsf{Heis}}((\uparrow \downarrow)^k, (\uparrow \downarrow)^l).$$
Using \Cref{BasisThm}, we need to show that any element of the form 
$$c_w^{k_w} \ldots c_1^{k_1} c_0^{k_0} r_1^{s_1} r_3^{s_3} \ldots r_{2k-1}^{s_{2k-1} } \alpha r_2^{t_2} r_4^{t_4} \ldots r_{2l}^{t_{2l}}$$
where $\alpha \in \mathsf{Sim}((\uparrow \downarrow)^k, (\uparrow \downarrow)^l)$ can be written in terms of the generating morphisms in $\mathsf{APar}$.
The morphisms $r_i$ can be obtained by tensoring the generators (\ref{HAff}) with the appropriate identity morphisms on the left and right (and subtracting the identity). Moreover, the  morphisms $c_i$ can be obtained by concatenating $r_1^i$ with the generators (\ref{CupsCaps}) on top and bottom. Thus, it remains to show that any diagram $\alpha \in \mathsf{Sim}((\uparrow \downarrow)^k, (\uparrow \downarrow)^l)$ can be written in terms of the generating morphism in $\mathsf{APar}$. A generalisation of Jones' normal form shows that any planar $\alpha \in \mathsf{Sim}((\uparrow \downarrow)^k, (\uparrow \downarrow)^l)$ can be written in terms of the generators (\ref{HTri}) and (\ref{CupsCaps}) (see for example \cite[Proof of Lemma 2.1]{RSA14} for an explicit construction). Now \Cref{SimBarDecomp} allows us the write any $\alpha \in \mathsf{Sim}((\uparrow \downarrow)^k, (\uparrow \downarrow)^l)$ as  $\alpha = \overline{ \pi ^{\uparrow} \beta  \sigma^{\downarrow}}$ where $\pi \in S(k), \sigma\in S(l)$ and $\beta$ is planar. Note that $s_i^\uparrow$ and $s_i^\downarrow$ can be written using the generators (\ref{HTau}) and the composition of the generators (\ref{HTri}) (and tensoring with the appropriate identity morphism on the left and right). So using the discussion following \Cref{permutation} we know that $\pi^\uparrow$ and $\sigma^\downarrow$ belong to ${\rm Hom}_{\mathsf{APar}}((\uparrow \downarrow)^k, (\uparrow \downarrow)^l)$. Now we can follow exactly the same proof as for \Cref{VarphiSurj} noting that in this case the maximum degree is $(k+l, k+l)$  and the only simple diagram with that degree is the one containing $k$ consecutive arcs at the top and $l$ consecutive arcs at the bottom, which is planar. The rest of the proof can be followed verbatum simply replacing ${\rm Im \varphi}$ by ${\rm Hom}_{\mathsf{APar}}((\uparrow \downarrow)^k, (\uparrow \downarrow)^l)$.
\end{proof}

We immediately obtain the following consequences.

\begin{cor}
The map $\varphi$ gives a surjective homomorphism for $\mathcal{A}_{2k}^{\text{aff}}$ to $AP_k$.
\end{cor}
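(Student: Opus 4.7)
The plan is to deduce the corollary directly from the immediately preceding theorem together with \Cref{VarphiSurj}. The preceding theorem asserts that $\mathsf{APar}$ is the \emph{full} monoidal subcategory of $\mathsf{Heis}$ generated by the object $\uparrow\downarrow$. Applied to the object $(\uparrow\downarrow)^k$, which lies in $\mathsf{APar}$, fullness means the inclusion induces an equality of morphism spaces
\[ AP_k = \mathsf{End}_{\mathsf{APar}}((\uparrow\downarrow)^k) = \mathsf{End}_{\mathsf{Heis}}((\uparrow\downarrow)^k). \]
So the target of $\varphi$ in \Cref{APADiaHom} \emph{is} literally $AP_k$, and the same map $\varphi$ can be regarded as a $\mathbb{C}$-algebra homomorphism $\mathcal{A}_{2k}^{\text{aff}} \to AP_k$.

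Surjectivity is then precisely the content of \Cref{VarphiSurj}, which shows that $\varphi$ surjects onto $\mathsf{End}_{\mathsf{Heis}}((\uparrow\downarrow)^k)$, and this space equals $AP_k$ by the identification above. There is no further work: the whole corollary is a two-line combination of the previous theorem (which provides the equality of hom-spaces) and \Cref{VarphiSurj} (which provides the surjection).

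The only thing that might conceivably require a sentence of comment is the observation that one does \emph{not} need to separately check that each generator of $\mathcal{A}_{2k}^{\text{aff}}$ is sent to a morphism in $\mathsf{APar}$, because that check is subsumed by the equality $AP_k = \mathsf{End}_{\mathsf{Heis}}((\uparrow\downarrow)^k)$; alternatively, one can observe by hand that the images $\varphi(e_i), \varphi(\tau_j), \varphi(x_i), \varphi(z_l)$ are all built out of the generating morphisms \eqref{HCox}--\eqref{HTau} of $\mathsf{APar}$ (for instance $\varphi(x_{2i-1})$ is a right curl, which is $\varphi(s_i) - \mathrm{id}$ for one of the generators in \eqref{HAff}, and similarly for $\varphi(\tau_i)$ using \eqref{HTau}), but this is redundant once fullness is established. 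No obstacle is expected, as all the work has already been done in proving the preceding theorem and \Cref{VarphiSurj}.
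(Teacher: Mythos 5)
Your proposal is correct and is exactly the argument the paper intends: the corollary is stated as an immediate consequence of the fullness theorem (which identifies $AP_k = \mathsf{End}_{\mathsf{APar}}((\uparrow\downarrow)^k)$ with $\mathsf{End}_{\mathsf{Heis}}((\uparrow\downarrow)^k)$) combined with \Cref{VarphiSurj}. Nothing further is needed.
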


\begin{cor}
 The set $\mathsf{B}((\uparrow \downarrow)^k)$ gives a basis for $AP_k$.
\end{cor}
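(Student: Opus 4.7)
The proof is essentially immediate from the preceding theorem combined with Khovanov's basis theorem (\Cref{BasisThm}), so the proposal is short. The plan is as follows.

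First, I would unpack the definition: by construction $AP_k = \mathsf{End}_{\mathsf{APar}}((\uparrow\downarrow)^k)$, which is by definition the $\mathbb{C}$-vector space $\mathsf{Hom}_{\mathsf{APar}}((\uparrow\downarrow)^k,(\uparrow\downarrow)^k)$ equipped with vertical composition. Next, I would invoke the preceding theorem, which asserts that $\mathsf{APar}$ is the \emph{full} monoidal subcategory of $\mathsf{Heis}$ generated by the object $\uparrow\downarrow$. The word \emph{full} is the crucial point: it means that for all $k$ we have an equality (not just an inclusion)
\[
\mathsf{Hom}_{\mathsf{APar}}((\uparrow\downarrow)^k,(\uparrow\downarrow)^k) = \mathsf{Hom}_{\mathsf{Heis}}((\uparrow\downarrow)^k,(\uparrow\downarrow)^k) = \mathsf{End}_{\mathsf{Heis}}((\uparrow\downarrow)^k).
\]

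Finally, I would apply \Cref{BasisThm} (Khovanov's result, \cite[Proposition 5]{Kho14}) with $\bm{a}=\bm{b}=(\uparrow\downarrow)^k$: this states precisely that $\mathsf{B}((\uparrow\downarrow)^k)$ is a basis for $\mathsf{Hom}_{\mathsf{Heis}}((\uparrow\downarrow)^k,(\uparrow\downarrow)^k)$. Combining the two displays gives the claim. There is no real obstacle here; all of the difficulty has been absorbed into the preceding theorem, whose proof generalises the surjectivity argument of \Cref{VarphiSurj} via \Cref{SimBarDecomp} and \Cref{SSLeadingTerm}. The corollary itself amounts to the observation that fullness of the inclusion $\mathsf{APar}\hookrightarrow\mathsf{Heis}$ transports Khovanov's basis for the Heisenberg endomorphism algebra onto a basis for $AP_k$.
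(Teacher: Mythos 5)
Your proof is correct and is exactly the argument the paper intends: the corollary is stated as an immediate consequence of the fullness theorem, and your deduction (fullness gives $\mathsf{End}_{\mathsf{APar}}((\uparrow\downarrow)^k)=\mathsf{End}_{\mathsf{Heis}}((\uparrow\downarrow)^k)$, then \Cref{BasisThm} supplies the basis) is precisely the intended route. Nothing is missing.
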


We do not know whether the map $\varphi$ is an isomorphism. If it were, then we would also have a presentation for $AP_k$.

\end{document}